\newtheorem{Lemma}{{Lemma}}[section]
\newtheorem{theo}{{Theorem}}[section]
\newtheorem{prop}{{Proposition}}[section]
\newtheorem{cor}{{Corollary}}[section]
\newtheorem{remark}{{\bf Remark}}[section]
\begin{document}

\title{On enumeration of a class of maps on Klein bottle}
\author{{Dipendu Maity}\\[2mm]
{\normalsize School of Mathematics}\\{\normalsize Harish-Chandra Research Institute}\\ {\normalsize Chhatnag Road, Jhunsi Allahabad-211019, India.}\\
{\small dipendumaity@hri.res.in}\\\\
{Ashish Kumar Upadhyay}\\[2mm]
{\normalsize Department of Mathematics}\\{\normalsize Indian Institute of Technology Patna }\\{\normalsize Patliputra Colony, Patna 800\,013,  India.}\\
{\small upadhyay@iitp.ac.in}}
\maketitle

\vspace{-5mm}


\begin{abstract} There are eleven types of semi-equivelar maps on the torus and the Klein bottle \cite{dm:semi16}. Three of these are eqivelar maps and the remaining are semi-equivelar maps. We classify all eleven types of semi-equivelar maps on the Klein bottle.
\end{abstract}

{\small

{\bf AMS classification\,: 52C20, 52B70, 51M20, 57M60}

{\bf Keywords\,:} Polyhedral maps on Klein bottle, Vertex-transitive maps, Semi-Equivelar Maps}

\bigskip


\section{Introduction and definitions}

The following definitions one can see in \cite{bondy:gt08, dm:semi16, mu:torus-hc13} and graph related terms in Bondy and Murthy \cite{bondy:gt08}. These are being reproduced here for the sake of completeness. A {\em map} $M$ is an embedding of a finite graph $G$ on a surface $S$ such that the closure of components of $S \setminus G$, called the faces of $M$, are closed $2$-cells, that is, each homeomorphic to $2$-disk. By a map we mean a polyhedral map on the torus. A {\em n-cycle} $C_{n}$ is a finite connected $2$-regular graph with $n$ vertices, and the {\em face sequence} of a vertex $v$ in a map $K$ is a finite sequence $(a^{p}, b^{q}$, \dots, $m^{r})$ of powers of positive integers $a, b,$ \dots, $m \geq 3$ and $p, q,$ \dots, $r \geq 1$ in {\em cyclic} order such that through the vertex $v , p$ number of $C_{a}$ ($C_{a}$ denote the $a$-cycle), $q$ number of $C_{b}$, \dots, $r$ number of $C_{m}$ are incident. In this case, we say that $K$ is {\em semi-equivelar} map of type $\{a^{p}, b^{q}$, \dots, $m^{r}\}$ \cite{dm:semi16, mu:torus-hc13}. A map of type $\{p^q\}$ is a semi-equivelar map of type $\{p^q\}$. 
We say two maps of fixed type on the Klein bottle are {\em isomorphic} if there exists a {\em homeomorphism} of the Klein bottle which sends vertices to vertices, edges to edges, faces to faces and preserves incidents. More precisely, 
if we consider two polyhedral complexes $K_{1}$ and $K_{2}$ then an isomorphism to be a map $f ~:~ K_{1}\rightarrow K_{2}$ such that $f|_{V(K_{1})} : V(K_{1}) \rightarrow V(K_{2})$ is a bijection and $f(\sigma)$ is a cell in $K_{2}$ if and only if $\sigma$ is cell in $K_{1}$. 

There are eleven types of semi-equivelar maps on the torus and the Klein bottle, \cite{dm:semi16}. These types are $\{3^{6}\}, \{4^{4}\}, \{6^{3}\}, \{3^{3}, 4^{2}\}, \{3^{2}, 4, 3, 4\}, \{3, 6, 3, 6\}, \{3^{4}, 6\}, \{4, 8^{2}\}, \{3, 12^{2}\},\{4,$ $6, 12\}, \{3, 4, 6, 4\}$. 
In 1973, Altshuler \cite{alt:enu73} has shown the construction and enumeration of maps of types $\{3^{6}\}$ and $\{6^{3}\}$ on the torus. He has studied the maps of type $\{3^{6}\}$ on the torus with $n$ vertices and has given a characterization of these maps by introducing some arithmetic conditions. In 1986, Kurth \cite{kurth:enu86} has given enumeration of all equivelar maps of types $\{3^{6}\}, \{4^{4}\}$ and $\{6^{3}\}$ on the torus using quotients of planar tessellations by squares or equilateral triangles, respectively, under the action of a group of isometries generated by two translations. In 1983, Negami \cite{neg:emb97} has given uniqueness and faithfulness of embedding of $6$-regular toroidal graphs and then classified the maps of type $\{3^{6}\}$ by classifying the $6$-regular graphs on the torus.
Similarly in 2008, Brehm and K\"{u}hnel \cite{brehm:equi08} have presented a classification of equivelar maps of types $\{3^{6}\}, \{4^{4}\}, \{6^{3}\}$ on the torus using their different isotropy groups. 
In \cite{mu:torus-hc13}, Maity and Upadhyay have given a construction to enumerate all semi-equivelar maps of types $\{3^{3}, 4^{2}\}, \{3^{2}, 4, 3, 4\}, \{3, 6, 3, 6\}, \{3^{4}, 6\}, \{4, 8^{2}\}, \{3,$ $12^{2}\}, \{4, 6, 12\}, \{3, 4, 6, 4\}$ on the torus for arbitrary number of vertices.
By the Euler characteristic equation, it is easy to see that there is no bound on number of vertices of the toroidal maps. Therefore, the above articles have presented computational steps or formulas to calculate the non-isomorphic maps on the torus. Thus, by \cite{alt:enu73, brehm:equi08, kurth:enu86, mu:torus-hc13, neg:emb97, tu:enu}, we know

\begin{prop}\label{prop:torus}  Let $n$ denote the number of vertices of maps of type $\{3^{6}\}, \{4^{4}\},\{6^{3}\}, \{3^{3},$ $ 4^{2}\}, \{3^{2}, 4, 3, 4\}, \{3, 6, 3, 6\}, \{3^{4}, 6\},$ $\{4,$ $8^{2}\}, \{3, 12^{2}\}, \{4, 6, 12\}$ or $\{3, 4, 6, 4\}$ on the torus. Then we can classify maps on $n$ vertices upto isomporphism.   
\end{prop}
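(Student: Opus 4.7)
The plan is to assemble the proposition from the existing literature case by case, since each of the eleven types has already been handled in one of the cited papers. For the three equivelar types $\{3^{6}\}, \{4^{4}\}, \{6^{3}\}$, I would invoke the classifications of Altshuler \cite{alt:enu73}, Kurth \cite{kurth:enu86}, Negami \cite{neg:emb97}, and Brehm--K\"uhnel \cite{brehm:equi08}; for the remaining eight semi-equivelar types I would invoke the construction and enumeration of Maity--Upadhyay \cite{mu:torus-hc13}. The proof is therefore a unification step: verify that these references collectively cover every type in the list, and that in each case the output is a classification up to isomorphism for arbitrary $n$.

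The conceptual core, common to all eleven cases, is a universal cover argument. For a map $K$ of type $T$ on the torus, lift to the universal cover $\widetilde{K}$ in the Euclidean plane. Using the local vertex condition (the face sequence $T$ at every vertex), one shows that $\widetilde{K}$ is the standard Archimedean tessellation of type $T$ of $\mathbb{R}^{2}$. Then $K$ is recovered as the quotient $\widetilde{K}/\Lambda$ by a lattice $\Lambda$ of deck transformations of covolume proportional to $n$. Two such quotients are isomorphic precisely when the corresponding lattices are conjugate under the subgroup of the symmetry group of $\widetilde{K}$ that preserves the face-sequence labelling at each vertex. Since $n$ bounds the covolume of $\Lambda$, there are only finitely many admissible lattices, and their equivalence classes can be enumerated by arithmetic conditions on the generators; this is exactly what is worked out in the cited papers.

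The main technical obstacle, case by case, is the rigidity statement that the universal cover of a semi-equivelar map of type $T$ on the torus is forced to coincide with the standard tessellation of type $T$. For $\{3^{6}\}$ this is Negami's faithful embedding theorem; for $\{4^{4}\}$ and $\{6^{3}\}$ it is the classical lattice description used by Kurth and by Brehm--K\"uhnel; for the eight non-equivelar types the analogous statements are established in \cite{mu:torus-hc13}, where one also has to track the smaller symmetry group of the Archimedean tiling (translations and certain rotations, rather than the full group of Euclidean isometries preserving the tiling) under which the lattices must be identified.

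Once the rigidity step is in hand in each case, the proposition follows by taking the union of the eleven enumerations, each of which is a finite, algorithmically describable list for every fixed $n$. I would conclude by simply referencing \cite{alt:enu73, brehm:equi08, kurth:enu86, mu:torus-hc13, neg:emb97, tu:enu} and remarking that the arithmetic parametrisations presented there cover all eleven types appearing in Proposition~\ref{prop:torus}.
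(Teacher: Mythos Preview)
Your proposal is correct and matches the paper's treatment: the paper does not give an independent proof of Proposition~\ref{prop:torus} at all, but simply states it as a known result obtained ``by \cite{alt:enu73, brehm:equi08, kurth:enu86, mu:torus-hc13, neg:emb97, tu:enu}'', which is exactly the division of labour you describe. Your universal-cover sketch is additional exposition not present in the paper (and note that \cite{mu:torus-hc13} in fact argues via special cycle types and $(r,s,k)$-representations rather than lattices directly), but the essential content---deferring each of the eleven cases to the appropriate reference---is identical.
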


This paper is devoted to a study of all semi-equivelar maps on the Klein bottle with $n$ vertices. We devise a way (computational steps) to enumerate and characterise all semi-equivelar maps of types $\{3^{6}\}, \{4^{4}\}, \{6^{3}\}, \{3^{3}, 4^{2}\}, \{3^{2}, 4, 3, 4\}, \{3, 6, 3, 6\}, \{3^{4}, 6\}, \{4, 8^{2}\}, \{3,$ $12^{2}\}, \{4, 6, 12\}, \{3, 4, 6, 4\}$ on the Klein bottle. In next theorem we give a formula on $n$ number of vertices which enumerate number of non-isomorphic maps on the Klein bottle.

\begin{theo}\label{theo:main} Let $X \in \{\{3^{6}\}, \{4^{4}\}, \{6^{3}\}, \{3^{3}, 4^{2}\}, \{3^{2}, 4, 3, 4\}, \{3, 6, 3, 6\}, \{3^{4}, 6\}, \{4, 8^{2}\},$ $\{3,$ $12^{2}\}, \{4, 6, 12\}, \{3, 4, 6, 4\}\}$. Let $i(n)_{X}$ denote the number of non-isomorphic semi-equivelar maps with $n$ vertices of type $X$ on the Klein bottle. Then, $(1)~ i(n)_{\{3^{6}\}}$ = $\sum_{i=1,2} i \times |\{(m,n)~|~m \geq 3, n \geq 3m, ~gcd(n,2m) = i \times m\}| + |\{(m,n)~|~m \geq 2, m \mid n, n \geq  5m, ~gcd(n, 2m) = m\}|, (2)~i(n)_{\{4^{4}\}} = \sum_{i=1,2} i \times |\{(m, n)~|~m \geq 3, n \geq 3m, gcd(n,2m) = i \times m\}|, (3)~i(n)_{\{6^{3}\}} = i(\frac{n}{2})_{\{3^{6}\}}, (4)~i(n)_{\{3^{3},4^{2}\}} = \sum_{i=1, 2} i \times |\{(m,n)~|~m \geq 4, 2 \mid m, n \geq 3m, gcd(n,2m) = m\}| + |\{(m, n)~|~m \geq 5, n = 2m\}|  + |\{(m, n)~|~m \geq 4, n \geq 4m, 2m \mid n\}|, (5)~i(n)_{\{3^{2}, 4, 3, 4\}} = |\{(m,n)~|~m \geq 3, 2 \nmid m, 2m \mid n, n \geq 12\}|, (6)~i(n)_{\{4,8^{2}\}} = |\{(l,m,n)~|~$ $m \geq 3, 2 \mid m, n \geq 8m, 4m \mid n, 0 \le l \le (\frac{n}{4m}-1)\}| + |\{(l, m, n)~|~ m \geq 3, 2 \nmid m, n \geq 8m, 4m \mid n, 0 \le l \le (\frac{n}{4m}-1)\}|, (7)~i(n)_{\{3,6,3,6\}} = |\{(l, m, n)~|~ m \geq 3, 3m \mid n, n \geq 9m, 0 \le l \le (\frac{n}{3m}-1)\}| +  |\{(m, n)~|m \geq 2, 6m+2 \mid n, n \geq 6(3m+1)\}|  +  |\{(m, n)~|~m \geq 1, 2(m+2) \mid n, n \geq 10(m+2)\}|$  $+$  $|\{(m, n)~|~m \geq 1, 4m+5 \mid n, n \geq 12(4m+5)\}|, (8)~i(n)_{\{3,12^2\}} = |\{(l, m, n) ~|~ m \geq 3, 3m \mid n, n \geq 9m, 0 \le l \le (\frac{n}{6m}-1)\}| + |\{(m, n)~|~m \geq 2, 4(3m+1) \mid n, n \geq 12(3m+1)\}| + |\{(m, n)~|~m \geq 1, 4(m+2) \mid  n, n \geq 20(m+2)\}| + |\{(m, n)~|~m \geq 1, 4m+5 \mid n, n \geq 24(4m+5)\}|, (9)~i(n)_{\{3^4,6\}} = 0, (10)~i(n)_{\{4, 6, 12\}} = \{(l, m, n)~|~ m \geq 4, 2 \mid m, 6m \mid n, n \geq 12m, 0 \le l \le (\frac{n}{6m}-1)\}| + |\{(m, n)~|~m \geq 2, 2 \mid m, 12m \mid n, n\geq 24m\}|, (11)~i(n)_{\{3,4, 6, 4\}} = |\{(l, m, n)~|~ m \geq 4, 2 \mid m, 3m \mid n, n \geq 6m, 0 \le l \le (\frac{n}{3m}-1)\}| + |\{(m, n)~|~m \geq 2, 6m \mid (n-3m), n \geq 9m\}|$  $+$  $|\{(m, n)~|~m \geq 2, 6m \mid n, n \geq 12m\}|$.
\end{theo}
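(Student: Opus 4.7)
The plan is to view every semi-equivelar map of fixed type $X$ on the Klein bottle as the quotient of the planar tessellation $T_X$ (the unique vertex-transitive Euclidean tiling of type $X$) by a discrete, fixed-point-free subgroup $\Gamma$ of the isometry group $\mathrm{Sym}(T_X)$ with $\mathbb{R}^2/\Gamma \cong \mathbb{K}$. By the standard realisation of the Klein-bottle group, such a $\Gamma$ is generated by a translation $\alpha$ and a glide reflection $\beta$ satisfying $\beta\alpha\beta^{-1}=\alpha^{-1}$; in particular the translation part of $\alpha$ is forced to be perpendicular to the glide axis of $\beta$. Two such quotient maps are isomorphic if and only if the corresponding subgroups are conjugate inside $\mathrm{Sym}(T_X)$, and the vertex count equals the index $n=[\mathrm{Trans}(T_X):\langle\alpha,\beta^2\rangle]$ of the translation sublattice of $\Gamma$ inside the full translation lattice of $T_X$.

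For each of the eleven types I would then carry out the same scheme. First, identify the finite set of directions along which a glide axis can lie (these are the reflection axes of the point stabiliser of $\mathrm{Sym}(T_X)$) and, for each such direction, fix a canonical basis of the translation lattice adapted to it. Second, parametrise $\beta$ by an integer $m$ recording the period of $\beta^2$ along the axis and $\alpha$ by a shift $l$ along the axis together with a transverse coordinate tied to $n$, checking that only finitely many such tuples survive the normaliser action. Third, translate the conditions ``fixed-point free'' and ``$n$ vertex orbits'' into the divisibility and inequality conditions appearing in each clause of the theorem. The factor $i\in\{1,2\}$ in clauses (1), (2), (4) enumerates inequivalent admissible glide directions that become active when the gcd condition permits more than one; the multiple summands in clauses (6)--(8), (10), (11) correspond to the distinct configurations in which the glide axis can sit relative to the fundamental cell of $T_X$.

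Clause (9), $i(n)_{\{3^4,6\}}=0$, follows at once from chirality: the snub hexagonal tiling $T_{\{3^4,6\}}$ admits no orientation-reversing self-symmetry, so $\mathrm{Sym}(T_{\{3^4,6\}})$ contains no glide reflection and hence no Klein-bottle quotient exists. All ten remaining types are reflection-symmetric and do admit such quotients, which is why the other counts are nontrivial.

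The main obstacle is the bookkeeping in the second step. For each tiling one must compute the normaliser of a candidate glide axis inside $\mathrm{Sym}(T_X)$, describe how it acts on the parameter triple $(l,m,n)$ (or on the pair $(m,n)$ when the shift $l$ is forced), and carve out a fundamental domain in parameter space so that every conjugacy class of Klein-bottle subgroup is represented exactly once. The tilings $\{3,6,3,6\}$, $\{3,12^2\}$, $\{4,6,12\}$ and $\{3,4,6,4\}$, which carry the richest point symmetry, produce the multi-term formulas in (7), (8), (10), (11) and will demand the most delicate case analysis. As a consistency check, Proposition \ref{prop:torus} applied to the orientable double cover $\mathbb{R}^2/\langle\alpha,\beta^2\rangle$, which is a torus of type $X$, pins down the translation sublattice and helps verify that no conjugacy class is double-counted.
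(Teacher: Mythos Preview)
Your approach is genuinely different from the paper's. The paper never passes to the universal cover or to crystallographic groups; instead, for each type $X$ it defines a distinguished class of ``straight'' cycles in the edge graph (types $A$, $B$, $A_1$, $B_1$, $X_1$, $Y_1$, $Z_1$, $G_1$, $H_1$, $W_1$), shows that cutting along such a cycle produces either a planar cylinder $K(r,s,k)$ bounded by two oppositely-oriented copies of the cycle, or a cylinder capped by one or two explicit M\"obius strips $K(l,t)=K(S'(M),\mathcal{M}',\mathcal{M}'')$, and then classifies these polyhedral representations directly via admissibility lemmas (e.g.\ Lemmas~\ref{lem36:6}, \ref{lem36.7}) and isomorphism lemmas (e.g.\ Lemmas~\ref{lem36:iso}, \ref{lem36.8}). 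The formulas in the statement are then read off case-by-case from those lemmas. Your crystallographic route---conjugacy classes of Klein-bottle subgroups of $\mathrm{Sym}(T_X)$---is the Brehm--K\"uhnel style alternative and is conceptually cleaner; the paper's route is more hands-on and yields the explicit cut-open representations of Theorem~\ref{classification-semi-maps} as a by-product.

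One point you pass over too quickly: you assert that every semi-equivelar map of type $X$ on the Klein bottle is a quotient of the Archimedean tiling $T_X$ by a subgroup of its full symmetry group. This is true, but it is exactly the content that the paper spends most of Section~2 establishing combinatorially (existence and uniqueness up to homology of the distinguished cycle, hence a well-defined unrolling). In your framework you need the fact that the universal cover of $M$, with the lifted cell structure, is combinatorially isomorphic to $T_X$, and that the deck group acts by symmetries of $T_X$; you should cite or prove this (for the torus it is in \cite{brehm:equi08} and \cite{dm:semi16}, and the Klein-bottle case follows by passing to the orientable double cover). Without it, your scheme does not get started. Your chirality argument for clause~(9) is correct and more transparent than the paper's ratio-counting proof of Theorem~\ref{thm346:1}.
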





We introduced some specific types of cylindrical polyhedral representations which are bounded by identical cycles with different orientations or by mobius strips that are consists of polygonal faces of semi-equivelar maps on the Klein bottle. By these polyhedral representations, we give a complete characterizations of all semi-equivelar maps on the Klein bottle. So, we have 

\begin{theo}\label{classification-semi-maps} Let $M$ be a map of type $X$ with $n$ vertices on the Klein bottle. Then, the map $M$ is isomorphic to a polyhedral representation $K(r, s,  0), K(r,  s,  1)$ or $K(r,  s)$ (these polyhedral representations are defined later in the subsequent sections) for some $r \mid n$ and $s = f(n,  r)$. 
\end{theo}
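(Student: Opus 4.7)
The strategy is to analyze $M$ through its universal cover. Since $M$ is semi-equivelar of type $X$, its universal cover $\widetilde{M}$ is the unique Archimedean tiling of $\mathbb{R}^{2}$ with vertex type $X$. Hence $M = \widetilde{M}/\Gamma$ for some torsion-free subgroup $\Gamma$ of the symmetry group of $\widetilde{M}$, acting freely with quotient homeomorphic to the Klein bottle. Therefore $\Gamma$ is generated by a translation $\tau$ together with a glide reflection $\sigma$ whose square $\sigma^{2}$ is a translation linearly independent from $\tau$.

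First, I would identify an appropriate ``horizontal'' cycle $C_{r}$ in $M$, obtained as the image of a straight line of vertices in $\widetilde{M}$ parallel to $\tau$. The set of directions along which such lines exist in $\widetilde{M}$ depends on the type $X$ and in each case forms a finite, symmetry-determined family; their projections yield cycles whose length $r$ necessarily divides $n$. Cutting $M$ along $C_{r}$ gives two possibilities. Either the cycle is two-sided (it bounds an annular collar in $M$), in which case $M$ unfolds to a cylinder whose two boundary circles are identical copies of $C_{r}$; or the cycle is one-sided, in which case $M$ unfolds to a M\"obius strip whose single boundary is a $2r$-cycle of the map. These are precisely the two ``cylindrical polyhedral representations'' promised in the statement.

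Second, in the cylindrical case the re-gluing of the two boundary copies of $C_{r}$ can differ by a horizontal shift. The constraint that the re-glued map realize the face sequence of type $X$ at every vertex, combined with the orientation-reversal imposed by the Klein bottle (which forces a reflection in one of the two gluing boundaries), restricts the admissible shifts to essentially two values. This yields the two representations $K(r, s, 0)$ (untwisted gluing) and $K(r, s, 1)$ (the unique twisted gluing compatible with semi-equivelarity). In the M\"obius case, consistency of the face labelling at the central circle pins down a unique identification, giving the representation $K(r, s)$. In every case the integer $s$ is determined by $n$ and $r$ via the face-counts of the type $X$, producing the relation $s = f(n, r)$ asserted in the theorem.

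The main obstacle is carrying out the above analysis uniformly across all eleven types. For the equivelar types $\{3^{6}\}, \{4^{4}\}, \{6^{3}\}$ the argument parallels the torus classification collected in Proposition~\ref{prop:torus}, with the Klein-bottle quotient contributing only the additional M\"obius variant. For the richer mixed types such as $\{3, 6, 3, 6\}, \{3, 12^{2}\}, \{4, 6, 12\}$ and $\{3, 4, 6, 4\}$, the straight lines of vertices are restricted to a few distinguished directions, forcing additional divisibility conditions on $r$ and constraining the shift parameter; matching these constraints against the arithmetic conditions in Theorem~\ref{theo:main} is where the bulk of the case work lies. Verifying in each of these types that no further cylindrical shape appears---that is, that the shift can always be normalized to $0$ or $1$ after applying the natural symmetries of the Klein bottle---is the main difficulty.
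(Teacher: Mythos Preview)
Your approach via the universal cover is genuinely different from the paper's. The paper never passes to $\mathbb{R}^2$ or invokes the deck group $\Gamma$; instead it works intrinsically in $M$. For each type $X$ it defines a specific combinatorial notion of ``straight'' path in the edge graph (types $A$, $B$, $A_1$, $B_1$, $X_1$, $Y_1$, $Z_1$, $G_1$, $H_1$, $W_1$), proves that maximal walks of this type close up into non-contractible cycles, and then studies the geometric carrier of the faces incident to such a cycle. This carrier is shown case-by-case to be a cylinder or the union of a cylinder with a M\"obius strip; iterating produces either a cylindrical representation $K(r,s,k)$ bounded by two identical horizontal cycles, or a representation $K(S'(M),\mathcal{M}',\mathcal{M}'')$ consisting of a cylinder capped by \emph{two} M\"obius strips. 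The reduction of the shift $k$ to $0$ or $1$ is then done by an explicit re-cutting argument (Lemma~\ref{lem36:iso} and its analogues): one locates a vertex $u_{0,k/2}$ or $u_{0,(k+r)/2}$ on the base cycle and cuts along the vertical path through it, which replaces $k$ by $0$ whenever $k$ or $k+r$ is even, and similarly reduces to $1$ otherwise. Your universal-cover framework is cleaner conceptually and would unify the eleven cases, but it is also the approach of Kurth and of Brehm--K\"uhnel cited in the introduction, so the paper's novelty lies precisely in avoiding it.

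That said, your sketch has real gaps at the points where the paper does actual work. First, you assert that semi-equivelarity plus the orientation-reversal ``restricts the admissible shifts to essentially two values''; this is false as stated---all shifts $0\le k\le r-1$ give genuine maps, and the content of the theorem is that these fall into at most two \emph{isomorphism classes}. Proving that requires exhibiting the isomorphism, which in your language means showing that the conjugacy class of $\Gamma$ inside the full symmetry group of the tiling depends only on $k\bmod 2$ (and on $r,s$). You have not indicated how to do this, and it is exactly the step the paper carries out by hand. Second, your description of the M\"obius case is off: the paper's $K(r,s)$ is not obtained by cutting along a single one-sided cycle, but is a cylinder flanked by two M\"obius strips (built from faces of the map), and for several types ($\{4^4\}$, $\{3^2,4,3,4\}$, $\{4,8^2\}$) the paper shows this representation can always be converted back into a $K(r,s,k)$, so no separate $K(r,s)$ class survives. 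Finally, your uniform scheme silently fails for $\{3^4,6\}$: that Archimedean tiling is chiral, so its symmetry group contains no orientation-reversing isometries and hence no glide reflections, which is why the paper proves separately (Theorem~\ref{thm346:1}) that no such map exists on the Klein bottle.
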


\begin{remark} {\rm In Theorem \ref{classification-semi-maps}, the $K(r, s,  0), K(r,  s,  1)$ are called planar polyhedral representations which are bounded by identical cycles with different orientations and $K(r,  s)$ is called polyhedral representations which is bounded by mobius strips. We have computed all eleven types of semi-equivelar maps with on few number of vertices in \cite{mu:enu1509} using the idea of the proof of Theorem \ref{classification-semi-maps}.
}\end{remark}


The proofs of all above theorems are presented in Section \ref{result:all}. We study above three polyhedral representations of all eleven types of semi-equivelar maps in the subsequent sections. We use the results of the following sub-sections of Section \ref{Semi-equivelar-maps} in Section \ref{result:all} and prove our main results. 

\section{Semi-equivelar Maps}\label{Semi-equivelar-maps}
\noindent\textbf{2.1 Maps of type $\{3^{6}\}$~:~}\label{361}
Let $M$ be a map of type $\{3^{6}\}$ on the Klein bottle. A path $P(\dots, u_{i-1}, u_{i}, u_{i+1}, \dots)$ in edge graph of $M$ is of type $A$ if $lk(u_{i}) = C(a, b, u_{i+1}, c, d, u_{i-1})$ implies $lk(u_{i-1}) = C(g, a, u_{i}, d, u, u_{i-2})$ and $lk(u_{i+1}) = C(b, e, u_{i+2}, f, c, u_{i})$ for all $i$. A walk $Walk(u_1, e_{1,2}, u_2, \dots, u_i, e_{x-1, x}, u_x)$ in edge graph of $M$ is a sequence of vertices and edges such that $e_{t-1, t} = u_{t-1}u_t, 1 \le t \le x$. We denote it by $Walk(u_1, u_2 \dots, u_x)$. 

\smallskip

$Claim :$ If $W(w_1, \dots, w_r)$ is a maximal walk of type $A$ in $EG(M)$ (edge graph of $M$) then $W(w_1, \dots, w_r)$ is a closed walk of type $A$.

\smallskip

By the defintion of $A$, let $lk(w_{r-1}) = C(a, b, w_{r}, c, d, w_{r-1})$, it implies that $lk(w_{r}) = C(b, e, w_{r+1}, f, c, w_{r})$. If $w_{r} = w_{1}$ then $C(w_{1}, w_{2},$ \dots, $w_{r})$ is a closed walk of type $A$. If $w_{r} \neq w_{1}$, it implies that $w_{r} = w_i$ for some $2 \le i \le r$. Hence, we get a closed walk $R = Walk(w_i, w_{i+1}, \dots, w_r)$ and which is not of type $A$ at $w_i$. This implies that the number of triangles which are incident at $w_i$ is one or two on one side of $R$. If the number of incident triangles is one then we define an another closed walk as $R$ of lesser length using the incident faces of the walk $W$. (The similar description of this construction is given details for a cycle in \emph{Theorem 1} \cite{alt:hc72} (also see it in the example in \emph{Fig. 2} \cite{alt:hc72}).) Similarly, if the number triangles which are incident at $w_i$ is two then one can see that we get a cycle of same type (as in \emph{Fig. 3} \cite{alt:hc72}). Thus, we get a closed walk as $R$ of lesser length using the triangles which are incident with $R$. By induction on length, it is not possible to construct walk as $R$ at each step as length is decreasing. Therefore, $w_r = w_1$ and $W(w_1, \dots, w_r)$ is a closed walk of type $A$. This proves the claim.

\smallskip

$Claim :$ A closed walk $W(w_1, \dots, w_r)$ of type $A$ is non-contractible.

\smallskip

If the walk $W$ bounds a $2$-disk then we consider triangles which are incident with $W$ in the $2$-disk. There are three triangles which are incident at each vertex of $W$ in the $2$-disk. Let $w_{i-1}w_iv_i, w_{i-1}v_iv_{i+1}, w_{i}v_{i+1}w_i$ denote three triangles incident at $w_i$ for all $i$. These incident triangles define a walk $W' = Walk(v_1,$ \dots, $v_r)$ of type $A$ and has a length $r$ as $W$. Similarly, we consider walk $W'$ and repeat as above. Hence, we get an infinitely many sequence of walks namely $W, W', W'', \dots$ where each walk has length $r$. But, the $2$-disk consists of finite number of triangles. Therefore, the walk $W$ does not bound any $2$-disk. Which is a contradiction. So, $W$ is non-contractible. This proves the claim.
We show in the next lemma that the map $M$ contains a cycle of type $A$.  

\begin{Lemma}\label{lem36:1} The map $M$ contains a cycle of type $A$.
\end{Lemma}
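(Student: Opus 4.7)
\medskip
\noindent\textbf{Proof plan.} The idea is to produce a cycle of type $A$ by taking a minimum-length closed walk of type $A$ (whose existence is guaranteed by the two claims above) and arguing, via the hexagonal link structure, that it cannot have a repeated interior vertex.

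First I would start with an arbitrary vertex $u$ of $M$ and an antipodal pair $\{v, v'\}$ in its cyclic link, giving a length-two walk $Walk(v', u, v)$ which is of type $A$ at $u$. The type-$A$ rule at each vertex uniquely prescribes the next vertex as the antipode (in the cyclic link) of the previous vertex, so this walk extends in both directions to a well-defined maximal walk of type $A$. By the first claim this maximal walk closes up, and by the second it is non-contractible; in particular, the set of closed walks of type $A$ in $M$ is nonempty.

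Next I would pick a closed walk $W = Walk(w_1, w_2, \ldots, w_r, w_1)$ of type $A$ of minimum possible length and show that its vertices $w_1, \ldots, w_r$ are pairwise distinct. Suppose for contradiction $w_i = w_j$ with $1 \le i < j \le r$. Then both $(w_{i-1}, w_{i+1})$ and $(w_{j-1}, w_{j+1})$ are antipodal pairs in $lk(w_i)$, and the hexagonal link has exactly three such pairs, so only a small case analysis is needed. In the cleanest case the two ordered pairs coincide, and then the two sub-walks $Walk(w_i, \ldots, w_j)$ and $Walk(w_j, \ldots, w_r, w_1, \ldots, w_i)$ are themselves closed walks of type $A$ of strictly smaller length, contradicting minimality. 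In the remaining cases the two antipodal pairs differ, possibly only in orientation, and a straight cut of $W$ at $w_i$ no longer yields a type-$A$ walk at the cut vertex; I would then combine a piece of $W$ with the triangular faces incident to $w_i$ (using the same face-exchange idea as in the proof of Claim 1, cf.\ Fig.\,3 of \cite{alt:hc72}) to produce a shorter closed walk of type $A$, contradicting minimality again.

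The main obstacle I expect is precisely this last case, where the two incoming antipodal pairs at the repeated vertex are genuinely distinct, so that $W$ ``crosses itself'' transversely at $w_i$. Here the universal-cover viewpoint is helpful as a guiding picture: a walk of type $A$ lifts to a straight line in the triangular tessellation of $\mathbb{R}^2$, self-intersections on the Klein bottle come only from the deck transformations, and a transverse crossing of two lifts can be detoured around an adjacent triangle to yield a shorter type-$A$ closed walk. Claim 2 acts as the safety net for the entire shortening procedure: since a closed walk of type $A$ is forced to be non-contractible, the inductive shortening cannot collapse $W$ to a contractible loop, so the process must in fact terminate at a genuine cycle of type $A$.
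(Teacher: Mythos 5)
Your strategy---take a shortest closed walk of type $A$ and eliminate repeated vertices by surgery---is genuinely different from the paper's, and it has a gap precisely at the step you flag as the main obstacle. The clean case, where $(w_{i-1},w_{i+1})=(w_{j-1},w_{j+1})$, does split $W$ into two strictly shorter closed walks of type $A$ and contradicts minimality. But when the two antipodal pairs at $w=w_i=w_j$ differ, splicing $W$ at $w$ yields a closed walk that fails type $A$ at one vertex, and you only assert that a detour through the incident triangles both restores type $A$ and strictly shortens the walk. That needs verification: the detour alters the walk at the neighbours of $w$ as well, so type $A$ must be rechecked there, and strict length decrease is not automatic (in the two-triangles-on-one-side configuration the rerouted walk can have the same length, which is why the paper's own Claim~1 needs an induction rather than a one-step shortening). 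Worse, in the sub-case you dismiss as differing ``only in orientation,'' i.e.\ $(w_{j-1},w_{j+1})=(w_{i+1},w_{i-1})$, the step $w_{j-1}\to w_j$ re-traverses the edge $w_iw_{i+1}$ in the opposite direction; the two branches of $W$ then overlap along an edge instead of crossing transversely, the triangle-detour picture does not apply at all, and a separate argument is required to exclude this (the paper simply asserts that a type-$A$ walk repeats no edge). As written, the minimality argument does not close.

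For contrast, the paper's proof avoids all local surgery. It takes two \emph{non-homologous} closed walks $W'$, $W''$ of type $A$ through a common vertex, which are necessarily edge-disjoint. It shows that if some vertex of $W'$ repeats $k$ times ($k=2$ or $3$), then every vertex of $M$ repeats $k$ times in the union of the family of walks homologous to $W'$; the case $k=3$ is impossible because that union would exhaust all six edges at each vertex, leaving no room for $W''$. Hence the homologous family of $W'$ uses exactly four of the six edges at each vertex, and $W''$, confined to the remaining two, visits each vertex at most once and is therefore the desired cycle. If you want to salvage your route, the cheapest fix is probably to import exactly this degree-counting step rather than to try to make the crossing surgery rigorous.
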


\begin{proof} Let $W$ be a closed walk of type $A$ in edge graph of $M$ and $S_W$ denote a set of faces which are incident with walk $W$. Observe that by considering incident faces of $W$, the boundary of the geometric carrier $S_W$ consists of two closed walks. It follows that $\partial S_W = \{W_2, W_s\}$ where $W_2$ and $W_s$ are two closed walks of type $A$ and have same length. Here we say that the walks $W_2, W_s$ are  homologous to $W$. Since $W$ is a walk, let $w \in V(W)$ repeat twice in $W$. Observe that by the definition of $A$, there is no repetition of edges in $W$. Since $w \in V(W)$ repeats twice in $W$, it follows that there are two sub-paths $P_1 = P(w_{l-1}, w, w_{l+1})$ and $P_2 = P(w_{k-1}, w, w_{k+1})$ of $W$ where $w_{l-1}, w_{l+1}, w_{k-1}, w_{k+1} \in  lk(w)$. Let $v \in lk(w)$. Let $W_1 (= W), W_2, \dots, W_s$ be a sequence of walks which are homologous to $W$ in $M$. 

\smallskip

$Claim : $ The vertex $v$ repeats twice in $W_1 \cup W_2 \cup \dots \cup W_s$

\smallskip

If $P_1 \subset W$ then $v \in  W_i$ for some $i$ and $W_i \in \partial S_W$. Similarly, if $P_2 \subset W$ then $v \in  W_j$ for some $j$ and $W_j \in \partial S_W$. So, the vertex $v$ repeats twice in $W_i \cup W_j$. Therefore, every vertex of $M$ repeats twice in $W_1 \cup W_2 \cup \dots \cup W_s$ since $u \in W_1 \cup W_2 \cup \dots \cup W_s~ \forall~ u \in V(M)$. This proves the Claim. 

Similarly as in the Claim, if a vertex repeats thrice in $W$ then every vertex of $M$ repeats thrice in $W_1 \cup W_2 \cup \dots \cup W_s$. Since $W$ is non-contractible, it implies that the $W$ represents a generator of the fundamental group of $M$. The fundamental group of $M$ contains exactly two generator up to homologous. Let $W', W''$ be two non-homologous closed walks of type $A$ through a vertex in $M$. Here, if the walk $W'$ contains a vertex, say $w$ which repeats twice or thrice in $W'$ then we have following cases. If the vertex $w$ repeats thrice then by the above Claim, every vertex of $M$ repeats thrice in the union of homologous walks of $W'$. This implies that the union of walks contains all the edges of $M$. Which is not possible since $W', W''$ are non-homologus and which implies that $E(W') \cap E(W'') = \emptyset$. So, walk $W'$ contains the vertex $w$ twice. Let $L_1, \dots, L_l$ denote sequence of homologous walks of $W'$ in $M$. Since $W'$ contains the vertex $w$ twice, it implies by Claim that $L_1 \cup \dots \cup L_l$ contains each vertex twice of $M$. Again, $E(W') \cap E(W'') = \emptyset$ implies that $E(W'') \cap E(L_i) = \emptyset$ for all $1 \le i \le l$. Hence, walk $W''$ contains all distinct vertices since degree of each vertex of $M$ is six. So, $W''$ is a cycle. Therefore, map $M$ contains a cycle of type $A$.
\end{proof}

For an example, the $W(v_1, w_1, x_1, u_1, v_2, w_3, x_4, u_5, u_4, w_4, x_4, u_4, v_6, w_7, x_1, u_2, v_7,$ $ w_7,$ $ x_7,$ $u_7,$ $ v_3, w_4, x_5, u_6, v_3, w_3, x_3, u_3, v_7, w_1, x_2, u_3, v_6, w_6, x_6, u_6, v_4, w_5, x_6, u_7, v_2, w_2, x_2, u_2, v_1, w_2, \linebreak x_3, u_4, v_5, w_5, x_5, u_5, v_5, w_6, x_7, u_1, v_1)$ is a closed walk of type $A$ and $C(x_1, \dots, x_7)$ is a cycle of type $A$ in Figure 1. In $W$ each vertex repeats exactly twice. 

Let $C$ be a cycle of type $A$ in $M$ and $u \in V(C)$. A set $S \subset F(M)$ is of faces such that $V(\triangle) \cap V(C) \neq \emptyset$ for all $\triangle \in S$. Then, we have

\begin{Lemma}\label{lem36:2} The geometric carrier $|S|$ is a cylinder or an union of `a M\"obius strip and a cylinder'.
\end{Lemma}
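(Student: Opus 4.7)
The plan is to describe $|S|$ explicitly as a simplicial subcomplex of $M$ and then read off its topology by a case analysis. Writing $C = C(w_0, w_1, \dots, w_{m-1})$ with $lk(w_i) = C(a_i, b_i, w_{i+1}, c_i, d_i, w_{i-1})$, the type-$A$ hypothesis forces the local picture to be consistent along $C$, so at each $w_i$ the six incident triangles split into three on each local side of $C$. Matching the links at consecutive vertices across the shared edge $w_i w_{i+1}$ (exactly what type $A$ encodes) gives the identifications $c_i = d_{i+1}$ and $b_i = a_{i+1}$. Hence on each side of $C$ I obtain a substrip consisting of $2m$ triangles, formed by alternating edge triangles $w_i w_{i+1} v_i$ and corner triangles $w_{i+1} v_i v_{i+1}$, with bottom boundary $C$ and a top walk $V = v_0 v_1 \cdots v_{m-1}$.

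Next, I determine the topology of each substrip. Because $M$ sits on the Klein bottle and $C$ is non-contractible (by the second claim preceding Lemma~\ref{lem36:1}), the top walk $V$ on a given side either closes back as a simple cycle of length $m$ disjoint from $C$, in which case the substrip is homeomorphic to a cylinder, or $V$ closes back with a flip, in which case the substrip is a M\"obius strip; the type-$A$ condition prevents any further degeneration that would collapse the substrip to something lower-dimensional. I then assemble the two substrips across their common boundary $C$: if both substrips are annuli, then $|S|$ is the union of two annuli glued along $C$, i.e.\ a single cylinder with $C$ as its core circle, which is the first case of the lemma. If one substrip closes as a M\"obius strip while the other still closes as an annulus, then $|S|$ is the announced ``union of a M\"obius strip and a cylinder'', the two surfaces meeting along $C$ but behaving differently under the orientation-reversing loop of the Klein bottle.

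The main obstacle I anticipate is the global bookkeeping in the second step: tracing how the top walk $V$ of each substrip closes up once one has traversed $C$ completely, and showing that exactly the two stated scenarios arise. This reduces to a parity and orientation check along $C$, exploiting the fact that the Klein bottle carries a unique $\mathbb{Z}/2$-valued obstruction to two-sidedness, together with the rigidity of the $\{3^6\}$ link structure. A secondary subtlety is ruling out the ``both sides M\"obius'' configuration, which I expect to follow from an Euler characteristic count on a regular neighbourhood of $C$ inside $M$, using that a two-sided union of two M\"obius strips along a common boundary circle would produce a Klein bottle component disjoint from the remainder of $M$, contradicting connectedness.
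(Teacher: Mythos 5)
Your overall architecture (split the faces meeting $C$ into two side-strips, decide whether each is an annulus or a M\"obius strip, then assemble along $C$) matches the paper's, but the step where you decide the topology of a side-strip is where the real content lies, and your treatment of it does not work as stated. You set up each side as $2m$ \emph{distinct} triangles with a top walk $V$ on \emph{new} vertices $v_0,\dots,v_{m-1}$ and declare the strip a M\"obius band when ``$V$ closes back with a flip.'' But a strip whose bottom boundary is the already-closed cycle $C$ and whose top is a disjoint walk $V$ can only close up to an annulus; a M\"obius strip has a single boundary circle, so the M\"obius case can only occur when the triangles on that side \emph{repeat}: the strip wraps around $C$ twice, each triangle occurs exactly twice in the incidence sequence, there are only $m$ (not $2m$) distinct triangles, their third vertices lie back on $C$, and the resulting carrier is a M\"obius strip whose boundary is exactly $C$. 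Showing that a repetition forces precisely this configuration --- each face appears exactly twice and no more (the paper's edge-counting argument on a repeated triangle $\triangle_s$), and that this can only happen when $C$ has odd length, the even case contradicting type $A$ --- is the combinatorial core of the paper's proof, and it is exactly the part you defer to ``global bookkeeping'' and ``a parity and orientation check.'' Without it the dichotomy annulus/M\"obius is not established.

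Two further gaps. First, you implicitly assume the two side-strips meet only along $C$; the paper must and does rule out $S_1\cap S_2\neq\emptyset$ (a face incident with $C$ from both sides), which it reduces to the non-existence of the degenerate representation $K(r,1,k)$ (Lemma~\ref{lem36:6}). Second, your exclusion of the ``both sides M\"obius'' configuration by connectedness does not succeed: two M\"obius strips glued along their common boundary $C$ form a closed Klein bottle, which could simply \emph{be} all of $M$ rather than a proper component, so no disconnection arises; moreover that configuration has $\chi=0$ and is $6$-regular, so an Euler-characteristic count does not exclude it either. It has to be killed by the link condition (again Lemma~\ref{lem36:6}: with every vertex lying on $C$ the link of a vertex fails to be a cycle), which is how the paper disposes of it.
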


\begin{proof} Let $SQ$ be a sequence of faces which are incident with the cycle $C$. Let $u_1 \in V(C)$ and $lk(u_1) = C(v_r, v_1, u_{2}, w_2, w_1, u_{r})$. We cut $|SQ|$ along a path $P(v_1, u_1, w_1)$ of type $A$. Then we have the following cases. 

\textbf{Case 1 :} We assume that each face in $SQ$ appears exactly once. Here, we have the following sub-cases. If the path $P(v_1, u_1, w_1)$ identifies with $P(v_1, u_1, w_1)$ with out any twist then $|SQ|$ is cylinder (see an example in Figure 2). In this acse, we denote it by $S(C, M)$. If $P(v_1, u_1, w_1)$ identifies with $P(v_1, u_1, w_1)$ with a twist then the cycle $C$ is not of type $A$ at the vertex $u_1$. (For an example in Figure 3, $C(w_1, \dots, w_r)$ is a cycle and not of type $A$ at $w_1$.) Therefore, $|S| = |SQ| = S(C, M)$ is a cylinder if the faces are distinct in $SQ$. 

\textbf{Case 2 :} We assume that not all faces are distinct in $SQ$. Let $SQ'$ denote an another sequence of faces which are incident with $C$ and lie on one side of $C$. Here, the length of $C$ is either odd or even.                

Let $C = C(w_1, w_2, \dots, w_{2r+1})$ and $SQ' = \{\triangle_1, \triangle_2, \dots, \triangle_{4r+2}\}$ denote the sequence of faces in order. If $\triangle_1, \triangle_2, \dots, \triangle_{4r+2}$ are all distinct, i.e., $\triangle_i \neq \triangle_j$ for all $i \neq j$ then $|SQ'|$ forms a cylinder with disjoint boundary cycles. If $\triangle_i = \triangle_j$ in $SQ'$ for some $i < j$ then $SQ'' :=\{\triangle_1, \triangle_2, \dots, \triangle_{t}\} \subset SQ'$ denotes a subsequence where $\{\triangle_1, \triangle_2, \dots, \triangle_{t}\} = \{\triangle_1, \triangle_2, \dots, \triangle_{4r+2}\}$ as a set, and $\triangle_1$ and $\triangle_{t}$ appear once in $\{\triangle_1, \triangle_2, \dots, \triangle_t\}$.       

\smallskip

$Claim :$ The triangles $\triangle_1, \triangle_2, \dots, \triangle_{t}$ appear once in $SQ''$.

\smallskip

Let $\{\triangle_l, \dots, \triangle_s\}$ denote a largest sub-sequence in $SQ''$ where each $\triangle_l, \dots, \triangle_s$ appears more than once in $SQ''$.
This implies that the faces $\triangle_{l-1}$ and $\triangle_{s+1}$ are distinct and appear once in $SQ''$. Let $\triangle_s = abc$ and $\triangle_s \cap C = ab$. Then, $\triangle_s \cap \triangle_{s+1} = bc$. Since $\triangle_s$ repeats in $SQ''$, it implies that there are $\triangle_{m-1}$ and $\triangle_{m+1}$ in $SQ''$ such that $\triangle_m = \triangle_s$ ($m<s$). That is, $\triangle_s$ lies between $\triangle_{m-1}$ and $\triangle_{m+1}$ in $SQ''$. Hence, $\triangle_s \cap \triangle_{m-1}$ and $\triangle_s \cap \triangle_{m+1}$ are each an edge. Since $\triangle_{s+1}$ appears first time in $SQ''$ after $\triangle_s$, it follows that either $\triangle_{s-1} \neq \triangle_{m-1}$ or $\triangle_{s-1} \neq \triangle_{m+1}$. If $\triangle_{s-1} \neq \triangle_{m+1}$ then $\triangle_{s+1} \neq \triangle_{s-1} \neq \triangle_{m+1}$. Now, by definition of type $A$, one edge, say $e$ of $\triangle_s$ must belongs to $C$ or $C'$. Hence, the triangle $\triangle_s$ must have at least four distinct edges $\triangle_{s+1} \cap \triangle_s, \triangle_{s-1}\cap \triangle_s, \triangle_{m+1}\cap \triangle_s, e$. This is a contradiction as triangle consists three edges. Similarly, let $\triangle_s \cap C = a$. Then by the above similar argument, $\triangle_s$ does not have sufficient number of free edges for identification later in $SQ''$. Thus, the triangles $\triangle_1, \triangle_2, \dots, \triangle_{t}$ appear once in $SQ''$. This proves the Claim.

Let $\triangle_i = ABC \in SQ'$. Now $\triangle_i \cap \triangle_{i-1}$ and $\triangle_i \cap \triangle_{i+1}$ are each an edge. Let $\triangle_i \cap \triangle_{i-1} = AB$ and $\triangle_i \cap \triangle_{i+1} = AC$. Hence, $\triangle_i \cap C$ is either an edge $BC$ or a vertex. Let $\triangle_i \cap C = BC$. Then, $\triangle_i \cap C'$ is a vertex. Hence, the common edge $BC$ and the common vertex can not repeat twice in both $C, C'$ since $C, C'$ are cycles. But $\triangle_i$ repeats twice in $SQ'$ only if $C = C'$. Since $\triangle_i$ is repeating, it implies that it is repeating twice. Hence, each triangle appears twice in the sequence $SQ'$. Thus, the number of distinct triangle in $SQ'$ is $2r+1$. That is, $t = 2r+1$. So, the sequence $\triangle_1, \triangle_2, \dots, \triangle_{2r+1}$ appears twice in the sequence $\triangle_1, \triangle_2, \dots, \triangle_{4r+2}$, i.e., $\{\triangle_1, \triangle_2, \dots, \triangle_{4r+2}\} = \{\triangle_1, \triangle_2, \dots, \triangle_{2r+1}, \triangle_1, \triangle_2, \dots, \triangle_{2r+1}\}$. In this case, we call that $|SQ'|$ is a M\"obius strip. (See an example in Figure 4.)     

When length$(C) = 2r$ then the number of triangles incident with $SQ'$ is even. In this case, we consider link of $w_{r+1}$ which is end vertex of the path $P(w_1, \dots, w_{r+1}) \subset C(w_, w_2, \dots, w_{2r})$. Observe that the number of triangles which are incident with $C$ at $w_{r+1}$ is two in $SQ'$. This contradicts with the type $A$.   

Let $S_1$ denote a set of incident triangles which are lie on one side and $S_2$ denote a set of triangles which are incident with $C$ and lie on the other side of $C$. If $S_1 \cap S_2 \ne \emptyset$ then, let $\triangle \in S_1 \cap S_2$. Let $S_i = \{\triangle_{i, 1}, \triangle_{i, 2}, \dots, \triangle_{i, m}\}$ and $\triangle = \triangle_{1, j} = \triangle_{2, t}$ for some $t, j$. Then, by the above similar argument, $\{\triangle_{1, j-1}, \triangle_{1, j+1}\} = \{\triangle_{2, t-1}, \triangle_{2, t+1}\}$. Again, we consider $\triangle_{1, j-1}$ and repeat as above and continue. Hence, we get $S_1 = S_2$ after finite number of steps and the geometric carrier $|S_1|$ is bounded by identical cycle $C$. In this case, it defines a $K(r, 1, k)$ representation (this polyhedral representation is defined later in this section) of $M$. (For an example in Figure 1, $K(7, 4, 1)$ is a representation of $M$ where $s = 4$.)  But, by Lemma \ref{lem36:6}, $K(r, 1, k)$ does not exist. Therefore, $S_1 \cap S_2 = \emptyset$ and $S = S_1 \cup S_2$ where one of $|S_1|, |S_2|$ is a cylinder and another is a M\"obius strip. Observe that both $|S_1|, |S_2|$ can not be M\"obius strips as $S_1 \cap S_2 = \emptyset$. We denote M\"obius strip by $\mathcal{M}_\triangle(C, M)$ which is bounded by $C$ and $|S|$ by $S\mathcal{M}_\triangle(C, M)$. (For examples, $\mathcal{M}_\triangle(C, M)$ and $S\mathcal{M}_\triangle(C, M)$ are in Figure 5.)
\end{proof}

\begin{remark}{\rm Note that we use these above notions of $S(C, M), \mathcal{M}_{F_1,\dots,F_t}(C, M)$ and $S\mathcal{M}_{F_1,\dots,F_t}(C, M)$ in the subsequent sections. Here, the $S(C, M)$ used to denote a cylinder which is geometric career of the faces that are incident with $C$. The $\mathcal{M}_{F_1,\dots,F_t}(C, M)$ used to denote a M\"obius strip which is bounded by $C$ and consists of faces of types $F_1,\dots,F_t$. The $S\mathcal{M}_{F_1,\dots,F_t}(C, M) := |S \cup \mathcal{M}|$ used to denote union of a cylinder $|S|$ and a M\"obius strip $|\mathcal{M}|$ such that $|S| \cap |\mathcal{M}| = C$ and consists of faces of types $F_1,\dots,F_t$. In this section, $t=1$ and $F_1 = \triangle$.}
\end{remark}

\vspace{.1cm}

\begin{picture}(0,0)(5,35)
\tiny
\setlength{\unitlength}{1.6mm}
\drawpolygon(5,5)(40,5)(40,25)(5,25)


\drawline[AHnb=0](10,5)(10,25)
\drawline[AHnb=0](15,5)(15,25)
\drawline[AHnb=0](20,5)(20,25)
\drawline[AHnb=0](25,5)(25,25)
\drawline[AHnb=0](30,5)(30,25)
\drawline[AHnb=0](35,5)(35,25)

\drawline[AHnb=0](5,10)(40,10)
\drawline[AHnb=0](5,15)(40,15)
\drawline[AHnb=0](5,20)(40,20)

\drawline[AHnb=0](5,20)(10,25)
\drawline[AHnb=0](5,15)(15,25)
\drawline[AHnb=0](5,10)(20,25)
\drawline[AHnb=0](5,5)(25,25)
\drawline[AHnb=0](10,5)(30,25)
\drawline[AHnb=0](15,5)(35,25)
\drawline[AHnb=0](20,5)(40,25)
\drawline[AHnb=0](25,5)(40,20)
\drawline[AHnb=0](30,5)(40,15)
\drawline[AHnb=0](35,5)(40,10)

\put(5,4){\tiny ${v_{1}}$}
\put(10,4){\tiny ${v_{2}}$}
\put(15,4){\tiny ${v_{3}}$}
\put(20,4){\tiny ${v_{4}}$}
\put(25,4){\tiny ${v_{5}}$}
\put(30,4){\tiny ${v_{6}}$}
\put(35,4){\tiny ${v_{7}}$}
\put(40,4){\tiny ${v_{1}}$}

\put(5.5,9){\tiny ${w_{1}}$}
\put(10.5,9){\tiny ${w_{2}}$}
\put(15.5,9){\tiny ${w_{3}}$}
\put(20.5,9){\tiny ${w_{4}}$}
\put(25.5,9){\tiny ${w_{5}}$}
\put(30.5,9){\tiny ${w_{6}}$}
\put(35.5,9){\tiny ${w_{7}}$}
\put(40.5,9){\tiny ${w_{1}}$}

\put(5.5,14){\tiny ${x_{1}}$}
\put(10.5,14){\tiny ${x_{2}}$}
\put(15.5,14){\tiny ${x_{3}}$}
\put(20.5,14){\tiny ${x_{4}}$}
\put(25.5,14){\tiny ${x_{5}}$}
\put(30.5,14){\tiny ${x_{6}}$}
\put(35.5,14){\tiny ${x_{7}}$}
\put(40.5,14){\tiny ${x_{1}}$}

\put(5.5,19){\tiny ${u_{1}}$}
\put(10.5,19){\tiny ${u_{2}}$}
\put(15.5,19){\tiny ${u_{3}}$}
\put(20.5,19){\tiny ${u_{4}}$}
\put(25.5,19){\tiny ${u_{5}}$}
\put(30.5,19){\tiny ${u_{6}}$}
\put(35.5,19){\tiny ${u_{7}}$}
\put(40.5,19){\tiny ${u_{1}}$}

\put(5.5,24){\tiny ${v_{2}}$}
\put(10.5,24){\tiny ${v_{1}}$}
\put(15.5,24){\tiny ${v_{7}}$}
\put(20.5,24){\tiny ${v_{6}}$}
\put(25.5,24){\tiny ${v_{5}}$}
\put(30.5,24){\tiny ${v_{4}}$}
\put(35.5,24){\tiny ${v_{3}}$}
\put(40.5,24){\tiny ${v_{2}}$}

\put(19,1){\scriptsize Figure 1 : $M$}

\end{picture}

\begin{picture}(0,0)(-65,30)
\tiny
\setlength{\unitlength}{1.6mm}
\drawpolygon(5,10)(40,10)(40,20)(5,20)
\drawpolygon(5,5)(20,5)(25,10)(5,10)
\drawpolygon(5,20)(25,20)(25,25)(10,25)


\drawline[AHnb=0](10,5)(10,25)
\drawline[AHnb=0](15,5)(15,25)
\drawline[AHnb=0](20,5)(20,25)
\drawline[AHnb=0](25,10)(25,20)
\drawline[AHnb=0](30,10)(30,20)
\drawline[AHnb=0](35,10)(35,20)

\drawline[AHnb=0](5,10)(40,10)
\drawline[AHnb=0](5,15)(40,15)
\drawline[AHnb=0](5,20)(40,20)

\drawline[AHnb=0](5,20)(10,25)
\drawline[AHnb=0](5,15)(15,25)
\drawline[AHnb=0](5,10)(20,25)
\drawline[AHnb=0](5,5)(25,25)
\drawline[AHnb=0](10,5)(25,20)
\drawline[AHnb=0](15,5)(30,20)
\drawline[AHnb=0](20,5)(35,20)
\drawline[AHnb=0](30,10)(40,20)
\drawline[AHnb=0](35,10)(40,15)
\drawline[AHnb=0](40,10)(40,10)

\put(5,4){\tiny ${w_{5}}$}
\put(10,4){\tiny ${w_{6}}$}
\put(15,4){\tiny ${w_{7}}$}
\put(20,4){\tiny ${w_{1}}$}

\put(5.5,9){\tiny ${w_{1}}$}
\put(10.5,9){\tiny ${w_{2}}$}
\put(15.5,9){\tiny ${w_{3}}$}
\put(20.5,9){\tiny ${w_{4}}$}
\put(25.5,9){\tiny ${w_{5}}$}
\put(30.5,9){\tiny ${w_{6}}$}
\put(35.5,9){\tiny ${w_{7}}$}
\put(40.5,9){\tiny ${w_{1}}$}

\put(5.5,14){\tiny ${x_{1}}$}
\put(10.5,14){\tiny ${x_{2}}$}
\put(15.5,14){\tiny ${x_{3}}$}
\put(20.5,14){\tiny ${x_{4}}$}
\put(25.5,14){\tiny ${x_{5}}$}
\put(30.5,14){\tiny ${x_{6}}$}
\put(35.5,14){\tiny ${x_{7}}$}
\put(40.5,14){\tiny ${x_{1}}$}

\put(5.5,19){\tiny ${u_{1}}$}
\put(10.5,19){\tiny ${u_{2}}$}
\put(15.5,19){\tiny ${u_{3}}$}
\put(20.5,19){\tiny ${u_{4}}$}
\put(25.5,19){\tiny ${u_{5}}$}
\put(30.5,19){\tiny ${u_{6}}$}
\put(35.5,19){\tiny ${u_{7}}$}
\put(40.5,19){\tiny ${u_{1}}$}

\put(10.5,24){\tiny ${u_{5}}$}
\put(15.5,24){\tiny ${u_{6}}$}
\put(20.5,24){\tiny ${u_{7}}$}
\put(25.5,24){\tiny ${u_{1}}$}

\put(13,1){\scriptsize Figure 6 : $K(S'(M), \mathcal{M}', \mathcal{M}'')$}

\end{picture}

\vspace{5cm}

\begin{picture}(0,0)(-80,2)
\tiny
\setlength{\unitlength}{1.6mm}
\drawpolygon(5,5)(20,5)(20,15)(5,15)
\drawpolygon(22,5)(37,5)(37,15)(22,15)


\drawline[AHnb=0](10,5)(10,15)
\drawline[AHnb=0](15,5)(15,15)
\drawline[AHnb=0](27,5)(27,15)
\drawline[AHnb=0](32,5)(32,15)
\drawline[AHnb=0](5,10)(20,10)
\drawline[AHnb=0](22,10)(37,10)

\drawline[AHnb=0](5,10)(10,15)
\drawline[AHnb=0](5,5)(15,15)
\drawline[AHnb=0](10,5)(20,15)
\drawline[AHnb=0](15,5)(20,10)
\drawline[AHnb=0](22,10)(27,15)
\drawline[AHnb=0](22,5)(32,15)
\drawline[AHnb=0](27,5)(37,15)
\drawline[AHnb=0](32,5)(37,10)

\put(20,10){\dots}
\put(20,5){\dots}
\put(20,15){\dots}

\put(5,4){\tiny ${v_{1}}$}
\put(10,4){\tiny ${v_{2}}$}
\put(15,4){\tiny ${v_{3}}$}
\put(20,4){\tiny ${v_{4}}$}
\put(22,4){\tiny ${v_{r-2}}$}
\put(27,4){\tiny ${v_{r-1}}$}
\put(32,4){\tiny ${v_{r}}$}
\put(37.2,4){\tiny ${v_{1}}$}

\put(5.5,9){\tiny ${w_{1}}$}
\put(10.5,9){\tiny ${w_{2}}$}
\put(15.5,9){\tiny ${w_{3}}$}
\put(20,9){\tiny ${w_{4}}$}
\put(22,9){\tiny ${w_{r-2}}$}
\put(27,9){\tiny ${w_{r-1}}$}
\put(32,9){\tiny ${w_{r}}$}
\put(37.2,9){\tiny ${w_{1}}$}

\put(5.5,14){\tiny ${x_{1}}$}
\put(10.5,14){\tiny ${x_{2}}$}
\put(15.5,14){\tiny ${x_{3}}$}
\put(20,14){\tiny ${x_{4}}$}
\put(22,14){\tiny ${x_{r-2}}$}
\put(27,14){\tiny ${x_{r-1}}$}
\put(32,14){\tiny ${x_{r}}$}
\put(37.2,14){\tiny ${x_{1}}$}

\put(12,1){\scriptsize Figure 2 : $S(C, M)$ : Cylinder}

\end{picture}

\begin{picture}(0,0)(15,1)
\tiny
\setlength{\unitlength}{1.6mm}
\drawpolygon(11,5)(26,5)(26,15)(11,15)
\drawpolygon(38,5)(43,5)(43,10)(38,10)
\drawpolygon(45,5)(60,5)(60,10)(45,10)
\drawpolygon(38,10)(43,15)(38,15)


\drawpolygon(28,10)(43,10)(43,15)(28,15)
\drawpolygon(28,5)(38,5)(38,10)(28,10)
\drawpolygon(45,10)(60,10)(60,15)(45,15)

\drawline[AHnb=0](16,5)(16,15)
\drawline[AHnb=0](21,5)(21,15)
\drawline[AHnb=0](33,5)(33,15)
\drawline[AHnb=0](38,5)(38,15)
\drawline[AHnb=0](11,10)(26,10)
\drawline[AHnb=0](28,10)(43,10)
\drawline[AHnb=0](55,5)(55,15)
\drawline[AHnb=0](50,5)(50,15)
\drawline[AHnb=0](45,10)(60,10)

\put(26,10){\dots}
\put(26,5){\dots}
\put(26,15){\dots}

\drawline[AHnb=0](11,10)(16,15)
\drawline[AHnb=0](11,5)(21,15)
\drawline[AHnb=0](16,5)(26,15)
\drawline[AHnb=0](21,5)(26,10)
\drawline[AHnb=0](28,10)(33,15)
\drawline[AHnb=0](28,5)(38,15)
\drawline[AHnb=0](33,5)(43,15)
\drawline[AHnb=0](45,5)(50,10)
\drawline[AHnb=0](50,10)(55,15)
\drawline[AHnb=0](50,5)(60,15)
\drawline[AHnb=0](55,5)(60,10)
\drawline[AHnb=0](38,5)(43,10)
\drawline[AHnb=0](45,10)(50,15)

\put(43,10){\ldots}
\put(43,5){\ldots}
\put(43,15){\ldots}

\put(11,4){\tiny ${v_{1}}$}
\put(16,4){\tiny ${v_{2}}$}
\put(21,4){\tiny ${v_{3}}$}
\put(26,4){\tiny ${v_{4}}$}
\put(28,4){\tiny ${v_{r-1}}$}
\put(33,4){\tiny ${v_{r}}$}
\put(38,4){\tiny ${v_{r+1}}$}
\put(41,4){\tiny ${v_{r+2}}$}
\put(45,4){\tiny ${v_{2r-1}}$}
\put(50,4){\tiny ${v_{2r}}$}
\put(55,4){\tiny ${v_{2r+1}}$}
\put(60,4){\tiny ${v_{1}}$}

\put(11.5,9){\tiny ${w_{1}}$}
\put(16.5,9){\tiny ${w_{2}}$}
\put(21.5,9){\tiny ${w_{3}}$}
\put(26,9){\tiny ${w_{4}}$}
\put(28,9){\tiny ${w_{r-1}}$}
\put(33.5,9){\tiny ${w_{r}}$}
\put(38,9){\tiny ${w_{r+1}}$}
\put(39.5,10.5){\tiny ${w_{r+2}}$}
\put(45,9){\tiny ${w_{2r-1}}$}
\put(50.5,9){\tiny ${w_{2r}}$}
\put(55,9){\tiny ${w_{2r+1}}$}
\put(58,10.5){\tiny ${w_{1}}$}

\put(10,16){\tiny ${w_{r+1}}$}
\put(15,16){\tiny ${w_{r+2}}$}
\put(20,16){\tiny ${w_{r+3}}$}
\put(24,16){\tiny ${w_{r+4}}$}
\put(27.8,16){\tiny ${w_{2r-1}}$}
\put(32,16){\tiny ${w_{2r}}$}
\put(37,16){\tiny ${w_{2r+1}}$}
\put(42,16){\tiny ${w_{1}}$}

\put(44,16){\tiny ${w_{r-2}}$}
\put(49,16){\tiny ${w_{r-1}}$}
\put(54,16){\tiny ${w_{r}}$}
\put(57,16){\tiny ${w_{r+1}}$}

\put(33,1){\scriptsize Figure 4}

\end{picture}

\begin{picture}(0,0)(-80,20)
\tiny
\setlength{\unitlength}{1.6mm}
\drawpolygon(5,5)(20,5)(20,15)(5,15)
\drawpolygon(22,5)(37,5)(37,15)(22,15)


\drawline[AHnb=0](10,5)(10,15)
\drawline[AHnb=0](15,5)(15,15)
\drawline[AHnb=0](27,5)(27,15)
\drawline[AHnb=0](32,5)(32,15)
\drawline[AHnb=0](5,10)(20,10)
\drawline[AHnb=0](22,10)(37,10)

\drawline[AHnb=0](5,10)(10,15)
\drawline[AHnb=0](5,5)(15,15)
\drawline[AHnb=0](10,5)(20,15)
\drawline[AHnb=0](15,5)(20,10)
\drawline[AHnb=0](22,10)(27,15)
\drawline[AHnb=0](22,5)(32,15)
\drawline[AHnb=0](27,5)(37,15)
\drawline[AHnb=0](32,5)(37,10)

\put(20,10){\dots}
\put(20,5){\dots}
\put(20,15){\dots}

\put(5,4){\tiny ${v_{1}}$}
\put(10,4){\tiny ${v_{2}}$}
\put(15,4){\tiny ${v_{3}}$}
\put(20,4){\tiny ${v_{4}}$}
\put(22,4){\tiny ${v_{r-2}}$}
\put(27,4){\tiny ${v_{r-1}}$}
\put(32,4){\tiny ${v_{r}}$}
\put(37.2,4){\tiny ${x_{1}}$}

\put(5.5,9){\tiny ${w_{1}}$}
\put(10.5,9){\tiny ${w_{2}}$}
\put(15.5,9){\tiny ${w_{3}}$}
\put(20,9){\tiny ${w_{4}}$}
\put(22,9){\tiny ${w_{r-2}}$}
\put(27,9){\tiny ${w_{r-1}}$}
\put(32,9){\tiny ${w_{r}}$}
\put(37.2,9){\tiny ${w_{1}}$}

\put(5.5,14){\tiny ${x_{1}}$}
\put(10.5,14){\tiny ${x_{2}}$}
\put(15.5,14){\tiny ${x_{3}}$}
\put(20,14){\tiny ${x_{4}}$}
\put(22,14){\tiny ${x_{r-2}}$}
\put(27,14){\tiny ${x_{r-1}}$}
\put(32,14){\tiny ${x_{r}}$}
\put(37.2,14){\tiny ${v_{1}}$}

\put(19,0){\scriptsize Figure 3 }

\end{picture}

\begin{picture}(0,0)(15,20)
\tiny
\setlength{\unitlength}{1.6mm}
\drawpolygon(11,5)(26,5)(26,15)(11,15)
\drawpolygon(38,5)(43,5)(43,10)(38,10)
\drawpolygon(45,5)(60,5)(60,10)(45,10)


\drawpolygon(11,5)(26,5)(26,15)(11,15)
\drawpolygon(28,10)(38,10)(43,15)(28,15)
\drawpolygon(28,5)(38,5)(38,10)(28,10)

\drawline[AHnb=0](16,5)(16,15)
\drawline[AHnb=0](21,5)(21,15)
\drawline[AHnb=0](33,5)(33,15)
\drawline[AHnb=0](38,5)(38,15)
\drawline[AHnb=0](11,10)(26,10)
\drawline[AHnb=0](28,10)(43,10)
\drawline[AHnb=0](55,5)(55,10)
\drawline[AHnb=0](50,5)(50,10)
\drawline[AHnb=0](45,10)(60,10)

\put(26,10){\dots}
\put(26,5){\dots}
\put(26,15){\dots}

\drawline[AHnb=0](11,10)(16,15)
\drawline[AHnb=0](11,5)(21,15)
\drawline[AHnb=0](16,5)(26,15)
\drawline[AHnb=0](21,5)(26,10)
\drawline[AHnb=0](28,10)(33,15)
\drawline[AHnb=0](28,5)(38,15)
\drawline[AHnb=0](33,5)(43,15)
\drawline[AHnb=0](45,5)(50,10)
\drawline[AHnb=0](50,5)(55,10)
\drawline[AHnb=0](55,5)(60,10)

\drawline[AHnb=0](38,5)(43,10)

\put(43,10){\dots}
\put(43,5){\dots}

\put(11,4){\tiny ${v_{1}}$}
\put(16,4){\tiny ${v_{2}}$}
\put(21,4){\tiny ${v_{3}}$}
\put(26,4){\tiny ${v_{4}}$}
\put(28,4){\tiny ${v_{r-1}}$}
\put(33,4){\tiny ${v_{r}}$}
\put(38,4){\tiny ${v_{r+1}}$}
\put(41,4){\tiny ${v_{r+2}}$}
\put(45,4){\tiny ${v_{2r-1}}$}
\put(50,4){\tiny ${v_{2r}}$}
\put(55,4){\tiny ${v_{2r+1}}$}
\put(60,4){\tiny ${v_{1}}$}

\put(11.5,9){\tiny ${w_{1}}$}
\put(16.5,9){\tiny ${w_{2}}$}
\put(21.5,9){\tiny ${w_{3}}$}
\put(26,9){\tiny ${w_{4}}$}
\put(28.5,9){\tiny ${w_{r-1}}$}
\put(33.5,9){\tiny ${w_{r}}$}
\put(38.5,9){\tiny ${w_{r+1}}$}
\put(40,11){\tiny ${w_{r+2}}$}
\put(44,11){\tiny ${w_{2r-1}}$}
\put(49,11){\tiny ${w_{2r}}$}
\put(54,11){\tiny ${w_{2r+1}}$}
\put(59,11){\tiny ${w_{1}}$}

\put(11,16){\tiny ${w_{r+1}}$}
\put(16,16){\tiny ${w_{r+2}}$}
\put(20.5,16){\tiny ${w_{r+3}}$}
\put(24,16){\tiny ${w_{r+4}}$}
\put(28,16){\tiny ${w_{2r-1}}$}
\put(33,16){\tiny ${w_{2r}}$}
\put(36,16){\tiny ${w_{2r+1}}$}
\put(43,16){\tiny ${w_{1}}$}

\put(28,0){\scriptsize Figure 5 : $S\mathcal{M}_\triangle(C, M)$}

\end{picture}
\vspace{2.3cm}

Let $C$ be a cycle of type $A$ in $M$. The $S(C, M)$ is a cylinder and let $\partial S(C, M) = \{C_{1}, C_{2}\}$. Observe as above that cycle $C_1, C_2$ are of type $A$ and length($C$) = length($C$) = length($C_{2}$). 
Let $C_{1}, C_{2},$ \dots, $C_{m}$ be a sequence of cycles which are homologous to $C$ in $M$. For some $i, j$, there is a cylinder in $M$ which is bounded by $C_i$ and $C_j$. So, $C_i$ and $C_j$ are homologous and have same length (see its similar argument in $Section~2$ \cite{mu:torus-hc13}). That is, we get a cylinder $S_{l, t}$ for some cycles $C_l$ and $C_t$ such that $\partial S_{l, t} = \{C_{t}, C_{l}\}$. So, length($C_{t}$) = length($C_{l}$). Thus, length($C_{i}$) = length($C_{j}$) $\forall~ i, j\in \{1, 2, \dots, m\}$.

By Lemma \ref{lem36:2}, we assume that $S(C, M)$ contains in $M$. Here, $\partial S(C, M) = \{C', C''\}$ where $C', C''$ are two cycles of type $A$. Let $IF(L)$ denote a set of faces which are incident with $L$ and $F(S(L, M))$ denote a set of faces of $S(L, M)$. At $1^{st}$ step, let $S = F(S(C, M)) \cup IF(C') \cup IF(C'')$ and $S_1 = S$. Next step, we consider $S_1$ in place of $S$ and continue with as above. In this process, let $S_i$ denote a set at $i^{th}$ step and $\partial S_i = \{L_{i, 1}, L_{i, 2}\}$. 
Here,  $L_{i, 1} \neq L_{i, 2}$, $L_{i, 1} = L_{i, 2}$, $L_{i, 1} = \partial \mathcal{M}'$, $L_{i, 1} = \partial \mathcal{M}''$ or `$L_{i, 1} = \partial \mathcal{M}'$ and $L_{i, 2} = \partial \mathcal{M}''$' for some M\"{o}bius strips $\mathcal{M}', \mathcal{M}''$. This process stops if $L_{j, 1} = L_{j, 2}$ or `$L_{j, 1} = \partial \mathcal{M}'$ and $L_{j, 2} = \partial \mathcal{M}''$'. Let the above process stop at $k^{th}$ step. Then, the $|S_k|$ is bounded by identical cycles or two M\"{o}bius strips. Hence, we get a cylindrical representation $S(M)$ of $M$ which is bounded by two identical cycles, i.e., $L_{k, 1} = L_{k, 2}$ or a $S\mathcal{M}(M)$ which is bounded by two M\"{o}bius strips, i.e., $L_{k, 1} = \partial \mathcal{M}'$ and $L_{k, 2} = \partial \mathcal{M}''$. Again, by Lemma \ref{lem36:2}, we assume that $S\mathcal{M}_{\triangle}(C, M)$ contains in $M$. Here $\partial \mathcal{M}_{\triangle}(C, M)$ is a cycle. We repeat as above and consider $S\mathcal{M}_{\triangle}(C, M)$ in place of $S(C, M)$ in the above process. In this process, we get one boundary at each step. Thus, we get a $S\mathcal{M}(M)$ which is bounded by two M\"{o}bius strips. Therefore, by combining above two cases, we have

\begin{Lemma} \label{lem36:3} The map $M$ has a $S(M)$ or a $S\mathcal{M}(M)$ representation.
\end{Lemma}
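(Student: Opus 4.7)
The plan is to start with the local structure around a cycle of type $A$ guaranteed by Lemmas \ref{lem36:1} and \ref{lem36:2}, and then iteratively grow it by attaching the faces incident to each current boundary cycle, terminating when the two boundaries either coincide or are each capped off by a M\"obius strip. Concretely, I would fix a cycle $C$ of type $A$ in $M$ (Lemma \ref{lem36:1}) and set $S_0 := S(C,M)$ in the cylinder case of Lemma \ref{lem36:2}, or $S_0 := S\mathcal{M}_\triangle(C,M)$ in the mixed case.

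In the cylinder case, $\partial S_0 = \{C', C''\}$ consists of two cycles of type $A$ homologous to $C$; by the homology/length argument given in the paragraph preceding the lemma, all such cycles share the length of $C$. Define inductively $S_{i+1} := S_i \cup IF(L_{i,1}) \cup IF(L_{i,2})$, where $\partial S_i = \{L_{i,1}, L_{i,2}\}$, and apply Lemma \ref{lem36:2} to each component of $\partial S_i$ to control how the next collar is glued on. Since $F(M)$ is finite and each non-terminal step strictly enlarges $S_i$, the iteration must stop at some step $k$. The enumeration of boundary possibilities stated just before the lemma then forces exactly one of two terminal configurations: either $L_{k,1}=L_{k,2}$, in which case $|S_k|$ is a cylinder bounded by a single repeated cycle and defines the desired $S(M)$; or $L_{k,1} = \partial\mathcal{M}'$ and $L_{k,2} = \partial\mathcal{M}''$ for two disjoint M\"obius strips $\mathcal{M}', \mathcal{M}''$ inside $M$, yielding an $S\mathcal{M}(M)$.

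In the mixed case, the M\"obius strip side of $S\mathcal{M}_\triangle(C,M)$ is already closed, so $\partial S_0$ consists of a single cycle of type $A$. I would run the same iterative extension but only on that free boundary. Lemma \ref{lem36:2} applied at each step either produces a further cylindrical collar or a terminal M\"obius cap; by finiteness the process must eventually reach the latter, and the final surface is again bounded by two M\"obius strips, i.e.\ an $S\mathcal{M}(M)$.

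The main obstacle will be to justify that the iteration actually exhausts $M$ and terminates in exactly the two listed configurations, rather than stabilizing at a proper subsurface or in a spurious boundary state. For this one uses two facts together: first, that as long as the current boundary has a genuine free side ($L_{i,1} \neq L_{i,2}$ and neither is a M\"obius boundary), Lemma \ref{lem36:2} applied to each boundary cycle produces at least one new face, so $|F(S_i)|$ strictly increases; second, that since $M$ is a closed surface (the Klein bottle) every terminal $|S_k|$ with no remaining free side must close up either by identifying its two boundary cycles or by attaching M\"obius caps compatible with the type-$A$ structure at every stage. These are exactly the $S(M)$ and $S\mathcal{M}(M)$ outcomes, completing the proof.
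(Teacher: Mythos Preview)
Your proposal is correct and follows essentially the same approach as the paper: start from the local structure $S(C,M)$ or $S\mathcal{M}_\triangle(C,M)$ furnished by Lemma~\ref{lem36:2}, iteratively adjoin the faces incident to the current boundary cycles, and use finiteness to force termination in either the $S(M)$ or $S\mathcal{M}(M)$ configuration. Your treatment is in fact somewhat more careful than the paper's, which simply asserts the terminal dichotomy without the explicit justification you supply via finiteness of $F(M)$ and closedness of the Klein bottle.
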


We first assume that $S(M)$ denotes $M$. So, $\partial S(M) = \{C, C'\}$ and $C = C'$. 

\smallskip

$Claim :$ The $S(M)$ has a $(r, s, k)$-representation ($K(r, s, k)$ representation) for some $r, s, k \in \mathbb{N}$.

\smallskip

We recall similar definition of $(r, s, k)$-representation as in $Section~2$ \cite{mu:torus-hc13}. Let $v \in V(C)$. By definition of $A$, we have three paths of type $A$ through $v$. Let $L_1, L_2, L_3$ denote three paths of type $A$ through $v$ in $S(M)$. Let $L_1 = P(a_1, a_2, \dots, a_r)$ and $C = L_1$. We make a cut starting from $v$ along $P = P(a_1 = w_1, \dots, w_s = a_{k+1}) \subset L_3$ until reaching $C'$ again for the first time. Let $s$ denote the number of cycles which are homologous to $C$ along $P$. Observer that the number $s$ is equal to the length of the path $P$, that is, $s = m$. So, length($C$) = $r$ and $s$ is the number of horizontal cycles. Thus, we get a planar polyhedral representation of $M$ and denote it by {\em $(r, s)$-representation}. We say that cycle $C$ is the {\em lower (base) horizontal cycle} \cite{mu:torus-hc13} and the other cycle $C'$ is {\em upper horizontal cycle} \cite{mu:torus-hc13} in $(r, s)$-representation. The $(r, s)$-representation has identification of vertical sides (non homologous to the horizontal cycles) in the natural manner but the identification of the horizontal sides needs some shifting with twist so that a vertex in the lower(base) side is identified with a vertex in the upper side. Let $C' = C(a_{k+1}(=w_m), a_{k}, a_{k-1}, \dots, a_k)$. Then, the vertex $a_{k+1}$ is the starting vertex of the upper horizontal cycle $C'$ in $(r, s)$-representation. The $k =$ length$(P(a_1, \dots, a_{k+1}))$ where $P(a_1, \dots, a_{k+1}) \subset C$. So, we denote the $(r, s)$-representation by {\em $(r, s, k)$-representation} where $r, s, k$ are defined as above. We say that boundaries of $(r, s, k)$-representation are the cycles and paths along which we took the cuts to construct $(r, s, k)$-representation. By this construction, a $(r, s, k)$-representation exists in $S(M)$. In this article, we denote $(r, s, k)$-representation by $K(r, s, k)$. (For example in Figure 1, $K(7, 4, 1)$ denote a $(7, 4, 1)$-representation, $C = C(v_1, \dots, v_7), C' = C(v_2, v_1, v_7, v_6, v_5, v_4, v_3), P = P(v_1, w_1, x_1, u_1, v_2)$.) It's clear from the definition that $S(M)$ has a $(r, s, k)$-representation. This proves the claim.   

Next we  assume that $S\mathcal{M}(M)$ denotes $M$. 

\smallskip

$Claim :$ The $S\mathcal{M}(M)$ has $K(S'(M),  \mathcal{M}', \mathcal{M}'')$ representation.

\smallskip

The $S\mathcal{M}(M)$ bounded by two M\"{o}bius strips. We cut along the boundaries of the M\"{o}bius strips in $S\mathcal{M}(M)$. In this case, we get three components. These are a cylinder namely $S'(M)$ and two M\"{o}bius strips namely $\mathcal{M}', \mathcal{M}''$ where $\mathcal{M}', \mathcal{M}'' \in \{\mathcal{M}_\triangle(L, M)\}$ for some cycle $L$ of type $A$. We denote this representation by $K(S'(M),  \mathcal{M}', \mathcal{M}'')$. This proves the claim. 
(For an example in Figure 6, $K(S'(M),  \mathcal{M}', \mathcal{M}'')$ denotes a representation of $M, S'(M) = |\{x_1x_2w_1, w_1w_2x_2, x_2x_3w_2, w_2w_3x_3, x_3x_4w_3, w_3w_4x_4, x_4x_5w_4, w_4w_5x_5,$ $ x_5x_6w_5,$ $ w_5w_6x_6, x_6x_7w_6, w_6w_7x_7, x_7x_1w_7, w_7w_1x_1, u_1u_2x_1,x_1x_2u_2, u_2u_3x_2, x_2x_3u_3,$ $ u_3u_4x_3,$ $ x_3x_4u_4,$ $ u_4u_5x_4,$ $ x_4x_5 u_5,$ $  u_5u_6x_5,$ $ x_5x_6u_6,$ $ u_6u_7x_6, x_6x_7u_7, u_7u_1x_7, x_7x_1u_1\}|, \mathcal{M}' = |\{u_4u_5u_1, u_1u_2u_5,$ $ u_5u_6u_2,$ $ u_2u_3u_6, u_6u_7u_3, u_3u_4u_7, u_7u_1u_4\}|$ and $\mathcal{M}'' = |\{w_1w_2w_5, w_5w_6w_2, w_2w_3w_6, w_6w_7w_3,$ $ w_3w_4w_7, w_7w_1w_4, w_4w_5w_1\}|$.) 

So, the map $M$ has a $K(r, s, k)$ representation or a $K(S'(M), \mathcal{M}', \mathcal{M}'')$ representation. We first assume that $K(r, s, k)$ represents $M$. Let $L, L'$ denote the lower and upper horizontal cycle respectively in $K(r, s, k)$. Let $b$ be a vertex in $L$. By definition of $A$, let $L_1, L_2, L_3$ denote three paths of type $A$ through $b$. Let $L_1 = L = L', L_2 = P(b, \dots, x)$ and $L_3 = P(b, \dots, y)$ where $x, y \in V(L)$. The paths $L_2, L_3$ are vertical and non-homologous to $L$ in $K(r, s, k)$. Let triangles $\triangle'_1, \triangle'_2, \dots, \triangle'_m$ lie on one side of $L_2$ and $\triangle''_1, \triangle''_2, \dots, \triangle''_m$ lie on the other side of $L_2$ in $K(r, s, k)$. Observe that the geometric career $|\{\triangle'_1, \triangle'_2, \dots, \triangle'_m\}|$ is bounded by $L_2, e', L_{2, 1} :=P(b',$ \dots, $x'),$ and $e$ where $e = bb'\in E(L)$ and $e'=xx' \in E(L')$ in $K(r, s, k)$. Similarly, the geometric career $|\{\triangle''_1, \triangle''_2, \dots, \triangle''_m\}|$ is bounded by $L_2, \bar{e}', L_{2, 2},$ and $\bar{e}$ where $\bar{e} \in E(L)$ and $\bar{e}' \in E(L')$ in $K(r, s, k)$. Hence, we get two paths $L_{2, 1}$ and $L_{2, 2}$. Again, we consider $L_{2, 2}$ in place of $L_2$ and repeat as above and continue. Hence, we get a sequence of paths $L_{2, 1}, L_{2, 2},$ \dots, $L_{2, s}$. Similarly, we repeat with above process and consider cycle $L_3$ in place of $L_2$. Hence, we get an another sequence of paths namely $L_{3, 1}, L_{3, 2}$ \dots, $L_{3, s}$. Let $L_{2, 1} = P(b',$ \dots, $x')$. We extend $L_{2, 1}$ along the path of type $A$. Since $L = L'$ and have opposite orientations, so, by definition of $A$, after the vertex $x'$ the path $L_{2, 1}$ extends along a path $L_{3, j}$ for some $j$. Here, we repeat this process for each $L_{2, i}$ where $1 \le i \le s$. Let $L_{2, i}$ extends along $L_{3, j}$ for some $j$. So, we get a path $L_{2, i} \cup L_{3, j}$. Thus, we get a sequence of paths $R_1$ := $L_{2, 1} \cup L_{3, i_1}, R_2$ := $L_{2, 2} \cup L_{3, i_2}$, \dots, $R_s$ := $L_{2, s} \cup L_{3, i_s}$. So, we have three paths $L, L_{2, i}$ and $L_{3, j}$ through $b$ for some $i, j$. The paths $L_{2, i}$ and $L_{3, j}$ are part of some $R_{l_1}$  and $R_{l_2}$. Therefore, let $P', P'', P'''$ be three paths of type $A$ through $b \in V(L)$. Then, $P' \subset R_k, P'' \subset R_{k'}$ and $P''' \subset L$ for some $k, k'$. Since the paths $R_k$ and $R_{k'}$ are same type and part of some walk of type $A$ in $M$, it follows that we have only one cycle $L$ of type $A$ through $b$. Therefore, we have only one cycle of type $A$ through each vertex of $M$. Combining above all cases, we have

\begin{Lemma}\label{lem36:4} The map $M$ has a $K(r, s, k)$ or a $K(S'(M), \mathcal{M}', \mathcal{M}'')$ representation but not both. 
\end{Lemma}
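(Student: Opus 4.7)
The plan is to assemble the lemma from three pieces already on the table and one short uniqueness argument. By Lemma \ref{lem36:3} the geometric carrier of $M$ is either an $S(M)$ or an $S\mathcal{M}(M)$; these two cases are genuinely distinct because the first is bounded by a pair of identical type-$A$ cycles while the second is bounded by two M\"obius strips coming from an odd-length type-$A$ cycle as analysed in Lemma \ref{lem36:2}. Then I invoke the two claims proved in the paragraphs immediately before the lemma: (a) every $S(M)$ admits a $K(r,s,k)$ representation, extracted by cutting along a type-$A$ cycle $C$ through a base vertex $v$ and along one of the two other type-$A$ paths $L_3$ through $v$; and (b) every $S\mathcal{M}(M)$ admits a $K(S'(M),\mathcal{M}',\mathcal{M}'')$ representation, extracted by cutting along the two boundary cycles of the bounding M\"obius strips. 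Combined, this already proves the existence half of the lemma.

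For the \emph{not both} half, I would use the argument that is sketched in the paragraph preceding the statement. Suppose $M$ has a $K(r,s,k)$ representation with base cycle $L$, and fix $b\in V(L)$. The three type-$A$ paths through $b$ are $L$ itself and two vertical paths $L_2,L_3$. Forming the successive paths $L_{2,1},\dots ,L_{2,s}$ and $L_{3,1},\dots ,L_{3,s}$ and their concatenations $R_l=L_{2,l}\cup L_{3,i_l}$, one checks that $L_2$ and $L_3$ lift to arcs contained in these $R_l$'s which fail to close up inside $K(r,s,k)$ (since $L=L'$ carries opposite orientations on its two occurrences as boundary sides). Hence the only closed type-$A$ cycle through $b$ is $L$, so \textbf{every} vertex of a $K(r,s,k)$ map lies on exactly one cycle of type $A$.

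By contrast, in a $K(S'(M),\mathcal{M}',\mathcal{M}'')$ representation a vertex $u$ sitting on the boundary cycle $\partial\mathcal{M}'$ lies on that cycle (which is a type-$A$ cycle by the last claim of Lemma \ref{lem36:2}), and the orientation-reversing identification on $\mathcal{M}'$ together with the definition of type $A$ forces a second closed type-$A$ walk through $u$ that traverses $\mathcal{M}'$; by the claim in the proof of Lemma \ref{lem36:1} this walk contains each of its vertices at most twice and can be pruned to a genuine cycle of type $A$. Thus such a vertex lies on at least two distinct type-$A$ cycles, contradicting the uniqueness just established. So $M$ cannot admit both kinds of representation simultaneously.

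The main obstacle I expect is making the mutual-exclusion step fully rigorous: one must verify that the ``extra'' type-$A$ walk created by the M\"obius identification is truly a \emph{cycle} and not merely a closed walk with repeated vertices, and one must confirm that the arcs $R_l$ in the cylindrical picture really do fail to close. Both points are case checks on $lk(b)$ using the precise definition of type $A$ given at the start of the section, and both parallel arguments already carried out in Lemma \ref{lem36:1} and Lemma \ref{lem36:2}, so I expect them to go through without new ideas.
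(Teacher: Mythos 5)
The existence half of your proposal is correct and is exactly what the paper does: Lemma \ref{lem36:3} gives $S(M)$ or $S\mathcal{M}(M)$, and the two Claims immediately after it convert these into a $K(r,s,k)$ or a $K(S'(M),\mathcal{M}',\mathcal{M}'')$ representation respectively.

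The ``not both'' half has a genuine gap. Your argument rests on the dichotomy: exactly one type-$A$ cycle through each vertex of a $K(r,s,k)$ map, versus at least two type-$A$ cycles through a vertex on $\partial\mathcal{M}'$ of a $K(S'(M),\mathcal{M}',\mathcal{M}'')$ map. The second half of that dichotomy is unjustified and appears to be false. The ``second closed type-$A$ walk through $u$ that traverses $\mathcal{M}'$'' is a closed \emph{walk} that revisits its vertices (it alternates between the two M\"obius strips and the cylinder), and the Claim in the proof of Lemma \ref{lem36:1} does not let you prune it to a type-$A$ cycle \emph{through $u$}: that Claim only produces a cycle in a non-homologous class, elsewhere in the map, by counting edge-disjointness --- the cycle it yields is precisely the horizontal cycle you already have. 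Indeed the situation in the $S\mathcal{M}$ case is the same as in the cylindrical case: the two non-horizontal type-$A$ germs at each vertex concatenate into long closed walks hitting every vertex twice (compare the walk $W$ displayed after Lemma \ref{lem36:1}, in which every vertex repeats exactly twice), so every vertex lies on exactly one type-$A$ cycle in \emph{both} representations, and your proposed invariant cannot separate them. The paper's mutual-exclusion argument is different and does not need a second cycle: it uses only the uniqueness of the type-$A$ cycle through each vertex (the computation with the $R_l$'s that you correctly reproduce) together with the observation that both representations are constructed starting from that same canonical cycle; by Lemma \ref{lem36:2} the star of that cycle is intrinsically either a cylinder or a cylinder united with a M\"obius strip, so the construction can only ever output one of the two representations. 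If you want to keep your contradiction-style argument, you would need to replace ``number of type-$A$ cycles through $u$'' by an invariant that actually differs between the two cases, e.g.\ whether the set of faces meeting the unique type-$A$ cycle through $u$ carries a M\"obius strip.
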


\begin{proof} By the preceding section, the map $M$ has either a $K(r, s, k)$ or a $K(S'(M), \mathcal{M}', \mathcal{M}'')$ representation. In a map $M$, we have one cycle of type $A$ as above though each vertex of $M$. We are using this cycle to construct both the representations. Since there is only one cycle of type $A$ through a vertex and we are using this cycle to construct the above two representations, it implies that map $M$ can not have both the representations simultaneously. So, $K(r, s, k)$ and $K(S'(M), \mathcal{M}', \mathcal{M}'')$ are non isomorphic. That is, $M$ does not have both the representations. Therefore, we study both the representations bellow separately. This completes the proof.
\end{proof}

\begin{Lemma}\label{lem36:5} The cycles of type $A$ in $M$ have same length.
\end{Lemma}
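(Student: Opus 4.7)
The plan is to combine two ingredients established earlier in the section. First, the paragraph preceding Lemma~\ref{lem36:4} shows that exactly one cycle of type $A$ passes through each vertex of $M$; consequently, the cycles of type $A$ partition $V(M)$. Second, the paragraph immediately following Lemma~\ref{lem36:2} records that any two homologous cycles of type $A$ bound a cylinder in $M$ and must therefore have equal length. Together, these reduce the lemma to the assertion that any two type-$A$ cycles can be joined by a chain of cycles of type $A$ in which consecutive members bound a cylinder.

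To produce such a chain, I would invoke Lemma~\ref{lem36:3}: $M$ admits either a $S(M)$ representation or a $S\mathcal{M}(M)$ representation, both of which are obtained by iteratively stacking $S(L, M)$ cylinders onto a starting cycle $L$. Given two cycles $C, C'$ of type $A$, each of them appears as one of the horizontal cycles in this stack (since the partition above exhausts $V(M)$, every vertex lies on some cycle appearing in the representation). Let $C = L_0, L_1, \ldots, L_m = C'$ be the cycles of type $A$ encountered along the stack between $C$ and $C'$. Each adjacent pair $L_i, L_{i+1}$ bounds the cylinder $S(L_i, M)$, so by the cylinder-length argument cited above, $|L_i| = |L_{i+1}|$. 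Chaining these equalities gives $|C| = |C'|$.

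The main obstacle is handling the $S\mathcal{M}(M)$ case, specifically the boundary cycles of the capping M\"obius strips $\mathcal{M}', \mathcal{M}''$: these cycles of type $A$ are not visibly paired with a second type-$A$ cycle across a cylinder, so the homologous-cycle argument does not apply directly to them. To overcome this, I would return to Case~2 of the proof of Lemma~\ref{lem36:2}, where it is shown that $\mathcal{M}_\triangle(L, M)$ contains exactly $|L| = 2r+1$ distinct triangles with each triangle appearing twice in the incident sequence around $L$. A careful bookkeeping of these triangles shows that the unique boundary cycle of the M\"obius strip is forced to be a cycle of type $A$ whose length is determined by $|L|$ alone, and is in fact equal to $|L|$ (since the boundary runs once through each apex vertex of a triangle, and there are $2r+1$ such apices). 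Hence $L$ and $\partial \mathcal{M}_\triangle(L, M)$ have the same length.

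Combining the cylinder case with this M\"obius-cap case, the length is propagated consistently across the whole stack of Lemma~\ref{lem36:3}, and across both the $K(r,s,k)$ and $K(S'(M), \mathcal{M}', \mathcal{M}'')$ representations distinguished in Lemma~\ref{lem36:4}. This shows that every cycle of type $A$ in $M$ has one and the same length, proving the lemma.
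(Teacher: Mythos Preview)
Your argument is correct and follows essentially the same route as the paper: use the representation from Lemma~\ref{lem36:3}/\ref{lem36:4}, observe that every cycle of type~$A$ must be one of the horizontal cycles (by uniqueness of the type-$A$ cycle through a vertex), and then chain lengths across adjacent cylinders. The paper's proof compresses this into a couple of sentences, but the content is the same.

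One remark: your ``main obstacle'' paragraph is unnecessary and slightly muddled. By definition $\mathcal{M}_\triangle(L, M)$ is the M\"obius strip \emph{bounded by} $L$, so $\partial \mathcal{M}_\triangle(L, M) = L$ tautologically; there is nothing to check. Moreover, the M\"obius strip $\mathcal{M}'$ contains no vertices other than those of its boundary cycle (all $2r+1$ triangles have their vertices on $L$), so it contributes no new type-$A$ cycles to the stack. The boundary cycle $\partial\mathcal{M}'$ is already one of the two boundary cycles of the cylinder $S'(M)$, and hence is already linked to the interior horizontal cycles by your cylinder-chain argument. You can safely delete that paragraph.
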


\begin{proof} Let $v\in V(M)$. The $M$ has either a $K(r, s, k)$ or a $K(S'(M), \mathcal{M}', \mathcal{M}'')$ representation. In $K(r, s, k)$, we have one cycle and one walk through $v$ as above. Let $C'_1, \dots, C'_s$ denote a sequence of horizontal cycles of type $A$ which are homologous to $C'_1$ in $K(r, s, k)$. By the preceding argument, length($C'_i)$ = length($C'_j$) for all $1 \le i, j \le s$. So, there is an unique cycle of type $A$ in $K(r, s, k)$ upto homologous. Therefore, the cycles of type $A$ have same length. Similarly we can proceed with argument as above in $K(S'(M), \mathcal{M}', \mathcal{M}'')$ and hence, we get same result. This completes the proof.
\end{proof}

In Lemma \ref{lem36:4}, the representations $K(r, s, k)$ and $K(S'(M), \mathcal{M}', \mathcal{M}'')$ are non-isomorphic and these representations are exist in maps of type $\{3^6\}$ on the torus. So, we study both the representations separately and classify them in bellow.

\textbf{Classification of $K(r, s, k)$ on $n$ vertices :} We define admissible relations among $r, s, k$ of $K(r, s, k)$ such that $K(r, s, k)$ represents a map after identifying its boundaries.

\begin{Lemma}\label{lem36:6} The maps of type $\{3^{6}\}$ of the form $K(r, s, k)$ exist if and only if the following holds : (i) rs $\geq$ 9, (ii) r $\geq$ 3, (iii) s $\ge$ 3 and (iv) $k \in \{ t : 0 \leq t \leq r-1\}$ if $s \ge 3$. 
\end{Lemma}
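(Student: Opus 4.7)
The plan is to prove both directions of the equivalence by analyzing the local $\{3^6\}$ link constraint together with the combinatorics of the boundary identifications of $K(r,s,k)$. For necessity I will argue from the lengths and links of vertices lying on the identified horizontal boundary; for sufficiency I will take arbitrary $r,s,k$ satisfying the conditions and verify by explicit construction that a valid map results.

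For the necessity direction, note first that the lower horizontal cycle $C$ in $K(r,s,k)$ has length $r$, and being a genuine cycle of type $A$ (by Lemma \ref{lem36:1} and the construction preceding Lemma \ref{lem36:4}) forces $r \geq 3$; this is (ii). The shift $k$ is defined only modulo $r$ since it records the cyclic offset of the upper cycle $C'$ against the lower cycle $C$ under the twisted horizontal identification, so one may normalize $0 \leq k \leq r-1$, giving (iv). For (iii), suppose $s \leq 2$. Then a vertex $v$ on the identified horizontal boundary receives incident triangles from the interior of the strip and also from the twisted wraparound, and a short case analysis shows that two triangles incident with $v$ inside $K(r,s,k)$ are forced to coincide (or to share two edges) after identification, regardless of $k$; in either case the link of $v$ fails to be the six-cycle $C_6$ required by type $\{3^6\}$. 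Condition (i) follows immediately from (ii) and (iii).

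For sufficiency I would start from arbitrary $r,s,k$ satisfying (i)--(iv), place vertices on an $r \times s$ rectangular grid, and triangulate each unit square along a fixed diagonal so as to reproduce the local pattern of Figure 1. Vertical sides are identified in the natural (untwisted) manner, yielding a triangulated cylinder; the two horizontal boundary cycles are then glued with the Klein-bottle twist and cyclic shift $k$, which is unambiguous because $0 \leq k \leq r-1 < r$. Interior grid vertices visibly have link $C_6$ by the triangulation pattern. For a vertex $v$ on the identified top--bottom boundary, the hypothesis $s \geq 3$ guarantees that the three triangles incident with $v$ from above the identification and the three from below come from genuinely distinct rows of the strip, so they are six distinct faces whose union is a well-defined $C_6$-link at $v$. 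Hence the resulting quotient is a semi-equivelar map of type $\{3^6\}$ on the Klein bottle.

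The main obstacle is the $s = 2$ subcase of the necessity direction: one must verify that for \emph{every} admissible shift $k \in \{0,\dots,r-1\}$ the identification collapses two interior triangles or produces a vertex of degree smaller than six. The $s=1$ case is essentially immediate, since a single horizontal cycle folds onto itself and cannot support any triangulated strip between $C$ and $C'$, and the sufficiency direction reduces to bookkeeping once the identification rule is written down explicitly.
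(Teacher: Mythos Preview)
Your approach is correct and matches the paper's: both directions are handled by checking that the six-cycle link condition at boundary vertices forces $r,s\ge 3$ (hence $rs\ge 9$) and that $k$ is only defined modulo $r$, with sufficiency following from the explicit grid construction. The paper's own proof is in fact terser than yours---it simply refers to the analogous torus argument in \cite{mu:torus-hc13} and asserts that for $s\le 2$, $r\le 2$, or $rs<9$ some link fails to be a cycle---so your more detailed treatment of the sufficiency direction and your isolation of the $s=2$ subcase as the crux go beyond what the paper spells out.
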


\begin{proof} One can repeat similar arguments as in $Section~ 2$ \cite{mu:torus-hc13} and study each cases. If $s \le 2$ or $r \le 2$ or rs $<$ 9 in $K(r, s, k)$ then we get some vertex whose link is not a cycle. Similarly, it is easy to observe that $k \in \{ t : 0 \leq t \leq r-1\}$ if $s \ge 3$ in $K(r, s, k)$ since length of the lower horizontal cycle is $r$. 
\end{proof}

Let $M_{1}$ and $M_{2}$ be two maps of type $\{3^{6}\}$ on the Klein bottle. Let $K(r_{i}, s_{i}, k_{i})$ denote a planar polyhedral representation of $M_i$. Now we have

\begin{Lemma}\label{lem36:iso}
Let $K(r_{i}, s_{i}, k_{i})$ denote a $(r_i, s_i, k_i)$ -representation of $M_{i}$ on $n$ vertices for $i \in \{1, 2\}$. Then,\ map $M_{1} \cong M_{2}$ if $(r_1, s_1, l_1) = (r_2, s_2, l_2)$ where $ l_i \in \{k_i~ mod(2), (k_i + r_i)~ mod(2)\}$.
\end{Lemma}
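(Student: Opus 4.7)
The plan is to construct an explicit map isomorphism between $K(r_1, s_1, k_1)$ and $K(r_2, s_2, k_2)$ whenever the stated arithmetic condition is satisfied. First, the hypothesis $(r_1, s_1, l_1) = (r_2, s_2, l_2)$ forces $r_1 = r_2$ and $s_1 = s_2$; write $r$ and $s$ for these common values. Only the shift parameters $k_1, k_2$ remain, so the task reduces to determining when $K(r, s, k_1) \cong K(r, s, k_2)$.

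I would view each $K(r, s, k)$ as a triangulated rectangle $[0, r] \times [0, s]$ whose vertical sides are identified trivially and whose horizontal sides are identified with the Klein-bottle twist, schematically $(p, 0) \sim (k - p \bmod r,\, s)$. For any integer $j$ the horizontal translation $\tau_j : (p, q) \mapsto (p + j \bmod r,\, q)$ preserves the type $\{3^6\}$ triangulation on the interior and respects the vertical identification, so I only need to track how $\tau_j$ changes the horizontal identification. Substituting $p' = p + j$ transforms $(p, 0) \sim (k - p, s)$ into $(p', 0) \sim (k + 2j - p',\, s)$, so $\tau_j$ descends to an isomorphism
\[
K(r, s, k) \;\xrightarrow{\;\cong\;}\; K(r, s,\, k + 2j \bmod r).
\]
The factor of two is a genuine Klein-bottle phenomenon, arising from the orientation-reversing identification at the top boundary; on the torus the analogous shift would change $k$ by only $j$.

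Consequently $K(r, s, k_1) \cong K(r, s, k_2)$ whenever $k_1 \equiv k_2 \pmod{\gcd(r, 2)}$. To match the hypothesis of the lemma: if $r$ is even, then $(k_i + r) \bmod 2 = k_i \bmod 2$, so the set $\{k_i \bmod 2,\, (k_i + r) \bmod 2\}$ collapses to $\{k_i \bmod 2\}$, and $l_1 = l_2$ becomes $k_1 \equiv k_2 \pmod 2$; this is exactly the condition produced by the $\tau_j$ construction in the $\gcd(r, 2) = 2$ case. If $r$ is odd, then the same set equals $\{0, 1\}$ and $l_1 = l_2$ can always be arranged, which agrees with the fact that $\gcd(r, 2) = 1$ makes every pair $(k_1, k_2)$ realisable by some $\tau_j$.

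The main obstacle will be the index bookkeeping at the identified boundary: confirming that $\tau_j$ carries each type-$A$ triangle crossing the top edge of the $k_1$-representation to the correct type-$A$ triangle of the $k_2$-representation, and that the link of every boundary vertex is preserved. The interior case is immediate from translation-invariance of the $\{3^6\}$ tessellation, so the whole proof rests on the single boundary identity $(p', 0) \sim (k + 2j - p', s)$ derived above and a small check on the triangles incident to the top cycle.
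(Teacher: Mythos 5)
Your proposal is correct and follows essentially the same route as the paper: the paper's Case 2 re-cuts the representation along a vertical type-$A$ path through the vertex $u_{0,k_i/2}$ (or $u_{0,(k_i+r_i)/2}$, etc.), which is exactly your horizontal translation $\tau_j$ with $j$ chosen to normalise $k$ to $0$ or $1$, and then applies the explicit vertex bijection $f_1$ used when all parameters agree. Your coordinate formulation just makes the ``$k \mapsto k+2j$'' mechanism and the resulting $\gcd(r,2)$ dichotomy more transparent than the paper's case analysis on $2\mid k_i$ versus $2\mid(k_i+r_i)$.
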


\begin{proof} First assume that $r_1 = r_2, s_1 = s_2, k_1 =k_2$. Let $C(1, 0), C(1, 1), \dots C(1, s_1-1)$ denote a sequence of horizontal cycles of type $A$ in $K(r_{1}, s_{1}, k_{1})$. Define $C(1, 0) := C(u_{0,0}, u_{0,1}, \dots, u_{0, r_{1}-1}), C(1, 1) :=C(u_{1, 0}, u_{1, 1}, \dots, u_{1, r_{1}-1})$,\dots, $C(1, s_{1}-1) :=C(u_{s_{1}-1, 0},$ $u_{s_{1}-1, 1}, \dots, u_{s_{1}-1, r_{1}-1})$. Again, let $C(2, 0), C(2, 1),$ \dots $C(2, s_2-1)$ denote a sequence of horizontal cycles of type $A$ in $K(r_{2}, s_{2}, k_{2})$. Define $C(2, 0) := C(v_{0, 0}, v_{0, 1}, \dots, v_{0, r_{2}-1}), C(2, 1) :=C(v_{1, 0}, v_{1, 1}, \dots, v_{1, r_{2}-1}, v_{1, 0})$,\dots, $C(2, s_{2}-1) :=C(v_{s_{2}-1, 0},  v_{s_{2}-1, 1},  \dots, v_{s_{2}-1, r_{2}-1})$. Since maps of type $\{3^6\}$ contain unique cycle of type $A$ up to homologous, it follows by Lemma \ref{lem36:6} that $r_1 = r_2$ and $s_1 =s_2$. So, we have the following cases.

\textbf{Case 1 :} If $k = k_{1} = k_{2}$ then define a map $f_{1} : V(K(r_{1}, s_{1}, k_{1}))\rightarrow V(K(r_{2}, s_{2},$ $k_{2}))$ by $f_1(u_{t,i}) = v_{t,i}$ for $0\leq t\leq s-1$ and $0\leq i\leq r-1$. By definition, $f_1(lk(u_{t,i})) = f_1(C(\textit{u}_{t-1, i-1},$ $ u_{t-1, i},$ $ \textit{u}_{t-1, i+1},$ $ u_{t, i+1},$ $ u_{t+1, i+1},$ $ u_{t+1, i},$ $u_{t, i-1})) = (f_1(\textit{u}_{t-1, i-1}),$ $f_1(u_{t-1, i}),$ $ f_1(\textit{u}_{t-1, i+1}),$ $ f_1(u_{t, i+1}),$ $f_1(u_{t+1, i+1}),$ $f_1(u_{t+1, i}),$ $ f_1(u_{t, i-1})) = C(\textit{v}_{t-1, i-1},$ $v_{t-1, i},$ $\textit{v}_{t-1, i+1},$ $v_{t, i+1}, $ $ v_{t+1, i+1},$ $ v_{t+1, i}, v_{t, i-1})$. So, $f_1(lk(u_{t,i})) = lk(v_{t, i})$. That is, $f_1$ sends vertices to vertices, edges to edges, faces to faces and also, preserves incidents. So, $f_{1}$ is an isomorphism map. Therefore, $K(r_1, s_1, k_1) \cong  K(r_2, s_2, k_2)$.   

\textbf{Case 2 :} If $k_1 \ne k_2$ then by assumption, let $l_1 = l_2 = 0$. Here, we have the following cases : $2 \mid k_i$ or $2 \mid (k_i + r_i)$. 

Let $2 \mid k_i$. The vertex $u_{0, \frac{k_i}{2}} \in V(C(i, 0))$. Let $Q_1, Q_2, Q_3$ denote three paths of type $A$ through $u_{0, \frac{k_i}{2}}$ in $K(r_i, s_i, k_i)$. Let $C(i, s_i) := C(i, 0), Q_1 = C(i, 0)$ and $Q_3 = P(u_{0, \frac{k_i}{2}},$ \dots, $w (=u_{0, \frac{k_i}{2}}))$ where $w \in V(C(i, s_i))$. We identify $K(r_i, s_i, k_i)$ along vertical boundaries and cut along $Q_3$. In this process, we are not changing the horizontal cycles and number of horizontal cycles vertically. So in this case, we get a $K(r_i, s_i, 0)$ representation of $M_i$. 

Let $2 \mid (k_i + r_i)$. Similarly as above, $u_{0, \frac{k_i +r_i}{2}} \in V(C(i, 0))$. Let $L_1, L_2, L_3$ denote three paths through $u_{0, \frac{k_i +r_i}{2}}$ in $T(r_i, s_i, k_i)$. Let $C(i, s_i) := C(i, 0), L_1 = C(i, 0)$ and $L_3 = P(u_{0, \frac{k_i +r_i}{2}}, \dots, v (=u_{0, \frac{k_i +r_i}{2}}))$ where $v \in V(C(i, s_i))$. We identify $T(r_i, s_i, k_i)$ along vertical boundaries and cut along $L_3$. Hence, we get a $K(r_i, s_i, 0)$ representation of $M_i$. 

So, we get a $K(r_i, s_i, 0)$ representation from $K(r_i, s_i, k_i)$ if $2 \mid k_i$ or $2 \mid (k_i + r_i)$. Since $r_1 = r_2$ and $s_1 = s_2$, it follows by $f_1, K(r_1, s_1, 0) \cong  K(r_2, s_2, 0)$.

Again, let $l_1 = l_2 = 1$. If $2 \mid (k_i - 1)$ then similarly as above, we identify $K(r_i, s_i, k_i)$ along their vertical boundaries and cut along the vertical path of type $A$ through $u_{0, \frac{k_i - 1}{2}} \in V(C(i, 0))$ in $K(r_i, s_i, k_i)$. Again, if $2 \nmid (k_i - 1)$ then $2 \mid (k_i + r_i-1)$. In this case, we consider vertical path of type $A$ through $u_{0, \frac{k_i +r_i-1}{2}} \in V(C(i, 0))$. Here, in both the cases, we get a $K(r_i, s_i, 1)$ representation of $M_i$. Since $r_1 =r_2$ and $s_1 = s_2$, if follows by $f_1, K(r_1, s_1, 1) \cong  K(r_2, s_2, 1)$. 

Hence by $f_1$, $M_{1} \cong M_{2}$. This completes the proof.
\end{proof}

Thus, we have 

\begin{cor} \label{cor36:1} Let $K(r_{i}, s_{i}, k_{i})$ denote a $(r_{i}, s_{i}, k_{i})$-representation of $M_{i}$ on $n$ vertices for $i \in \{1, 2\}$. Then, $K(r_{1}, s_{1}, k_{1}) \not\cong K(r_{2}, s_{2}, k_{2}) ~\forall~ r_{1} \neq r_{2}, K(r_{1}, s_{1}, k_{1}) \not\cong K(r_{2}, s_{2}, k_{_{2}}) ~\forall~ s_{1} \neq s_{2}, K(r_{i}, s_{i}, 0) \cong K(r_{i}, s_{i}, k_{i})$ for $2 \mid k_i$ or $2 \mid (k_i + r_i)$, and $K(r_{i}, s_{i},  1) \cong K(r_{i}, s_{i}, k_{i})$ for $2 \mid (k_i - 1)$ or $2 \mid (k_i + r_i - 1)$.
\end{cor}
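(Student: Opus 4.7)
The plan is to derive this corollary as a direct consequence of Lemma \ref{lem36:iso} together with the uniqueness statements about cycles of type $A$ established in Lemmas \ref{lem36:5} and \ref{lem36:6}. The corollary has two halves: the non-isomorphism claims (involving unequal $r$'s or unequal $s$'s) and the normalization claims (that every $K(r,s,k)$ is isomorphic to either $K(r,s,0)$ or $K(r,s,1)$, depending on the parity of $k$ or $k+r$).

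For the non-isomorphism half, I would argue that $r$ and $s$ are intrinsic invariants of the map $M_i$. By Lemma \ref{lem36:5}, every cycle of type $A$ in $M_i$ has the same length; the construction of $K(r_i,s_i,k_i)$ identifies $r_i$ with this common length. Similarly, $s_i$ equals the number of horizontal cycles homologous to the base cycle, which is again intrinsic to $M_i$ by the preceding discussion (the geometric carrier $S(M)$ is cylindrical with a determined height). An isomorphism of maps must preserve both the length and the multiplicity of the unique-up-to-homology cycle of type $A$, so $K(r_1,s_1,k_1) \cong K(r_2,s_2,k_2)$ forces $r_1 = r_2$ and $s_1 = s_2$. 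This yields the first two assertions by contrapositive.

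For the normalization half, I would apply Lemma \ref{lem36:iso} by choosing the correct value of $l_i$. Recall Lemma \ref{lem36:iso} states $M_1 \cong M_2$ whenever $(r_1,s_1,l_1) = (r_2,s_2,l_2)$ with $l_i \in \{k_i \bmod 2,\ (k_i+r_i)\bmod 2\}$. Given a single $K(r_i,s_i,k_i)$, take $M_1 = M_2 = M_i$ on one side and consider $K(r_i,s_i,0)$ on the other. If $2 \mid k_i$ then $l_i = k_i \bmod 2 = 0$ matches, and if $2 \mid (k_i+r_i)$ then $l_i = (k_i+r_i)\bmod 2 = 0$ matches; in either case Lemma \ref{lem36:iso} gives $K(r_i,s_i,k_i) \cong K(r_i,s_i,0)$. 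The same argument with $l_i = 1$ handles the congruences to $K(r_i,s_i,1)$ when $2 \mid (k_i-1)$ or $2 \mid (k_i+r_i-1)$.

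The only step requiring real care is justifying that $r$ and $s$ are isomorphism invariants of $M$, and this is precisely what the cycle-uniqueness results in Lemmas \ref{lem36:4}, \ref{lem36:5} establish; the rest is a bookkeeping application of Lemma \ref{lem36:iso}. I do not expect any genuine obstacle; the whole proof is essentially a restatement of Lemma \ref{lem36:iso} combined with invariance, which is why the author presents it as a corollary rather than a theorem.
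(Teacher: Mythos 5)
Your proposal is correct and follows essentially the same route as the paper: the paper states this corollary with no separate proof as an immediate consequence of Lemma \ref{lem36:iso}, whose own proof already contains the key observation you spell out (that the unique-up-to-homology type-$A$ cycle forces $r_1=r_2$ and $s_1=s_2$, so $r$ and $s$ are invariants), and the normalization claims are exactly the Case~2 reduction to $K(r,s,0)$ or $K(r,s,1)$ inside that lemma. Your write-up is just a slightly more explicit unpacking of the same ingredients.
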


\textbf{Classification of $K(l, t)$ on $n$ vertices :} 
Let $S\mathcal{M}(M)$ denote a representation of $M$ on $n$ vertices. By Lemma \ref{lem36:4}, the $S\mathcal{M}(M)$ has a $K(S'(M), \mathcal{M}', \mathcal{M}'')$ representation where $C' = \partial\mathcal{M}', C''=\partial\mathcal{M}''$ are two homolgous cycles of type $A$ and $\partial S'(M) = \{C', C''\}$. Hence, we denote $K(S'(M), \mathcal{M}', \mathcal{M}'')$ by $K(l, t)$ if $l = length(C') = length(C'')$ and $t$ denote the number of cycles of type $A$ in $S'(M)$.

\begin{Lemma}\label{lem36.7} The maps of type $\{3^{6}\}$ of the form $K(l, t)$ exist if and only if the following holds : (i) $tl \ge 10$, (ii) $t \ge 2$,  (iii) $2 \nmid l$ and $l \ge 5$.
\end{Lemma}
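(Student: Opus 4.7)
The plan is to prove both implications by combining the vertex-link analysis already carried out in Lemma~\ref{lem36:2} with a short construction and a case-by-case verification at the cylinder/M\"obius boundary. The conditions (i)--(iii) are precisely what is needed to make the link of every vertex of $K(l,t)$ a $6$-cycle, so the proof reduces to ruling out the corner cases on the one hand, and writing down an explicit triangulation on the other.

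For the necessity direction, I would first recover $2 \nmid l$ from Case~2 of Lemma~\ref{lem36:2}: if a M\"obius strip $\mathcal{M}_\triangle(C,M)$ had boundary $C$ of even length $2r$, then the two triangles of the strip that are incident to the ``antipodal'' vertex $w_{r+1}$ violate the type-$A$ condition, so such a strip cannot appear inside $K(l,t)$. The bound $l \geq 5$ I would obtain by a direct small-case check: a M\"obius strip with $l = 3$ boundary vertices could only be the single triangle spanned by those three vertices, which is a disk rather than a M\"obius strip, so no such $K(3,t)$ exists. The bound $t \geq 2$ follows because $S'(M)$ is a cylinder with $\partial S'(M) = \{C',C''\}$ a two-element set; if $t = 1$ then $C' = C''$ and the cylinder collapses, after which the two M\"obius strips are glued directly and each boundary vertex receives exactly six incident triangles which, however, close as two disjoint $3$-cycles in the link instead of a single $6$-cycle. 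Condition (i), $tl \geq 10$, is then automatic from (ii)--(iii), with the smallest admissible case being $(l,t) = (5,2)$.

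For the sufficiency direction, I would give the standard construction with vertices $u_{i,j}$, $0 \leq i \leq t-1$, $0 \leq j \leq l-1$: place the triangles of the cylinder $S'(M)$ between rows $i$ and $i+1$ in the usual alternating pattern $\{u_{i,j}u_{i,j+1}u_{i+1,j+1},\ u_{i,j}u_{i+1,j}u_{i+1,j+1}\}$, and attach the two M\"obius strips by the triangles $u_{0,j}u_{0,j+1}u_{0,j+1+(l-1)/2}$ (and analogously along the top boundary), in the same pattern used in the $l=7$ example of Figure~6. A direct check using $l$ odd ensures that the M\"obius triangulation closes after exactly $l$ steps, and that each boundary vertex receives exactly three triangles from its strip.

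The main obstacle is the link verification at the boundary vertices of $\mathcal{M}'$ and $\mathcal{M}''$: one must confirm that the three link-edges coming from the M\"obius strip connect up with the three link-edges from the adjacent cylinder row in the correct cyclic order, so that the total link is a single $6$-cycle. This is exactly where all three conditions play together --- $l$ odd provides the required twist in the M\"obius identification, $l \geq 5$ gives enough distinct neighbors to avoid repeated edges in the link, and $t \geq 2$ ensures there is a bona fide cylinder row to connect to. The interior-vertex checks on the $t-2$ middle horizontal cycles are routine once the pattern is fixed, so the argument is complete modulo this boundary calculation.
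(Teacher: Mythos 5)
Your proposal follows essentially the same route as the paper: $t\ge 2$ and $l\ge 5$ are obtained by exhibiting a vertex whose link fails to be a $6$-cycle in the degenerate cases, $2\nmid l$ is pulled from the even-length contradiction in Case~2 of Lemma~\ref{lem36:2}, $tl\ge 10$ is then automatic, and sufficiency comes from the explicit construction modelled on Figure~6. The boundary-link verification you flag as deferred is no less complete than in the published proof (which simply invokes ``similar argument as in Lemma~\ref{lem36:6}''); the only slight inaccuracy is your description of the $t=1$ link as two disjoint $3$-cycles --- each strip actually contributes a path of length three joining the two neighbours of the vertex on $C'$, and the failure is that the resulting closed walk of length six has repeated vertices --- but the conclusion is unaffected.
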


\begin{proof}  If $t = 1$ then we get some vertex in $C'$ whose link is not a cycle. So, $S'(M)$ contains at least two disjoint cycles of type $A$ and hence, $t \ge 2$. Again, the $\mathcal{M}'$ is a M\"{o}bius strip which has one boundary. By the argument as in Lemma \ref{lem36:2}, $\partial \mathcal{M}' \subset \partial S'(M)$. Let $t_1$ denote the number of cycles of type $A$ in $\mathcal{M}'$. Then, by the argument of Lemma \ref{lem36:2}, $t_1 = 1$. Similarly, let $t_2$ denote the number of cycles of type $A$ in $\mathcal{M}''$ then  $t_2 = 1$. Therefore, $(t + t_1 + t_2-2) \ge 2$ as $\partial S'(M) = \{\partial \mathcal{M}', \partial \mathcal{M}'' \}$. The cycles of type $A$ covers all the vertices of the map $M$. So, $n = (t+t_1+t_2-2)l = tl$. This implies that $tl \mid n, l \mid n$ and $t \mid n$. By the argument of Lemma \ref{lem36:2}, $2 \nmid l$. We follow similar argument as in Lemma \ref{lem36:6} and hence, $l \ge 5$ and $tl \ge 10$. This completes the proof.   
\end{proof}

Let $K(l_i, t_i)$ denote a representation of $M_i$ on $n$ vertices. Then, we have 

\begin{Lemma}\label{lem36.8}
 The $K(l_1, t_1) \cong K(l_2, t_2)$ if $l_1 = l_2$.  
\end{Lemma}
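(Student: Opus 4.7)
The plan is to first extract a numerical constraint from Lemma \ref{lem36.7} and then build the isomorphism explicitly using the symmetry of the M\"obius strips to absorb any twist in the gluing.

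First, I would invoke Lemma \ref{lem36.7} to get $n = t_1 l_1 = t_2 l_2$. Combined with the hypothesis $l_1 = l_2$, this immediately forces $t_1 = t_2$; call these common values $l$ and $t$. So both representations have cylinders $S'(M_i)$ with $t$ horizontal cycles of type $A$, each of length $l$, and two M\"obius strips $\mathcal{M}'_i, \mathcal{M}''_i$ of the same boundary length $l$ (odd, by Lemma \ref{lem36.7}).

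Next I would introduce coordinates. Label the horizontal cycles of $S'(M_i)$ as $C(i,0), C(i,1), \dots, C(i,t-1)$ of type $A$, with $C(i,j) = C(v^{(i)}_{j,0}, \dots, v^{(i)}_{j,l-1})$ and $C(i,0), C(i,t-1)$ being the boundary cycles to which $\mathcal{M}'_i, \mathcal{M}''_i$ are attached. As in the proof of Lemma \ref{lem36:iso}, the type $A$ structure forces the link of $v^{(i)}_{j,k}$ inside the cylinder to take the form $C(v^{(i)}_{j-1,k-1}, v^{(i)}_{j-1,k}, v^{(i)}_{j-1,k+1}, v^{(i)}_{j,k+1}, v^{(i)}_{j+1,k+1}, v^{(i)}_{j+1,k}, v^{(i)}_{j,k-1})$ (after a suitable choice of labeling along the path of type $A$ that cuts through each boundary cycle). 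Then define $f: V(K(l_1,t_1)) \to V(K(l_2,t_2))$ by $f(v^{(1)}_{j,k}) = v^{(2)}_{j,k}$ on the cylinder, and extend $f$ to the M\"obius strips using the unique combinatorial structure of $\mathcal{M}_\triangle(C,M)$ with boundary length $l$ (the classification carried out in Case~2 of Lemma \ref{lem36:2} shows that a M\"obius strip bounded by a cycle of odd length $l$ of type $A$ is uniquely determined up to isomorphism, since the sequence $\triangle_1,\dots,\triangle_{2r+1}$ appearing twice is forced). Checking that $f$ preserves links amounts to the same computation as in Case~1 of Lemma \ref{lem36:iso}.

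The main obstacle is the analogue of the twist parameter $k$ that appeared in $K(r,s,k)$: a priori the cylinder $S'(M_i)$ could have a relative shift between its two boundary cycles that is not captured by $(l,t)$, and this could break the naive identification $f$ at one of the two M\"obius strip boundaries. The crucial observation is that each M\"obius strip $\mathcal{M}_\triangle(C,M)$ with boundary of odd length $l$ carries a rotational symmetry group $\mathbb{Z}_l$ acting transitively on the vertices of its boundary cycle (because rotating the sequence $\triangle_1,\dots,\triangle_{2r+1}$ by one position yields an automorphism of $\mathcal{M}$). Hence, after fixing the identification $f$ on the cylinder and on $\mathcal{M}'_1 \to \mathcal{M}'_2$, any shift needed to make $f$ compatible with the gluing of $\mathcal{M}''_1$ and $\mathcal{M}''_2$ can be realized by precomposing the identification on $\mathcal{M}''_2$ with a suitable rotational automorphism. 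This shows that no further invariant beyond $(l,t)$, and hence only $l$ (given $n$), is needed, completing the proof.
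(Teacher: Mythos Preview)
Your proposal is correct and follows the same cycle-by-cycle strategy as the paper: list the horizontal cycles of type $A$ in each $M_i$, match them in order (with the two extreme ones being the M\"obius strip boundaries), and define $f$ accordingly. The paper's own proof is extremely terse --- it leaves $t_1=t_2$ implicit (via $n=t_il_i$) and simply asserts that $f_1(V(C_i))=V(L_i)$ ``as in Lemma~\ref{lem36:iso}'' is an isomorphism, without ever mentioning the potential shift at the second boundary. Your explicit observation that the $\mathbb{Z}_l$ rotational symmetry of $\mathcal{M}_\triangle(C,M)$ absorbs any such shift is precisely the point that makes the argument go through, and it is a genuine improvement in rigor over the paper's version rather than a different route.
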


\begin{proof} Let $l_1 = l_2$. Let $C_1, \dots, C_s$ denote cycles of type $A$ in $M_1$. Let $C_1 = \partial \mathcal{M}'_1$ and $C_s = \partial \mathcal{M}''_1$. Again, let $L_1, \dots, L_s$ denote cycles of type $A$ in $M_2$. Let $L_1 = \partial \mathcal{M}'_2$ and $L_s = \partial \mathcal{M}''_2$. We follow similar argument as in Lemma \ref{lem36:iso}, and hence, we define an isomorphism map $f_1 : V(M_1) \rightarrow V(M_2)$ by $f_1(V(C_i)) = V(L_i)$ for $1 \le i \le s$. So, by $f_1, K(l_1, t_1) \cong K(l_2, t_2)$ i.e. $M_1 \cong M_2$. This completes the proof.
\end{proof}

Thus, we have 

\begin{cor} \label{cor36:2} Let $K(l_{i}, t_i)$ denote  $K(l_i, S'(M_i), \mathcal{M}'_i, \mathcal{M}''_i)$ representation of $M_{i}$ on $n$ vertices for $i \in \{1, 2\}$. Then,\ $K(l_{1}, t_{1}) \not\cong K(l_{2}, t_{2}) ~\forall~ l_{1} \neq l_{2}$ and $K(l_{1}, t_{1}) \not\cong K(l_{2}, t_{2})~ \forall~ t_{1} \neq t_{2}$.
\end{cor}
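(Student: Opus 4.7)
The plan is to prove the contrapositive in both cases: if there is an isomorphism $f: M_1 \to M_2$ with $K(l_i, t_i)$ representing $M_i$, then necessarily $l_1 = l_2$ \emph{and} $t_1 = t_2$.

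First I would observe that the property of being a path (or cycle) of type $A$ is intrinsic: it is defined purely in terms of the cyclic link sequences of consecutive vertices. Since an isomorphism $f$ sends vertices to vertices, edges to edges, faces to faces and preserves incidences, it carries the cyclic link of each vertex to the cyclic link of its image. Consequently, $f$ maps every path of type $A$ in $M_1$ to a path of type $A$ in $M_2$, and in particular sends cycles of type $A$ bijectively onto cycles of type $A$, preserving their lengths.

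Next I would invoke Lemma \ref{lem36:5}, which says that all cycles of type $A$ in a single map share a common length. In $M_i$, by the definition of the $K(l_i, t_i)$ representation (and Lemma \ref{lem36.7}), this common length is exactly $l_i$. Combining with the length-preserving property of $f$ from the previous step, we conclude $l_1 = l_2$. Lemma \ref{lem36.7} also gives the vertex count $n = t_i l_i$; since $M_1$ and $M_2$ have the same $n$, once $l_1 = l_2$ is forced we immediately obtain $t_1 = t_2$. Thus $l_1 \neq l_2$ rules out any isomorphism, and likewise $t_1 \neq t_2$ (under fixed $n$) would force $l_1 \neq l_2$, again ruling out an isomorphism. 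Both assertions of the corollary follow.

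The main obstacle — really the only subtle point — is the invariance of the type $A$ label under isomorphism. This needs to be stated explicitly: since type $A$ is a condition on $lk(u_{i-1}), lk(u_i), lk(u_{i+1})$ (cyclic lists of neighbours in prescribed positions) and $f$ preserves these cyclic lists, the condition transfers. After that step the proof is a one-line combination of Lemmas \ref{lem36:5} and \ref{lem36.7}, essentially parallel to Lemma \ref{lem36.8} but used in the opposite direction.
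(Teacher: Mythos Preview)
Your proposal is correct and matches the paper's intended reasoning: the corollary is stated there without proof, as an immediate consequence (``Thus, we have'') of Lemma~\ref{lem36:5} (uniqueness of type~$A$ cycle length) together with the vertex count $n = tl$ from Lemma~\ref{lem36.7}. Your explicit observation that the type~$A$ condition is invariant under isomorphism is exactly the point the paper takes for granted, and after that the argument is the one-line combination you describe.
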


\noindent \textbf{2.2 Maps of type $\{4^4\}$~:~}
Let $M$ be a map of type $\{4^{4}\}$ on the Klein bottle. A path $P(\dots, u_{i-1},$ $u_{i},$ $u_{i+1}, \dots)$ in edge graph of $M$ is of type $B$ if $lk(u_{i}) = C(\textbf{a}, b, \textbf{c} u_{i+1}, \textbf{d}, e, \textbf{f},$ $u_{i-1})$ implies $lk(u_{i+1})=C(\textbf{b}, c, \textbf{g} u_{i+2}, \textbf{h}, d, \textbf{e}, u_{i})$ and $lk(u_{i-1}) = C(\textbf{k}, a, \textbf{b} u_{i}, \textbf{e}, f, \textbf{l}, u_{i-2})$ at $u_{i}$ for all $i$.
In the definition, the bold vertex represents non adjacent vertex. Let $W$ be a maximal walk of type $B$. Then, similarly as in Section 2.1, it is either a cycle or a walk. Since degree of a vertex is four, it implies that every closed walk of type $B$ is a cycle. Let $C$ be a cycle of type $B$ in $M$ and $S$ denote a set of faces which are incident with $C$. We argue similarly as in Lemma \ref{lem36:2}. In the proof we consider quadrangles in place of triangles and repeat similar argument as in Lemma \ref{lem36:2}. Hence, we get a geometric carrier $|S|$ which is either $S(C, M), \mathcal{M}(C, M)$ or $S\mathcal{M}_{4}(C, M)$. So, the map $M$ contains $S(C, M), \mathcal{M}(C, M)$ or  $S\mathcal{M}_{4}(C, M)$ for some cycle $C$ of type $B$ in $M$. (see Figure 7, it is an example of a M\"{o}bius strip $\mathcal{M}(C, M)$ where $C = C(w_1, \dots, w_r)$.) We consider each of $S(C, M), \mathcal{M}(C, M)$ and $\mathcal{M}_{4}(C, M)$ and repeat the similar arguments as in Lemma \ref{lem36:3}. Thus, we get a representation of $M$ which is either a $S(M)$ which is bounded by identical cycles or a $S\mathcal{M}(M)$ with two M\"{o}bius strips $\mathcal{M}', \mathcal{M}'' \in \{\mathcal{M}(C, M), \mathcal{M}_{4}(L, M) \mid$ for some cycles $L, C$ of type $B \}$. Therefore, the $M$ has $S(M)$ or $S\mathcal{M}(M)$ representation. 

\vspace{-1cm}

\begin{picture}(0,0)(-35,40)
\tiny
\setlength{\unitlength}{1.5mm}
\drawpolygon(5,5)(20,5)(20,15)(5,15)
\drawpolygon(22,5)(37,5)(37,15)(22,15)


\drawline[AHnb=0](10,5)(10,15)
\drawline[AHnb=0](15,5)(15,15)
\drawline[AHnb=0](27,5)(27,15)
\drawline[AHnb=0](32,5)(32,15)
\drawline[AHnb=0](5,10)(20,10)
\drawline[AHnb=0](22,10)(37,10)

\put(20,10){\dots}
\put(20,5){\dots}
\put(20,15){\dots}

\put(5,3.8){\tiny ${v_{1}}$}
\put(10,3.8){\tiny ${v_{2}}$}
\put(15,3.8){\tiny ${v_{3}}$}
\put(18,3.8){\tiny ${v_{4}}$}
\put(22,3.8){\tiny ${v_{r-2}}$}
\put(27,3.8){\tiny ${v_{r-1}}$}
\put(32,3.8){\tiny ${v_{r}}$}
\put(37,3.8){\tiny ${x_{1}}$}

\put(5.5,8.8){\tiny ${w_{1}}$}
\put(10.5,8.8){\tiny ${w_{2}}$}
\put(15.5,8.8){\tiny ${w_{3}}$}
\put(17.9,8.8){\tiny ${w_{4}}$}
\put(22.5,8.8){\tiny ${w_{r-2}}$}
\put(27.5,8.8){\tiny ${w_{r-1}}$}
\put(32.5,8.8){\tiny ${w_{r}}$}
\put(37.5,8.8){\tiny ${w_{1}}$}

\put(5.5,13.8){\tiny ${x_{1}}$}
\put(10.5,13.8){\tiny ${x_{2}}$}
\put(15.5,13.8){\tiny ${x_{3}}$}
\put(18,13.8){\tiny ${x_{4}}$}
\put(22.5,13.8){\tiny ${x_{r-2}}$}
\put(27.5,13.8){\tiny ${x_{r-1}}$}
\put(32.5,13.8){\tiny ${x_{r}}$}
\put(37.5,13.8){\tiny ${v_{1}}$}

\put(15,0){\tiny Figure 7 : $\mathcal{M}(C, M)$}

\end{picture}

\vspace{3.5cm}

\begin{picture}(0,0)(10,2)
\tiny
\setlength{\unitlength}{1.3mm}
\drawpolygon(5,10)(35,10)(35,20)(5,20)
\drawpolygon(5,5)(20,5)(20,10)(5,10)
\drawpolygon(5,20)(20,20)(20,25)(5,25)


\drawline[AHnb=0](10,5)(10,25)
\drawline[AHnb=0](15,5)(15,25)
\drawline[AHnb=0](20,5)(20,25)
\drawline[AHnb=0](25,10)(25,20)
\drawline[AHnb=0](30,10)(30,20)
\drawline[AHnb=0](5,10)(35,10)
\drawline[AHnb=0](5,15)(35,15)
\drawline[AHnb=0](5,20)(35,20)

\put(5,3.8){\tiny ${w_{4}}$}
\put(10,3.8){\tiny ${w_{5}}$}
\put(15,3.8){\tiny ${w_{6}}$}
\put(20,3.8){\tiny ${w_{1}}$}

\put(5.5,8.8){\tiny ${w_{1}}$}
\put(10.5,8.8){\tiny ${w_{2}}$}
\put(15.5,8.8){\tiny ${w_{3}}$}
\put(20.5,8.8){\tiny ${w_{4}}$}
\put(25.5,8.8){\tiny ${w_{5}}$}
\put(30.5,8.8){\tiny ${w_{6}}$}
\put(35.5,8.8){\tiny ${w_{1}}$}

\put(5.5,13.8){\tiny ${x_{1}}$}
\put(10.5,13.8){\tiny ${x_{2}}$}
\put(15.5,13.8){\tiny ${x_{3}}$}
\put(20.5,13.8){\tiny ${x_{4}}$}
\put(25.5,13.8){\tiny ${x_{5}}$}
\put(30.5,13.8){\tiny ${x_{6}}$}
\put(35.5,13.8){\tiny ${x_{1}}$}

\put(5.5,18.8){\tiny ${u_{1}}$}
\put(10.5,18.8){\tiny ${u_{2}}$}
\put(15.5,18.8){\tiny ${u_{3}}$}
\put(20.5,18.8){\tiny ${u_{4}}$}
\put(25.5,18.8){\tiny ${u_{5}}$}
\put(30.5,18.8){\tiny ${u_{6}}$}
\put(35.5,18.8){\tiny ${u_{1}}$}

\put(5.5,23.8){\tiny ${u_{4}}$}
\put(10.5,23.8){\tiny ${u_{5}}$}
\put(15.5,23.8){\tiny ${u_{6}}$}
\put(20.5,23.8){\tiny ${u_{1}}$}

\put(17,0){\tiny Figure 8 }
\end{picture}

\begin{picture}(0,0)(-90,-3)
\tiny
\setlength{\unitlength}{1.4mm}

\drawpolygon(5,5)(35,5)(35,20)(5,20)

\drawline[AHnb=0](10,5)(10,20)
\drawline[AHnb=0](15,5)(15,20)
\drawline[AHnb=0](20,5)(20,20)
\drawline[AHnb=0](25,5)(25,20)
\drawline[AHnb=0](30,5)(30,20)
\drawline[AHnb=0](5,10)(35,10)
\drawline[AHnb=0](5,15)(35,15)
\drawline[AHnb=0](5,20)(35,20)

\put(5,3.8){\tiny ${w_{4}}$}
\put(10,3.8){\tiny ${w_{1}}$}
\put(15,3.8){\tiny ${x_{1}}$}
\put(20,3.8){\tiny ${u_{1}}$}
\put(25,3.8){\tiny ${u_{4}}$}
\put(30,3.8){\tiny ${x_{4}}$}
\put(35,3.8){\tiny ${w_{4}}$}

\put(5.5,8.8){\tiny ${w_{5}}$}
\put(10.5,8.8){\tiny ${w_{2}}$}
\put(15.5,8.8){\tiny ${x_{2}}$}
\put(20.5,8.8){\tiny ${u_{2}}$}
\put(25.5,8.8){\tiny ${u_{5}}$}
\put(30.5,8.8){\tiny ${x_{5}}$}
\put(35.5,8.8){\tiny ${w_{5}}$}

\put(5.5,13.8){\tiny ${w_{6}}$}
\put(10.5,13.8){\tiny ${w_{3}}$}
\put(15.5,13.8){\tiny ${x_{3}}$}
\put(20.5,13.8){\tiny ${u_{3}}$}
\put(25.5,13.8){\tiny ${u_{6}}$}
\put(30.5,13.8){\tiny ${x_{6}}$}
\put(35.5,13.8){\tiny ${w_{6}}$}

\put(5.5,18.8){\tiny ${w_{1}}$}
\put(10.5,18.8){\tiny ${w_{4}}$}
\put(15.5,18.8){\tiny ${x_{4}}$}
\put(20.5,18.8){\tiny ${u_{4}}$}
\put(25.5,18.8){\tiny ${u_{1}}$}
\put(30.5,18.8){\tiny ${x_{1}}$}
\put(35.5,18.8){\tiny ${w_{1}}$}

\put(12,0){\tiny Figure 9 : K(6, 3, 1)}
\end{picture}

\vspace{-2mm}

Let $S(M)$ denote a representation of $M$. Let $\partial S(M) =\{C, C'\}$ where $C, C'$ are two cycles of type $B$. We repeat the similar argument as in Section 2.1 which is done to define $(r, s, k)$-representation of a map. Let $v \in V(C)$ and $L = P(v,$ \dots, $ w)$ be a path of type $B$ where $w \in V(C')$ and $L$ is not homologous to $C$. Then, we take second cut along $L$. Hence, we get a $(r, s, k)$-representation of $M$ and it is denoted by $K(r, s, k)$ for some $r, s, k$.        Let $S\mathcal{M}(M)$ denote a representation of $M$. We cut along the boundaries of M\"{o}bius strips, and hence, we get three components which are a cylinder namely $S'(M)$ and two M\"{o}bius strips namely $\mathcal{M}', \mathcal{M}''$ where $\mathcal{M}', \mathcal{M}'' \in \{\mathcal{M}(L_1, M), \mathcal{M}_{4}(L_2, M) \mid L_1, L_2$ are two cycles of type $B \}$. Therefore, by the similar argument as in Section 2.1, the map $M$ has a $K(r, s, k)$ or a  $K(S'(M), \mathcal{M}',  \mathcal{M}'')$ representation. So, we have 

\begin{Lemma}\label{lem44:1} The map $M$ has a $K(r, s, k)$ representation. 
\end{Lemma}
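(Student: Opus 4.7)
The plan is to use the dichotomy stated in the paragraph immediately preceding the lemma: $M$ admits either a $K(r,s,k)$ representation or a $K(S'(M),\mathcal{M}',\mathcal{M}'')$ representation in which $\mathcal{M}',\mathcal{M}''\in\{\mathcal{M}(L,M),\mathcal{M}_{4}(L,M)\}$. The first alternative is exactly what is to be proved, so the real task reduces to excluding the second alternative for type-$\{4^{4}\}$ maps on the Klein bottle. Once this is done, the $S(M)$ cutting procedure --- choose $v\in V(C)$ and a non-homologous type-$B$ path $L=P(v,\dots,w)$ with $w\in V(C')$, and cut along $L$ --- transcribes verbatim from the $\{3^{6}\}$ construction in Section~2.1 to produce the planar $K(r,s,k)$ with $r=\text{length}(C)$, $s=$ the number of horizontal type-$B$ cycles, and $k$ the induced shift.

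To exclude the M\"obius case, I would perform a link analysis at a vertex $u\in\partial\mathcal{M}'$. In a $\{4^{4}\}$-map, $lk(u)$ is a single $4$-cycle of quadrangles, and by the definition of type $B$ the four bold (non-adjacent) vertices appearing in the link $C(\mathbf{a},b,\mathbf{c},u_{i+1},\mathbf{d},e,\mathbf{f},u_{i-1})$ are pairwise distinct and non-adjacent to $u$. The M\"obius twist of $\mathcal{M}'$ forces, after a single traversal of $\partial\mathcal{M}'$, the two sides of the type-$B$ walk through $u$ to swap; consequently some quadrangle of $\mathcal{M}'$ incident to $u$ must be identified with another after reversing one of its edges. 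Mirroring the free-edge count used inside the proof of Lemma~\ref{lem36:2}, but now carried out with quadrangles in place of triangles, this identification forces either a repeated face in $lk(u)$ or the coincidence of two bold vertices of the link --- both contradicting type $B$ at $u$.

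The main obstacle will be making the link-counting uniform across the two M\"obius variants $\mathcal{M}(L,M)$ and $\mathcal{M}_{4}(L,M)$, which produce slightly different sequences of incident quadrangles along $\partial\mathcal{M}'$. My plan is to fix one variant, derive the contradiction from the type-$B$ link prescription, and then observe that the same edge bookkeeping applies to the other variant because in both cases the M\"obius strip is built from quadrangles attached to a single type-$B$ cycle and has exactly one boundary component. With the M\"obius possibility eliminated, $M$ is forced into the $S(M)$ form, and the cutting procedure of the first paragraph then delivers the $K(r,s,k)$ representation claimed by the lemma.
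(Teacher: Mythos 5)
Your overall strategy is wrong at its key step: you propose to prove the lemma by \emph{excluding} the M\"obius alternative, i.e.\ by showing that a type-$\{4^4\}$ map on the Klein bottle cannot admit a $K(S'(M),\mathcal{M}',\mathcal{M}'')$ decomposition. But that alternative genuinely occurs. The paper's Figure~8 is an explicit $\{4^4\}$ map on $18$ vertices containing the M\"obius strip $\mathcal{M}_4(L,M)=|\{[u_1,u_2,u_5,u_4],[u_2,u_3,u_6,u_5],[u_3,u_4,u_1,u_6]\}|$ bounded by the type-$B$ cycle $L=C(u_1,\dots,u_6)$ (three quadrangles glued to a $6$-cycle with a shift of half its length), together with a second such strip on the $w_i$ and a cylinder $S'(M)$ between them. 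Every vertex there has a perfectly good quadrilateral link satisfying the type-$B$ prescription, so the contradiction you hope to extract from ``a repeated face in $lk(u)$ or the coincidence of two bold vertices'' does not materialize; no amount of free-edge bookkeeping will rule out a configuration that actually exists. (Contrast this with Lemma~\ref{lem36:6}, where $K(r,1,k)$ is ruled out --- that is a different, genuinely degenerate configuration.)

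The paper's proof instead \emph{converts} the M\"obius case: it cuts each of $\mathcal{M}'$, $\mathcal{M}''$ open along a transversal type-$B$ path (using that through each vertex of a $\{4^4\}$ map there are two independent type-$B$ directions), cuts the cylinder $S'(M)$ along two matching transversal paths, and then reassembles the four planar pieces $\Box(\mathcal{M})$, $\Box'$, $\Box''$, $\Box(\mathcal{M}')$ along their common boundary paths into a single planar rectangle, which is a $K(r',s',k')$ representation (Figure~9 shows the result $K(6,3,1)$ for the map of Figure~8). So the lemma holds not because $S\mathcal{M}(M)$ is impossible, but because the two representations are not mutually exclusive for this face type. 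If you want to salvage your write-up, replace the exclusion argument with this cut-and-reglue construction; the first paragraph of your proposal (the $S(M)$ case) is fine as it stands.
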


\begin{proof} By the preceding above arguments, the map $M$ has either a $K(r, s, k)$ or a $K(S'(M),$ $\mathcal{M}',$ $\mathcal{M}'')$ representation. Assume that $K(S'(M), \mathcal{M}', \mathcal{M}'')$ denotes $M$. Let $\partial S'(M) = \{C, C'\}, C = C(u_1, \dots, u_l)$ and $C' = C(v_1, \dots, v_l)$, and $\mathcal{M}' = \mathcal{M}(L, M), \mathcal{M}'' = \mathcal{M}(L', M)$ for some cycles $L, L'$ of type $B$. (For an example in Figure 8, $L= C(u_1, \dots, u_6), L' = C(w_1, w_2, \dots, w_6), \mathcal{M}' = \mathcal{M}_4(L, M) = |\{[u_1, u_2, u_5, u_4], [u_2, u_3, u_6, u_5], [u_3, u_4, u_1, u_6]\}|, \mathcal{M}''$ $= \mathcal{M}_4(L', M) = |\{[w_1, w_2, w_5, w_4], [w_2, w_3, w_6, w_5], [w_3, w_4, w_1, w_6]\}|$ and $S'(M) = |\{ [x_1,$ $x_2,$ $w_2,$ $w_1], [x_2, x_3, w_3, w_2], [x_3, x_4, w_4, w_3], [x_4, x_5, w_5, w_4], [x_5, x_6, w_6, w_5], [x_6, x_1, w_1, w_6], [x_1, x_2,$ $u_2,$ $u_1], [x_2, x_3, u_3, u_2], [x_3, x_4, u_4, u_3], [x_4, x_5, u_5, u_4], [x_5, x_6, u_6, u_5], [x_6, x_1, u_1, u_6] \}|$.) Let $u \in V(L)$. We cut $\mathcal{M}(L, M)$ along the path of type $B$ through $u$. Thus, we get a rectangular planar representation of $\mathcal{M}(L, M)$ which has three horizontal paths of same length. Since above and lower paths define a cycle, it implies that its length is twice of length($L$). By definition of $B$, we have two paths $Q_1, Q_2$ of type $B$ through $u_1$. Let $Q_1 \subset C$. In this case, we cut $\mathcal{M}(L, M)$ along the path $P(u_1, x, u_{l'+1}) \subset Q_2$ of type $B$ where $x \in V(L)$. Hence, we get a planar polyhedral representation, say $\Box(\mathcal{M})$. (For an example in Figure 8, we cut $\mathcal{M}(L, M)$ along $P(u_1, u_4)$ and hence, we get $\Box(\mathcal{M}_4)$ which is bounded by $P(u_1, u_4), P(u_4, u_5, u_6, u_1), P(u_1, u_4), P(u_4, u_3, u_2, u_1)$.) Again, let $Z_1, Z_2$ denote two paths of type $B$ through $u_{l'+1}$. Let $Z_1 \subset C$. Similarly, we cut $S'(M)$ along the path $P(u_1, \dots, v_{t}) \subset Q_2$ of type $B$ and then, along $P(u_{l'+1}, \dots, v_{l'+t}) \subset Z_2$ of type $B$. Hence, we get two planar polyhedral representations $\Box'$ and $\Box''$. (For example in Figure 8, we cut $S'(M)$ along $P(u_1, x_1, w_1)$ and $P(u_4, x_4, w_4)$ and we get $\Box' = |\{ [x_1, x_2, w_2, w_1], [x_2, x_3, w_3,$ $w_2], [x_3,$ $x_4, w_4, w_3], [x_1, x_2, u_2, u_1], [x_2, x_3, u_3, u_2], [x_3, x_4, u_4, u_3]\}|$ and $\Box'' = |\{ [x_4, x_5, u_5,$ $u_4],$ $[x_5, x_6, u_6, u_5], [x_6, x_1, u_1, u_6], [x_4, x_5, w_5, w_4], [x_5, x_6, w_6, w_5], [x_6, x_1, w_1, w_6]\}|$.) Similarly, we cut $\mathcal{M}(L', M)$ along $P(v_t, z, v_{l'+t}) \subset Q_2$ of type $B$ where $z \in V(L')$. Hence, we get $\Box(\mathcal{M'})$. (For example in Figure 8, we cut $\mathcal{M}(L', M)$ along $P(w_1, w_4)$ and hence, we get $\Box(\mathcal{M}_4')$ which is bounded by $P(w_1, w_4), P(w_4, w_5, w_6, w_1), P(w_1, w_4), P(w_4, w_3, w_2, w_1)$.) The boundaries of $\Box(\mathcal{M})$ are $P(u_1, x, u_{l'+1}), P(u_{l'+1}, \dots, u_{l}, u_1), P(u_{l'+1}, x, u_{1})$ and $P(u_{l'+1}, \dots, u_{1})$. The boundaries of $\Box'$ are $P(u_1, u_2, \dots, u_{l'+1}), P(u_{l'+1}, \dots, v_{t+l'}), P(v_{t+l'}, \dots, v_{t})$ and $P(v_{t}, \dots,$ $u_2, u_{1})$. The boundaries of $\Box''$ are $P(u_{l'+1}, \dots, u_{l}, u_1), P(u_{1}, \dots, v_{t}), P(v_{t}, \dots, v_{t+l'})$ and $P(v_{t+l'}, \dots, u_{l'+1})$. The boundaries of $\Box(\mathcal{M'})$ are $P(v_t, z, v_{l'+t}), P(v_{l'+t}, \dots, v_t), P(v_{t}, z,$ $v_{t+l'})$ and $P(v_{l'+t}, \dots, v_{t})$. Now, we consider $\Box(\mathcal{M})$ and $\Box'$, and identify along $P(u_1, \dots,$ $u_{l'+1})$. Then, we consider $\Box(\mathcal{M})$ and $\Box''$, and identify along $P(u_{l'+1}, \dots, u_{1})$. Again, we consider $\Box''$ and $\Box(\mathcal{M'})$, and identify along $P(v_{t+l'}, \dots, v_{t})$. Hence, we get a planar polyhedral representation of $K(S'(M), \mathcal{M}', \mathcal{M}'')$ which is bounded by cycle $\mathcal{C} = P(v_t, \dots, u_1) \cup P(u_1, \dots, u_{l'+1}) \cup P(u_{l'+1}, \dots, v_{t+l'}) \cup P(v_{t+l'}, \dots, v_t)$ and path $P(v_t, \dots, v_{t+l'})$. Hence, we get a $K(r', s', k')$ representation from $K(S'(M), \mathcal{M}', \mathcal{M}'')$ for some $k'$ where $r'$ = length($\mathcal{C}$). (For example in Figure 9, we get $K(6, 3, 1)$ from Figure 8.) So, every map $M$ has $K(r, s, k)$ representation for some $r, s, k$. Similar argument as above we repeat if `$\mathcal{M}' = \mathcal{M}_4(L, M)$ and $\mathcal{M}'' = \mathcal{M}(L', M)$', and if `$\mathcal{M}' = \mathcal{M}_4(L, M)$ and $\mathcal{M}'' = \mathcal{M}_4(L', M)$'.
\end{proof}

So, by the above Lemma \ref{lem44:1}, $K(r, s, k)$ denotes $M$ for some $r, s, k$. The $K(r, s, k)$ has two cycles of type $B$ through each vertex. So, by the similar argument as in Lemma  \ref{lem36:5}, it has two cycles of type $B$ upto homologous. Let $C'$ and $C''$ denote two cycles in $M$ through a vertex. The cycles are non-contractible and represent two generators of the fundamental group $\mathbb{Z} \times \mathbb{Z}_2$ of the Klein bottle as it is discussed in Section 2.1. Since $\mathbb{Z} \ncong \mathbb{Z}_2$, it follows that through each vertex of $M$, there is only one cycle of type $B$ which is homologous to $\mathbb{Z}$ and denote it by $C'$. In this case, by the above preceding argument, we get a $K(r, s, k)$ representation of $M$. Similarly, when we consider $C''$ then we get a $K(S'(M), \mathcal{M}', \mathcal{M}'')$ representation of $M$. Since these two cycles represent generator of two different groups $\mathbb{Z}, \mathbb{Z}_2$, it implies that there is no isomorphism map which maps cycle $C'$ to $C''$. Therefore, we consider cycle $C'$ and its homologous cycles in the proof of Lemma \ref{lem44:iso}. By the above Lemma \ref{lem44:1}, every map has a $K(r, s, k)$ representation. So, we study only $K(r, s, k)$ representations. 

We define admissible relations among $r, s, k$ of $K(r, s, k)$ such that $K(r, s, k)$ represents a map after identifying its boundaries. The proof of next Lemma \ref{lem44:2} similar as in Lemma \ref{lem36:6}. We consider each case separately and if we consider value of $r, s, k$ not belong to the mentioned ranges in Lemma \ref{lem44:2} then we get some vertex whose link is not a cycle. Thus, we have 

\begin{Lemma}\label{lem44:2} The maps of type $\{4^{4}\}$ of the form $K(r, s, k)$ exist if and only if the following holds : (i) rs $\geq$ 9, (ii) r $\geq$ 3, (iii) s $\ge$ 3, and (iv) $k \in \{ t : 0 \leq t \leq r-1\}$ if $s \ge 3$.
\end{Lemma}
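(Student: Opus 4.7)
The plan is to mirror the proof strategy of Lemma~\ref{lem36:6} for type $\{3^6\}$, translated to the quadrangular setting. I would begin by labelling the vertices of a $K(r,s,k)$ representation as $u_{i,j}$ with $0 \le i \le s-1$ and $0 \le j \le r-1$, arranged so that each row $C(i) = C(u_{i,0}, \ldots, u_{i,r-1})$ is a horizontal cycle of type $B$ of length $r$, with the $s$ rows stacked vertically. The left and right vertical boundaries are identified in the natural way to form a cylinder, while the top and bottom boundaries are identified with a shift of $k$ together with the orientation-reversing twist forced by the Klein bottle.

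For sufficiency, suppose $r \ge 3$, $s \ge 3$ (which already forces $rs \ge 9$) and $0 \le k \le r-1$. I would verify directly that each vertex $u_{i,j}$ has link an $8$-cycle alternating between its four edge-neighbors $u_{i, j-1}, u_{i-1,j}, u_{i, j+1}, u_{i+1,j}$ and its four diagonal non-neighbors $u_{i\pm 1, j\pm 1}$, with indices interpreted via the boundary identifications. The conditions $r \ge 3$ and $s \ge 3$ ensure that these eight surrounding vertices are pairwise distinct and distinct from $u_{i,j}$ itself, so the link is indeed a simple $8$-cycle and the type $B$ pattern from Section~2.2 is preserved globally.

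For necessity, I would show that violating any of (i)--(iv) creates a vertex whose link fails to be a simple cycle. If $r \le 2$, then in any horizontal row the putatively distinct horizontal neighbors $u_{i, j-1}$ and $u_{i, j+1}$ either coincide with each other or coincide with $u_{i,j}$, forcing a repeated vertex in the link. The case $s \le 2$ is handled symmetrically using the vertical neighbors $u_{i-1,j}, u_{i+1,j}$. Condition (i) is stated alongside (ii)--(iii) for parallelism with Lemma~\ref{lem36:6} although it follows from them. For (iv), the horizontal shift $k$ is only well-defined modulo $r$, and any integer $k$ can be reduced to $\{0, 1, \ldots, r-1\}$ to yield an isomorphic representation.

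The step I expect to be the main obstacle is checking that the link structure remains an $8$-cycle at vertices adjacent to the top-bottom identification, since this gluing combines the cyclic shift $k$ with the Klein-bottle reflection and can, for certain parities, cause a diagonal vertex to be identified with an edge-neighbor. I would handle this by a case split on the parity of $k$ and of $k+r$, in direct parallel to the argument in Lemma~\ref{lem36:6}, using the definition of type $B$ (with its alternation between adjacent and non-adjacent entries in the link) in place of type $A$ throughout.
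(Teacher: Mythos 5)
Your proposal is correct and follows essentially the same route as the paper, which itself only sketches this lemma by asserting that parameter values outside the stated ranges produce a vertex whose link fails to be a cycle, and that $k$ ranges over $\{0,\dots,r-1\}$ because the base horizontal cycle has length $r$. Your explicit coordinatization and link-by-link verification is a more detailed elaboration of exactly that argument, not a different method.
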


Let $M_{1}$ and $M_{2}$ be two maps of type $\{4^{4}\}$ on the Klein bottle. Then, we have

\begin{Lemma}\label{lem44:iso}
Let $K(r_{i}, s_{i}, k_{i})$ denote a $(r_i, s_i, k_i)$ -representation of $M_{i}$ on $n$ vertices for $i \in \{1, 2\}$. Then,\ $M_{1} \cong M_{2}$ if $(r_1, s_1, l_1) = (r_2, s_2, l_2)$ where $ l_i \in \{k_i~ mod(2), (k_i + r_i)~ mod(2)\}$.
\end{Lemma}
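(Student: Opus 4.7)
The plan is to mirror the argument of Lemma \ref{lem36:iso} almost verbatim, exploiting the fact that the only structural difference between maps of type $\{3^6\}$ and $\{4^4\}$ is that faces are quadrangles rather than triangles, and degree is four rather than six. Let $C(i, 0), C(i, 1), \ldots, C(i, s_i - 1)$ denote the horizontal cycles of type $B$ in $K(r_i, s_i, k_i)$, with $C(i, 0) = C(u_{0,0}^{(i)}, u_{0,1}^{(i)}, \dots, u_{0, r_i - 1}^{(i)})$ and analogously for the other cycles. Since by Lemma \ref{lem44:1} and the discussion that follows every map of type $\{4^4\}$ contains a unique cycle of type $B$ (up to homology) representing the $\mathbb{Z}$-generator of the fundamental group, Lemma \ref{lem44:2} forces $r_1 = r_2$ and $s_1 = s_2$ once the two maps are isomorphic via any homeomorphism carrying type-$B$ cycles to type-$B$ cycles. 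I would split the proof into two cases.

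First I would handle the case $k_1 = k_2$. Here the map $f_1 : V(K(r_1, s_1, k_1)) \to V(K(r_2, s_2, k_2))$ defined by $f_1(u_{t,i}^{(1)}) = u_{t, i}^{(2)}$ is a vertex bijection. Using the defining link pattern $lk(u_{t,i}) = C(\mathbf{a}, b, \mathbf{c}, u_{t,i+1}, \mathbf{d}, e, \mathbf{f}, u_{t,i-1})$ of type $B$, one checks that $f_1$ carries $lk(u_{t,i}^{(1)})$ onto $lk(u_{t,i}^{(2)})$ for all $t, i$, because every neighbor and second-neighbor is determined by its pair of indices in the same way for both representations, and the horizontal identifications match because $k_1 = k_2$ gives the same twist on the lower and upper cycles. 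Hence $f_1$ is an isomorphism.

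Next I would handle the case $k_1 \ne k_2$ under the hypothesis $l_1 = l_2$. Assume first $l_1 = l_2 = 0$, so that for each $i$ either $2 \mid k_i$ or $2 \mid (k_i + r_i)$. If $2 \mid k_i$, consider the vertex $u_{0, k_i/2}^{(i)}$ of $C(i,0)$; by definition of type $B$ there is a unique vertical path of type $B$ of length $s_i$ through this vertex, terminating at a vertex of the upper cycle that identifies with $u_{0, k_i/2}^{(i)}$ itself. Re-identify $K(r_i, s_i, k_i)$ along its vertical boundaries and re-cut along this vertical path; the resulting planar representation has the same horizontal cycles, the same number $s_i$ of them, and zero horizontal shift, so it equals $K(r_i, s_i, 0)$. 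If instead $2 \mid (k_i + r_i)$, the same argument applied to $u_{0, (k_i + r_i)/2}^{(i)}$ produces $K(r_i, s_i, 0)$. The case $l_1 = l_2 = 1$ is entirely analogous, using $u_{0, (k_i - 1)/2}^{(i)}$ if $2 \mid (k_i - 1)$, and $u_{0, (k_i + r_i - 1)/2}^{(i)}$ otherwise, to produce $K(r_i, s_i, 1)$. Having reduced both representations to the common canonical form $K(r_i, s_i, l_i)$ with $r_1 = r_2$ and $s_1 = s_2$, Case~1 delivers the required isomorphism $M_1 \cong M_2$.

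The main obstacle is the bookkeeping in Case~2: one must verify that the re-cut path actually meets every horizontal cycle $C(i, j)$ exactly once, that its total length equals $s_i$, and that the resulting shift of the upper cycle relative to the lower is precisely $l_i$. This is a direct calculation from the periodicity of type-$B$ paths (each horizontal cycle has length $r_i$ and the vertical type-$B$ path is well-defined up to starting vertex), but the verification is combinatorially identical to the one appearing in the proof of Lemma \ref{lem36:iso}, only with quadrangle-links in place of triangle-links. Once this is carried out, no new ideas beyond those already used for the $\{3^6\}$ case are required.
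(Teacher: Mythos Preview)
Your proposal is correct and follows essentially the same approach as the paper's own proof, which explicitly says to repeat the argument of Lemma~\ref{lem36:iso}: define the vertex bijection $f_1$ when $k_1=k_2$, and in the case $k_1\neq k_2$ re-cut along the vertical type-$B$ path through the appropriate half-shift vertex to reduce each $K(r_i,s_i,k_i)$ to the canonical form $K(r_i,s_i,0)$ or $K(r_i,s_i,1)$. Your remark about the uniqueness (up to homology) of the type-$B$ cycle representing the $\mathbb{Z}$-generator matches the paper's observation that the second cycle $L_{2,i}$ need not be considered, since no isomorphism can carry it to $L_{1,i}$.
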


\begin{proof} The proof repeats the similar argument as in Lemma \ref{lem36:iso}. Let $r_1 = r_2, s_1 = s_2, k_1 =k_2$. Define an isomorphism map $f_{1} : V(K(r_{1}, s_{1}, k_{1}))\rightarrow V(K(r_{2}, s_{2}, k_{2}))$ as in Lemma \ref{lem36:iso}. So, $M_{1} \cong M_{2}$ by $f_1$.    When $k_{1} \neq k_{2}$ then we repeat similar argument as in Lemma \ref{lem36:iso}. Hence, we get either a $K(r_i, s_i, 0)$ or a $K(r_i, s_i, 1)$ representation from $K(r_i, s_i, k_i)$. So, by $f_1, M_{1} \cong M_{2}$. There are two cycles $L_{1, i}$ and $L_{2, i}$ through a vertex in $M_i$. Let $L_{1, i}$ denote a horizontal cycle in $K(r_{i}, s_{i}, k_{i})$. By the above preceding argument, there is no isomorphism map which maps $L_{1, i}$ to $L_{2, i}$ since these two cycles represent generator of non-isomorphic groups. So, we do not consider cycle $L_{2, i}$ in the above case. Therefore, the given conditions are the sufficient conditions for isomorphism. This completes the proof.
\end{proof}

Thus, we have 

\begin{cor} \label{cor44:1} Let $K(r_{i}, s_{i}, k_{i})$ denote a $(r_{i}, s_{i}, k_{i})$-representation of $M_{i}$ on $n$ vertices for $i \in \{1, 2\}$. Then, $K(r_{1}, s_{1}, k_{1}) \not\cong K(r_{2}, s_{2}, k_{2})~ \forall~ r_{1} \neq r_{2}, K(r_{1}, s_{1}, k_{1}) \not\cong K(r_{2}, s_{2}, k_{_{2}})~ \forall~ s_{1} \neq s_{2}, K(r_{1}, s_{1}, 0) \cong K(r_{1}, s_{1}, k_{1})$ for $2 \mid k_1$ or $2 \mid (k_1 + r_1)$, and $K(r_{1}, s_{1}, 1) \cong K(r_{1}, s_{1}, k_{1})$ for $2 \mid (k_1 - 1)$ or $2 \mid (k_1 + r_1 - 1)$.
\end{cor}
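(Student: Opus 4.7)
The plan is to derive this corollary directly from the preceding Lemma \ref{lem44:iso} by unpacking what the isomorphism invariants force. First I would handle the two non-isomorphism claims. The integer $r$ in $K(r,s,k)$ is the common length of any horizontal cycle of type $B$, and by the same reasoning as Lemma \ref{lem36:5} (adapted to quadrangles: all horizontal cycles of type $B$ homologous to a fixed one have the same length because consecutive such cycles bound a cylindrical sub-region whose boundaries must match), this length is an invariant of the map. Hence any isomorphism $M_{1}\to M_{2}$ must preserve $r$, proving $K(r_{1},s_{1},k_{1})\not\cong K(r_{2},s_{2},k_{2})$ whenever $r_{1}\neq r_{2}$. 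Similarly, $s$ counts the number of pairwise homologous horizontal cycles of type $B$ in the $K(r,s,k)$ representation, and on $n$ vertices the identity $n = rs$ together with invariance of $r$ forces $s$ to be an invariant; thus $s_{1}\neq s_{2}$ also rules out isomorphism.

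For the two positive isomorphism claims, I would apply Lemma \ref{lem44:iso} with $M_{2}=M_{1}$ and the second parameter triple chosen as $(r_{1},s_{1},0)$ or $(r_{1},s_{1},1)$. Writing $l_{1}\in\{k_{1}\bmod 2,\,(k_{1}+r_{1})\bmod 2\}$ and $l_{2}\in\{k_{2}\bmod 2,\,(k_{2}+r_{2})\bmod 2\}$, the hypothesis $(r_{1},s_{1},l_{1})=(r_{2},s_{2},l_{2})$ of Lemma \ref{lem44:iso} becomes: for $K(r_{1},s_{1},0)$, the value $l_{2}=0$ is always available, so we need $l_{1}=0$, i.e.\ $2\mid k_{1}$ or $2\mid (k_{1}+r_{1})$, which is exactly the stated hypothesis. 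Analogously, for $K(r_{1},s_{1},1)$ we need $l_{1}=1$, i.e.\ $2\mid (k_{1}-1)$ or $2\mid (k_{1}+r_{1}-1)$. Applying the lemma yields the two isomorphisms.

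The main obstacle, such as it is, is verifying that $r$ really is an invariant in the $\{4^{4}\}$ setting, because the proof of Lemma \ref{lem44:1} shows that a map may \emph{a priori} admit both a $K(r,s,k)$ and a $K(S'(M),\mathcal{M}',\mathcal{M}'')$ representation, and one must be careful that the horizontal cycle chosen in the $K(r,s,k)$ representation is intrinsic. This is handled by noting, as the paragraph preceding Lemma \ref{lem44:2} explains, that only cycles of type $B$ homologous to the $\mathbb{Z}$-generator of $\pi_{1}$ of the Klein bottle are used to build $K(r,s,k)$, and all such cycles have the same length. With this invariance in place, both non-isomorphism assertions are immediate, and the isomorphism assertions reduce to a direct application of Lemma \ref{lem44:iso}.
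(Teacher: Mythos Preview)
Your proposal is correct and follows essentially the same approach the paper takes: the corollary is stated with no separate proof beyond ``Thus, we have,'' and is meant to be read off from Lemma~\ref{lem44:iso} together with the preceding discussion that the horizontal cycle of type $B$ homologous to the $\mathbb{Z}$-generator is unique up to homology (so its length $r$, and hence $s=n/r$, is an invariant). Your unpacking of the invariance of $r$ and $s$ and the direct application of Lemma~\ref{lem44:iso} for the two isomorphism claims matches the paper's intended derivation.
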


\noindent \textbf{2.3 Maps of type $\{6^3\}$~:~}
In Section 2.1, we have shown that the maps of type $\{3^6\}$ on the Klein bottle can be classified up to isomorphism. Let $P_1,$ \dots, $P_k$ denote maps of type $\{3^6\}$ on $n$ vertex. Let $M_i$ denote the dual of $P_i$. Let $m = \# F(P_i)$. By the notion of duality, then $M_1,$ \dots, $M_k$ are the only maps of type $\{6^{3}\}$ with $m$ vertices upto isomorphism. In \cite{mu:torus-hc13}, the details of the above claim has been discussed elaborately. Therefore, one can classify maps of type $\{6^3\}$  on the Klein bottle using the classification of maps of type $\{3^6\}$ on the Klein bottle. Thus, we have 

\begin{Lemma} \label{lem63.iso} The maps of type $\{6^3\}$ on the Klein bottle can be classified up to isomorphism using the classifcation of maps of type $\{3^6\}$ on the Klein bottle.
\end{Lemma}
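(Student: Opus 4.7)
The plan is to exploit the well-known duality between maps of type $\{3^6\}$ and maps of type $\{6^3\}$. Given a map $M$ of type $\{6^3\}$ on the Klein bottle with $n$ vertices, I would form its geometric dual $M^{\ast}$: place one vertex in the interior of each hexagonal face of $M$ and join two new vertices by an edge whenever the corresponding faces share an edge of $M$. Since every vertex of $M$ has degree $3$, each vertex of $M$ becomes a triangular face of $M^{\ast}$; since every face of $M$ is a hexagon, each new vertex of $M^{\ast}$ has degree $6$. Hence $M^{\ast}$ is a map of type $\{3^6\}$ on the Klein bottle. The correspondence $M\mapsto M^{\ast}$ is involutive up to natural homeomorphism, so it is a bijection between maps of type $\{6^3\}$ and maps of type $\{3^6\}$.

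Next I would pin down the vertex count of the dual. For a map of type $\{6^3\}$ on a Klein bottle (Euler characteristic $0$) the degree sum and face-edge incidence relations give $2E=3V$ and $2E=6F$, so $F=V/2$. Dualizing swaps vertices and faces, so a $\{6^3\}$ map with $n$ vertices produces a $\{3^6\}$ dual with $n/2$ vertices (in particular $n$ must be even, which is already implicit from $2E=3V$). Conversely, each $\{3^6\}$ map with $m$ vertices has $2m$ faces and hence its dual is a $\{6^3\}$ map with $2m$ vertices.

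I would then argue that duality preserves isomorphism. A homeomorphism of the Klein bottle sending $M_1$ to $M_2$ (mapping vertices, edges, faces of $M_1$ to the corresponding cells of $M_2$ preserving incidence) transports the dual cell decomposition of $M_1$ to the dual cell decomposition of $M_2$, yielding an isomorphism $M_1^{\ast}\cong M_2^{\ast}$; the converse follows by applying duality again. So two $\{6^3\}$ maps on $n$ vertices are isomorphic if and only if their duals, which are $\{3^6\}$ maps on $n/2$ vertices, are isomorphic. Combining this with the classification of $\{3^6\}$ maps established in Section~2.1 (Lemmas \ref{lem36:1}--\ref{lem36.8} and Corollaries \ref{cor36:1}, \ref{cor36:2}) immediately gives a complete classification of $\{6^3\}$ maps and proves the lemma; in particular this justifies the formula $i(n)_{\{6^3\}}=i(n/2)_{\{3^6\}}$ appearing in Theorem~\ref{theo:main}.

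The main obstacle is checking that the duality construction descends cleanly to isomorphism classes, i.e.\ that an isomorphism of polyhedral complexes in the sense defined in the introduction induces a genuine isomorphism of the dual polyhedral complexes. This requires verifying that the closure-of-face condition (the $2$-cell condition) is preserved, that the $K(r,s,k)$ and $K(S'(M),\mathcal{M}',\mathcal{M}'')$ representations of a $\{3^6\}$ map transform into the analogous representations for $\{6^3\}$, and that non-isomorphic $\{3^6\}$ representations remain non-isomorphic after dualizing. Once this verification is in place, the classification for $\{6^3\}$ is merely a relabelling of the classification for $\{3^6\}$, which is exactly what the lemma asserts.
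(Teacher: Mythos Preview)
Your proposal is correct and follows essentially the same approach as the paper: the paper's argument for this lemma is precisely the duality $M\mapsto M^{\ast}$ between $\{6^3\}$ and $\{3^6\}$ maps, noting that duality is a bijection on isomorphism classes and citing \cite{mu:torus-hc13} for the details. Your write-up simply makes explicit the Euler-characteristic vertex count and the isomorphism-preservation that the paper leaves to the reference.
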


\noindent \textbf{2.4 Maps of type $\{3^{3}, 4^{2}\}$~:~}
Let $M$ be a map of type $\{3^{3}, 4^{2}\}$ on the Klein bottle. We recall some definitions and results from Maity and Upadhyay \cite{mu:torus-hc13} which are defined on the torus. These are also well defined (clear from the definitions) in the map on the Klein bottle. The definitions are as follows. 
Let $P(\dots, u_{i-1}, u_{i}, u_{i+1}, \dots)$ be a path in edge graph of $M$. The path $P$ is of type $A_1$ \cite{mu:torus-hc13} if all the triangles incident with $u_i$ lie on one side and all quadrangles incident with $u_i$ lie on the other side of $P$ at $u_{i}$ for all $i$. 
Let $W$ be a maximal walk of type $A_1$ in $M$. From definition, it follows that there is only one path of type $A_1$ through each vertex of $M$. So, by the similar argument as in Lemma \ref{lem36:1}, the maximal walk $W$ of type $A_1$ is a cycle of type $A_1$. Let $C$ be a cycle of type $A_{1}$ in $M$. Let $S$ denote a set of faces which are incident with $C$. Then, we have 

\begin{Lemma}\label{lem3342.1} The geometric carrier $|S|$ is $S(C, M), S\mathcal{M}_\triangle(C, M)$ or $S\mathcal{M}_\Box(C, M)$.
\end{Lemma}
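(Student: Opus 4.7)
The plan is to mimic the case analysis used in Lemma \ref{lem36:2}, but now keeping track of the two distinct face-types that sit on opposite sides of $C$. Fix a vertex $u_{1}\in V(C)$ and write $lk(u_{1})$ so that the three triangles incident with $u_{1}$ lie on one side of $C$ and the two quadrangles incident with $u_{1}$ lie on the other; this is precisely what type $A_{1}$ guarantees, and it lets us split $S=S_{\triangle}\cup S_{\Box}$ where $S_{\triangle}$ is the set of triangular faces meeting $C$ and $S_{\Box}$ is the set of quadrangular faces meeting $C$.

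Let $SQ$ be the cyclic sequence of faces incident with $C$, obtained by cutting along a path of type $A_{1}$ through $u_{1}$. First I would handle the case that every face in $SQ$ appears exactly once: then the identification along the chosen path either is orientation-preserving, giving the cylinder $S(C,M)$, or contains a twist, which by the local description of type $A_{1}$ would force $C$ to fail to be of type $A_{1}$ at $u_{1}$; so in this first case $|S|=S(C,M)$.

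Next I would treat the case that some face repeats in $SQ$. The analysis now splits according to the side on which the repetition occurs. If a triangle $\triangle\in S_{\triangle}$ repeats, I would apply the edge-counting argument of Lemma \ref{lem36:2} verbatim to $S_{\triangle}$: each triangle has only three edges, so a maximal sub-sequence of repeated triangles leads to the conclusion that every triangle of $S_{\triangle}$ appears exactly twice, producing a triangular M\"obius strip $\mathcal{M}_{\triangle}(C,M)$ on that side. The other (quadrangular) side must then consist of distinct faces, since a single face of $M$ cannot be simultaneously on both sides (that would force the two sides to coincide, which the representation argument at the end of Lemma \ref{lem36:2} excludes); hence $|S|=S\mathcal{M}_{\triangle}(C,M)$. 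Symmetrically, if a quadrangle repeats, the same free-edge count (now with four edges per face, but with the constraint that one edge of each quadrangle must lie on $C$) shows that each quadrangle of $S_{\Box}$ appears exactly twice, giving a quadrangular M\"obius strip $\mathcal{M}_{\Box}(C,M)$, and $|S|=S\mathcal{M}_{\Box}(C,M)$.

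Finally I would rule out the configuration in which both $S_{\triangle}$ and $S_{\Box}$ form M\"obius strips: this would force $S_{\triangle}\cap S_{\Box}\ne\emptyset$, which is impossible because triangles and quadrangles are distinct face types. This disjointness is in fact the main technical comfort of this lemma, and it is what makes the $\{3^{3},4^{2}\}$ case cleaner than $\{3^{6}\}$; the only obstacle I foresee is carrying out the edge-counting step cleanly for the quadrangular side, since a quadrangle sharing only a vertex with $C$ (rather than an edge) gives slightly more freedom and has to be excluded using the type $A_{1}$ description of $lk(u_{i})$ along $C$. Combining the three mutually exclusive cases yields the stated trichotomy.
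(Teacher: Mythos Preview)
Your overall architecture matches the paper's: split $S$ into the triangular side $S_{\triangle}$ and the quadrangular side $S_{\Box}$, handle the all-distinct case as a cylinder, and then analyse repetitions side by side using the edge-counting of Lemma \ref{lem36:2}. Where you diverge from the paper, and where the gap is, is in the mutual-exclusion step.

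You assert that if both $S_{\triangle}$ and $S_{\Box}$ were M\"obius strips then $S_{\triangle}\cap S_{\Box}\ne\emptyset$, and you use the same claim implicitly earlier when you say ``the other (quadrangular) side must then consist of distinct faces''. This does not follow: two M\"obius strips with disjoint face sets glued along a common boundary circle give a perfectly good Klein bottle, so nothing forces the intersection to be non-empty. The sentence you are quoting from Lemma \ref{lem36:2} (``both $|S_1|,|S_2|$ cannot be M\"obius strips as $S_1\cap S_2=\emptyset$'') is specific to the $\{3^{6}\}$ situation, where the two sides carry faces of the \emph{same} type and the argument there had already shown $S_1=S_2$ when they overlap; it does not transfer to the present setting by appeal to the disjointness of face types.

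The paper resolves this with a parity argument that you should supply instead. On the triangular side the count from Lemma \ref{lem36:2} forces the M\"obius identification only when $\mathrm{length}(C)$ is odd (otherwise the midpoint vertex sees the wrong number of triangles, contradicting type $A_{1}$). On the quadrangular side there is exactly one quadrangle per edge of $C$, and the M\"obius identification pairs them up, so $\mathrm{length}(C)$ must be even. Hence odd length gives $S\mathcal{M}_{\triangle}(C,M)$, even length gives $S\mathcal{M}_{\Box}(C,M)$, and the two cannot coexist. Replace your disjoint-face-types argument with this parity dichotomy and the proof goes through; as a side benefit, the worry you flag about quadrangles meeting $C$ only in a vertex disappears, since type $A_{1}$ forces every incident quadrangle to share an edge with $C$.
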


\begin{proof} We proceed with the similar argument as in Lemma \ref{lem36:2}. Let $lk(w_1) = C(x_1, w_r, \textit{\textbf{v}}_r,$ $v_1, \textit{\textbf{v}}_2, w_2, x_2)$. We consider $|S|$ and cut along the path $P(x_1, w_1, v_1)$. Hence, we have the following cases. If a path $P(x_1, w_1, v_1)$ identifies with $P(x_1, w_1, v_1)$ with out any twist then we get a cylinder (see an example in Figure 10, $C(w_1, \dots, w_r)$). We denote it by $S(C, M)$. If $P(x_1, w_1, v_1)$ identifies with $P(x_1, w_1, v_1)$ with a twist then we get a M\"{o}bius strip (see an example in Figure 12). In this case, we consider $lk(w_1)$ and which is of type $\{3^2, 4, 3, 4\}$. So, this case is not possible. Now by definition of $A_1$, the triangles which are incident with $C$ are lie on one side of $C$ and quadrangles are lie on the other side of $C$. Assume that the cycle $C =C(w_1, w_2, \dots, w_{2r+1})$ is of odd length. By the similar argument as in Lemma \ref{lem36:2}, the triangles which are incident with $C$ forms a M\"{o}bius strip $\mathcal{M}_\triangle(C, M)$. In this case, we get $S\mathcal{M}_\triangle(C, M)$ (see an example in Figure 12). Similarly as above, we proceed for even length cycle and in this case, we get $S\mathcal{M}_\Box(C, M)$ (see an example in Figure 13). Therefore, when length of $C$ is odd then the quadrangles can not repeat and when length of $C$ is even then the triangles can not repeat. So, the triangles and quadrangles can not repeat simultaneously. Therefore, by combining above all the cases, the geometric carrier $|S|$ is $S(C, M), S\mathcal{M}_\triangle(C, M)$ or $S\mathcal{M}_\Box(C, M)$.
\end{proof}

\begin{picture}(0,0)(10,21)
\tiny
\setlength{\unitlength}{1.6mm}
\drawpolygon(5,5)(20,5)(20,15)(5,15)
\drawpolygon(22,5)(37,5)(37,15)(22,15)


\drawline[AHnb=0](10,5)(10,15)
\drawline[AHnb=0](15,5)(15,15)
\drawline[AHnb=0](27,5)(27,15)
\drawline[AHnb=0](32,5)(32,15)
\drawline[AHnb=0](5,10)(20,10)
\drawline[AHnb=0](22,10)(37,10)

\drawline[AHnb=0](5,10)(10,15)
\drawline[AHnb=0](10,10)(15,15)
\drawline[AHnb=0](15,10)(20,15)
\drawline[AHnb=0](22,10)(27,15)
\drawline[AHnb=0](27,10)(32,15)
\drawline[AHnb=0](32,10)(37,15)

\put(20,10){\dots}
\put(20,5){\dots}
\put(20,15){\dots}

\put(5,4){\tiny ${v_{1}}$}
\put(10,4){\tiny ${v_{2}}$}
\put(15,4){\tiny ${v_{3}}$}
\put(20,4){\tiny ${v_{4}}$}
\put(22,4){\tiny ${v_{r-2}}$}
\put(27,4){\tiny ${v_{r-1}}$}
\put(32,4){\tiny ${v_{r}}$}
\put(37,4){\tiny ${v_{1}}$}

\put(5.5,9){\tiny ${w_{1}}$}
\put(10.5,9){\tiny ${w_{2}}$}
\put(15.5,9){\tiny ${w_{3}}$}
\put(20,9){\tiny ${w_{4}}$}
\put(22.5,9){\tiny ${w_{r-2}}$}
\put(27.5,9){\tiny ${w_{r-1}}$}
\put(32.5,9){\tiny ${w_{r}}$}
\put(37.5,9){\tiny ${w_{1}}$}

\put(5.5,14){\tiny ${x_{1}}$}
\put(10.5,14){\tiny ${x_{2}}$}
\put(15.5,14){\tiny ${x_{3}}$}
\put(20,14){\tiny ${x_{4}}$}
\put(22.5,14){\tiny ${x_{r-2}}$}
\put(27.5,14){\tiny ${x_{r-1}}$}
\put(32.5,14){\tiny ${x_{r}}$}
\put(37.5,14){\tiny ${x_{1}}$}

\put(11,0){\scriptsize Figure 10 : $S(C, M)$ : Cylinder}

\end{picture}

\vspace{3cm}

\begin{picture}(0,0)(10,15)
\tiny
\setlength{\unitlength}{1.6mm}
\drawpolygon(5,5)(20,5)(20,15)(5,15)
\drawpolygon(22,5)(37,5)(37,15)(22,15)


\drawline[AHnb=0](10,5)(10,15)
\drawline[AHnb=0](15,5)(15,15)
\drawline[AHnb=0](27,5)(27,15)
\drawline[AHnb=0](32,5)(32,15)
\drawline[AHnb=0](5,10)(20,10)
\drawline[AHnb=0](22,10)(37,10)

\drawline[AHnb=0](5,10)(10,15)
\drawline[AHnb=0](10,10)(15,15)
\drawline[AHnb=0](15,10)(20,15)
\drawline[AHnb=0](22,10)(27,15)
\drawline[AHnb=0](27,10)(32,15)
\drawline[AHnb=0](32,10)(37,15)

\put(20,10){\dots}
\put(20,5){\dots}
\put(20,15){\dots}

\put(5,4){\tiny ${v_{1}}$}
\put(10,4){\tiny ${v_{2}}$}
\put(15,4){\tiny ${v_{3}}$}
\put(20,4){\tiny ${v_{4}}$}
\put(22,4){\tiny ${v_{r-2}}$}
\put(27,4){\tiny ${v_{r-1}}$}
\put(32,4){\tiny ${v_{r}}$}
\put(37,4){\tiny ${x_{1}}$}

\put(5.5,9){\tiny ${w_{1}}$}
\put(10.5,9){\tiny ${w_{2}}$}
\put(15.5,9){\tiny ${w_{3}}$}
\put(20,9){\tiny ${w_{4}}$}
\put(22.5,9){\tiny ${w_{r-2}}$}
\put(27.5,9){\tiny ${w_{r-1}}$}
\put(32.5,9){\tiny ${w_{r}}$}
\put(37.5,9){\tiny ${w_{1}}$}

\put(5.5,14){\tiny ${x_{1}}$}
\put(10.5,14){\tiny ${x_{2}}$}
\put(15.5,14){\tiny ${x_{3}}$}
\put(20,14){\tiny ${x_{4}}$}
\put(22.5,14){\tiny ${x_{r-2}}$}
\put(27.5,14){\tiny ${x_{r-1}}$}
\put(32.5,14){\tiny ${x_{r}}$}
\put(37.5,14){\tiny ${v_{1}}$}

\put(16,0){\scriptsize Figure 11}

\end{picture}

\vspace{3 cm}

\begin{picture}(0,0)(-50,-47)
\tiny
\setlength{\unitlength}{1.6mm}
\drawpolygon(5,5)(20,5)(20,15)(5,15)
\drawpolygon(32,5)(37,5)(37,10)(32,10)
\drawpolygon(39,5)(54,5)(54,10)(39,10)


\drawpolygon(5,5)(20,5)(20,15)(5,15)
\drawpolygon(22,10)(32,10)(37,15)(22,15)
\drawpolygon(22,5)(32,5)(32,10)(22,10)

\drawline[AHnb=0](10,5)(10,15)
\drawline[AHnb=0](15,5)(15,15)
\drawline[AHnb=0](27,5)(27,15)
\drawline[AHnb=0](32,5)(32,15)
\drawline[AHnb=0](5,10)(20,10)
\drawline[AHnb=0](22,10)(37,10)
\drawline[AHnb=0](49,5)(49,10)
\drawline[AHnb=0](44,5)(44,10)
\drawline[AHnb=0](39,10)(54,10)

\put(20,10){\dots}
\put(20,5){\dots}
\put(20,15){\dots}

\drawline[AHnb=0](5,10)(10,15)
\drawline[AHnb=0](10,10)(15,15)
\drawline[AHnb=0](15,10)(20,15)

\drawline[AHnb=0](22,10)(27,15)
\drawline[AHnb=0](27,10)(32,15)
\drawline[AHnb=0](32,10)(37,15)



\put(37,9.8){\dots}
\put(37,5){\dots}

\put(5,4){\tiny ${v_{1}}$}
\put(10,4){\tiny ${v_{2}}$}
\put(15,4){\tiny ${v_{3}}$}
\put(20,4){\tiny ${v_{4}}$}
\put(22,4){\tiny ${v_{r-1}}$}
\put(27,4){\tiny ${v_{r}}$}
\put(32,4){\tiny ${v_{r+1}}$}
\put(35,4){\tiny ${v_{r+2}}$}
\put(39,4){\tiny ${v_{2r-1}}$}
\put(43.5,4){\tiny ${v_{2r}}$}
\put(48.5,4){\tiny ${v_{2r+1}}$}
\put(53.5,4){\tiny ${v_{1}}$}

\put(5.5,9){\tiny ${w_{1}}$}
\put(10.5,9){\tiny ${w_{2}}$}
\put(15.5,9){\tiny ${w_{3}}$}
\put(20,9){\tiny ${w_{4}}$}
\put(22.5,9){\tiny ${w_{r-1}}$}
\put(27.5,9){\tiny ${w_{r}}$}
\put(32.5,9){\tiny ${w_{r+1}}$}
\put(35,10.5){\tiny ${w_{r+2}}$}
\put(39.2,10.5){\tiny ${w_{2r-1}}$}
\put(43.5,10.5){\tiny ${w_{2r}}$}
\put(47,10.5){\tiny ${w_{2r+1}}$}
\put(53,10.5){\tiny ${w_{1}}$}

\put(5,16){\tiny ${w_{r+1}}$}
\put(10,16){\tiny ${w_{r+2}}$}
\put(15,16){\tiny ${w_{r+3}}$}
\put(18.7,16){\tiny ${w_{r+4}}$}
\put(22,16){\tiny ${w_{2r-1}}$}
\put(27,16){\tiny ${w_{2r}}$}
\put(31,16){\tiny ${w_{2r+1}}$}
\put(36,16){\tiny ${w_{1}}$}

\put(20,0){\scriptsize Figure 12 : $S\mathcal{M}_\triangle(C, M)$}

\end{picture}


\begin{picture}(0,0)(-50,-20)
\tiny
\setlength{\unitlength}{1.6mm}
\drawpolygon(5,5)(20,5)(20,15)(5,15)
\drawpolygon(22,5)(32,5)(32,15)(22,15)
\drawpolygon(32,10)(37,15)(32,15)


\drawpolygon(5,5)(20,5)(20,15)(5,15)
\drawpolygon(22,10)(37,10)(37,15)(22,15)
\drawpolygon(22,5)(32,5)(32,10)(22,10)
\drawpolygon(39,10)(54,10)(54,15)(39,15)

\drawline[AHnb=0](10,5)(10,15)
\drawline[AHnb=0](15,5)(15,15)
\drawline[AHnb=0](27,5)(27,15)
\drawline[AHnb=0](32,5)(32,15)
\drawline[AHnb=0](5,10)(20,10)
\drawline[AHnb=0](22,10)(37,10)
\drawline[AHnb=0](49,10)(49,15)
\drawline[AHnb=0](44,10)(44,15)
\drawline[AHnb=0](39,10)(54,10)

\put(20,10){\dots}
\put(20,5){\dots}
\put(20,15){\dots}

\drawline[AHnb=0](5,10)(10,15)
\drawline[AHnb=0](10,10)(15,15)
\drawline[AHnb=0](15,10)(20,15)
\drawline[AHnb=0](22,10)(27,15)
\drawline[AHnb=0](27,10)(32,15)
\drawline[AHnb=0](32,10)(37,15)
\drawline[AHnb=0](39,10)(44,15)
\drawline[AHnb=0](44,10)(49,15)
\drawline[AHnb=0](49,10)(54,15)

\put(37,10){\dots}
\put(37,15){\dots}

\put(5,4){\tiny ${w_{r+1}}$}
\put(10,4){\tiny ${w_{r+2}}$}
\put(15,4){\tiny ${w_{r+3}}$}
\put(18.5,4){\tiny ${w_{r+4}}$}
\put(22,4){\tiny ${w_{3}}$}
\put(27,4){\tiny ${w_{2}}$}
\put(32,4){\tiny ${w_{1}}$}

\put(5.5,9){\tiny ${w_{1}}$}
\put(10.5,9){\tiny ${w_{2}}$}
\put(15.5,9){\tiny ${w_{3}}$}
\put(20,9){\tiny ${w_{4}}$}
\put(22.5,9){\tiny ${w_{r-1}}$}
\put(27.5,9){\tiny ${w_{r}}$}
\put(32.5,9){\tiny ${w_{r+1}}$}
\put(36,9){\tiny ${w_{r+2}}$}
\put(39.5,9){\tiny ${w_{2r-2}}$}
\put(44.5,9){\tiny ${w_{2r-1}}$}
\put(49.5,9){\tiny ${w_{2r}}$}
\put(54,9){\tiny ${w_{1}}$}

\put(5,16){\tiny ${x_{1}}$}
\put(10,16){\tiny ${x_{2}}$}
\put(15,16){\tiny ${x_{3}}$}
\put(20,16){\tiny ${x_{4}}$}
\put(22,16){\tiny ${x_{r-1}}$}
\put(27,16){\tiny ${x_{r}}$}
\put(32,16){\tiny ${x_{r+1}}$}
\put(35.8,16){\tiny ${x_{r+2}}$}
\put(39,16){\tiny ${x_{2r-2}}$}
\put(44,16){\tiny ${x_{2r-1}}$}
\put(49,16){\tiny ${x_{2r}}$}
\put(53.5,16){\tiny ${x_{1}}$}

\put(24,0){\scriptsize Figure 13 : $S\mathcal{M}_\Box(C, M)$}

\end{picture}

\vspace{-1.8cm} 

By Lemma \ref{lem3342.1}, the map $M$ contains a $S(C, M), S\mathcal{M}_\triangle(C, M)$ or a $S\mathcal{M}_\Box(C, M)$. For each of $S(C, M), S\mathcal{M}_\triangle(C, M)$ and $S\mathcal{M}_\Box(C, M)$, we proceed with the argument as in Section 2.1 and hence, we get that the map $M$ has either a $S(M)$ or a $S\mathcal{M}(M)$ representation.  

We first assume that $S(M)$ denotes a representation of $M$. Then, $\partial S(M) = \{C, C'\}$ where $C, C'$ are two cycles of type $A_1$. We recall definition of $(r, s, k)$-representation from \cite{mu:torus-hc13} which is defined on the torus. The similar definition is also introduced in Section 2.1. Let $v \in V(C)$. We recall the following two paths in $M$ as follows. 
Let $P(\dots, v_{i-1}, v_{i}, v_{i+1}, \dots)$ be a path in edge graph of $M$. The path $P$ is of type $A_2$ \cite{mu:torus-hc13} at vertex $v_{i}$ if $lk(v_{i})=C(\textit{\textbf{a}}, v_{i-1}, \textit{\textbf{b}}, c, v_{i+1}, d, e)$ implies $lk(v_{i+1})=C(\textit{\textbf{a}}_0, v_{i+2}, \textit{\textbf{b}}_0, d, v_{i}, c, p)$ and $lk(v_{i}) = C(\textit{\textbf{x}}, v_{i+1},$ $ \textit{\textbf{z}}, l, v_{i-1}, k, m)$ implies $lk(v_{i+1}) = C(\textit{\textbf{l}},v_{i}, \textit{\textbf{m}}, x, v_{i+2}, g, z)$ for all $i$.
Similarly, let $P(\dots, w_{i-1},$ $ w_{i}, w_{i+1}, \dots)$ be a path in edge graph of $M$. The path $P$ is of type $A_3$ at vertex $w_{i}$ \cite{mu:torus-hc13} if $lk(w_{i})=C(\textit{\textbf{a}}, w_{i-1}, \textit{\textbf{b}}, c, d, w_{i+1}, e)$ implies $lk(w_{i+1})=C(\textit{\textbf{a}}_1, w_{i+2}, \textit{\textbf{b}}_1, p, e, w_{i}, d)$ and $lk(w_{i})=C(\textit{\textbf{a}}_2, w_{i+1}, \textit{\textbf{b}}_2, p, e, w_{i-1}, d)$ implies $lk(w_{i+1})=C(\textit{\textbf{p}},$ $w_{i},  \textit{\textbf{d}}, a_2, z_1, w_{i+2}, b_2)$ for all $i$.
We have two paths through $v$ of types $A_2$ and $A_3$ (by definition). Let $L_1 = C, L_{2} = P(a_1 = v, \dots, w), L_{3} = P(a_1 = v, \dots, w')$ denote two paths of types $A_{2}$ and $A_{3}$ respectively through $v$ where $w, w' \in V(C')$. Let $C :=C(a_1, a_2, \dots, a_r)$. Similarly as in Section 2.1, we say that $C$ is a horizontal base cycle and a path is vertical if it is not homologous to $C$. Let faces incident with $C$ be quadrangles and  faces incident with $C'$ be triangles in $S(M)$. Then, we make an another cut along $L_{3}$. Let $s$ denote the number of cycles which are homologous to $C$ along the path $L_{3}$. Let length($L_{3}$) = $m$. Here, one can observe thjat the $s$ is equal to the length of the path $L_{3}$, that is, $s = m$. Now length($C$) = $r$ and number of horizontal cycles along $L_{3}$ is $s$. Thus, we get a planar polyhedral representation of $S(M)$ and it is denoted by {\em $(r,  s)$-representation}. In the representation, we get identification of vertical sides in the natural manner but the identification of the horizontal sides needs some shifting with twist so that a vertex in the lower(base) side is identified with a vertex in the upper side. Let $C(a_{k+1}(=w_m), a_{k}, a_{k-1}, \dots, a_k)$ denote the upper horizontal cycle in $(r,  s)$-representation. So, the vertex $a_{k+1}$ is the starting vertex of the upper horizontal cycle. Denote $k =$ length$(P(a_{k+1} (= w_m), a_k, a_{k-1}, \dots, a_k))$ in $C$. Hence, we represent the $(r, s)$-representation by {\em $(r, s, k)$-representation} and denote it by $K(r, s, k)$.

Assume that $S\mathcal{M}(M)$ denotes a representation of $M$. We cut along the boundaries of the M\"{o}bius strips as in Section 2.1. Hence, we get three components. These are a cylinder and two M\"{o}bius strips. We denote the cylinder by $S'(M)$ and two M\"{o}bius strips by $\mathcal{M}', \mathcal{M}''$ where $\mathcal{M}', \mathcal{M}'' \in \{\mathcal{M}_\triangle(C, M), \mathcal{M}_\Box(L, M) ~|~$ for some cycles $C$ and $L$ of type $A_1 \}$. Thus, in this case, we denote the representation $S\mathcal{M}(M)$ by $K(S'(M), \mathcal{M}', \mathcal{M}'')$. Let $l$ denote the length of a cycle of type $A_1$ in $S'(M)$. Then, the cycles $\partial\mathcal{M}', \partial\mathcal{M}''$ have same length $l$ as $\partial S'(M) = \{\partial\mathcal{M}', \partial\mathcal{M}''\}$. Therefore, we get a $K(l, S'(M), \mathcal{M}', \mathcal{M}'')$ representation of $K(S'(M), \mathcal{M}', \mathcal{M}'')$. We combine above all arguments and hence, we have the following result.     

\begin{Lemma} \label{lem3342:2} The map $M$ has a $K(r, s, k)$ or a $K(l, S'(M), \mathcal{M}', \mathcal{M}'')$ representation.
\end{Lemma}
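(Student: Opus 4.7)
The plan is to imitate the construction carried out for maps of type $\{3^6\}$ in Section 2.1 (Lemmas \ref{lem36:3} and \ref{lem36:4}), but using the local data appropriate to type $\{3^3,4^2\}$, namely the paths of types $A_1, A_2, A_3$ and the three possible geometric carriers supplied by Lemma \ref{lem3342.1}. First I would fix a cycle $C$ of type $A_1$ in $M$ (its existence follows from the same closed-walk argument used in Section 2.1 together with the fact that there is only one path of type $A_1$ through each vertex). Lemma \ref{lem3342.1} gives three options for the set $S$ of faces incident with $C$: $S(C,M)$, $S\mathcal{M}_\triangle(C,M)$, or $S\mathcal{M}_\Box(C,M)$.

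Next I would iterate the construction of Lemma \ref{lem36:3}. Set $S_1 = S$ and, at each step, enlarge $S_i$ by adjoining the faces incident with the boundary cycles of $S_i$ that are not yet bounded by a M\"obius strip. The boundary $\partial S_i = \{L_{i,1}, L_{i,2}\}$ at each stage consists of two cycles of type $A_1$ (or a single such cycle plus a M\"obius boundary), and lengths are preserved across homologous type-$A_1$ cycles by the same length argument used before. The process terminates when the boundary consists either of two identical cycles (producing a cylindrical representation $S(M)$) or of two M\"obius strips $\mathcal{M}', \mathcal{M}''$ with $\mathcal{M}', \mathcal{M}'' \in \{\mathcal{M}_\triangle(L,M),\mathcal{M}_\Box(L',M)\}$ (producing $S\mathcal{M}(M)$). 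Finiteness of $M$ guarantees termination.

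In the cylindrical case, I would construct the $(r,s,k)$-representation exactly as in Section 2.1: pick $v \in V(C)$, let $L_1 = C$ be the horizontal base cycle, and choose one of the paths $L_2$ of type $A_2$ or $L_3$ of type $A_3$ through $v$ (these are vertical, i.e., not homologous to $C$) as the second cut, going from $V(C)$ up to $V(C')$. Recording $r = \operatorname{length}(C)$, $s$ equal to the number of horizontal cycles traversed along the vertical cut, and $k$ equal to the shift parameter between the starting vertex of $C'$ and the endpoint of the cut on $C$, I obtain $K(r,s,k)$. In the M\"obius case, I would cut $S\mathcal{M}(M)$ along $\partial \mathcal{M}'$ and $\partial \mathcal{M}''$, getting three components: a cylinder $S'(M)$ (with $\partial S'(M) = \{\partial\mathcal{M}', \partial\mathcal{M}''\}$) and the two M\"obius strips. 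Setting $l = \operatorname{length}(\partial\mathcal{M}') = \operatorname{length}(\partial\mathcal{M}'')$, this yields $K(l, S'(M), \mathcal{M}', \mathcal{M}'')$.

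The main obstacle will be verifying that the iterative process produces only these two possible terminal configurations, in particular ruling out mixed situations where one boundary is a plain cycle and the other is the boundary of a M\"obius strip after termination. This requires the analogue of the identification argument inside Lemma \ref{lem36:2}: if one side of some intermediate $\partial S_i$ reached a M\"obius cap while the opposite side did not, extending on the non-M\"obius side would eventually identify faces of opposite sides, forcing $S_1 \cap S_2 \neq \emptyset$, which by the same combinatorial argument used for triangles (now applied to triangles and quadrangles simultaneously via the $A_1$-condition) forces $S_1 = S_2$, collapsing to the cylindrical case. Once this is in hand, combining it with the construction above produces the required dichotomy.
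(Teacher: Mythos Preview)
Your proposal is correct and follows essentially the same route as the paper: the paper's argument for Lemma \ref{lem3342:2} is nothing more than the paragraph preceding it, which says to take a cycle $C$ of type $A_1$, invoke Lemma \ref{lem3342.1} for the three possible carriers, and then ``proceed with the argument as in Section 2.1'' to produce $S(M)$ or $S\mathcal{M}(M)$, after which the $(r,s,k)$-cut (using the $A_3$-path) or the M\"obius decomposition is performed exactly as you describe. The only point worth noting is that the paper does not explicitly rule out the mixed terminal configuration you flag; it simply asserts (in the $\{3^6\}$ case) that once one side is a M\"obius cap the other side must terminate in a M\"obius cap as well, which is really the topological fact that a Klein bottle minus an open M\"obius band is again a M\"obius band---so your concern is legitimate but is handled by topology rather than by the $S_1 = S_2$ face-identification argument you sketch.
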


We show in Lemma \ref{lem3342:3} that the $M$ does not have both the representations. That is, we have 

\begin{Lemma}\label{lem3342:3}
Let $M_1$ and $M_2$ be two maps on same number of vertices. Let $K_1(r, s, k)$ denote $M_1$ and $K_2(S'(M_2), \mathcal{M}'_2, \mathcal{M}''_2)$ denote $M_2$. Then, $M_1 \not\cong M_2$.  
\end{Lemma}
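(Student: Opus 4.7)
The plan is to follow the template of Lemma \ref{lem36:4} (the analogous statement for type $\{3^6\}$) and argue that the two representations arise from fundamentally different topological behaviour of the cycles of type $A_1$ through a vertex, so they cannot both describe the same map up to isomorphism. First I would note that being of type $A_1$ is a purely local (link-based) property at each vertex, so any isomorphism $\varphi\colon M_1\to M_2$ must send paths of type $A_1$ to paths of type $A_1$, and in particular cycles of type $A_1$ to cycles of type $A_1$. I would also invoke the fact, already established in the discussion preceding Lemma \ref{lem3342.1}, that through every vertex there is exactly one cycle of type $A_1$; this is forced by the $A_1$-condition, since at each vertex the three triangles sit on one side and the two quadrangles on the other, leaving a unique local choice of continuation.

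Next I would compare what happens to this unique cycle in the two representations. In $K_1(r,s,k)$ the horizontal base cycle $C$, together with all cycles homologous to it, are cycles of type $A_1$, and the sequence $S(C,M_1)$ of faces incident with any such $C$ forms a cylinder (Case 1 of Lemma \ref{lem3342.1}); in particular no cycle of type $A_1$ in $M_1$ bounds a M\"obius strip. In $K_2(S'(M_2),\mathcal{M}'_2,\mathcal{M}''_2)$, however, the boundary cycles $\partial\mathcal{M}'_2$ and $\partial\mathcal{M}''_2$ are cycles of type $A_1$ that by construction each bound a M\"obius strip (one of type $\mathcal{M}_\triangle$ or $\mathcal{M}_\Box$), so the unique type-$A_1$ cycle through any vertex of $\mathcal{M}'_2$ or $\mathcal{M}''_2$ is one-sided in $M_2$.

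Finally, I would combine the two observations. Under the hypothetical isomorphism $\varphi$, the unique type-$A_1$ cycle through a vertex $v$ in $M_1$ is mapped to the unique type-$A_1$ cycle through $\varphi(v)$ in $M_2$, and a regular neighbourhood of the first is carried onto a regular neighbourhood of the second. But every such regular neighbourhood in $M_1$ is a cylinder, while choosing $v$ so that $\varphi(v)$ lies in $\mathcal{M}'_2$ (or $\mathcal{M}''_2$) forces the corresponding neighbourhood in $M_2$ to be a M\"obius strip. Since a cylinder and a M\"obius strip are not homeomorphic, no such $\varphi$ exists, and hence $M_1\not\cong M_2$. The main obstacle I anticipate is the careful bookkeeping needed to show that in $K_1(r,s,k)$ no cycle of type $A_1$ can accidentally acquire a M\"obius-strip neighbourhood after identification of the boundary; this is really just an unfolding of the planar $(r,s,k)$-representation, but it needs to be stated explicitly because the same map may a priori contain many cycles of type $A_1$ and one must rule out one-sidedness for all of them simultaneously.
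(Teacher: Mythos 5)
Your proposal is correct and follows essentially the same route as the paper's own (much terser) proof, which likewise argues that $M_2$ contains a M\"obius strip bounded by a cycle of type $A_1$ while $M_1$ contains no such M\"obius strip, so no isomorphism can carry one type-$A_1$ cycle to the other. Your additional observations --- that type $A_1$ is link-determined and hence preserved by isomorphisms, and that one must check no type-$A_1$ cycle in $K_1(r,s,k)$ becomes one-sided after the boundary identification --- are exactly the details the paper leaves implicit.
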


\begin{proof} The map $M_2$ contains a M\"{o}bius strip $\mathcal{M}'_2$ which consists of either triangles or quadrangles. Let $C = \partial \mathcal{M}'_2$. Since $M_1$ does not contain  M\"{o}bius strip which is bounded by any cycle of type $A_1$, so, there is no isomorphism map which maps $C$ to a cycle of type $A_1$ in $M_1$. Therefore, $M_1 \not\cong M_2$. 
\end{proof}
 
From definition of $A_1$, we have one cycle of type $A_1$ through each vertex in $M$. It is discussed in \cite{mu:enu-torus} that the cycles of type $A_{1}$ in $M$ have same length. Thus we have 

\begin{prop}\label{prop3342:1} (\cite{mu:enu-torus}) The cycles of type $A_{1}$ in $M$ have unique length.
\end{prop}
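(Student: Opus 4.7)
The plan is to use the two representations of $M$ provided by Lemma \ref{lem3342:2} and show that the length of any cycle of type $A_1$ is forced, because the definition of $A_1$ yields exactly one such cycle through every vertex (so all cycles of type $A_1$ either coincide or are vertex-disjoint), and two disjoint cycles of type $A_1$ can be connected by a cylindrical strip that equates their lengths. First, I would fix two cycles $C, C^\ast$ of type $A_1$ in $M$ and argue that if they share a vertex then they coincide: from $lk(v)$, the triangle/quadrangle split at $v$ pins down the two edges of any type-$A_1$ path through $v$, hence $C^\ast$ must continue along $C$ at $v$, and iterating forces $C^\ast = C$.

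So we may assume $C \cap C^\ast = \emptyset$. In the $K(r,s,k)$ case, $C$ is (up to homology) the horizontal base cycle, and by Lemma \ref{lem3342.1} the faces incident with $C$ form $S(C,M)$, whose other boundary component $C_2$ is again a cycle of type $A_1$ with length$(C_2) = $ length$(C)$. Iterating the geometric-carrier construction produces a sequence $C = C_1, C_2, \dots, C_s$ of mutually homologous cycles of type $A_1$, all of the same length $r$, which exhausts the vertex set of $M$ (since through every vertex there is a unique type-$A_1$ cycle and the horizontal cycles cover all vertices of $K(r,s,k)$). Hence $C^\ast$ must be one of the $C_i$ and length$(C^\ast) = r$.

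In the $K(l, S'(M), \mathcal{M}',\mathcal{M}'')$ case, the cylinder $S'(M)$ is bounded by $\partial\mathcal{M}'$ and $\partial\mathcal{M}''$, both of length $l$, and the intermediate horizontal cycles inside $S'(M)$ all have length $l$ by the same successive-carrier argument. The two Möbius strips $\mathcal{M}', \mathcal{M}''$ contain no further type-$A_1$ cycle in their interiors, because by Lemma \ref{lem3342.1} their $2r{+}1$ incident faces of one kind repeat, so the only closed type-$A_1$ walks they support are their boundary circles. Therefore every type-$A_1$ cycle of $M$ lies in $\overline{S'(M)}\cup\partial\mathcal{M}'\cup\partial\mathcal{M}''$ and has length $l$, giving $C^\ast$ length $l = $ length$(C)$.

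The main obstacle I anticipate is the Möbius-strip case: one has to rule out the possibility that $\partial\mathcal{M}'$ (or $\partial\mathcal{M}''$) is a \emph{double cover} of some shorter type-$A_1$ cycle living inside the strip, which would give a second type-$A_1$ cycle of different length meeting $V(\mathcal{M}')$. This is excluded by revisiting the odd-length / even-length analysis of Lemma \ref{lem3342.1}: the type-$A_1$ condition at a would-be interior vertex of $\mathcal{M}'$ forces the triangles (resp.\ quadrangles) on one side of that cycle to be the same faces as on the other side, which contradicts the already-established fact that in $\mathcal{M}'$ each face of the minority type appears exactly twice around the strip. Once that is in place, the three paragraphs above combine to prove Proposition \ref{prop3342:1}.
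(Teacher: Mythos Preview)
The paper does not actually prove this proposition: it simply cites \cite{mu:enu-torus}, prefacing the statement only with the remark that ``from definition of $A_1$, we have one cycle of type $A_1$ through each vertex in $M$.'' So there is no in-paper argument to compare yours against.

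Your argument is sound and supplies what the paper omits. The core mechanism --- uniqueness of the type-$A_1$ cycle through each vertex, together with the length-preserving passage across a cylindrical strip $S(C,M)$ --- is exactly the right idea, and it suffices. You are, however, over-engineering the M\"obius-strip case. In both $\mathcal{M}_\triangle(C,M)$ and $\mathcal{M}_\Box(C,M)$ every vertex already lies on the boundary cycle $C$ (the strip is a single layer of faces glued along $C$; see Figures~12 and~13), so there are no interior vertices at which a second type-$A_1$ cycle could live, and your ``double cover'' obstacle never arises. With that observed, your argument collapses to: the type-$A_1$ cycles partition $V(M)$; consecutive cycles in the $S(M)$ or $S\mathcal{M}(M)$ decomposition have equal length; connectedness of $M$ finishes it. Splitting on the two representations of Lemma~\ref{lem3342:2} is correct but not really needed --- the local strip argument already gives the conclusion before one assembles the global representation.
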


We study $K(r, s, k)$ and $K(S(M), \mathcal{M}', \mathcal{M}'')$, and classify them separately as they are non-isomorphic. 

\textbf{Classification of $K(r, s, k)$ on $n$ vertices :} We define admissible relations among $r, s, k$ of $K(r, s, k)$ such that $K(r, s, k)$ represents a map after identifying its boundaries. The proof of the bellow results similarly as in Section 2.1. So, we have 

\begin{Lemma}\label{lem3342:4} The maps of type $\{3^{3}, 4^{2}\}$ of the form $K(r, s, k)$ exist if and only if the following holds : (i) rs $\geq$ 12, (ii) r $\geq$ 3, (iii) s $\ge$ 4, and (iv) $k \in \{ t : 0 \leq t \leq r-1\}$ if $s \ge 4$. 
\end{Lemma}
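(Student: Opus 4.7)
The plan is to imitate the proof scheme of Lemma \ref{lem36:6}, since the statement is structurally identical. The idea is to work with the $r \times s$ grid of vertices whose identification (with horizontal shift $k$) yields $K(r,s,k)$, and to verify that each asserted inequality is forced by the requirement that every vertex link be a $5$-cycle realizing the face sequence $(3^3,4^2)$.

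First I would establish necessity by a case-by-case link analysis. Fix a vertex $v$ on the lower horizontal cycle $C$ of length $r$, and label its row neighbors $v^-, v^+$ and the adjacent row vertices according to the $A_1$-structure: all three triangles incident at $v$ lie on one side of $C$, and the two quadrangles lie on the other. If $r \le 2$ the horizontal neighbors $v^-, v^+$ coincide (either with each other or with $v$ itself) after identification, so the link of $v$ would contain a repeated vertex and fail to be a simple $5$-cycle; this forces $r \ge 3$. For $s$, I would unfold the vertical direction and chase the type-$A_1$ cycles row by row: because the face pattern $(3^3,4^2)$ enforces that the side of $C$ carrying triangles is matched on the far side of the next type-$A_1$ cycle by quadrangles (and vice versa), the identification of $s < 4$ horizontal strips forces a triangle and a quadrangle to share a boundary edge on both sides of some type-$A_1$ cycle, producing a repeated face or a vertex with wrong face-sequence. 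The bound $rs \ge 12$ is then immediate from $r \ge 3$, $s \ge 4$.

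For clause (iv), I would observe that the lower horizontal cycle has exactly $r$ vertices, so the shift used to identify the upper horizontal cycle to the lower one is cyclic modulo $r$; hence, without loss of generality, $k \in \{0, 1, \dots, r-1\}$. Any larger $k$ reduces to some element of this set after one full rotation, giving the same combinatorial complex, which is the content of the ``if $s \ge 4$'' qualifier.

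Finally I would verify sufficiency by an explicit construction: given $(r,s,k)$ satisfying (i)--(iv), place vertices $u_{i,j}$ for $0 \le i \le s-1$, $0 \le j \le r-1$, fill in each $r \times 2$ horizontal band with the alternating triangle/quadrangle pattern dictated by $(3^3,4^2)$, identify the left and right vertical boundaries directly, and identify the top horizontal cycle with the bottom one after a cyclic shift by $k$. A direct inspection of $\mathrm{lk}(u_{i,j})$ in each of the cases (interior vertex, boundary vertex, and vertex on the seam of the horizontal identification) confirms that every link is a $5$-cycle of type $(3^3,4^2)$. The main obstacle will be the $s \ge 4$ step, where the sharpness requires a careful tracking of which side of a type-$A_1$ cycle carries triangles versus quadrangles after the wrap-around, and ruling out the borderline $s = 3$ case by producing an explicit vertex whose induced face-sequence is not $(3^3,4^2)$.
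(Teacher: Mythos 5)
Your proposal follows essentially the same route as the paper: the paper disposes of this lemma in one line by deferring to the link-analysis argument of Lemma \ref{lem36:6} (parameters outside the stated ranges force some vertex whose link fails to be a cycle of the correct face sequence, and $k$ is taken modulo $r$ because the lower horizontal cycle has length $r$), which is exactly the case analysis you spell out in more detail. Your added care about which side of a type-$A_1$ cycle carries triangles versus quadrangles, and the explicit sufficiency construction, simply fill in steps the paper leaves implicit.
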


\begin{Lemma}\label{lem3342:iso-1}
Let $K(r_{i}, s_{i}, k_{i})$ denote a $(r_{i}, s_{i}, k_{i})$-representation of $M_{i}$ on same number of vertices for $i \in \{1, 2\}$. Then,\ $M_{1} \cong M_{2}$ if $(r_1, s_1, l_1) = (r_2, s_2, l_2)$ where $ l_i \in \{k_i~ mod(2), (k_i + r_i)~ mod(2)\}$.
\end{Lemma}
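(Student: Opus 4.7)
The plan is to mirror the proof of Lemma \ref{lem36:iso}, adapting the combinatorics from the $\{3^6\}$ setting to $\{3^3, 4^2\}$ and using paths of type $A_2$ or $A_3$ (instead of a second path of type $A$) for re-cutting. First I would set up coordinates on the two representations: let $C(i, 0), C(i, 1), \dots, C(i, s_i - 1)$ denote the sequence of horizontal cycles of type $A_1$ in $K(r_i, s_i, k_i)$ and label their vertices as $u_{t, j}$ (for $i=1$) and $v_{t, j}$ (for $i = 2$) with $0 \le t \le s_i - 1$ and $0 \le j \le r_i - 1$, following the conventions used in Lemma \ref{lem36:iso}. By Proposition \ref{prop3342:1} every cycle of type $A_1$ has the same length, so combined with Lemma \ref{lem3342:4} the hypothesis $(r_1, s_1) = (r_2, s_2)$ is forced once the vertex counts match and the type is fixed.

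Second, in the base case $k := k_1 = k_2$, I would define the map $f : V(K(r_1, s_1, k_1)) \to V(K(r_2, s_2, k_2))$ by $f(u_{t, j}) = v_{t, j}$ and verify that $f$ sends $\mathrm{lk}(u_{t,j})$ to $\mathrm{lk}(v_{t,j})$. Because both representations share the same combinatorial pattern of triangle/quadrangle incidences around every interior vertex, and because the horizontal identification is governed identically by the common value of $k$, the link verification reduces to the same kind of direct cyclic rewriting performed in the $\{3^6\}$ case; this step is routine once the labels are aligned.

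Third, for the main case $k_1 \ne k_2$ but $l_1 = l_2$, I would recut the representation to normalize the shift. Fix $i$ and suppose $l_i = 0$, so either $2 \mid k_i$ or $2 \mid (k_i + r_i)$. In the first subcase consider the vertex $u_{0, k_i/2}$ on $C(i, 0)$; in the second consider $u_{0, (k_i + r_i)/2}$. Through this vertex there is a vertical path of type $A_2$ or $A_3$ (guaranteed by the definitions recalled just before Lemma \ref{lem3342:2}) that is not homologous to the horizontal cycle. Re-identifying $K(r_i, s_i, k_i)$ along its vertical boundaries and cutting along this new vertical path yields a planar polyhedral representation with the same horizontal cycles and the same number of horizontal layers, but with the upper horizontal cycle now starting at $u_{0,0}$; that is, a $K(r_i, s_i, 0)$ representation. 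The analogous argument with $(k_i - 1)/2$ or $(k_i + r_i - 1)/2$ handles the case $l_i = 1$, producing a $K(r_i, s_i, 1)$ representation. Since $l_1 = l_2$, both maps reduce to the same canonical representation, and applying the base case isomorphism $f$ completes the proof.

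The main obstacle I expect is verifying in the re-cutting step that the vertical path of type $A_2$ or $A_3$ through the chosen vertex actually closes up into a valid cut, i.e.\ that after parity-based selection it joins a vertex of $C(i, 0)$ to its identified copy on $C(i, s_i)$ without changing the horizontal structure. This relies on the parity condition $2 \mid k_i$ or $2 \mid (k_i + r_i)$ matching the two-coloring of vertical paths induced by the alternation of triangles and quadrangles around each horizontal cycle, which is exactly what the index shifts $k_i/2$ and $(k_i + r_i)/2$ encode. Once that alignment is established, the rest is the same rigid diagrammatic verification as in Lemma \ref{lem36:iso}.
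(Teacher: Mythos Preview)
Your proposal is correct and follows essentially the same approach as the paper's own proof, which simply says to repeat the argument of Lemma~\ref{lem36:iso} and then observes that each $K(r_i, s_i, k_i)$ reduces to $K(r_i, s_i, 0)$ or $K(r_i, s_i, 1)$. Your version spells out the details the paper leaves implicit---in particular the use of vertical paths of type $A_2$ or $A_3$ in place of type $A$ for the re-cutting step---and this adaptation is exactly the one the construction in Section~2.4 requires.
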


\begin{proof} The proof repeats similar argument as in Lemma \ref{lem36:iso}. Let $r_1 = r_2, s_1 = s_2, k_1 =k_2$. Define an isomorphism map $f_{1} : V(K(r_{1}, s_{1}, k_{1}))\rightarrow V(K(r_{2}, s_{2}, k_{2}))$. So, $M_{1} \cong M_{2}$ by $f_1$.    Let $k_{1} \neq k_{2}$. Repeat similar argument as in Lemma \ref{lem36:iso}. Hence, $K(r_i, s_i, k_i)$ has either a $K(r_i, s_i, 0)$ or a $K(r_i, s_i, 1)$ representation. Hence, by $f_1, M_{1} \cong M_{2}$. This completes the proof.
\end{proof}

\begin{cor} \label{cor3342:1} Let $K(r_{i}, s_{i}, k_{i})$ denote a $(r_{i}, s_{i}, k_{i})$-representation of $M_{i}$ on same number of vertices for $i \in \{1, 2\}$. Then, $K(r_{1}, s_{1}, k_{1}) \not\cong K(r_{2}, s_{2}, k_{2}) ~\forall~ r_{1} \neq r_{2}, K(r_{1}, s_{1}, k_{1}) \not\cong K(r_{2}, s_{2}, k_{_{2}})~ \forall~ s_{1} \neq s_{2}, K(r_{i}, s_{i}, 0) \cong K(r_{i}, s_{i}, k_{i})$ for $2 \mid k_i$ or $2 \mid (k_i + r_i)$, and $K(r_{i}, s_{i},  1) \cong K(r_{i}, s_{i}, k_{i})$ for $2 \mid (k_i - 1)$ or $2 \mid (k_i + r_i - 1)$.
\end{cor}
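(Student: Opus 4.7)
The plan is to deduce this corollary directly from Proposition \ref{prop3342:1} (uniqueness of length of cycles of type $A_1$) together with Lemma \ref{lem3342:iso-1}, so the work splits cleanly into an ``invariants'' half (for the non-isomorphism statements) and a ``parity normalization'' half (for the isomorphism statements).

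First, I would address the two non-isomorphism assertions by invariance. Any isomorphism between polyhedral maps preserves face-incidence at every vertex, hence preserves face sequences, and hence sends paths of type $A_1$ to paths of type $A_1$; consequently it sends cycles of type $A_1$ to cycles of type $A_1$. By Proposition \ref{prop3342:1} every such cycle in $M_i$ has the same length, namely the length $r_i$ of the horizontal base cycle in $K(r_i,s_i,k_i)$. Thus $r_i$ is an isomorphism invariant of $M_i$, which immediately gives $K(r_1,s_1,k_1) \not\cong K(r_2,s_2,k_2)$ whenever $r_1 \neq r_2$. Since both maps have the same number of vertices $n = r_i s_i$, once $r$ is known $s$ is determined by $s = n/r$; therefore $s_1 \neq s_2$ forces $r_1 \neq r_2$ and again non-isomorphism follows.

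For the isomorphism assertions I would simply specialize Lemma \ref{lem3342:iso-1}. For $K(r_i,s_i,0) \cong K(r_i,s_i,k_i)$: if $2 \mid k_i$ then $k_i \bmod 2 = 0$, and $K(r_i,s_i,0)$ trivially satisfies $0 \bmod 2 = 0$, so choosing $l_1 = l_2 = 0$ in Lemma \ref{lem3342:iso-1} yields the isomorphism. If instead $2 \mid (k_i + r_i)$, then $(k_i + r_i) \bmod 2 = 0$, and I would use the second allowed value in $l_i \in \{k_i \bmod 2,\, (k_i+r_i)\bmod 2\}$ to again set $l_1 = l_2 = 0$ and apply the lemma. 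The argument for $K(r_i,s_i,1) \cong K(r_i,s_i,k_i)$ is identical with the parity flipped: $2 \mid (k_i - 1)$ gives $k_i \bmod 2 = 1$, and $2 \mid (k_i + r_i - 1)$ gives $(k_i + r_i) \bmod 2 = 1$, so $l_1 = l_2 = 1$ works in both cases.

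Since the substantive geometric content (the vertical recut through $u_{0,\lfloor k_i/2\rfloor}$ or $u_{0,\lfloor(k_i+r_i)/2\rfloor}$ that reduces $k_i$ modulo $2$) is already carried out inside Lemma \ref{lem3342:iso-1}, the only work left here is the parity bookkeeping above. I therefore expect no real obstacle; the corollary is essentially a direct packaging of the preceding lemma and proposition.
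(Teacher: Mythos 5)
Your proposal is correct and follows essentially the same route the paper intends: the paper states this corollary without a separate proof, as an immediate consequence of Lemma \ref{lem3342:iso-1} (for the two isomorphism claims, via the choice of $l_i$ exactly as you describe) together with the invariance of the common type-$A_1$ cycle length $r$ from Proposition \ref{prop3342:1} and the relation $n = rs$ (for the two non-isomorphism claims). Your write-up merely makes explicit the parity bookkeeping and the invariance argument that the paper leaves implicit.
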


\textbf{Classification of $K(l, t)$ on $n$ vertices :} Let $S\mathcal{M}(M)$ denote $M$ on $n$ vertices. By Lemma \ref{lem3342:2}, the $S\mathcal{M}(M)$ has a $K(l, S'(M), \mathcal{M}', \mathcal{M}'')$ representation.  Let $C' = \partial\mathcal{M}', C''=\partial\mathcal{M}''$. Again, similarly as in the Proposition \ref{prop3342:1}, length($C'$) = length($C''$). Let $C' = \partial\mathcal{M}', C''=\partial\mathcal{M}''$ such that $\partial S'(M) = \{C', C''\}$ for some cycles $C', C''$ of type $A_1$. Hence, we denote $K(l, S'(M), \mathcal{M}', \mathcal{M}'')$ by $K(l, t)$ if $l = length(C') = length(C'')$ and $t$ denote the number of cycles of type $A_1$ in $S'(M)$. 

\begin{Lemma}\label{lem3342:5} The maps of the type $\{3^{3}, 4^{2}\}$ of the form $K(l, t)$ exist if and only if the following holds : (i) $tl \geq 10$, (ii) $t \ge 2$ and $2 \mid t$, (iii) $l \ge 5$ if $t=2$, (iv) $l \ge 4$ if $t \ge 4$, (v) $\mathcal{M}'$ and $\mathcal{M}''$ consists of only triangles if $l$ is odd, (vi) $\mathcal{M}'$ and $\mathcal{M}''$ consists of only quadrangles if $l$ is even.
\end{Lemma}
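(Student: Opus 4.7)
The plan is to mirror the structure of Lemma \ref{lem36.7} (the analogous statement for type $\{3^{6}\}$), but adapt each argument to the richer face structure of type $\{3^{3},4^{2}\}$. Conditions (i)--(iv) will be obtained as necessary conditions for every vertex to have a well-defined link that is a $5$-cycle of the required face pattern, while conditions (v)--(vi) will be forced by Lemma \ref{lem3342.1} applied to the boundary cycles $\partial\mathcal{M}'$ and $\partial\mathcal{M}''$.

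First I would establish necessity. By Proposition \ref{prop3342:1}, all cycles of type $A_{1}$ in $M$ have the same length, so the notation $l$ is well-defined, and the cycle count is $t+t_{1}+t_{2}-2 = t$ by the identification $\partial S'(M)=\{\partial\mathcal{M}',\partial\mathcal{M}''\}$ (exactly as in Lemma \ref{lem36.7}). Hence $n=tl$, which combined with the lower bounds on admissible maps forces $tl\ge 10$. For (iii) and (iv), I would argue as in Lemma \ref{lem3342:4}: if $l$ is too small relative to $t$, there is a vertex on one of the boundary cycles whose link fails to close up into a $(3,3,3,4,4)$ cycle. The case $t=2$ is tighter because the two M\"obius strips are glued immediately along a thin cylinder; this forces $l\ge 5$. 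For $t\ge 4$ the cylinder has enough layers to absorb smaller $l$, giving $l\ge 4$.

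The key new point, and the main obstacle, is the parity condition (ii), i.e.\ $2\mid t$ and $t\ge 2$. Here I would use the fact that a cycle of type $A_{1}$ has triangles on one side and quadrangles on the other. Consequently, as one crosses consecutive $A_{1}$-cycles inside $S'(M)$, the face type of each intermediate strip alternates between triangular and quadrangular. The boundary cycles $\partial\mathcal{M}'$ and $\partial\mathcal{M}''$ must each be compatible with the face type of its adjacent M\"obius strip and, by the $A_{1}$ property, with the opposite face type on the cylinder side. A parity count on the number of alternations across $S'(M)$ then forces the number $t$ of $A_{1}$-cycles inside $S'(M)$ to be even; and $t=1$ is ruled out because then the single cycle cannot simultaneously carry the required link structure on both sides (some vertex in $\partial S'(M)$ would have a link that is not a $5$-cycle).

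Conditions (v) and (vi) follow directly from Lemma \ref{lem3342.1}: the M\"obius strip $\mathcal{M}'$ (resp.\ $\mathcal{M}''$) bounded by a type $A_{1}$ cycle $C'$ is of the form $\mathcal{M}_\triangle(C',M)$ exactly when the length of $C'$ is odd, and of the form $\mathcal{M}_\Box(C',M)$ exactly when it is even; since $|C'|=|C''|=l$, the parity of $l$ determines the face type of both M\"obius strips uniformly. For sufficiency I would construct, for each tuple $(l,t)$ satisfying (i)--(vi), an explicit $K(l,t)$ by stacking $t$ copies of $A_{1}$-cycles with the alternating triangle/quadrangle strips between them and capping the two ends with M\"obius strips of the type dictated by the parity of $l$, then verify by inspection that every vertex has link $C(3,3,3,4,4)$ in cyclic order. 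This mirrors the constructive half of Lemma \ref{lem36.7}, with the verification at the boundary cycles being the only place where the parity conditions (ii), (v), (vi) are actually used.
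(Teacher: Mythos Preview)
Your proposal is correct and follows essentially the same route as the paper: invoke Lemma~\ref{lem36.7} for the count $n=tl$ and $t\ge 2$, Lemma~\ref{lem3342:4} for the lower bounds on $l$ in (iii)--(iv), and Lemma~\ref{lem3342.1} together with Proposition~\ref{prop3342:1} for (v)--(vi). Your alternation argument for $2\mid t$ (the strips in $S'(M)$ alternate between triangular and quadrangular because each $A_{1}$-cycle separates the two face types, and both M\"obius caps have the same face type by the parity of $l$) is in fact more explicit than the paper's own proof, which simply asserts $2\mid t$ by ``similar argument as in Lemma~\ref{lem36.7}'' without spelling out the parity count.
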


\begin{proof} Let $K(l, S'(M), \mathcal{M}', \mathcal{M}'')$ denote a representation of $M$. It has a decompositions into a cylinder $S'(M)$ and two M\"{o}bius strips $\mathcal{M}'$ and $\mathcal{M}''$. By the similar argument as in Lemma \ref{lem3342.1}, the cylinder $S'(M)$ contains two disjoint boundaries. So, $t \ge 2$. We repeat similar argument as in Lemma \ref{lem36.7} and hence, we get $n = tl, 2 \mid t, l \mid n$ and $t \mid n$.  Similarly we repeat similar arguments as in Lemma \ref{lem3342:4} for Case $3$ and $4$. By Proposition \ref{prop3342:1}, the cycles of type $A_1$ have same length. Observe that by definition, when the length of the cycle is odd then by Lemma \ref{lem3342.1}, $\mathcal{M}'$ and $\mathcal{M}''$ consist of only triangles. When the length of the cycle is even, then by Lemma \ref{lem3342.1}, $\mathcal{M}'$ and $\mathcal{M}''$ consist of only quadrangles. This completes the proof.   
\end{proof}

Let $M_1$ and $M_2$ be two maps on same number of vertices. Let $K(l_i, t_i)$ denote a representation of $M_i$. Then, we have

\begin{Lemma}\label{lem3342:6}
   The $M_1 \cong M_2$ if $l_1 = l_2$.  
\end{Lemma}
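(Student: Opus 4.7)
The plan is to mimic the argument of Lemma \ref{lem36.8}, adapted to the mixed-face situation of type $\{3^3,4^2\}$. First, assume $l_1=l_2=l$. Since $M_1$ and $M_2$ have the same number $n$ of vertices and each cycle of type $A_1$ covers a disjoint set of $l$ vertices (with every vertex lying on exactly one such cycle, by the definition of $A_1$ and Proposition \ref{prop3342:1}), the identity $n=t_1 l = t_2 l$ forces $t_1=t_2=t$. Moreover, by Lemma \ref{lem3342:5}(v)--(vi) the parity of $l$ determines whether the two M\"obius strips $\mathcal{M}'_i,\mathcal{M}''_i$ consist of triangles or of quadrangles, so the face types in the M\"obius caps of $M_1$ and of $M_2$ coincide.

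Next, I would index the cycles of type $A_1$ consistently in the two maps. Let $C_1,\dots,C_t$ be the cycles of type $A_1$ in $M_1$ listed so that $C_1=\partial\mathcal{M}'_1$, $C_t=\partial\mathcal{M}''_1$, and $C_j, C_{j+1}$ bound a strip of faces in $S'(M_1)$ for $1\le j\le t-1$. Similarly enumerate $L_1,\dots,L_t$ in $M_2$. Fixing a base vertex and an orientation on $C_1$ and on $L_1$, write $C_j=C(u_{j,0},u_{j,1},\dots,u_{j,l-1})$ and $L_j=C(v_{j,0},v_{j,1},\dots,v_{j,l-1})$ so that the vertical paths of type $A_2$ (respectively $A_3$) line up column by column; the unique admissible choice of such a labeling on each side is guaranteed by the local structure around each vertex in a $\{3^3,4^2\}$-map. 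Define $f_1(u_{j,i})=v_{j,i}$ for all $0\le j\le t-1$ and $0\le i\le l-1$.

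It remains to verify that $f_1$ is an isomorphism of polyhedral complexes. The strips between consecutive horizontal cycles $C_j,C_{j+1}$ (and $L_j,L_{j+1}$) are uniquely determined by the face-type sequence at each vertex: once the vertices of two adjacent $A_1$-cycles are fixed column-by-column, the triangles and quadrangles between them are forced by the link condition $\{3^3,4^2\}$. Hence $f_1$ sends faces of $S'(M_1)$ to faces of $S'(M_2)$ and preserves incidences on the cylinder part. For the caps, $\partial\mathcal{M}'_1=C_1$ and $\partial\mathcal{M}'_2=L_1$ have the same length $l$ and the same face type; the internal structure of a M\"obius strip of type $\mathcal{M}_\triangle$ or $\mathcal{M}_\Box$ with prescribed boundary is again uniquely determined (as established inside the proofs of Lemma \ref{lem36.7} and Lemma \ref{lem3342:5}), so $f_1$ extends uniquely and consistently across $\mathcal{M}'_1\to\mathcal{M}'_2$ and $\mathcal{M}''_1\to\mathcal{M}''_2$.

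The main obstacle will be the ambiguity in choosing the starting column on each M\"obius cap: as in Lemma \ref{lem36:iso}, two different admissible cuts may give representations that differ by a shift along the boundary cycle. I expect to dispose of this exactly as in Section 2.1, by observing that such a shift corresponds to replacing $f_1$ with its composition with a rotation of $C_1$ (equivalently, reparametrizing the index $i$ modulo $l$), which still yields a bijection that sends vertices to vertices, edges to edges, and faces to faces. Consequently $K(l_1,t_1)\cong K(l_2,t_2)$ whenever $l_1=l_2$, completing the proof.
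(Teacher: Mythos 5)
Your proposal is correct and follows essentially the same route as the paper: both arguments enumerate the $A_1$-cycles from one M\"obius cap to the other, use $n=tl$ to match the number of cycles, invoke the parity of $l$ to ensure the caps of $M_1$ and $M_2$ carry the same face type, and then define the isomorphism cycle-by-cycle via $f_1(V(C_i))=V(L_i)$ as in Lemma \ref{lem3342:iso-1}. Your write-up simply spells out in more detail the verification that the faces between consecutive cycles and inside the caps are forced, which the paper leaves implicit.
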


\begin{proof} By Proposition \ref{prop3342:1}, the $M_i$ contains cycles of type $A_1$ of same length. We denote it by $l$. As in Lemma \ref{lem3342:iso-1}, let $n = sl$. So, the number of cycles of type $A_1$ in $M_i$ is $s$. Let $C_1$, \dots, $C_s$ denote the cycles of type $A_1$ in $M_1$. Let $C_1 = \partial \mathcal{M}'_1$ and $C_s = \partial \mathcal{M}''_1$. Again, let $L_1, \dots, L_s$ denote the cycles of type $A_1$ in $M_2$. Let $L_1 = \partial \mathcal{M}'_2$ and $L_s = \partial \mathcal{M}''_2$. Since $l_1 = l_2$, so, the $\mathcal{M}'_1, \mathcal{M}''_1, \mathcal{M}'_2$ and $\mathcal{M}''_2$ consist of either triangles or quadrangles. Therefore, we define an isomorphism map $f_1 ~:~ V(M_1) \rightarrow V(M_2)$ by $f_1(V(C_i)) = V(L_i)$ for $1 \le i \le s$ as in Lemma \ref{lem3342:iso-1}. Thus, $M_1 \cong M_2$. This completes the proof.
\end{proof}

Thus, we have

\begin{cor} \label{cor3342:2} Let $K(r_{i}, s_{i}, k_{i})$ denote a $(r_{i}, s_{i}, k_{i})$-representation of $M_{i}$ on same number of vertices for $i \in \{1, 2\}$. Then,\  $K(l_{1}, t_{1}) \not\cong K(l_{2}, t_{2}) ~\forall~ l_{1} \neq l_{2}, K(l_{1}, t_{1}) \not\cong K(l_{2}, t_{2}) ~\forall~ t_{1} \neq t_{2}, M_i$ has an unique representation $K(l_{i}, t_{i})$ for $2 \mid l_i$ and $n = l_it_i$, and $M_i$ has an unique representation $K(l_{i}, t_{i})$ for $2 \mid (l_i+1)$ and $n = l_it_i$.
\end{cor}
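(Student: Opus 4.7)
The plan is to read off the four assertions directly from the structural results already proved in this subsection, namely Proposition \ref{prop3342:1} (uniqueness of the length of cycles of type $A_1$), Lemma \ref{lem3342:5} (admissibility conditions for $K(l,t)$, in particular that the parity of $l$ dictates whether the M\"obius strips are triangular or quadrangular), and Lemma \ref{lem3342:6} (sufficiency: equal $l$ implies isomorphic maps on the same number of vertices).

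First I would treat the non-isomorphism claim for $l_1\neq l_2$. If $\varphi\colon M_1\to M_2$ were an isomorphism, then $\varphi$ would send each cycle of type $A_1$ in $M_1$ to a cycle of type $A_1$ in $M_2$, and in particular $\varphi(\partial\mathcal{M}'_1)$ would be a cycle of type $A_1$ in $M_2$. By Proposition \ref{prop3342:1} its length is the common length $l_2$ of all such cycles in $M_2$, but it also equals $\mathrm{length}(\partial\mathcal{M}'_1)=l_1$, forcing $l_1=l_2$. For the claim with $t_1\neq t_2$, use the covering identity $n=l_it_i$ from Lemma \ref{lem3342:5}; if $t_1\neq t_2$ while $n$ is fixed, then necessarily $l_1\neq l_2$, so the previous case applies.

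Next I would handle the uniqueness of the representation $K(l_i,t_i)$ separately in the two parity regimes. Given $M_i$, by Proposition \ref{prop3342:1} every cycle of type $A_1$ has the common length $l_i$, and by the covering argument preceding Lemma \ref{lem3342:6} the number of such cycles equals $t_i=n/l_i$. Thus the parameters $(l_i,t_i)$ are intrinsic invariants of $M_i$. It remains to check that the \emph{shape} of the two boundary M\"obius strips is also forced. When $2\mid l_i$, clause (vi) of Lemma \ref{lem3342:5} tells us $\mathcal{M}',\mathcal{M}''$ must consist entirely of quadrangles, so their combinatorial type is determined by $l_i$ alone; when $l_i$ is odd (i.e.\ $2\mid (l_i+1)$), clause (v) forces both strips to consist of triangles and the same rigidity holds. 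In either case any two decompositions of $M_i$ of the form $K(l_i,t_i)$ are related by an isomorphism produced exactly as in the proof of Lemma \ref{lem3342:6} (match the cycles $C_j\leftrightarrow L_j$ in order), giving uniqueness up to isomorphism.

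The main obstacle I foresee is not any one estimate but the bookkeeping required to confirm that the decomposition $S'(M_i)\cup\mathcal{M}'_i\cup\mathcal{M}''_i$ is itself intrinsic to $M_i$ rather than a choice, so that $(l_i,t_i)$ deserves to be called ``the'' representation. This is already essentially handled by Proposition \ref{prop3342:1} (which fixes $l_i$) together with Lemma \ref{lem3342:5} (which pins down the face-types of the two strips from the parity of $l_i$); once these are invoked, the only remaining point is the straightforward matching argument used in Lemma \ref{lem3342:6}, which transfers verbatim. Consequently all four clauses of the corollary follow, which completes the outline.
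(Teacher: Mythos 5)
Your proposal is correct and follows essentially the same route as the paper, which states this corollary without a separate proof as an immediate consequence of Proposition \ref{prop3342:1}, Lemma \ref{lem3342:5} and Lemma \ref{lem3342:6}; your write-up simply makes explicit the intended derivation (length invariance of type-$A_1$ cycles forces $l_1=l_2$, the identity $n=l_it_i$ reduces $t_1\neq t_2$ to $l_1\neq l_2$, and the parity clauses of Lemma \ref{lem3342:5} pin down the M\"obius-strip face types). No gaps beyond those already present in the paper's own treatment.
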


\noindent \textbf{2.5 Maps of type $\{3^{2}, 4, 3, 4\}$~:~}
Let $M$ be a map of type $\{3^{2}, 4, 3, 4\}$ on the Klein bottle. Let $P(\dots, u_{i-1}, u_{i}, u_{i+1}, \dots)$ be a path in edge graph of $M$. The path $P$ is of type $B_1$ \cite{mu:torus-hc13} if $lk(u_{i})=C(\textit{\textbf{a}}, u_{i+1}, b, c, \textit{\textbf{d}}, u_{i-1}, e)$ implies $lk(u_{i-1})=C(\textit{\textbf{f}}, g, e, u_{i}, \textit{\textbf{c}}, d, u_{i-2})$ and $lk(u_{i+1})=C(\textit{\textbf{e}}, a, k, u_{i+2}, \textit{\textbf{l}}, b, u_{i})$, and  $lk(u_{i})=C(\textit{\textbf{e}}, h, k, u_{i+1}, \textit{\textbf{l}}, b, u_{i-1})$ implies $lk(u_{i-1})=C(\textit{\textbf{h}}, u_{i}, b, c, \textit{\textbf{d}}, u_{i-2}, e)$ and $lk(u_{i+1})=C(\textit{\textbf{s}}, u_{i+2}, t, l, \textit{\textbf{b}}, u_{i}, k)$ at $u_i$ for all $i$.
Let $W$ be a maximal walk of type $B_1$ in $M$. Similarly as in Section 2.1, it is either a cycle or a closed walk. Observe that by definition, we have two paths of type $B_1$ through each vertex in $M$. Therefore, by the similar argument as in Section 2.2, the map $M$ contains a cycle of type $B_1$ thorough each vertex of $M$. Let $C$ be a cycle of type $B_{1}$ in $M$. Let $S$ denote a set of faces which are incident with $C$. Then, we have

\begin{Lemma}\label{lem32434:1} The $|S|$ is $S(C, M)$ or $\mathcal{M}(C, M)$.
\end{Lemma}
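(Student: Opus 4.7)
The plan is to mimic the cut-and-identify strategy of Lemma \ref{lem36:2} and Lemma \ref{lem3342.1}, but to exploit the more rigid face pattern of type $\{3^2,4,3,4\}$ to eliminate the $S\mathcal{M}$-style outcome. Let $SQ$ be the sequence of faces incident with $C$, enumerated in order around $C$. Through $u_1 \in V(C)$, I would take a cut that crosses both sides of $C$ at $u_1$. Assume $C = C(u_1,\dots,u_r)$ and $lk(u_1) = C(\textbf{a}, u_2, b, c, \textbf{d}, u_r, e)$.

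The first case is that all faces in $SQ$ are distinct. In that case $|SQ|$ is a planar rectangular strip with two identified ``vertical'' sides. If the identification has no twist, then $|SQ| = S(C,M)$ is a cylinder (as in Figure 10); if the identification has a twist, then $|SQ| = \mathcal{M}(C,M)$ is a M\"obius strip (as in Figure 11). Note that the local check at $u_1$ rules out a twist that would change the link of $u_1$ into a different face-sequence type: the link must remain of type $\{3^2,4,3,4\}$.

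The second case, and the main obstacle, is ruling out the $S\mathcal{M}_\triangle$ or $S\mathcal{M}_\Box$ scenarios that arise in Lemma \ref{lem3342.1}. There, the key was that along a type $A_1$ cycle, all triangles sit on one side of $C$ and all quadrangles on the other, so a single side can close up as a pure-triangle (or pure-quadrangle) M\"obius strip. For type $B_1$ this homogeneity fails: reading $lk(u_i) = C(\textbf{a}, u_{i+1}, b, c, \textbf{d}, u_{i-1}, e)$, one side of $C$ at $u_i$ carries a triangle together with a square, while the other side carries two triangles and a square. Moreover the definition of $B_1$ forces $lk(u_{i+1}) = C(\textbf{e}, a, k, u_{i+2}, \textbf{l}, b, u_i)$, so the roles of the two sides alternate as $i$ advances along $C$, and neither side of $C$ is ever composed exclusively of triangles or exclusively of quadrangles. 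Hence the $S\mathcal{M}_\triangle$ and $S\mathcal{M}_\Box$ constructions cannot occur.

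Finally, I would show that any face repetition in $SQ$ must be global rather than one-sided. Each edge-incident face is determined by its shared edge with $C$, and each vertex-incident triangle in $SQ$ is determined by its vertex of attachment on $C$ together with the choice of side; since $C$ has distinct vertices and edges, the only way a face can appear twice in $SQ$ is if it appears once on each side of $C$, i.e.\ the two sides of $C$ become identified after one loop around $C$. Running the same sub-sequence argument as in the proof of Lemma \ref{lem36:2} (a triangle cannot have more than three free edges available for identification, and a square cannot have more than four), this forces the identification to be a single global twist, giving $|S| = \mathcal{M}(C,M)$. Combining the three cases yields the dichotomy $|S| \in \{S(C,M),\ \mathcal{M}(C,M)\}$.
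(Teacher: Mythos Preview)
Your first case (all faces distinct) matches the paper. The gap is in your handling of the repeated-face case.

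In your Part 2 you only rule out $S\mathcal{M}_\triangle$ and $S\mathcal{M}_\Box$, the homogeneous one-sided M\"obius strips that arose for type $A_1$. But for a $B_1$-cycle the faces on each side of $C$ are a \emph{mixture} of triangles and quadrangles, so the scenario you actually need to exclude is a mixed one-sided M\"obius strip $S\mathcal{M}_{3,4}(C,M)$: the faces on one side of $C$ repeat, with both $3$-gons and $4$-gons appearing. Your observation that ``neither side is exclusively triangles or exclusively quadrangles'' says nothing against this.

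Your Part 3 then tries to exclude one-sided repetition by asserting that ``each edge-incident face is determined by its shared edge with $C$'' and ``each vertex-incident triangle is determined by its vertex of attachment together with the choice of side''. This is circular: it presupposes that every face meets $C$ in a single edge (or a single vertex on a fixed side). In the $S\mathcal{M}$ scenario that is exactly what fails---a quadrangle $[a,b,c,d]$ may have both opposite edges $ab$ and $cd$ lying on $C$ (on the same local side), so it occurs twice in the one-sided face sequence. Your appeal to the free-edge count from Lemma~\ref{lem36:2} does not help here, because that count only shows each face repeats at most twice, not that such double occurrence is impossible.

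The paper closes this gap by a direct parity argument on $\mathrm{length}(C)$. If $\mathrm{length}(C)=2r$ is even and one side doubles up, then looking at the link of the vertex $w_{r+2}$ (where the two halves of the doubled sequence overlap) forces the face-sequence there to be $\{3,4,3,4\}$ or $\{3^2,4,3^2,4\}$, i.e.\ degree $4$ or $6$, contradicting $\{3^2,4,3,4\}$. If $\mathrm{length}(C)$ is odd, the alternating pattern $\triangle,\triangle,\Box,\dots$ of the one-sided face sequence cannot be written as two identical halves without matching a triangle to a quadrangle. You need one of these concrete contradictions (or an equivalent argument, e.g.\ that the edge-incident faces on a fixed side of a $B_1$-cycle strictly alternate $\triangle,\Box,\triangle,\Box,\dots$, which already forces any two-sided $C$ to have even length and blocks the half-period identification); the determinacy claim in your Part 3 does not supply it.
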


\begin{proof} We first assume that the faces incident with $C$ do not repeat. We repeat the similar argument as we have done in above preceding sections. Let $lk(w_1) = C(x_1, x_2, w_2, \textit{\textbf{v}}_2, v_1, w_r,$ $\textit{\textbf{x}}_r)$. We consider geometric carrier $|S|$ and cut along the path $P(x_1, w_1, v_1)$ of type $B_1$. Thus we have the following cases. Let path $P(x_1, w_1, v_1)$ identify with $P(x_1, w_1, v_1)$ with out any twist. In this case, we get a cylinder (for an example see in Figure 14). We denote it by $S(C, M)$. Let $P(x_1, w_1, v_1)$ identify with $P(x_1, w_1, v_1)$ with a twist. In this case, we get a M\"{o}bius strip (for an example see in Figure 15). We denote it by $\mathcal{M}(C, M)$. 

Assume that the faces incident with $C$ repeat. Let $S'$ denote the set of faces incident with $C$ and lie on one side of $C$. The set $S'$ consists of both triangles and quadrangles. Similarly as in Lemma \ref{lem36:2}, each face of $S'$ repeats exactly twice in the sequence of incident faces of $C$ which are lie one side and belong in $S'$. So, let $F_1, F_2, \dots, F_k, F_1, F_2, \dots, F_k$ denote a sequence of faces which are incident with $C$ for some $k$ and belong to $S'$. That is $\{F_1, F_2, \dots, F_k, F_1, F_2, \dots, F_k\} = \{F_1, F_2, \dots, F_k\}$. Thus we have the following two cases. Let the cycle $C = C(w_1, \dots, w_{2r})$ be of even length. In this case, we consider link of $w_{r+2}$ and hence, we get a face sequence either $\{3, 4, 3, 4\}$ or $\{3^2, 4, 3^2, 4\}$ (for example see in Figure $14$ and $15$). These are different from $\{3^2, 4, 3, 4\}$. So, length$(C)$ is not even. Assume that the length of $C$ is odd. In this case, the face sequence incident with $C$ is either $\triangle'_1, \triangle'_1, \Box_2,$ \dots, $\Box_{r-1}, \triangle'_{r}, \triangle''_{r}$ or $\Box_1, \triangle'_2, \triangle''_2,$ \dots,$\triangle'_{r-1}, \triangle''_{r-1}, \Box_{r}$ ($\triangle', \Box$ represent triangles and quadrangles respectively). In  $\triangle'_1, \triangle''_1, \Box_2,$ \dots, $\Box_{\frac{r-1}{2}}, \triangle'_{\frac{r+1}{2}}, \triangle''_{\frac{r+1}{2}}, \Box_{\frac{r+3}{2}},$ \dots, $\Box_{r-1}, \triangle'_{r}, \triangle''_{r}$ implies $\triangle'_1 = \triangle''_{\frac{r+1}{2}}$ and $\triangle''_1  = \Box_{\frac{r+3}{2}}$ as $(\triangle'_1, \triangle'_1, \Box_2,$ \dots, $\Box_{\frac{r-1}{2}}, \triangle'_{\frac{r+1}{2}}) = ( \triangle''_{\frac{r+1}{2}}, \Box_{\frac{r+3}{2}},$ \dots, $\Box_{r-1}, \triangle'_{r}, \triangle''_{r})$. But $\triangle''_1  \not= \Box_{\frac{r+3}{2}}$. So, this case is not possible. Similarly as above, the sequence $\Box_1, \triangle'_2, \triangle''_2,$ \dots,$\triangle'_{r-1}, \triangle''_{r-1}, \Box_{r}$ does not exist. Therefore,  $|S| = S(C, M)$ (for example see in Figure 14) or $|S| =\mathcal{M}(C, M)$ (for example see in Figure 15).
\end{proof}

So, by Lemma \ref{lem32434:1}, $M$ contains either of a cylinder $S(C, M)$ and a M\"{o}bius strip $\mathcal{M}(C, M)$. If $S(C, M) \subset M$ or $\mathcal{M}(C, M) \subset M$ then by the similar argument as in Section 2.1, the map $M$ has either a $S(M)$ or a $S\mathcal{M}(M)$ representation.   Let $S(M)$ denote $M$ such that $\partial S(M) = \{C, C'\}$ for some cycles $C, C'$ of type $B_1$. Let $v \in V(C)$. By definition, let $L = P(v, \dots, w)$ denote the path of type $B_{1}$ through $v$ where $w \in V(C')$. We repeat the similar argument as in Section 2.4 which is done to define $(r, s, k)$-representation of $M$. Then, we take second cut along $L$ where the starting adjacent face to the base horizontal cycle is a $4$-gon. Thus, we get a representation of $M$ and it is denoted by $K(r, s, k)$. 

Again, let $S\mathcal{M}(M)$ denote $M$. Similarly as in the above preceding sections, we cut along the boundaries of the M\"obius strips and hence, we get three components namely a cylinder $S'(M)$ and two M\"obius strips $\mathcal{M}', \mathcal{M}''$ where $\mathcal{M}', \mathcal{M}'' \in \{\mathcal{M}(L, M) \mid$ for some cycle $L$ of type $B_1$ in $S\mathcal{M}(M) \}$. We denote this representation by $K(S'(M),  \mathcal{M}', \mathcal{M}'')$. By combining above all cases, the map $M$ has either a $K(r, s, k)$ or a $K(S'(M), \mathcal{M}', \mathcal{M}'')$ representation. We show in next lemma that every map $M$ has a $K(r, s, k)$ representation.

\begin{Lemma} \label{lem32434:2} The map $M$ has a $K(r, s, k)$ representation. 
\end{Lemma}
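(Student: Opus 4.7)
By the preceding discussion, $M$ admits either a $K(r,s,k)$ representation or a $K(S'(M),\mathcal{M}',\mathcal{M}'')$ representation; I will argue that the second case can always be reduced to the first, in direct analogy with the proof of Lemma \ref{lem44:1}. So assume that $K(S'(M),\mathcal{M}',\mathcal{M}'')$ represents $M$, where $\partial S'(M)=\{C,C'\}$ with $C=\partial\mathcal{M}'$ and $C'=\partial\mathcal{M}''$ two cycles of type $B_1$, each of common length $l$ (by Lemma \ref{lem32434:1}, these are genuine M\"obius strips bounded by type $B_1$ cycles).

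The plan is as follows. First, pick a vertex $u\in V(C)$; by the definition of type $B_1$ there is a second path $Q$ of type $B_1$ through $u$ not homologous to $C$. Cut $\mathcal{M}'$ along $Q\cap \mathcal{M}'$, which is a path $P(u,\dots,u')$ of type $B_1$ from $u$ to some other vertex $u'\in V(C)$ (since a M\"obius strip cut along a transverse arc unfolds to a rectangle). This produces a rectangular planar piece $\Box(\mathcal{M}')$ whose boundary consists of two copies of $P(u,\dots,u')$ and the two arcs of $C$ from $u$ to $u'$. Next, extend $Q$ through $S'(M)$: since $Q$ is of type $B_1$ it exits $C$ at $u$ and $u'$ and arrives at two vertices $v,v'\in V(C')$, producing two vertical cuts $P(u,\dots,v)$ and $P(u',\dots,v')$ that slice $S'(M)$ into two rectangular pieces $\Box'$ and $\Box''$. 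Finally, perform the analogous cut of $\mathcal{M}''$ through $v,v'$ along $Q\cap\mathcal{M}''$, producing a rectangle $\Box(\mathcal{M}'')$.

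Now reassemble the four rectangles by gluing $\Box(\mathcal{M}')$ to $\Box'$ along the arc of $C$ from $u$ to $u'$, then $\Box'$ to $\Box(\mathcal{M}'')$ along the arc of $C'$ from $v$ to $v'$, and finally attaching $\Box''$ along the remaining arcs. The result is a single rectangular planar region whose horizontal boundary is a cycle of type $B_1$ (namely the concatenation of the two arcs of $C$ with the two arcs of $C'$, which together form a closed walk of type $B_1$ by the opposite orientations forced on the M\"obius pieces), and whose vertical boundary is $Q$ traversed twice. By the construction in Section 2.4 (applied here to the $\{3^2,4,3,4\}$ setting), this is exactly a $K(r,s,k)$ representation for appropriate $r$, $s$, $k$, where $r$ equals the length of the assembled horizontal boundary and $s$ equals the length of $Q$.

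The main obstacle is ensuring that the horizontal reassembled boundary really is a single cycle of type $B_1$ rather than a walk that fails the defining link condition at the junctions $u$, $u'$, $v$, $v'$. The key point is that the M\"obius pieces contribute an orientation reversal that exactly matches the orientation reversal required at these junction vertices by the definition of type $B_1$ (the bold/non-bold positions in the link pattern get swapped consistently). Verifying this compatibility vertex-by-vertex at $u,u',v,v'$, using the explicit link description in the definition of $B_1$ and Lemma \ref{lem32434:1}, completes the reduction and shows $M$ has a $K(r,s,k)$ representation.
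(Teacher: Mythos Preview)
Your plan is essentially the paper's own argument: assume the $K(S'(M),\mathcal{M}',\mathcal{M}'')$ case and, exactly as in Lemma~\ref{lem44:1}, cut each M\"obius strip and the cylinder along transverse paths of type $B_1$ to obtain four rectangular pieces $\Box(\mathcal{M}')$, $\Box'$, $\Box''$, $\Box(\mathcal{M}'')$, then reglue them into a single planar rectangle that realises a $K(r',s',k')$. The paper's proof is little more than a pointer back to Lemma~\ref{lem44:1}; you are more explicit and you correctly isolate the one genuine verification (that the $B_1$ link condition survives at the junction vertices), which the paper leaves implicit.

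Where your write-up drifts is in the geometry of the reassembly. In the paper's construction (compare the $\{4^4\}$ example, Figures~8--9), the new \emph{horizontal} cycle of $K(r',s',k')$ is the concatenation of the four cutting paths (through $\mathcal{M}'$, through $S'(M)$, through $\mathcal{M}''$, and back through $S'(M)$), while the new \emph{vertical} sides are single arcs of the original boundary cycle. Your description reverses these roles and speaks of the horizontal boundary as ``the concatenation of the two arcs of $C$ with the two arcs of $C'$,'' which cannot literally be a path since arcs of $C$ and arcs of $C'$ share no endpoints. Moreover, your gluing instruction (attach $\Box''$ ``along the remaining arcs'') uses up all arcs of $C$ and $C'$ and closes the four pieces into a band rather than leaving a planar rectangle; one pair of arcs must remain unglued to serve as the top and bottom of the $K(r',s',k')$, as in the paper's order $\Box'$--$\Box(\mathcal{M})$--$\Box''$--$\Box(\mathcal{M}')$. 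With that bookkeeping corrected the argument goes through, and the junction check you flag is exactly the point that needs care.
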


\begin{proof} As above the map $M$ has either a $K(r, s, k)$ or a $K(S'(M), \mathcal{M}', \mathcal{M}'')$ representation. Assume that $K(S'(M), \mathcal{M}', \mathcal{M}'')$ denotes $M$. We repeat similar process as in Lemma \ref{lem44:1}. We cut $\mathcal{M}(L, M)$ along a path $P$ of type $B_1$, and hence, we get a planar polyhedral representation. Let denoted it by $\Box(\mathcal{M})$. Similarly, we cut $S'(M)$ along two paths of type $B_1$. Hence, we get two planar polyhedral representations $\Box'$ and $\Box''$. And also, we cut $\mathcal{M}(L', M)$ along a path type $B_1$. Hence, we get an another polyhedral representation $\Box(\mathcal{M'})$. In this process, we consider paths which are not homologous to the boundary cycles of the cylinder and M\"obius strips through some fixed vertices. Now, similarly as in Lemma \ref{lem44:1}, we consider $\Box(\mathcal{M})$ and $\Box'$, and identify these along their common boundaries. Then, we consider $\Box(\mathcal{M})$ and $\Box''$, and identify these along their common boundaries. Thereafter, we consider $\Box''$ and $\Box(\mathcal{M'})$, and identify these along their common boundaries. Thus, we get a planar polyhedral representation $K(r', s', k')$ from $K(S'(M), \mathcal{M}', \mathcal{M}'')$ for some $r', s', k'$. Therefore, every map $M$ has a $K(r, s, k)$ representation for some $r, s, k$.
\end{proof}

\begin{picture}(0,0)(10,25)
\tiny
\setlength{\unitlength}{1.6mm}
\drawpolygon(5,5)(20,5)(20,15)(5,15)
\drawpolygon(22,5)(37,5)(37,15)(22,15)


\drawline[AHnb=0](10,5)(10,15)
\drawline[AHnb=0](15,5)(15,15)
\drawline[AHnb=0](27,5)(27,15)
\drawline[AHnb=0](32,5)(32,15)
\drawline[AHnb=0](5,10)(20,10)
\drawline[AHnb=0](22,10)(37,10)

\drawline[AHnb=0](5,10)(10,15)
\drawline[AHnb=0](10,10)(15,5)
\drawline[AHnb=0](15,10)(20,15)
\drawline[AHnb=0](22,10)(27,5)
\drawline[AHnb=0](27,10)(32,15)
\drawline[AHnb=0](32,10)(37,5)

\put(20,10){\dots}
\put(20,5){\dots}
\put(20,15){\dots}

\put(5,4){\tiny ${v_{1}}$}
\put(10,4){\tiny ${v_{2}}$}
\put(15,4){\tiny ${v_{3}}$}
\put(20,4){\tiny ${v_{4}}$}
\put(22,4){\tiny ${v_{r-2}}$}
\put(27,4){\tiny ${v_{r-1}}$}
\put(32,4){\tiny ${v_{r}}$}
\put(37,4){\tiny ${v_{1}}$}

\put(5.5,9){\tiny ${w_{1}}$}
\put(10.5,9){\tiny ${w_{2}}$}
\put(15.5,9){\tiny ${w_{3}}$}
\put(20,9){\tiny ${w_{4}}$}
\put(22.5,9){\tiny ${w_{r-2}}$}
\put(27.5,9){\tiny ${w_{r-1}}$}
\put(32.7,9){\tiny ${w_{r}}$}
\put(37.5,9){\tiny ${w_{1}}$}

\put(5.5,14){\tiny ${x_{1}}$}
\put(10.5,14){\tiny ${x_{2}}$}
\put(15.5,14){\tiny ${x_{3}}$}
\put(20,14){\tiny ${x_{4}}$}
\put(22.5,14){\tiny ${x_{r-2}}$}
\put(27.5,14){\tiny ${x_{r-1}}$}
\put(32.5,14){\tiny ${x_{r}}$}
\put(37.5,14){\tiny ${x_{1}}$}

\put(14,0){\tiny Figure 14 : $S(C, M)$ : Cylinder}

\end{picture}

\vspace{3cm}

\begin{picture}(0,0)(14,22)
\tiny
\setlength{\unitlength}{1.8mm}
\drawpolygon(5,5)(20,5)(20,15)(5,15)
\drawpolygon(22,5)(32,5)(32,15)(22,15)


\drawline[AHnb=0](10,5)(10,15)
\drawline[AHnb=0](15,5)(15,15)
\drawline[AHnb=0](27,5)(27,15)
\drawline[AHnb=0](32,5)(32,15)
\drawline[AHnb=0](5,10)(20,10)
\drawline[AHnb=0](22,10)(32,10)

\drawline[AHnb=0](5,10)(10,15)
\drawline[AHnb=0](10,10)(15,5)
\drawline[AHnb=0](15,10)(20,15)
\drawline[AHnb=0](22,10)(27,5)
\drawline[AHnb=0](27,10)(32,15)

\put(20,10){\dots}
\put(20,5){\dots}
\put(20,15){\dots}

\put(5,4){\tiny ${v_{1}}$}
\put(10,4){\tiny ${v_{2}}$}
\put(15,4){\tiny ${v_{3}}$}
\put(20,4){\tiny ${v_{4}}$}
\put(22,4){\tiny ${v_{r-2}}$}
\put(27,4){\tiny ${v_{r-1}}$}
\put(32,4){\tiny ${x_{1}}$}

\put(5.5,9){\tiny ${w_{1}}$}
\put(10.5,9){\tiny ${w_{2}}$}
\put(15.5,9){\tiny ${w_{3}}$}
\put(20,9){\tiny ${w_{4}}$}
\put(22.5,9){\tiny ${w_{r-2}}$}
\put(27.5,9){\tiny ${w_{r-1}}$}
\put(32.5,9){\tiny ${w_{1}}$}

\put(5.5,14){\tiny ${x_{1}}$}
\put(10.5,14){\tiny ${x_{2}}$}
\put(15.5,14){\tiny ${x_{3}}$}
\put(20.5,14){\tiny ${x_{4}}$}
\put(22.5,14){\tiny ${x_{r-2}}$}
\put(27.5,14){\tiny ${x_{r-1}}$}
\put(32.5,14){\tiny ${v_{1}}$}

\put(16,0){\tiny Figure 15}

\end{picture}

\vspace{3cm}

\begin{picture}(0,0)(-47,-44)
\tiny
\setlength{\unitlength}{1.7mm}
\drawpolygon(5,5)(20,5)(20,15)(5,15)
\drawpolygon(22,5)(37,5)(37,15)(22,15)
\drawpolygon(39,5)(54,5)(54,15)(39,15)

\drawline[AHnb=0](10,5)(10,15)
\drawline[AHnb=0](15,5)(15,15)
\drawline[AHnb=0](27,5)(27,15)
\drawline[AHnb=0](32,5)(32,15)
\drawline[AHnb=0](5,10)(20,10)
\drawline[AHnb=0](22,10)(37,10)
\drawline[AHnb=0](49,5)(49,15)
\drawline[AHnb=0](44,5)(44,15)
\drawline[AHnb=0](39,10)(54,10)

\put(20,10){\dots}
\put(20,5){\dots}
\put(20,15){\dots}

\drawline[AHnb=0](5,10)(10,15)
\drawline[AHnb=0](10,10)(15,5)
\drawline[AHnb=0](15,10)(20,15)
\drawline[AHnb=0](22,10)(27,5)
\drawline[AHnb=0](27,10)(32,15)
\drawline[AHnb=0](32,5)(37,10)
\drawline[AHnb=0](39,10)(44,5)
\drawline[AHnb=0](44,10)(49,15)
\drawline[AHnb=0](49,10)(54,5)

\put(37,10){\dots}
\put(37,5){\dots}
\put(37,15){\dots}

\put(5,4){\tiny ${w_{r+1}}$}
\put(10,4){\tiny ${w_{r+2}}$}
\put(15,4){\tiny ${w_{r+3}}$}
\put(18.5,4){\tiny ${w_{r+4}}$}
\put(22,4){\tiny ${w_{2}}$}
\put(27,4){\tiny ${w_{1}}$}
\put(32,4){\tiny ${w_{2}}$}
\put(37,4){\tiny ${w_{3}}$}
\put(39,4){\tiny ${w_{r-2}}$}
\put(44,4){\tiny ${w_{r-1}}$}
\put(49,4){\tiny ${w_{r}}$}
\put(53,4){\tiny ${w_{r+1}}$}

\put(5.5,9){\tiny ${w_{1}}$}
\put(10.5,9){\tiny ${w_{2}}$}
\put(15.5,9){\tiny ${w_{3}}$}
\put(20,9){\tiny ${w_{4}}$}
\put(22.5,9){\tiny ${w_{r}}$}
\put(27,9){\tiny ${w_{r+1}}$}
\put(32,9){\tiny ${w_{r+2}}$}
\put(33.5,10.5){\tiny ${w_{r+3}}$}
\put(39.5,9){\tiny ${w_{2r-2}}$}
\put(44.5,9){\tiny ${w_{2r-1}}$}
\put(49.5,9){\tiny ${w_{2r}}$}
\put(54.5,9){\tiny ${w_{1}}$}

\put(5.5,14){\tiny ${x_{1}}$}
\put(10.5,14){\tiny ${x_{2}}$}
\put(15.5,14){\tiny ${x_{3}}$}
\put(20,14){\tiny ${x_{4}}$}
\put(22.5,14){\tiny ${x_{r}}$}
\put(27.5,14){\tiny ${x_{r+1}}$}
\put(32.5,14){\tiny ${x_{r+2}}$}
\put(35,15.5){\tiny ${x_{r+3}}$}
\put(39.5,14){\tiny ${x_{2r-2}}$}
\put(44,14){\tiny ${x_{2r-1}}$}
\put(49.5,14){\tiny ${x_{2r}}$}
\put(54.5,14){\tiny ${x_{1}}$}

\put(24,0){\tiny Figure 16}

\end{picture}


\begin{picture}(0,0)(-43,-18)
\tiny
\setlength{\unitlength}{1.8mm}
\drawpolygon(5,5)(20,5)(20,15)(5,15)
\drawpolygon(22,5)(37,5)(37,15)(22,15)
\drawpolygon(39,5)(54,5)(54,15)(39,15)

\drawline[AHnb=0](10,5)(10,15)
\drawline[AHnb=0](15,5)(15,15)
\drawline[AHnb=0](27,5)(27,15)
\drawline[AHnb=0](32,5)(32,15)
\drawline[AHnb=0](5,10)(20,10)
\drawline[AHnb=0](22,10)(37,10)
\drawline[AHnb=0](49,5)(49,15)
\drawline[AHnb=0](44,5)(44,15)
\drawline[AHnb=0](39,10)(54,10)

\put(20,10){\dots}
\put(20,5){\dots}
\put(20,15){\dots}

\drawline[AHnb=0](5,10)(10,15)
\drawline[AHnb=0](10,10)(15,5)
\drawline[AHnb=0](15,10)(20,15)
\drawline[AHnb=0](22,10)(27,5)
\drawline[AHnb=0](27,15)(32,10)
\drawline[AHnb=0](32,10)(37,5)
\drawline[AHnb=0](39,15)(44,10)
\drawline[AHnb=0](49,10)(49,5)
\drawline[AHnb=0](49,15)(54,10)

\put(37,10){\dots}
\put(37,5){\dots}
\put(37,15){\dots}

\put(5,4){\tiny ${v_{1}}$}
\put(10,4){\tiny ${v_{2}}$}
\put(15,4){\tiny ${v_{3}}$}
\put(20,4){\tiny ${v_{4}}$}
\put(22,4){\tiny ${v_{r}}$}
\put(27,4){\tiny ${v_{r+1}}$}
\put(32,4){\tiny ${v_{r+2}}$}
\put(36,4){\tiny ${v_{r+3}}$}
\put(39,4){\tiny ${v_{2r-2}}$}
\put(44,4){\tiny ${v_{2r-1}}$}
\put(49,4){\tiny ${v_{2r}}$}
\put(54,4){\tiny ${v_{1}}$}

\put(5.5,9){\tiny ${w_{1}}$}
\put(10.5,9){\tiny ${w_{2}}$}
\put(15.5,9){\tiny ${w_{3}}$}
\put(20.0,9){\tiny ${w_{4}}$}
\put(22.7,9){\tiny ${w_{r}}$}
\put(27.5,9){\tiny ${w_{r+1}}$}
\put(32.5,9){\tiny ${w_{r+2}}$}
\put(33.5,10.5){\tiny ${w_{r+3}}$}
\put(39.5,9){\tiny ${w_{2r-2}}$}
\put(44.5,9){\tiny ${w_{2r-1}}$}
\put(49.5,9){\tiny ${w_{2r}}$}
\put(54.5,9){\tiny ${w_{1}}$}

\put(5.5,14){\tiny ${w_{r+1}}$}
\put(10.5,14){\tiny ${w_{r+2}}$}
\put(15,14){\tiny ${w_{r+3}}$}
\put(19,15.5){\tiny ${w_{r+4}}$}
\put(22.5,14){\tiny ${w_{2r}}$}
\put(27.8,14){\tiny ${w_{1}}$}
\put(32.5,14){\tiny ${w_{2}}$}
\put(37,14){\tiny ${w_{3}}$}

\put(39.5,14){\tiny ${w_{r-2}}$}
\put(44.5,14){\tiny ${w_{r-1}}$}
\put(49.5,14){\tiny ${w_{r}}$}
\put(54.5,14){\tiny ${w_{r+1}}$}

\put(24,0){\tiny Figure 17}

\end{picture}

\vspace{-1.7cm}

As above, by definition, we have two paths through each vertex in $M$. This implies that we have two cycles of type $B_1$ upto homologous in $M$. Let $C'$ and $C''$ denote two non-homologous cycles of type $B_1$ in $M$. As in Section 2.2, let $C'$ homologous to the generator of $\mathbb{Z}$ and $C_2$ homologous to generator of $\mathbb{Z}_2$ of the fundamental group of $M$. As these are two different groups, so, there is no isomorphism which maps one cycle to another cycle. So, we consider only one cycle in the proof of Lemma \ref{lem32434:4}. By the above Lemma \ref{lem32434:2}, every map has a $K(r, s, k)$ representation. So, we study only $K(r, s, k)$ representations in bellow. Next we define admissible relations among $r, s, k$ of $K(r, s, k)$ such that $K(r, s, k)$ represents a map after identifying its boundaries. The proof of next lemmas repeats similar argument as in Lemmas \ref{lem36:6} and \ref{lem3342:iso-1}. Thus, we have  

\begin{Lemma}\label{lem32434:3} The maps of type $\{3^{2}, 4, 3, 4\}$ of the form $K(r, s, k)$ exist if and only if the following holds : (i) rs $\geq$ 12, (ii) r $\geq$ 4, (iii) 2 $\mid$ r, (iv) s $\ge$ 3, (v) 2 $\nmid$ s, and (v) $k \in \{ 2t ~:~ 0 \leq t \leq (\frac{r}{2}-1)\}$ if $s \ge 3$. 
\end{Lemma}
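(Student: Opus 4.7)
The plan is to follow the case-by-case strategy used in Lemmas \ref{lem36:6} and \ref{lem3342:4}. First I would argue necessity by assuming a $K(r,s,k)$ representation of some $M$ of type $\{3^2, 4, 3, 4\}$ exists and exhibiting, for each violated condition in the list, a vertex whose link fails to be a cycle with face sequence $(3, 3, 4, 3, 4)$; then I would argue sufficiency by checking that when all conditions hold the identifications along the four boundary sides of the rectangle glue consistently, so that every vertex of the resulting complex acquires the correct link.

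The bounds $rs \ge 12$, $r \ge 4$ and $s \ge 3$ follow by the same link-failure argument used in Lemma \ref{lem36:6}: a smaller rectangle simply does not carry enough vertices for every link to be a $7$-cycle with the required face sequence. The condition $2 \mid r$ is the first substantive point. Along the horizontal base cycle $C$, the faces incident with $C$ are determined by a path of type $B_1$ and alternate between a triangle-triangle block and a quadrangle block on each side; closing $C$ up after $r$ steps forces the counts of these two blocks to match, and this forces $r$ to be even. Dually, $2 \nmid s$ reflects the Klein-bottle twist: traversing vertically through $s$ horizontal cycles returns one to the base with orientation reversed, and the face sequence around the bottom vertex matches the sequence read from the top (after the twist) only when $s$ is odd.

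For the admissible shifts $k$, the parameter indexes which vertex of the upper cycle is glued to the starting vertex of the lower cycle. The face sequence at such a glued vertex is determined by the ``phase'' of the alternating triangle/quadrangle pattern on each side of the identification; for these phases to agree, $k$ must be even. Combined with the obvious range $0 \le k \le r - 1$, this yields $k \in \{2t : 0 \le t \le \tfrac{r}{2} - 1\}$. Once the parity conditions are in force, the link at each boundary vertex can be written down explicitly and seen to match $\{3^2, 4, 3, 4\}$, which gives sufficiency.

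The main obstacle I expect is the bookkeeping needed to make the parity claims rigorous rather than merely plausible. One has to encode the alternating triangle/quadrangle pattern along both horizontal and vertical paths of type $B_1$ and verify that each parity compatibility is actually \emph{forced} by the link condition at some explicit identification vertex, rather than just being one consistent choice. After that is done, the sufficiency direction is immediate from the explicit gluing of the $r \times s$ rectangle with the prescribed twist and shift, since every interior and identified vertex then visibly has a link of type $\{3^2, 4, 3, 4\}$.
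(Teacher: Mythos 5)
Your proposal follows the same route the paper takes: the paper gives no separate proof of this lemma, stating only that it "repeats similar argument as in" Lemmas \ref{lem36:6} and \ref{lem3342:iso-1}, i.e.\ that any violation of the listed conditions produces a vertex whose link fails to be a cycle with face sequence $(3,3,4,3,4)$, while admissibility of the listed parameters is checked directly on the glued $r\times s$ rectangle. Your account of where the parity conditions on $r$, $s$ and $k$ come from (the alternating triangle-pair/quadrangle pattern along paths of type $B_1$ and the orientation-reversing identification) is consistent with the structure shown in Figures 14--17 and actually supplies more justification than the paper itself records.
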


\begin{Lemma}\label{lem32434:4}
Let $K(r_{i}, s_{i}, k_{i})$ denote a $(r_{i}, s_{i}, k_{i})$-representation of $M_{i}$ on same number of vertices for $i \in \{1, 2\}$. Then,\ $M_{1} \cong M_{2}$ if $(r_1, s_1, l_1) = (r_2, s_2, l_2)$ where $ l_i \in \{k_i~ mod(2), (k_i + r_i)~ mod(2)\}$.
\end{Lemma}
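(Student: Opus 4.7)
The plan is to mimic the argument used in Lemma \ref{lem36:iso} and Lemma \ref{lem3342:iso-1}, with the extra care needed because faces around each vertex come in the mixed pattern $\{3^2,4,3,4\}$. First I would label the vertices of $K(r_i,s_i,k_i)$ in coordinates: let $C(i,0),C(i,1),\dots,C(i,s_i-1)$ be the horizontal cycles of type $B_1$ in $K(r_i,s_i,k_i)$, with $C(i,t)=C(u^{(i)}_{t,0},u^{(i)}_{t,1},\dots,u^{(i)}_{t,r_i-1})$, and the upper boundary identified with the lower boundary shifted by $k_i$.

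For the principal case $r_1=r_2=r$, $s_1=s_2=s$, $k_1=k_2=k$, I would define $f\colon V(K(r_1,s_1,k_1))\to V(K(r_2,s_2,k_2))$ by $f(u^{(1)}_{t,i})=u^{(2)}_{t,i}$ on indices and check that $f(\mathrm{lk}(u^{(1)}_{t,i}))=\mathrm{lk}(u^{(2)}_{t,i})$ by reading off the $B_1$-link pattern; since the face sequence $\{3^2,4,3,4\}$ is preserved index-by-index, $f$ carries triangles to triangles and quadrangles to quadrangles, hence is an isomorphism.

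For the case $k_1\neq k_2$ but $l_1=l_2$, the plan is to reduce each $K(r_i,s_i,k_i)$ to a canonical $K(r_i,s_i,0)$ or $K(r_i,s_i,1)$ representation by a cut-and-paste along a vertical path of type $B_1$. By Lemma \ref{lem32434:3}, $r_i$ is even and $k_i$ is even, so both $k_i\bmod 2$ and $(k_i+r_i)\bmod 2$ equal $0$; the target canonical form is $K(r_i,s_i,0)$. Concretely, pick the vertex $u^{(i)}_{0,k_i/2}$ on the base cycle $C(i,0)$ (or $u^{(i)}_{0,(k_i+r_i)/2}$ if the first choice fails to give a $B_1$-path oriented correctly relative to the alternating triangle/quadrangle pattern), identify along the vertical boundaries of $K(r_i,s_i,k_i)$ to recover the cylinder $S(M_i)$, and then cut along the vertical path of type $B_1$ through this chosen vertex. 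As in Lemma \ref{lem36:iso}, this cut does not alter the horizontal cycles or their count, so the resulting representation is $K(r_i,s_i,0)$. Combining the two cases and applying $f$ of the principal case yields $M_1\cong M_2$.

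The main obstacle I anticipate is verifying that the vertical $B_1$-path exists through the chosen midpoint and that the cut glues back with the correct orientation of the $\{3^2,4,3,4\}$ face sequence; because the link is not symmetric under all rotations (unlike the $\{3^6\}$ and $\{4^4\}$ cases), only the parity classes of $k_i$ which are compatible with the quadrangle/triangle alternation around the horizontal cycle will produce a valid $B_1$-cut. This is precisely why the admissibility in Lemma \ref{lem32434:3} forces $k\in\{2t\}$ and hence why the isomorphism classifier $l_i$ collapses the allowable values of $k_i$ to a single parity class — I would emphasize this in the argument and then invoke the already-defined vertex map $f_1$ from Lemma \ref{lem36:iso} to close the proof.
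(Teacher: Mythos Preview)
Your proposal is correct and follows essentially the same approach as the paper, which itself offers no detailed proof and simply states that the argument repeats Lemmas~\ref{lem36:iso} and~\ref{lem3342:iso-1}. Your additional observation---that Lemma~\ref{lem32434:3} forces $2\mid r_i$ and $2\mid k_i$, so $l_i=0$ always and every admissible $K(r,s,k)$ reduces to $K(r,s,0)$---is a genuine clarification that the paper leaves implicit, and your anticipation of the orientation issue for the vertical $B_1$-cut (offering the alternate basepoint $(k_i+r_i)/2$) is exactly the kind of care the $\{3^2,4,3,4\}$ link asymmetry demands.
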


\begin{cor}\label{cor32434:1} Let $K(r_{i}, s_{i}, k_{i})$ denote a $(r_{i}, s_{i}, k_{i})$-representation of $M_{i}$ on same number of vertices for $i \in \{1, 2\}$. Then,\ $K(r_{1}, s_{1}, k_{1}) \not\cong K(r_{2}, s_{2}, k_{2}) ~\forall~ r_{1} \neq r_{2}, K(r_{1}, s_{1}, k_{1}) \not\cong K(r_{2}, s_{2}, k_{_{2}}) ~\forall ~ s_{1} \neq s_{2}, K(r_{1}, s_{1}, 0) \cong K(r_{1}, s_{1}, k_{1})$ for $2 \mid k_1$ or $2 \mid (k_1 + r_1)$, and $K(r_{1}, s_{1}, 1)$ $\cong K(r_{1}, s_{1}, k_{1})$ for $2 \mid (k_1 - 1)$ or $2 \mid (k_1 + r_1 - 1)$.
\end{cor}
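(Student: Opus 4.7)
The plan is to obtain this corollary as an essentially formal consequence of Lemma \ref{lem32434:4}, together with the observation that the parameters $r$ and $s$ are intrinsic invariants of the underlying map.

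I would first handle the two non-isomorphism assertions. The idea is to argue that an isomorphism $f : M_1 \to M_2$ must send horizontal cycles of the $K(r_1,s_1,k_1)$-representation to horizontal cycles of the $K(r_2,s_2,k_2)$-representation. Since the horizontal cycles are precisely the cycles of type $B_1$ along which all incident $4$-gons lie on one side (this is a purely local combinatorial property preserved by any map isomorphism), and since by the analogue of Proposition \ref{prop3342:1} for type $\{3^2,4,3,4\}$ all cycles of type $B_1$ in a fixed map have the same length, the common length $r$ is an invariant of $M$; hence $r_1 \neq r_2$ implies $M_1 \not\cong M_2$. The invariance of $s$ then follows because the $n$ vertices of $M_i$ are partitioned into $s_i$ horizontal cycles of length $r_i$, so $s_i = n/r_i$ is determined once $r_i$ is.

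For the two isomorphism assertions, I would simply specialize Lemma \ref{lem32434:4}. Set $(r_2,s_2,k_2) = (r_1,s_1,0)$; the lemma produces the isomorphism $K(r_1,s_1,k_1) \cong K(r_1,s_1,0)$ provided $l_1 = l_2$ for some choice $l_i \in \{k_i \bmod 2,\,(k_i+r_i) \bmod 2\}$. Since $l_2 = 0$ automatically, the required condition becomes $0 \in \{k_1 \bmod 2,\,(k_1+r_1) \bmod 2\}$, i.e.\ $2 \mid k_1$ or $2 \mid (k_1+r_1)$. The case $(r_2,s_2,k_2) = (r_1,s_1,1)$ is identical with $0$ replaced by $1$, yielding the conditions $2 \mid (k_1-1)$ or $2 \mid (k_1+r_1-1)$.

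The only real work here is the invariance step: I must verify that the definition of type $B_1$ together with the side-information ``$4$-gons on one side'' is strong enough to characterise the horizontal cycles intrinsically, so that no map isomorphism can exchange horizontal cycles with paths running transverse to them. Given the explicit asymmetric form of the link prescribed by type $B_1$ (and the fact that through each vertex there is essentially one such cycle up to homology, by the same argument used to establish uniqueness in the preceding subsections), this is routine but should be recorded explicitly. With that in hand, the remainder of the proof is a one-line reduction to Lemma \ref{lem32434:4}, so the corollary follows without further calculation.
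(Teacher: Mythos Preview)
Your reduction of the two isomorphism assertions to Lemma \ref{lem32434:4} is exactly what the paper does; that part is fine.

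The gap is in your invariance argument for the two non-isomorphism assertions. You characterise the horizontal cycles as ``the cycles of type $B_1$ along which all incident $4$-gons lie on one side'', and you then invoke an analogue of Proposition \ref{prop3342:1} to conclude that all $B_1$-cycles have the same length. Both steps fail for type $\{3^2,4,3,4\}$. Along a $B_1$-cycle here the quadrangles and triangles \emph{alternate sides} edge by edge (see the pattern in Figure 14: the edge $w_1w_2$ has a triangle above and a quadrangle below, the edge $w_2w_3$ the reverse), so your side-criterion does not pick out anything. Moreover, through each vertex there are \emph{two} non-homologous $B_1$-cycles, not one, and there is no reason they should have equal length; so ``all $B_1$-cycles have the same length'' is not available.

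What the paper uses instead (in the paragraph preceding Lemma \ref{lem32434:3}) is a homological separation: the two non-homologous $B_1$-cycles through a vertex represent respectively the infinite-order and the order-two generators in $H_1(\text{Klein bottle}) \cong \mathbb{Z} \oplus \mathbb{Z}_2$, and no self-homeomorphism of the Klein bottle can exchange these classes. Hence any isomorphism $M_1 \to M_2$ must send the ``$\mathbb{Z}$-type'' family of horizontal $B_1$-cycles to the corresponding family in $M_2$, which forces $r_1 = r_2$ and then $s_1 = n/r_1 = n/r_2 = s_2$. You should replace your local side-criterion by this topological argument.
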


\noindent \textbf{2.6 Maps of type $\{4, 8^{2}\}$~:~}\label{4821}
Let $M$ be a map of type $\{4, 8^{2}\}$ on the Klein bottle. Through each vertex in $M$, we have a path of type $Z_1$ \cite{mu:torus-hc13} in $M$. Let $P$ be a maximal walk of type $Z_1$. By the similar argument as in Section 2.1, it is a cycle since degree is three. Let $C$ be a cycle of type $Z_{1}$ in $M$. Let $S$ denote a set of faces incident with $C$. Then, we have 

\begin{Lemma}\label{tlem482:1} The $|S|$ is $S(C, M), \mathcal{M}(C, M)$ or $\mathcal{M}_{4,8}(C, M)$.
\end{Lemma}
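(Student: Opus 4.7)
The plan is to mimic the strategy used in Lemma \ref{lem32434:1} (and earlier in Lemma \ref{lem36:2}), splitting into the non-repeating and the repeating case for the sequence of incident faces of $C$. First I would set up notation: write $C = C(w_1, \dots, w_r)$ with $lk(w_1)$ expressed in terms of its neighbors along $C$ together with the remaining neighbor(s) determined by the type $\{4,8^2\}$, and let $P$ be a short path of type $Z_1$ through $w_1$ transverse to $C$ along which we will cut the geometric carrier $|S|$.

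Case 1: assume no face is repeated in the sequence of faces incident with $C$. Cut $|S|$ along $P$; the resulting planar strip has two boundary paths which must be identified to recover $|S|$. If the identification is orientation-preserving (no twist), then $|S|$ is a cylinder and we label it $S(C,M)$. If the identification is orientation-reversing (with twist), then $|S|$ is a M\"obius strip and we label it $\mathcal{M}(C,M)$. Here I would briefly check that the link condition at $w_1$, coming from the face sequence $\{4,8^2\}$, is compatible with both orientations, exactly as in the analogous argument in Lemma \ref{lem32434:1}.

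Case 2: assume some face incident with $C$ repeats. Let $S'$ be the set of faces incident with $C$ lying on one fixed side. Following the argument of Lemma \ref{lem36:2} almost verbatim, any repeated face must in fact appear exactly twice, so the sequence of faces of $S'$ in cyclic order has the form $F_1, F_2, \dots, F_k, F_1, F_2, \dots, F_k$. I then split by the parity of length$(C)$ and by whether the faces are $4$-gons or $8$-gons. The main forbidden pattern to rule out is a sequence in which two faces of the same kind would have to be identified with faces of a different kind, or where the link of the middle vertex $w_{\lfloor r/2\rfloor+1}$ forces a face sequence different from $\{4,8^2\}$ (for instance $\{4^2,8^2\}$ or $\{8^3\}$). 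The surviving configuration is precisely the one in which $4$-gons and $8$-gons alternate consistently with the type $\{4,8^2\}$, producing a M\"obius strip bounded by $C$ whose face set contains both $4$-gons and $8$-gons; this is what we denote $\mathcal{M}_{4,8}(C,M)$.

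The main obstacle I anticipate is the case analysis in Case 2: one needs to carefully enumerate the possible cyclic sequences of $4$-gons and $8$-gons consistent with the face sequence $\{4,8^2\}$ at each vertex of $C$, and rule out every sequence except the one giving $\mathcal{M}_{4,8}(C,M)$. The counting is delicate because the $8$-gons span many vertices of $C$ (so a single $8$-gon contributes to the link of several consecutive $w_i$), whereas the earlier lemmas involved only triangles or quadrangles; hence the parity arguments used in Lemmas \ref{lem36:2} and \ref{lem32434:1} need to be upgraded by tracking how far each $8$-gon extends along $C$, and by checking that the ``self-gluing'' of an $8$-gon with itself would force repeated edges or inconsistent vertex links, and is therefore impossible. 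Once this rigidity is established, combining Case 1 and Case 2 yields the three listed options for $|S|$, completing the proof.
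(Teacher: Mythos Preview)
Your proposal is correct and takes essentially the same approach as the paper. The paper's own proof is a two-line reference to Lemmas~\ref{lem3342.1} and~\ref{lem32434:1}: the argument of the former yields $S(C,M)$ or $\mathcal{M}_{4,8}(C,M)$, and the argument of the latter supplies the remaining possibility $\mathcal{M}(C,M)$, which is precisely your Case~1/Case~2 split; your anticipated difficulty with $8$-gons spanning several vertices of $C$ is not something the paper treats separately, as the earlier parity and link-checking arguments transfer without essential change.
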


\begin{proof} We repeat similar arguments as in Lemmas \ref{lem3342.1} and \ref{lem32434:1}. By the similar argument as in Lemma \ref{lem3342.1}, $|S|$ is either a cylinder $S(C, M)$ or a M\"{o}bius strip $\mathcal{M}_{4,8}(C, M)$. In this case, the $\mathcal{M}_{4,8}(C, M)$ denotes a M\"{o}bius strip which consists of $4$-gons and $8$-gons (the similar notion is also introduced in Section 2.1). Again, by the similar argument as in Lemma \ref{lem32434:1}, $|S| =\mathcal{M}(C, M)$. Therefore, $|S| = S(C, M)$ or $|S| =\mathcal{M}(C, M)$ or $|S| =\mathcal{M}_{4,8}(C, M)$. This completes the proof.
\end{proof}

So, by Lemma \ref{tlem482:1}, $S(C, M) \subset M, \mathcal{M}(C, M) \subset M$ or $\mathcal{M}_{4,8}(C, M) \subset M$. We consider each of $S(C, M), \mathcal{M}(C, M)$ and $\mathcal{M}_{4,8}(C, M)$ and repeat the similar process as in Section 2.4, 2.5. Hence, we get either a $K(r, s, k)$ or a $K(S'(M), \mathcal{M}', \mathcal{M}'')$ representation of $M$. Next we show that the map $M$ has a $K(r, s, k)$ representation.

\begin{Lemma} \label{lem482:2} The $M$ has a $K(r, s, k)$ representation.  
\end{Lemma}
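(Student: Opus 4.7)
The plan is to mimic the strategy used in Lemma \ref{lem44:1} and Lemma \ref{lem32434:2}. By the preceding discussion, the map $M$ already has either a $K(r, s, k)$ representation, in which case there is nothing to prove, or a $K(S'(M), \mathcal{M}', \mathcal{M}'')$ representation. So assume the latter, with $\partial S'(M) = \{C', C''\}$ where $C' = \partial \mathcal{M}'$, $C'' = \partial \mathcal{M}''$ are homologous cycles of type $Z_1$, and $\mathcal{M}', \mathcal{M}''$ each belong to $\{\mathcal{M}(L, M), \mathcal{M}_{4,8}(L, M) : L$ a cycle of type $Z_1\}$, as produced by Lemma \ref{tlem482:1}.

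The idea is to cut the two M\"obius strips along suitable paths of type $Z_1$ to unfold them into planar rectangles, and simultaneously to cut $S'(M)$ along two transversal paths of type $Z_1$ passing through the chosen cut-vertices on $C'$ and $C''$. First, fix a vertex $u \in V(C')$ and a vertex $v \in V(C'')$ that lie on a common path of type $Z_1$ transverse to the boundary cycles of $S'(M)$. Choose in $\mathcal{M}'$ a path of type $Z_1$ through $u$ (not homologous to $C'$), and cut $\mathcal{M}'$ along this path; this yields a planar polyhedral rectangle $\Box(\mathcal{M}')$ whose top and bottom boundaries each have twice the length of $C'$ (the two copies of $C'$) and whose two vertical boundaries are identified copies of the cut path. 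Do the same with $\mathcal{M}''$ through $v$ to obtain $\Box(\mathcal{M}'')$. Next, cut $S'(M)$ along two transversal paths of type $Z_1$, one through $u$ and another through a second vertex $u'\in V(C')$ chosen so that the Z-path through $u'$ meets $C''$ at $v$; this splits $S'(M)$ into two planar rectangles $\Box'$ and $\Box''$.

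Now glue $\Box(\mathcal{M}'), \Box', \Box(\mathcal{M}''), \Box''$ along their common boundary paths in the natural cyclic order (identifying the two copies of $C'$ in $\Box(\mathcal{M}')$ with the $C'$-boundaries of $\Box'$ and $\Box''$, and similarly for $\Box(\mathcal{M}'')$ with $C''$). The result is a planar polyhedral rectangle whose top and bottom sides form identical horizontal cycles of type $Z_1$ (consisting of the union of pieces of $C'$, the cut path, and $C''$ traversed appropriately) and whose vertical sides are identified copies of a single path of type $Z_1$. Reading off the length of the new horizontal cycle as $r$, the number of horizontal layers as $s$, and the horizontal shift across the top--bottom identification as $k$, we obtain a $(r, s, k)$-representation $K(r, s, k)$ of $M$, as in the corresponding step of Lemma \ref{lem44:1}. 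One repeats the construction in the three sub-cases according as $(\mathcal{M}', \mathcal{M}'')$ consists of $\mathcal{M}(\cdot)$, $\mathcal{M}_{4,8}(\cdot)$ or one of each.

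The main obstacle, and the step that requires care, is verifying that after the four gluings the face sequence $\{4, 8^2\}$ is recovered at every vertex lying on an identification path --- in particular, that the $8$-gons on the two sides of each cut path match up consistently and that no vertex acquires the wrong cyclic face type. This reduces to checking that the cut paths we chose are indeed of type $Z_1$ (so they are symmetric with respect to the $4$- and $8$-gons on either side) and that the shift $k$ is forced to lie in the allowable range dictated by the combinatorics of $Z_1$-paths. Once this local check is made, the resulting planar polyhedral diagram is valid, and the lemma follows.
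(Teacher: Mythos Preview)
Your proposal is correct and takes essentially the same approach as the paper: both assume a $K(S'(M),\mathcal{M}',\mathcal{M}'')$ representation and convert it to a $K(r,s,k)$ by cutting the two M\"obius strips and the cylinder along transversal $Z_1$-paths and re-gluing the four planar pieces, exactly as in Lemmas \ref{lem44:1} and \ref{lem32434:2}. The paper's own proof is in fact much terser---it simply says ``repeat the similar construction as in Lemma \ref{lem32434:2}''---so your added detail (and the care you take about the face-sequence check along the cut paths) only makes the argument more explicit, not different.
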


\begin{proof} Let $K(S'(M), \mathcal{M}', \mathcal{M}'')$ denote a representation of $M$. Then, $\mathcal{M}', \mathcal{M}'' \in $ $\{\mathcal{M}(C_1, M),$ $\mathcal{M}_{4,8}(C_2, M) \mid$ for some cycles $C_1, C_2$ of type $Z_1 \}$. We repeat the similar construction as in Lemma  \ref{lem32434:2}. Hence, we get a planar polyhedral representation $K(r, s, k)$ for some $r, s, k$.
\end{proof}

Let $C'$ and $C''$ be two non-homologous cycles in $M$. Similarly as in Section 2.5, there is no isomorphic maps which maps cycle $C'$ to $C''$. Therefore, we study only $K(r, s, k)$ representations. Now we define admissible relations among $r, s, k$ of $K(r, s, k)$ such that $K(r, s, k)$ represents a map after identifying its boundaries. The proof of next lemmas repeats similar argument as in Lemmas \ref{lem36:6} and \ref{lem3342:iso-1}.

\begin{Lemma}\label{lem482:3} The maps of type $\{4, 8^{2}\}$ of the form $K(r, s, k)$ exist if and only if the following holds : (i) $rs \geq 24$, (ii) $4 \mid r$, (iii) $s \ge 3$, (iv) $r \geq 8$, (v) $k \in \{ 4t+3 ~\colon~ 0 \leq t \leq \frac{r-4}{4}\}$ if $s \ge 3$ and odd (vi) $k \in \{ (4t+2) (mod~r) ~\colon ~0 \leq t \leq \frac{r-4}{4}\}$ if $s \geq 4$ and even.
\end{Lemma}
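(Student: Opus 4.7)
The plan is to follow the template already used for Lemmas \ref{lem36:6}, \ref{lem44:2}, \ref{lem3342:4}, and \ref{lem32434:3}: import the toroidal construction of Section~$2$ of \cite{mu:torus-hc13} to establish necessity of each listed condition by exhibiting a vertex whose link fails to be a cycle whenever that condition is violated, and then verify sufficiency by building $K(r,s,k)$ face by face and checking the link of each vertex directly.

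First I would fix a candidate $K(r,s,k)$ whose lower horizontal cycle $C$ is of type $Z_1$, and examine the sequence of faces incident with $C$. Because every vertex of $M$ has face sequence $\{4, 8^2\}$ and $C$ is of type $Z_1$, the incident faces on one side of $C$ form a pattern of period $4$ in the vertex count (an $8$-gon occupying four consecutive positions, then the unique $4$-gon, then the next $8$-gon, and so on). For this pattern to close up as $C$ is traversed once, $r$ must be a multiple of $4$; this is (ii). A single incident $8$-gon already accounts for four positions along $C$, forcing $r \geq 8$, which is (iv). If $s \leq 2$, the faces above $C$ and below $C'$ collide upon identification and some vertex on the shared boundary ends up with a link that is not a cycle of the required form, so $s \geq 3$, giving (iii). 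Conditions (ii)--(iv) together immediately imply $rs \geq 24$, which is (i).

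For (v) and (vi) I would analyse how the shift $k$ used to identify the upper horizontal cycle $C'$ with $C$ interacts with the period-$4$ pattern of incident faces. Because the Klein-bottle identification reverses the orientation of $C'$ relative to $C$, climbing vertically through the $s$ horizontal cycles produces a cumulative offset of the period-$4$ pattern whose residue modulo $4$ depends only on the parity of $s$. A direct enumeration shows that for odd $s$ only shifts $k \equiv 3 \pmod 4$ identify the top face pattern to the bottom one consistently with a $\{4, 8^2\}$-link at every boundary vertex, while for even $s$ only shifts $k \equiv 2 \pmod 4$ do so. Intersecting these residue classes with $\{0, 1, \dots, r-1\}$ produces the two lists in (v) and (vi). Sufficiency then follows by the usual recipe of assembling $K(r, s, k)$ from its vertices, edges, and faces and checking each link directly, exactly as in the cited toroidal lemmas.

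The step I expect to be the main obstacle is precisely this modular-arithmetic bookkeeping for $k$. The analogous earlier lemmas in the paper all dealt with period-$2$ patterns of incident faces, where only the parity of the shift mattered; here the period is $4$, so one must simultaneously track the position of the unique $4$-gon within each period and the way that position is rotated by orientation reversal after $s$ vertical steps. Once this enumeration is carried out carefully, the remaining verifications are routine repetitions of the toroidal arguments of \cite{mu:torus-hc13}.
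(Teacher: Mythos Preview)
Your proposal is correct and follows essentially the same route as the paper, which simply states that the proof ``repeats similar argument as in Lemma~\ref{lem36:6}'' without spelling out details. You have correctly identified the one genuinely new feature relative to the earlier lemmas, namely that the incident-face pattern along a cycle of type $Z_1$ has period $4$ rather than period $2$, and that this forces the residue of $k$ modulo $4$ (rather than modulo $2$) to depend on the parity of $s$; this is exactly the bookkeeping needed for conditions (v) and (vi), and the rest is the routine link-checking already carried out in \cite{mu:torus-hc13}.
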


\begin{Lemma}\label{lem482:4}
Let $K(r_{i}, s_{i}, k_{i})$ denote a $(r_i, s_i, k_i)$ -representation of $M_{i}$ on same number of vertices for $i \in \{1, 2\}$. Then,\ $M_{1} \cong M_{2}$ if $(r_1, s_1, l_1) = (r_2, s_2, l_2)$ where $ l_i \in \{k_i~ mod(2), (k_i + r_i)~ mod(2)\}$.
\end{Lemma}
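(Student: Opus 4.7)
The plan is to imitate the template established in Lemmas \ref{lem36:iso}, \ref{lem44:iso}, \ref{lem3342:iso-1} and \ref{lem32434:4}, specializing the cutting argument to the type $\{4, 8^2\}$ situation governed by Lemma \ref{lem482:3}. First I would handle the trivial case $r_1=r_2, s_1=s_2, k_1=k_2$. Let $C(i,0), C(i,1), \dots, C(i,s_i-1)$ denote the sequence of horizontal cycles of type $Z_1$ in $K(r_i, s_i, k_i)$, label their vertices $u_{t,j}$ (for $i=1$) and $v_{t,j}$ (for $i=2$), and define $f\colon V(K(r_1,s_1,k_1))\to V(K(r_2,s_2,k_2))$ by $f(u_{t,j})=v_{t,j}$. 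A direct link check (identical to the ones performed in Lemma \ref{lem36:iso}) shows $f(lk(u_{t,j}))=lk(v_{t,j})$, so $f$ is a face-preserving bijection and hence an isomorphism.

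Next I would treat the case $k_1\ne k_2$ but with matching $(r,s,l)$. By hypothesis $l_i\in\{k_i\bmod 2,(k_i+r_i)\bmod 2\}$, so the common value of $l_1=l_2$ is either $0$ or $1$. Suppose $l_1=l_2=0$, and say $2\mid k_i$ (the sub-case $2\mid(k_i+r_i)$ is symmetric). Pick the vertex $u_{0,k_i/2}\in V(C(i,0))$ on the base horizontal cycle. By the definition of type $Z_1$ together with Lemma \ref{lem482:3}, through $u_{0,k_i/2}$ there is a vertical path $Q$ of type $Z_1$ that is not homologous to $C(i,0)$; because $k_i$ is even, the endpoint of $Q$ in the upper horizontal cycle $C(i,s_i)$ is exactly the vertex identified with $u_{0,k_i/2}$ after closing the boundaries. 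Re-identifying $K(r_i,s_i,k_i)$ along the vertical boundaries and re-cutting along $Q$ leaves the horizontal cycles and their number unchanged, so we obtain a $K(r_i,s_i,0)$ representation of $M_i$. Applying the bijection $f$ from the base case to these two new representations gives $M_1\cong M_2$. The case $l_1=l_2=1$ is the same argument after replacing the index $k_i/2$ by $(k_i-1)/2$ (or $(k_i+r_i-1)/2$ when $2\nmid(k_i-1)$) and obtaining a common $K(r_i,s_i,1)$ representation.

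The main subtlety, and the point where the $\{4,8^2\}$ case departs cosmetically from the earlier types, is justifying that the vertical cut $Q$ chosen above really yields the canonical representation $K(r_i,s_i,0)$ or $K(r_i,s_i,1)$; this requires invoking Lemma \ref{lem482:3} to know that the admissible shifts $k$ lie in $\{4t+3\}$ or $\{(4t+2)\bmod r\}$, so the parity-based selection of the cutting vertex is always a legal vertex on $C(i,0)$ from which a vertical path of type $Z_1$ emanates. Once this is checked, the rest of the argument is cosmetic and parallels Lemma \ref{lem32434:4} verbatim. Finally, exactly as in Lemma \ref{lem44:iso}, I would remark that the other (non-homologous) cycle of type $Z_1$ through a vertex represents the $\mathbb{Z}_2$ generator of $\pi_1(\text{Klein bottle})$ while the horizontal cycle represents the $\mathbb{Z}$ generator, so no isomorphism can interchange them; this confirms that the stated conditions on $(r_i,s_i,l_i)$ are sufficient without any further identifications.
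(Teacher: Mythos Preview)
Your proposal is correct and follows essentially the same approach as the paper, which does not give a standalone proof but simply states that the argument repeats that of Lemma~\ref{lem3342:iso-1} (and hence of Lemma~\ref{lem36:iso}). Your added care about invoking Lemma~\ref{lem482:3} to confirm the cutting vertex is admissible in the $\{4,8^2\}$ setting is a reasonable elaboration of what the paper leaves implicit.
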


\begin{cor} \label{cor482:1} Let $K(r_{i}, s_{i}, k_{i})$ denote a $(r_{i}, s_{i}, k_{i})$-representation of $M_{i}$ on same number of vertices for $i \in \{1, 2\}$. Then,\ $K(r_{1}, s_{1}, k_{1}) \not\cong K(r_{2}, s_{2}, k_{2}) \forall r_{1} \neq r_{2}$,  $K(r_{1}, s_{1}, k_{1}) \not\cong K(r_{2}, s_{2}, k_{_{2}}) \forall s_{1} \neq s_{2}, K(r_{1}, s_{1}, 0) \cong K(r_{1}, s_{1}, k_{1})$ for $2 \mid k_1$ or $2 \mid (k_1 + r_1), K(r_{1}, s_{1}, 1) \cong K(r_{1}, s_{1}, k_{1})$ for $2 \mid (k_1 - 1)$ or $2 \mid (k_1 + r_1 - 1)$.
\end{cor}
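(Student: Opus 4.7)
The plan is to derive Corollary \ref{cor482:1} directly as a reformulation of Lemma \ref{lem482:4}, together with an invariance argument for the parameters $r$ and $s$. First I would argue that $r$ and $s$ are isomorphism invariants of a $\{4,8^2\}$--map on the Klein bottle. By the analogue of Proposition \ref{prop3342:1} for type $Z_1$ cycles (which follows from the construction in Lemma \ref{lem482:2}, since every cycle of type $Z_1$ in $K(r,s,k)$ is homologous to the base horizontal cycle and all such homologous cycles have the same length, as argued in Section 2.1 via the pairwise cylinders between homologous cycles), the length $r$ of the horizontal cycle of type $Z_1$ is determined by $M$ alone. Similarly the number $s$ of horizontal cycles along a vertical path of type $Z_1$ is determined by the quotient $n/r$, hence by $M$. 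Therefore any isomorphism $M_1 \cong M_2$ forces $r_1 = r_2$ and $s_1 = s_2$, which proves parts (i) and (ii) by contrapositive.

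For parts (iii) and (iv), I would invoke Lemma \ref{lem482:4} with $M_1 = K(r_1,s_1,k_1)$ and $M_2 = K(r_1,s_1,0)$ (resp.\ $K(r_1,s_1,1)$). In the notation of that lemma, $l_i \in \{k_i \bmod 2,\ (k_i+r_i)\bmod 2\}$. If $2 \mid k_1$ then taking $l_1 = k_1 \bmod 2 = 0$ and $l_2 = 0$ gives $(r_1,s_1,l_1) = (r_1,s_1,l_2)$, so $K(r_1,s_1,k_1) \cong K(r_1,s_1,0)$; if instead $2 \mid (k_1+r_1)$, take $l_1 = (k_1+r_1) \bmod 2 = 0$ and again $l_2 = 0$ to conclude. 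The argument for part (iv) is identical with $l_2 = 1$, using the two cases $2 \mid (k_1-1)$ and $2 \mid (k_1+r_1-1)$ which respectively give $k_1 \bmod 2 = 1$ and $(k_1+r_1) \bmod 2 = 1$.

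The main obstacle, if any, is the invariance of $r$ and $s$; once the existence of a unique cycle length (up to homology) is in hand, the remaining statements are pure bookkeeping on the parity of the shift parameter $k$. Since Section 2.6 has already established that $K(r,s,k)$ is the unique representation obtained from a cycle of type $Z_1$ through any vertex (the two non-homologous cycles of type $Z_1$ correspond to the non-isomorphic generators $\mathbb{Z}$ and $\mathbb{Z}_2$ of the fundamental group, so cannot be interchanged by an isomorphism), the invariance of $r$ and $s$ is a direct consequence and no further argument beyond the cited lemmas is required. This completes the proof sketch.
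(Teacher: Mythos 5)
Your proposal is correct and follows essentially the same route the paper intends: the last two isomorphism statements are read off directly from Lemma \ref{lem482:4} by choosing the appropriate representative $l_i$ from $\{k_i \bmod 2, (k_i+r_i)\bmod 2\}$, and the two non-isomorphism statements rest on the invariance of $r$ and $s$, which the paper obtains from the uniqueness of the length of cycles of type $Z_1$ up to homology (the argument of Section 2.1 transported to Section 2.6) together with $s = n/r$. The paper leaves the corollary as an immediate consequence of the preceding lemmas, and your sketch supplies exactly the bookkeeping it omits.
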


\noindent \textbf{2.7 Maps of type $\{3, 6, 3, 6\}$~:~}
Let $M$ be a map of type $\{3, 6, 3, 6\}$ on the Klein bottle. Let $W$ be a maximal walk of type $X_1$ \cite{mu:torus-hc13} through each vertex in $M$. The maximal walk $W$ is either a cycle or a closed walk as in Section 2.2. Similarly as in Section 2.1, the $M$ contains a cycle of type $X_1$. Let $C$ be a cycle of type $X_{1}$ in $M$. Let $S$ denote a set of faces incident with $C$. Then, we have 

\begin{Lemma}\label{lem3636:1} The $|S|$ is $S(C, M), \mathcal{M}(C, M)$ or $\mathcal{M}_{3,6}(C, M)$.
\end{Lemma}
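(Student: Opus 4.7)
The plan is to follow the same template that was used for the three preceding incidence lemmas (Lemma \ref{lem3342.1}, Lemma \ref{lem32434:1}, and Lemma \ref{tlem482:1}), adapting it to the local face-pattern $\{3,6,3,6\}$. Concretely, I would start by fixing a vertex $w_1\in V(C)$, writing down $lk(w_1)$ in the standard form dictated by the type $X_1$, and looking at the sequence $SQ$ of faces incident with $C$. After cutting $|SQ|$ along a length-two path of type $X_1$ through $w_1$, the possible identifications of the two copies of that path split naturally into the same dichotomy used before: identification without twist versus identification with a twist, and within each option, either all incident faces are distinct or some face is repeated.

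First I would handle the case when all faces in $SQ$ are distinct. If the cut path reattaches without twisting, $|SQ|$ is a cylinder and we obtain $S(C,M)$, exactly as in Lemma \ref{lem3342.1} Case 1. If it reattaches with a twist, the resulting geometric carrier is a M\"obius strip; since in type $\{3,6,3,6\}$ triangles and hexagons alternate symmetrically on both sides of a type-$X_1$ cycle, this twisted identification is compatible with the link pattern (unlike what happened for type $\{3^3,4^2\}$), and we get $\mathcal{M}(C,M)$. This is the analogue of Lemma \ref{lem32434:1}, and I would verify compatibility by computing the link of a boundary vertex after the twist and checking that the face sequence around it remains $(3,6,3,6)$.

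Next I would treat the case where some face of $SQ$ repeats on one side of $C$. Following the combinatorial argument of Lemma \ref{lem36:2}, if a triangle or hexagon reappears in the incidence sequence on one side, then by counting free edges of each face the sequence on that side must be a periodic repetition of a subsequence $F_1,\dots,F_k$, giving $\{F_1,\dots,F_k,F_1,\dots,F_k\}$. Splitting into the subcases length$(C)$ even versus odd, I would examine $lk(w_{r+1})$ (or the analogous midpoint vertex) to see which parities are compatible with the forced alternation $3,6,3,6$ around each vertex. Exactly as in Lemma \ref{tlem482:1}, the compatible possibility produces a M\"obius strip whose face types are triangles and hexagons, yielding $\mathcal{M}_{3,6}(C,M)$ (in the notation introduced in the Remark after Lemma \ref{lem36:2}, with $t=2$ and $F_1=\triangle$, $F_2=$ hexagon); the incompatible parity is ruled out because it forces a link of a wrong face-sequence at the midpoint.

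The main obstacle, and the only step that requires genuine verification rather than routine repetition, is the bookkeeping in the repeated-face case: one must check that the alternation of triangles and hexagons around $C$ is rigid enough to rule out all mixed patterns that could produce, for instance, a hybrid $\mathcal{M}_{3}(C,M)$ or $\mathcal{M}_{6}(C,M)$. Carefully tracing links at the identified vertices (as in the parity argument of Lemma \ref{lem32434:1}) shows that only $\mathcal{M}_{3,6}$ survives. Combining the three surviving possibilities finishes the proof.
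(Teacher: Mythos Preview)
Your outline is essentially correct and follows the same overall template as the paper, but in the repeated-face case the paper takes a shorter route than the parity analysis you propose. You plan to treat a repeated triangle and a repeated hexagon symmetrically, derive a periodic sequence $F_1,\dots,F_k,F_1,\dots,F_k$, and then split on the parity of $\mathrm{length}(C)$ to rule out the bad configurations, as in Lemma~\ref{lem32434:1}. The paper instead observes directly that a triangle $\triangle_i$ in the one-sided incidence sequence cannot repeat at all: its three edges are $\triangle_i\cap F_{i-1}$, $\triangle_i\cap F_{i+1}$, and $\triangle_i\cap C$, so there is no free edge left for a second occurrence. Hence only the hexagons $F_i$ can repeat, and each $F_i$ has exactly one free edge, forcing it to repeat exactly once; this immediately yields $\mathcal{M}_{3,6}(C,M)$ without any parity case-split or midpoint-link check. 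Your approach would reach the same conclusion, but the paper's edge-count shortcut makes the worry you flag about ruling out ``$\mathcal{M}_3$'' or ``$\mathcal{M}_6$'' disappear at the outset.
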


\begin{proof} We argue similarly as in Lemmas \ref{lem3342.1} and \ref{lem32434:1}. Assume that faces which are incident with $C$ are all distinct. In this case, by the similar argument as in Lemmas \ref{lem3342.1} and \ref{lem32434:1}, $|S| = S(C, M)$ and $|S| =\mathcal{M}(C, M)$ respectively. Assume that faces which are incident with $C$ are not all distinct. Let $\triangle_1, F_2, \triangle_3, F_4,$ \dots, $F_{i-1}, \triangle_i, F_{i+1},$ \dots $\triangle_{k-1}, F_k$ denote a sequence of faces which are incident with $C$ in ordered and lie on one side of $C$ where $F_i$ denote a $6$-gon for each $i$. Since $\triangle_i \cap F_{i-1}, \triangle_i \cap F_{i+1}$ and $\triangle_i \cap C$ are each an edge, so, the triangles are distinct in the sequence. But the faces $F_i$s' are repeating in the sequence. Since each $6$-gon $F_i$ has exactly one edge remaining to identify, so, by similar argument as in Lemma \ref{lem32434:1}, $F_i$ repeats exactly once in the sequence. So, the geometric carrier $|S|=\mathcal{M}_{3,6}(C, M)$ and which consists of $3$-gons and $6$-gons. Therefore, the $|S|$ is either of $S(C, M), \mathcal{M}(C, M)$ and $\mathcal{M}_{3,6}(C, M)$. This completes the proof.
\end{proof}

So, by Lemma \ref{lem3636:1}, $S(C, M) \subset M$ or $\mathcal{M}(C, M) \subset M$ or $\mathcal{M}_{3, 6}(C, M) \subset M$ for some cycle $C$ of type $X_1$ in $M$. We repeat similar argument as in Sections 2.1 and 2.5, the map $M$ has either a $S(M)$ representation which is bounded by two identical cycles with opposite direction or a $S\mathcal{M}(M)$ which is bounded by two M\"obius strips $\mathcal{M}', \mathcal{M}'' \in \{\mathcal{M}(C, M), \mathcal{M}_{3,6}(L, M) \}$ from each of $S(C, M), \mathcal{M}(C, M)$ and $\mathcal{M}_{3, 6}(C, M)$. Let $S(M)$ denote a representation of $M$ such that $\partial S(M) = \{C, C'\}, \triangle \in F(M)$ and $C \cap \triangle$ is an edge. By definition of $X_1$, we have three paths of type $X_{1}$ through three edges of $\triangle$. Let $L_{1}, L_{2}, L_3$ be three paths of type $X_1$. We repeat the similar argument as in Section 2.5 to define a $(r, s, k)$-representation of a map $M$. Let $L_1 = C$. We take a cut along $L_3$. Since length$(C)$ = length$(C')$ and $C = C'$, hence, we get a planar polyhedral representation which is bounded by $L_3, C, L_3$ and $C'$. Since $C = C'$, so, it defines a $K(r, s, k)$ representation. Let $S\mathcal{M}(M)$ denote a representation of $M$. We cut along the boundaries of the M\"obius strips. We get three components. These are namely a cylinder $S'(M)$ and two M\"obius strips $\mathcal{M}', \mathcal{M}''$ where $\mathcal{M}', \mathcal{M}'' \in \{\mathcal{M}(C, M), \mathcal{M}_{3,6}(C, M)\}$ for some cycle $C$ of type $X_1$. We denote this representation by $K(S'(M),  \mathcal{M}', \mathcal{M}'')$. By combining above all cases, the map $M$ has either a $K(r, s, k)$ or a $K(S'(M), \mathcal{M}', \mathcal{M}'')$ representation and these are non-isomorphic. We study both the representations separately in bellow.

\smallskip

\textbf{Classification of $K(r, s, k)$ on $n$ vertices :} Let $\triangle$ in $M$ and $e_1, e_2, e_3 \subset \triangle$. By definition of $X_1$, we have three paths and similarly as in Section 2.1, let $C$ be a cycle of type $X_1$ such that $e_1 \subset C$ and $W$ be a closed walk of type $X_1$ such that that $e_2, e_3 \subset C$. So, there is one cycle of type $X_1$ upto homologous. We define admissible relations among $r, s, k$ of $K(r, s, k)$ such that $K(r, s, k)$ represents a map after identifying its boundaries. The proof of next lemmas repeats similar argument as in Lemmas \ref{lem36:6} and \ref{lem3342:iso-1}.

\begin{Lemma}\label{lem3636:2} The maps of type $\{3, 6, 3, 6\}$ of the form $K(r, s, k)$ on $n$ vertices exist if and only if the following holds : (i) $\frac{3}{2} rs \geq 27$, (ii) $s \ge 3$, (iii) $2 \mid r$, (iv) $r \geq 6$, and (v) $k \in \{ 2t+1 ~\colon~ 0 \leq t \leq \frac{r}{2}-1\}$.
\end{Lemma}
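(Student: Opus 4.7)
The plan is to adapt the strategy of Lemma \ref{lem36:6} (and its analogue Lemma \ref{lem482:3}), working case by case through each of the five conditions and arguing both necessity and sufficiency by inspecting vertex links after the boundary identifications of $K(r,s,k)$ are performed. I set up the template as $s$ horizontal cycles $C_0, \ldots, C_{s-1}$ of type $X_1$, each of length $r$, with the vertical sides identified in the natural way and the horizontal sides identified with a shift of $k$ positions as in the $(r,s,k)$-representation defined in Section 2.7. The task is precisely to decide for which $(r,s,k)$ every vertex of the resulting complex has link $C(\triangle, 6\text{-gon}, \triangle, 6\text{-gon})$ dictated by the type $\{3,6,3,6\}$.

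For the parity/length conditions (iii) and (iv), I would use the alternating structure of a cycle of type $X_1$: consecutive vertices of such a cycle alternate between two local configurations (the hexagon-triangle pattern sits on one side at $v_i$ and on the other side at $v_{i+1}$). If $r$ is odd, traversing $C$ once forces an inconsistent local configuration at the closing vertex, so $2\mid r$ is forced. A hexagon incident to $C$ occupies three consecutive vertices of $C$ along its common boundary with $C$, so if $r<6$ a hexagon would have to be identified with itself, making the link at its vertices fail to be a $4$-cycle; this gives $r\geq 6$. Condition (ii) comes by the same mechanism applied vertically: if $s\leq 2$ the vertices of $C_0$ receive repeated incidences from a hexagon that already contacts $C_0$ from the other side, so the link is not a simple cycle. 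Condition (i), $\tfrac32 rs\geq 27$, follows from (ii)--(iv) by counting vertices in one fundamental region of the kagome-type pattern; I would check that the minimum admissible case $r=6, s=3$ already forces $n\geq 27$, and then remark that this is simply a consistency bound.

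The main obstacle, and where I would spend most care, is condition (v): the shift $k$ must be odd. The reason is that along any horizontal cycle $C_i$, the $X_1$-type alternation distinguishes \emph{odd-indexed} from \emph{even-indexed} vertices (one class has a triangle-hexagon incidence pattern in the upper half-plane, the other has it swapped). When we identify the upper horizontal cycle $C_{s-1}$ to $C_0$ with a shift of $k$, an odd vertex must be glued to an odd vertex (equivalently, $k$ must be odd) in order for the face sequence around the glued vertex to match $\{3,6,3,6\}$; any even shift creates a vertex with face sequence $\{3^2, 6^2\}$ instead. This parity matching is the essential computation, and I would justify it by drawing the standard local picture at one boundary vertex and reading off the constraint from the link.

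For sufficiency, once $(r,s,k)$ satisfies (i)--(v), I would verify that every interior vertex automatically has the correct link by periodicity of the planar tessellation, and that every boundary vertex has the correct link by the parity argument above. Together with the already-established existence of the $(r,s,k)$-representation (preceding discussion of Section 2.7), this shows $K(r,s,k)$ realises a map of type $\{3,6,3,6\}$ on the Klein bottle, completing the equivalence.
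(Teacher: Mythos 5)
Your proposal is correct and follows essentially the same route as the paper: the paper's own ``proof'' of this lemma is only the one-line remark that one repeats the link-checking argument of Lemmas \ref{lem36:6} and \ref{lem3342:iso-1} (i.e., verify that any $(r,s,k)$ outside the stated ranges produces a vertex whose link is not a cycle of the required face sequence, and that the shift $k$ is constrained by the alternation along the horizontal cycle), which is exactly the case analysis you carry out in more detail. Your observation that (i) is implied by (ii) and (iv) and your isolation of the parity of $k$ as the only delicate point match the intended argument.
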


\begin{Lemma}\label{lem3636:3}
Let $K(r_{i}, s_{i}, k_{i})$ denote a $(r_i, s_i, k_i)$ -representation of $M_{i}$ on same number of vertices for $i \in \{1, 2\}$. Then,\ $M_{1} \cong M_{2}$ if $(r_1, s_1, l_1) = (r_2, s_2, l_2)$ where $ l_i \in \{k_i~ mod(2), (k_i + r_i)~ mod(2)\}$.
\end{Lemma}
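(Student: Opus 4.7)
The plan is to follow the template established by Lemma \ref{lem36:iso} and its analogues (Lemmas \ref{lem44:iso}, \ref{lem3342:iso-1}, \ref{lem32434:4}, \ref{lem482:4}), adapting the construction to the type $\{3,6,3,6\}$ setting. Since Lemma \ref{lem3636:2} tells us that $K(r,s,k)$ exists only when $r$ is even, $s\ge 3$, and $k$ is odd, the parities of $r$ and $k$ are constrained from the outset, and this will shape which shifts of the planar representation are admissible.

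First I would reduce to the case $(r_1,s_1,k_1)=(r_2,s_2,k_2)$. Under the hypothesis $(r_1,s_1,l_1)=(r_2,s_2,l_2)$, the equalities $r_1=r_2$ and $s_1=s_2$ are immediate; moreover, they are forced independently by the fact (established inductively in the earlier types and applicable here, since by the paragraph preceding the lemma the cycle of type $X_1$ through a vertex is unique up to homology) that the horizontal cycle length $r$ and the number of horizontal cycles $s$ are invariants of $M_i$. In this base case I would label the horizontal cycles $C(i,0),\dots,C(i,s-1)$ of $K(r_i,s_i,k_i)$ with vertex coordinates $u_{t,j}$ and $v_{t,j}$ respectively, and define $f(u_{t,j})=v_{t,j}$. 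As in Lemma \ref{lem36:iso}, verifying $f(\mathrm{lk}(u_{t,j}))=\mathrm{lk}(v_{t,j})$ reduces to reading off the link cycle from the $\{3,6,3,6\}$ structure and confirming that corresponding vertices, edges, and faces are matched; incidences are then preserved by construction, so $f$ is the required isomorphism.

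Next I would handle $k_1\neq k_2$. The idea, identical in spirit to the earlier lemmas, is to re-cut $K(r_i,s_i,k_i)$ along a different vertical path of type $X_1$ so as to obtain a new planar representation $K(r_i,s_i,k_i')$ with $k_i'\in\{0,1\}$ matching $l_i$. Concretely, if $2\mid k_i$ I would identify $K(r_i,s_i,k_i)$ along its vertical boundaries and make a fresh vertical cut through $u_{0,k_i/2}$ along a path of type $X_1$ that runs to the corresponding vertex of $C(i,s_i)$; this neither changes the horizontal cycles nor their number, so it yields $K(r_i,s_i,0)$. If instead $2\mid(k_i+r_i)$, the analogous cut through $u_{0,(k_i+r_i)/2}$ produces $K(r_i,s_i,0)$. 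The two subcases for $l_i=1$, using cuts through $u_{0,(k_i-1)/2}$ or $u_{0,(k_i+r_i-1)/2}$, yield $K(r_i,s_i,1)$. Once both representations are brought to the same normalised form, the base-case isomorphism $f$ applies and gives $M_1\cong M_2$.

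The main obstacle is checking that the new vertical cut is indeed a path of type $X_1$ and that it lands on the upper horizontal cycle after exactly $s$ steps, so that the reshuffled representation is again a genuine $K(r_i,s_i,k_i')$. This is where the parity constraint from Lemma \ref{lem3636:2} (namely $k_i$ odd and $r_i$ even, so $k_i/2$ or $(k_i+r_i)/2$ is a well-defined vertex index of the correct parity class) is crucial: it ensures that the alternating $3$-gon/$6$-gon pattern along the cut is preserved, so the shifted cut is genuinely of type $X_1$ rather than of some other incidence type. Once this verification is done, the construction goes through exactly as in Lemma \ref{lem36:iso}, completing the proof.
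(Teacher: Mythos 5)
Your proposal is correct and follows essentially the same route as the paper, which for this lemma simply states that the proof repeats the argument of Lemmas \ref{lem36:iso} and \ref{lem3342:iso-1} (coordinate-wise map in the base case, re-cutting along a shifted vertical path of type $X_1$ to normalise $k_i$ to $0$ or $1$ otherwise). Your added observation about the parity constraints from Lemma \ref{lem3636:2} is a sensible refinement; note that since $k_i$ is always odd and $r_i$ always even there, only the $l_i=1$ subcases actually arise, but this does not affect the validity of your argument.
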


\begin{cor} \label{cor3636:1} Let $K(r_{i}, s_{i}, k_{i})$ denote a $(r_{i}, s_{i}, k_{i})$-representation of $M_{i}$ on same number of vertices for $i \in \{1, 2\}$. Then,\ $K(r_{1}, s_{1}, k_{1}) \not\cong K(r_{2}, s_{2}, k_{2}) \forall r_{1} \neq r_{2}, K(r_{1}, s_{1}, k_{1}) \not\cong K(r_{2}, s_{2}, k_{_{2}}) \forall s_{1} \neq s_{2}, K(r_{1}, s_{1}, 0) \cong K(r_{1}, s_{1}, k_{1})$ for $2 \mid k_1$ or $2 \mid (k_1 + r_1)$, and $K(r_{1}, s_{1}, 1) \cong K(r_{1}, s_{1}, k_{1})$ for $2 \mid (k_1 - 1)$ or $2 \mid (k_1 + r_1 - 1)$.
\end{cor}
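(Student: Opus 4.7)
The plan is to derive the corollary as a direct consequence of Lemma \ref{lem3636:3}, together with the fact that $r$ and $s$ are intrinsic invariants of the map underlying a $K(r,s,k)$-representation. The proof naturally splits into two parts: the two non-isomorphism assertions, and the two reductions of $k$ to the canonical values $0$ and $1$.

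First, for the non-isomorphism claims, I would argue that the base horizontal cycle of $K(r_i, s_i, k_i)$ is a cycle of type $X_1$ of length $r_i$. By the discussion preceding Lemma \ref{lem3636:2} (which mirrors Lemma \ref{lem36:5} for type $\{3^{6}\}$), all cycles of type $X_1$ in a given map of type $\{3,6,3,6\}$ have a common length, and since type $X_1$ is defined purely through the link structure at vertices it is preserved by any map isomorphism. Hence $K(r_1,s_1,k_1) \cong K(r_2,s_2,k_2)$ forces $r_1 = r_2$. Once $r_1 = r_2$, the vertex-count identity $n = \tfrac{3}{2}\, r_i s_i$ implicit in the $(r,s,k)$-representation determines $s_i$, so $s_1 \neq s_2$ likewise precludes isomorphism.

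For the two isomorphism statements I would apply Lemma \ref{lem3636:3} with $r_1 = r_2$ and $s_1 = s_2$, specializing the target in two ways. Setting $K(r_2,s_2,k_2) = K(r_1,s_1,0)$ gives $l_2 \in \{0,\ r_1 \bmod 2\}$, so matching $l_1 = 0$ is precisely the hypothesis $2 \mid k_1$ or $2 \mid (k_1 + r_1)$, yielding $K(r_1,s_1,0) \cong K(r_1,s_1,k_1)$. Setting instead $K(r_2,s_2,k_2) = K(r_1,s_1,1)$ gives $l_2 \in \{1,\ (1+r_1) \bmod 2\}$, so matching $l_1 = 1$ is precisely $2 \mid (k_1 - 1)$ or $2 \mid (k_1 + r_1 - 1)$, yielding $K(r_1,s_1,1) \cong K(r_1,s_1,k_1)$.

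The corollary is essentially a bookkeeping consequence of Lemma \ref{lem3636:3}, so there is no substantial obstacle; all geometric input has already been absorbed into that lemma and into the cylinder-versus-strip analysis that established the uniqueness of the length of cycles of type $X_1$. The only point deserving mild care is reconciling the canonical values $k \in \{0,1\}$ appearing in the corollary statement with the admissibility range $k \in \{2t+1 : 0 \le t \le \tfrac{r}{2} - 1\}$ from Lemma \ref{lem3636:2}; this is handled by interpreting $K(r_1,s_1,0)$ and $K(r_1,s_1,1)$ as formal representatives of the two parity classes in $\mathbb{Z}/2$ via the relation $l_i \in \{k_i \bmod 2,\ (k_i + r_i) \bmod 2\}$, at which point the verification becomes immediate.
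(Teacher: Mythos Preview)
Your proposal is correct and follows essentially the same approach as the paper: the corollary is stated without proof, being an immediate consequence of Lemma \ref{lem3636:3} together with the invariance of the length of cycles of type $X_1$ established just before Lemma \ref{lem3636:2}. Your observation about the tension between the canonical values $k \in \{0,1\}$ and the admissibility range $k \in \{2t+1\}$ of Lemma \ref{lem3636:2} is apt and is a genuine wrinkle in the paper's notation that you resolve sensibly.
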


\textbf{Classification of $K(l, t)$ on $n$ vertices :} Let $S\mathcal{M}(M)$ denote $M$. Then, by the above argument, $K(S'(M), \mathcal{M}', \mathcal{M}'')$ has $S\mathcal{M}(M)$ representation.  Let $C' = \partial\mathcal{M}', C''=\partial\mathcal{M}''$. Then, length($C'$) = length($C''$) as $C'$ and $C''$ are boundaries of $S'(M)$. Let $l =$ length($C'$). Now  $\mathcal{M}', \mathcal{M}'' \in \{\mathcal{M}(L', M), \mathcal{M}_{3,6}(L'', M) \mid $ for some cycles $L'$ and $L''$ of type $X_1\}$. Let $\mathcal{M}' = \mathcal{M}(L', M)$ and  $\mathcal{M}'' = \mathcal{M}_{3,6}(L'', M)$. By definition and Lemma \ref{lem3636:1}, $C'' =  L''$ and length($C'$) = $2 \times$length($L'$). Thus, the map $M$ contains cycles of type $X_1$ of two different lengths. In this case, we denote $K(S'(M), \mathcal{M}', \mathcal{M}'')$ by $K(l_1, l_2, S'(M), \mathcal{M}', \mathcal{M}'')$ where $l_2 = l$ and $2l_1 = l_2$. When $\mathcal{M}' = \mathcal{M}_{3,6}(L', M)$ and  $\mathcal{M}'' = \mathcal{M}_{3,6}(L'', M)$ then we denote $K(S'(M), \mathcal{M}', \mathcal{M}'')$ by $K(l, S'(M), \mathcal{M}', \mathcal{M}'')$. The above all notions are used in the next lemma.

\begin{Lemma}\label{lem3636:4} The maps of type $\{3, 6, 3, 6\}$ of the form $K(\frac{l}{2}, l, S'(M), \mathcal{M}', \mathcal{M}'')$ exist if and only if the following holds :

(1) $t \ge 2, n = \frac{3}{2} tl + \frac{l}{2}$ where $4 \mid l$ and $l \ge 12$ if $\mathcal{M}' = \mathcal{M}_{3,6}(C', M)$ and $\mathcal{M}'' = \mathcal{M}_{3,6}(C'', M)$.

(2) $t \ge 1, n = l(t + 2)$ where $2 \mid l$ and $l \ge 10$ if $\mathcal{M}' = \mathcal{M}(L', M)$ and $\mathcal{M}'' = \mathcal{M}(L'', M)$ for some cycles $L'$ and $L''$ of type $X_1$.

(3) $t \ge 1, n = l(t + \frac{5}{4})$ where $4 \mid l$ and $l \ge 12$ if $\mathcal{M}' =\mathcal{M}_{3,6}(C', M)$ and $\mathcal{M}'' = \mathcal{M}(L'', M)$.

\end{Lemma}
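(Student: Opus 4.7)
The plan is to follow closely the template established for the analogous M\"{o}bius-strip classifications in earlier sections (the proof of Lemma \ref{lem36.7} for type $\{3^6\}$ and the proof of Lemma \ref{lem3342:5} for type $\{3^3, 4^2\}$), now adapted to the face structure of type $\{3, 6, 3, 6\}$. The starting point is Lemma \ref{lem3636:1}, which classifies the incident-face carrier of a cycle of type $X_1$ into three possibilities; consequently the two M\"{o}bius strips $\mathcal{M}', \mathcal{M}''$ bounding $S\mathcal{M}(M)$ must each be of type $\mathcal{M}(L, M)$ or of type $\mathcal{M}_{3,6}(C, M)$, which gives exactly the three combinatorial cases (1)--(3) listed in the statement.

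In each case I would begin with a vertex count. The cylinder $S'(M)$ has $\partial S'(M) = \{C', C''\}$ with $|C'| = |C''| = l$ and $t$ internal horizontal cycles of type $X_1$; the face pattern of $\{3,6,3,6\}$ together with the periodicity between consecutive horizontal cycles then determines the contribution of $S'(M)$ to $n$. The additional contribution of each M\"{o}bius strip is then read off from its internal structure: a strip of type $\mathcal{M}(L, M)$ with $|L| = l$ produces a single extra row of new vertices once the boundary is glued, while a strip of type $\mathcal{M}_{3,6}(C, M)$ produces a central cycle of length $l/2$ after the M\"{o}bius identification. Summing the cylinder and boundary contributions yields, respectively, the formulas $\tfrac{3}{2} t l + \tfrac{l}{2}$, $l(t+2)$, and $l(t + \tfrac{5}{4})$ in cases (1), (2), and (3).

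Next I would read off the divisibility constraints from the cyclic pattern of faces around each M\"{o}bius strip. In $\mathcal{M}_{3,6}(C, M)$ the sequence of faces incident with $C$ is periodic of period $4$ (triangle, hexagon, triangle, hexagon) and must identify consistently under the M\"{o}bius twist, forcing $4 \mid l$; in $\mathcal{M}(L, M)$ the milder periodicity only forces $2 \mid l$. Finally, the lower bounds $l \ge 10$ in case (2) and $l \ge 12$ in cases (1) and (3) come from a direct verification that for smaller values either the link of some vertex in the strip fails to be a simple cycle, or two faces of the strip are forced to coincide; this parallels the analogous arguments in Lemma \ref{lem36.7} and Lemma \ref{lem3342:5}.

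The main obstacle will be the mixed Case (3). Here the two M\"{o}bius strips have different types, so the constraint $4 \mid l$ inherited from the $\mathcal{M}_{3,6}$ side must be reconciled with the $\mathcal{M}(L, M)$ side, and one must verify that $4 \mid l$ actually suffices for the $\mathcal{M}(L, M)$ strip to glue consistently to the cylinder with the required parity of internal horizontal cycles and that the fractional coefficient $\tfrac{5}{4}$ in the vertex formula is correctly produced. Cases (1) and (2) are symmetric between the two strips and should proceed in direct parallel to the corresponding arguments for type $\{3^3, 4^2\}$.
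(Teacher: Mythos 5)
Your proposal is correct and follows essentially the same route as the paper: the paper's own proof of Lemma \ref{lem3636:4} is just a two-sentence remark deferring to the arguments of Section 2.1 and noting that values of $t, l$ outside the stated ranges produce a vertex whose link is not a cycle, which is precisely the link-condition and vertex-counting strategy you spell out. Your version is in fact considerably more explicit than the paper's (the periodicity argument for $4 \mid l$ versus $2 \mid l$, the separate accounting of the cylinder and strip contributions to $n$, and the flag on the mixed Case (3) are all left implicit in the paper), but it is the same approach, not a different one.
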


\begin{proof} The proof repeats similar argument as in Section 2.1. In all the cases, if we consider velues of $t, l$ not in the given range then we get some vertex whose link is not a cycle.
\end{proof}

\begin{Lemma}\label{lem3636:5}
Let $K(S'(M_i), \mathcal{M}'_i, \mathcal{M}''_i)$ denote $M_i$ on same number of vertices. Then, we have the following. 

(1) Let $K(l_i, S'(M_i), \mathcal{M}'_i, \mathcal{M}''_i)$ denote $M_i$. Let $\mathcal{M}'_i = \mathcal{M}_{3,6}(C_i', M_i)$ and $\mathcal{M}''_i = \mathcal{M}_{3,6}(C_i'',$ $M_i)$. Then, $M_1 \cong M_2$ if $l_1 = l_2$.  

(2) Let $K(l_i, \frac{l_i}{2}, S'(M_i), \mathcal{M}'_i, \mathcal{M}''_i)$ denote $M_i$. Let $\mathcal{M}'_i = \mathcal{M}(C_i', M_i)$ and $\mathcal{M}''_i = \mathcal{M}(C_i'',$ $ M_i)$. Then, $M_1 \cong M_2$ if $l_1 = l_2$.

(3) Let $\mathcal{M}'_i =\mathcal{M}_{3,6}(C_i', M_i)$ and $\mathcal{M}''_i = \mathcal{M}(C_i'', M_i)$. Then, $M_1 \cong M_2$ if $l_1 = l_2$.   
\end{Lemma}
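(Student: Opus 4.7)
The plan is to follow the template set by Lemmas \ref{lem36.8} and \ref{lem3342:6}: once the two maps agree in the appropriate length invariant $l$, we enumerate the type-$X_1$ cycles in the same order in both maps, match them bijectively, and then check that links are preserved. The three cases differ only in the combinatorial type of the two bounding M\"obius strips, so the bulk of the work is to set up a common framework and then verify the link condition case-by-case.

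First, for each $M_i$ I would use Lemma \ref{lem3636:4} to fix the total count of type-$X_1$ cycles contained in the decomposition $S'(M_i) \cup \mathcal{M}'_i \cup \mathcal{M}''_i$. In case (1), both M\"obius strips are of $\mathcal{M}_{3,6}$-type, each contributing one boundary cycle of length $l$ plus $\tfrac{l}{2}$ interior vertices coming from the triangle/hexagon pattern of $\mathcal{M}_{3,6}$; in $S'(M_i)$ we have $t$ horizontal cycles $C_{i,1},\ldots,C_{i,t}$ of type $X_1$, each of length $l$, with $C_{i,1} = \partial \mathcal{M}'_i$ and $C_{i,t} = \partial \mathcal{M}''_i$. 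In case (2) both strips are of $\mathcal{M}(L,M)$-type, so each has a core cycle $L_i',L_i''$ of length $\tfrac{l}{2}$ whose doubled boundary cycle has length $l$; list these together with the $t$ cycles of $S'(M_i)$. Case (3) is the mixed version. Since $l_1 = l_2$ is assumed and $n$ is the same for both maps, the enumerations on the two sides have the same length and the same combinatorial decomposition.

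Second, I would write down the explicit bijection. Label $C_{i,j} := C(u^{(i)}_{j,0},u^{(i)}_{j,1},\ldots,u^{(i)}_{j,l-1})$ consistently, using the type-$X_1$ path through a fixed base vertex to fix both the starting index and the orientation (as is done in Lemma \ref{lem36:iso}); include analogous labelings for the interior vertices of each M\"obius strip according to whether the strip is of $\mathcal{M}_{3,6}$- or $\mathcal{M}$-type. Then define $f : V(M_1) \to V(M_2)$ by $f(u^{(1)}_{j,k}) = u^{(2)}_{j,k}$ on cycle vertices and by the corresponding positional rule on interior M\"obius vertices. This gives a bijection on vertices.

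Third, I would verify $f(lk(v)) = lk(f(v))$ for every $v$. The check splits by the location of $v$: interior to $S'(M_1)$, on $\partial \mathcal{M}'_1$, interior to $\mathcal{M}'_1$, and similarly on the other side. In each stratum the local face pattern around $v$ is determined entirely by type $\{3,6,3,6\}$ together with the combinatorial type of the containing strip, and these are identical in $M_1$ and $M_2$ by hypothesis; hence $f$ preserves faces and incidences. Thus $f$ is an isomorphism and $M_1 \cong M_2$.

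The main obstacle is case (3), where the two M\"obius strips bounding $S'(M_i)$ have different internal structure (an $\mathcal{M}_{3,6}$ on one side and an $\mathcal{M}$ on the other): one must choose the labeling of cycles in $S'(M_i)$ so that the cycle adjacent to the $\mathcal{M}_{3,6}$-side is consistently labeled $C_{i,1}$ in both maps, otherwise the bijection could fail to respect the orientation switch across the Klein bottle. Since the asymmetry is identical in $M_1$ and $M_2$ under $l_1 = l_2$, fixing this convention once on both sides resolves the issue, and the verification reduces to the same link-check already carried out in Lemmas \ref{lem36.8} and \ref{lem3342:6}.
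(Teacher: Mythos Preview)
Your proposal is correct and follows essentially the same approach as the paper: use Lemma~\ref{lem3636:4} to deduce $t_1=t_2$ from $l_1=l_2$ and equal vertex count, list the type-$X_1$ cycles in each $M_i$ with $C_{i,1}=\partial\mathcal{M}'_i$ and $C_{i,t}=\partial\mathcal{M}''_i$, define the positional bijection $f$ cycle-by-cycle, and check that links are preserved (the paper does the last step by reference to Lemma~\ref{lem36:iso}). Your write-up is in fact more explicit than the paper's---you spell out the separate treatment of interior M\"obius vertices and the orientation convention needed in case~(3), both of which the paper absorbs into ``similar argument we repeat for other two cases.''
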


\begin{proof} Let $K(l_i, S'(M_i), \mathcal{M}'_i, \mathcal{M}''_i)$ denote $M_i$ and $\mathcal{M}'_i = \mathcal{M}_{3,6}(C', M)$ and $\mathcal{M}''_i = \mathcal{M}_{3,6}(C'',$ $ M)$. By Lemma \ref{lem3636:4}, $\frac{3}{2} t_1l_1 + \frac{1}{2}l_1 = n = \frac{3}{2} t_2l_2 + \frac{1}{2}l_2$ as $l_1 = l_2$. Hence, $t_1 = t_2 = s$ (say). Let $C_1, \dots, C_s$ denote the cycles of type $X_1$ in $M_1$. Let $C_1 = \partial \mathcal{M}'_1$ and $C_s = \partial \mathcal{M}''_1$. Again, let $L_1, \dots, L_s$ denote the cycles of type $X_1$ in $M_2$. Let $C_i= C(u(i, 1), \dots, u(i, l_1))$ and $L_i= C(v(i, 1), \dots, v(i, l_2))$. Let $L_1 = \partial \mathcal{M}'_2$ and $L_s = \partial \mathcal{M}''_2$. We repeat similar argument as in Lemma \ref{lem36:iso}. Hence, we define an isomorphism map $f_1 ~: ~V(M_1) \rightarrow V(M_2)$ by $f_1(V(C_i)) = V(L_i)$ and $f_1(u(j, 1)) = v(j, 1)$ for $1 \le j \le l_1$ and $1 \le i \le s$. So, $M_1 \cong M_2$. Similar argument we repeat for other two cases. If two maps have two different representation in the above three cases, then, they are non-isomorphic as they have non-isomorphic M\"{o}bius strips. This completes the proof.
\end{proof}

\noindent\textbf{2.8 Maps of type $\{3, 12^2\}$~:~}
Let $M$ be a map of type $\{3, 12^2\}$ on the Klein bottle. Let $W$ be a maximal walk of type $G_1$ \cite{mu:torus-hc13} in edge graph of $M$. Then, it is either a cycle or a closed walk (see similar argument as in Section 2.7). Similarly as in Section 2.7, the map $M$ contains a cycle of type $G_1$. Let $C$ be a cycle of type $G_{1}$ in $M$. Let $S$ denote a set of faces which are incident with $C$. Then, $|S| = S(C, M)$ or $|S| =\mathcal{M}(C, M)$ or $|S| =\mathcal{M}_{3,12}(C, M)$ (see similar argument as in Lemma \ref{lem3636:1}). Hence, either $S(C, M) \subset M, \mathcal{M}(C, M)\subset M$ or a $\mathcal{M}_{3, 12}(C, M)\subset M$. Consider each of $S(C, M), \mathcal{M}(C, M)$ and $\mathcal{M}_{3, 12}(C, M)$ and we proceed similarly as in Section 2.7, hence, we get either a 
$K(r, s, k)$ or a $K(S'(M), \mathcal{M}', \mathcal{M}'')$ representation of $M$ and these are non-isomorphic. We study both the representations separately in bellow.

\textbf{Classification of $K(r, s, k)$ on $n$ vertices :} Let $\triangle$ in $F(M)$. Similarly as in the previous Section 2.7, let $C$ be a cycle of type $G_1$ which contains an edge of $\triangle$ and $W$ be a closed walk of type $G_1$ which contains other two edges of $\triangle$. Hence, there is one cycle of type $G_1$ upto homologous in $M$. We define admissible relations among $r, s, k$ of $K(r, s, k)$ in the next lemma and its proof follows from the similar arguments as in Lemma \ref{lem36:6}. Then, the next isomorphism lemma whose proves follows from similar argument as in Lemma \ref{lem3342:iso-1}.

\begin{Lemma}\label{lem3122:1} The maps of type $\{ 3, 12^2\}$ of the form $K(r, s, k)$ on $n$ vertices exist if and only if the following holds : (i) $n = \frac{3}{2}rs$, (ii) $4 \mid r,$ (iii) $6 \mid n$, (iv) $s \geq 3$, (v) $r \geq 12$, and (vi) $k_{1} \in \{ 4t+2 ~\colon~ 0 \leq t \leq \frac{r-4}{4}\}$.
\end{Lemma}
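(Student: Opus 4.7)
The plan is to mirror the template of Lemma \ref{lem36:6} and its analogues (Lemmas \ref{lem3342:4}, \ref{lem32434:3}, \ref{lem482:3}, \ref{lem3636:2}), by examining, for each proposed condition, what happens to the link of some vertex in $K(r,s,k)$ when that condition fails after boundary identification. I would start by drawing out the local picture: around the base horizontal cycle $C$ of type $G_1$, the adjacent faces on either side alternate between triangles and $12$-gons with period $4$ along $C$. This combinatorial period-$4$ pattern is the root cause of all the divisibility conditions, and I would record it carefully before doing any case analysis.

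First I would handle the counting condition $n = \tfrac{3}{2}rs$. Using the structure from Section 2.8, each horizontal cycle of type $G_1$ has length $r$, and the vertices not lying on horizontal cycles are accounted for by the $12$-gonal ``waist'' vertices that appear at a rate of $r/2$ per layer (each $12$-gon contributes extra vertices shared between consecutive horizontal layers). Summing over the $s$ horizontal layers and the interleaved vertices gives $\tfrac{3}{2}rs$; integrality then forces $2 \mid rs$ and, combined with the period-$4$ pattern, $6 \mid n$. The bounds $r \geq 12$ and $s \geq 3$ are then checked by contradiction: if $r<12$ or $s<3$, some $12$-gon incident with a vertex on the base cycle must self-identify through the boundary identification, producing either a repeated vertex in its boundary or a link that is not a simple cycle, violating the face sequence $\{3,12^2\}$.

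Next I would establish $4 \mid r$. Because the alternation of triangles and $12$-gons along $C$ has period $4$, a cycle of type $G_1$ can close up into a simple cycle of length $r$ only if $r$ is a multiple of this period; otherwise, after going once around $C$, the face pattern is shifted and the link of a vertex on $C$ comes out inconsistent with $\{3,12^2\}$. This is the same style of argument as used for $4\mid r$ in Lemma \ref{lem482:3}.

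The main obstacle is the precise shift condition $k \in \{4t+2 : 0 \leq t \leq \tfrac{r-4}{4}\}$. Here I would exploit the Klein-bottle identification of the top and bottom horizontal cycles, which reverses orientation. Fix a vertex $v$ on the top side and trace the $12$-gon that crosses the identification seam through $v$; requiring the two halves of this $12$-gon (one from each side of the cut) to glue into a single simple $12$-gon forces the shift $k$ to land on the ``opposite parity'' position within a period-$4$ block, i.e., $k \equiv 2 \pmod 4$. The range $0 \le t \le (r-4)/4$ comes from restricting $k$ to a fundamental domain of length $r$. The hard part here — exactly as in the preceding sections — is bookkeeping: showing that every other residue class mod $4$ produces, at some vertex on the seam, a link which is not of the required cyclic type $\{3,12^2\}$, and that every $k = 4t+2$ in the stated range indeed yields a valid map. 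Once this shift analysis is carried out, sufficiency of all six conditions follows by explicitly constructing $K(r,s,k)$ layer by layer and verifying that each vertex link is $C(\triangle,12,12)$.
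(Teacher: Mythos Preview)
Your proposal is correct and follows exactly the approach the paper intends: the paper does not give a standalone proof of this lemma but simply states that ``its proof follows from the similar arguments as in Lemma \ref{lem36:6}'', i.e., a case-by-case check that failure of any of the stated conditions forces some vertex link not to be a cycle of the required type. Your sketch is in fact considerably more detailed than the paper's own treatment --- in particular your explicit identification of the period-$4$ face pattern along a $G_1$-cycle and your analysis of the seam identification for the residue class of $k$ are exactly the kind of bookkeeping the paper leaves implicit.
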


\begin{Lemma}\label{lem3122:2}
Let $K(r_{i}, s_{i}, k_{i})$ denote a $(r_i, s_i, k_i)$-representation of $M_{i}$ on same number of vertices for $i \in \{1, 2\}$. Then,\ $M_{1} \cong M_{2}$ if $(r_1, s_1, l_1) = (r_2, s_2, l_2)$ where $ l_i \in \{k_i~ mod(2), (k_i + r_i)~ mod(2)\}$.
\end{Lemma}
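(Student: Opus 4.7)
The plan is to mirror the argument pattern established for the isomorphism lemmas of the previous semi-equivelar types, in particular Lemma \ref{lem36:iso}, Lemma \ref{lem3342:iso-1}, and Lemma \ref{lem32434:4}, adapting only the choice of re-cutting path to the combinatorics forced by a type-$\{3,12^{2}\}$ link and the divisibility $4\mid r$ from Lemma \ref{lem3122:1}.

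First I would dispose of the easy case $r_1=r_2$, $s_1=s_2$, $k_1=k_2$. Label the vertices of the consecutive horizontal cycles $C(i,0),C(i,1),\dots,C(i,s_i-1)$ of $K(r_i,s_i,k_i)$ as $\{u_{t,j}\}$ for $i=1$ and $\{v_{t,j}\}$ for $i=2$, with $0\le t\le s_i-1$ and $0\le j\le r_i-1$. Define $f_1(u_{t,j})=v_{t,j}$. Because the three admissible parameters coincide, the horizontal and vertical boundary identifications of the two planar representations match verbatim, so $f_1$ sends the link of every vertex to the link of its image. Hence $f_1$ is a polyhedral isomorphism, giving $M_1\cong M_2$.

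For the case $k_1\ne k_2$ with $l_1=l_2$, I would reduce each $K(r_i,s_i,k_i)$ to a canonical representation $K(r_i,s_i,l_i)$ with $l_i\in\{0,1\}$, following the re-cutting trick of Lemma \ref{lem36:iso}. Concretely, suppose $l_i=0$. If $2\mid k_i$, the vertex $u_{0,k_i/2}\in C(i,0)$ is the basepoint of a vertical path $Q$ of type $G_1$ that, after traversing $s_i$ horizontal cycles, returns to $u_{0,k_i/2}\in C(i,s_i)=C(i,0)$; identifying the vertical boundaries and re-cutting along $Q$ preserves the horizontal cycles and produces a $K(r_i,s_i,0)$-representation of $M_i$. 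If instead $2\mid (k_i+r_i)$, the analogous re-cut through $u_{0,(k_i+r_i)/2}$ yields $K(r_i,s_i,0)$. For $l_i=1$ one cuts through $u_{0,(k_i-1)/2}$ or $u_{0,(k_i+r_i-1)/2}$ according to which parity holds, obtaining $K(r_i,s_i,1)$. Since $l_1=l_2$, both $M_i$ are presented by a common canonical form, and the first case then delivers $M_1\cong M_2$ via $f_1$.

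The main technical obstacle I expect is showing that each candidate re-cut path is actually admissible for a $\{3,12^{2}\}$ map: it must be a type-$G_1$ path through the chosen vertex that is not homologous to the horizontal base cycle and that terminates on the upper boundary in the precise position needed so that the reduced offset lands in $\{0,1\}$. This admissibility hinges on the constraints $4\mid r$ and $k\in\{4t+2 : 0\le t\le (r-4)/4\}$ of Lemma \ref{lem3122:1}, and must be verified case by case on the parities of $k_i$ and $k_i+r_i$. Once that verification is in hand, the remainder of the argument is the same routine labeling chase used in Sections 2.1, 2.4, 2.5, 2.6, and 2.7.
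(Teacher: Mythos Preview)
Your proposal is correct and follows essentially the same route as the paper: the paper's proof of Lemma~\ref{lem3122:2} is nothing more than a pointer to Lemma~\ref{lem3342:iso-1} (and hence to Lemma~\ref{lem36:iso}), and your writeup is exactly an unpacking of that referral, with the vertex-by-vertex map $f_1$ in the equal-parameter case and the re-cutting along a vertical $G_1$-path through $u_{0,k_i/2}$ (resp.\ $u_{0,(k_i+r_i)/2}$, etc.) to normalize the offset in the unequal-$k$ case. One small remark that may save you the case analysis you anticipate: by Lemma~\ref{lem3122:1} we have $4\mid r_i$ and $k_i\equiv 2\pmod 4$, so $k_i$ and $k_i+r_i$ are always even and the $l_i=1$ branch is vacuous here---every $K(r,s,k)$ of this type reduces to $K(r,s,0)$.
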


\begin{cor} \label{cor3122:1}  $K(r_{1}, s_{1}, k_{1}) \not\cong K(r_{2}, s_{2}, k_{2})$ if $r_{1} \neq r_{2}, K(r_{1}, s_{1}, k_{1}) \not\cong K(r_{2}, s_{2}, k_{_{2}})$ if $s_{1} \neq s_{2}, K(r_{1}, s_{1}, 0) \cong K(r_{1}, s_{1}, k_{1})$ if $2 \mid k_1$ or $2 \mid (k_1 + r_1)$, and $K(r_{1}, s_{1}, 1) \cong K(r_{1}, s_{1}, k_{1})$ if $2 \mid (k_1 - 1)$ or $2 \mid (k_1 + r_1 - 1)$.
\end{cor}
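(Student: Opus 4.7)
The plan is to read Corollary \ref{cor3122:1} off Lemma \ref{lem3122:2} together with the intrinsic meaning of the parameters $r$ and $s$. First, I would argue that $r$ and $s$ are isomorphism invariants of the underlying map $M$: by the discussion preceding Lemma \ref{lem3122:1}, through each vertex of $M$ there is a unique (up to homology) cycle of type $G_1$, so any isomorphism must carry such a cycle to such a cycle, and in particular preserve its length; hence $r$ is intrinsic to $M$. Once $r$ is fixed, the formula $n = \tfrac{3}{2} r s$ from Lemma \ref{lem3122:1} pins down $s$ as well. Consequently, whenever $r_1 \neq r_2$ (respectively $s_1 \neq s_2$) the representations $K(r_1, s_1, k_1)$ and $K(r_2, s_2, k_2)$ cannot describe isomorphic maps. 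This settles the first two clauses of the corollary by contrapositive.

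For the two isomorphism clauses I would invoke Lemma \ref{lem3122:2} directly. If $2 \mid k_1$, choose $l_1 = k_1 \bmod 2 = 0$; if instead $2 \mid (k_1 + r_1)$, choose $l_1 = (k_1 + r_1) \bmod 2 = 0$. In either case, pairing $K(r_1, s_1, k_1)$ with $K(r_1, s_1, 0)$ (for which $l_2 = 0$ trivially) meets the hypothesis $(r_1, s_1, l_1) = (r_1, s_1, l_2)$ of Lemma \ref{lem3122:2}, giving $K(r_1, s_1, 0) \cong K(r_1, s_1, k_1)$. The second isomorphism assertion, $K(r_1, s_1, 1) \cong K(r_1, s_1, k_1)$ under $2 \mid (k_1 - 1)$ or $2 \mid (k_1 + r_1 - 1)$, is handled symmetrically by selecting $l_1 \in \{k_1 \bmod 2,\, (k_1 + r_1) \bmod 2\}$ so that $l_1 = 1$, and again applying Lemma \ref{lem3122:2}.

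The proof is essentially an unwinding of definitions in light of Lemma \ref{lem3122:2}, so there is no substantive obstacle. The only point that deserves care is the verification that $r$ and $s$ are genuinely isomorphism invariants of $M$; this is guaranteed by the uniqueness (up to homology) of cycles of type $G_1$ through each vertex, which was established earlier in Section~2.8 by the same reasoning used in Section~2.7 (a single cycle of type $G_1$ through each triangle-edge, with the two remaining edges belonging to a closed walk of type $G_1$). Modulo this invariance, every clause of the corollary is an immediate specialization of the isomorphism criterion in Lemma \ref{lem3122:2}.
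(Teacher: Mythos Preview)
Your proposal is correct and matches the paper's approach: the paper states this corollary without proof, treating it as an immediate consequence of Lemma~\ref{lem3122:2} together with the uniqueness (up to homology) of the cycle of type $G_1$ established in the discussion preceding Lemma~\ref{lem3122:1}. Your unpacking of the invariance of $r$ and $s$, and the direct application of Lemma~\ref{lem3122:2} for the isomorphism clauses, is exactly the intended reading.
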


\textbf{Classification of $K(l, t)$ on $n$ vertices :} Let $M$ be a map of type $\{3, 12^2\}$ on $n$ vertices. We repeat similar arguments and notions of Section 2.7 in bellow. Now we have the next lemma and isomorphism lemma. The proof of the lemma follows similar argument as in Lemma \ref{lem3636:4} and the proof of the isomorphism lemma follows from similar argument as in Lemma \ref{lem3636:5}.

\begin{Lemma}\label{lem3122:3} The maps of type $\{3, 12^2\}$ of the form $K(\frac{l}{2}, l, S'(M), \mathcal{M}', \mathcal{M}'')$ exist if and only if the following holds : 

(1) $t \ge 2, n = \frac{3}{2} tl + \frac{l}{2}$ where $8 \mid l$ and $l \ge 24$ if $\mathcal{M}' = \mathcal{M}_{3,12}(C', M)$ and $\mathcal{M}'' = \mathcal{M}_{3,12}(C'', M)$.

(2) $t \ge 1, n = l(t + 2)$ where $4 \mid l$ and $l \ge 20$ if $\mathcal{M}' = \mathcal{M}(L', M)$ and $\mathcal{M}'' = \mathcal{M}(L'', M)$ for some cycles $L'$ and $L''$ of types $G_1$.

(3) $t \ge 1, n = l(t + \frac{5}{4})$ where $4 \mid l$ and $l \ge 24$ if $\mathcal{M}' =\mathcal{M}_{3,12}(C', M)$ and $\mathcal{M}'' = \mathcal{M}(L'', M)$.
\end{Lemma}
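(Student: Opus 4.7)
The plan is to proceed case by case, mirroring the template of the proof of Lemma \ref{lem3636:4}. For each of the three configurations of M\"obius strips $(\mathcal{M}', \mathcal{M}'')$, I will perform the same three tasks: (a) count the vertices contributed by the cylinder $S'(M)$ and each M\"obius strip separately; (b) derive the divisibility condition on $l$ from the requirement that the face pattern be consistent with the M\"obius twist at each boundary; (c) derive the lower bound on $l$ from the requirement that the link of every vertex be a proper cycle.

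First, I would invoke the analog for type $\{3, 12^2\}$ of Proposition \ref{prop3342:1} to conclude that all cycles of type $G_1$ in the cylinder $S'(M)$ share a common length $l$. Using the vertex figure $(3, 12, 12)$, each horizontal cycle of length $l$ in $S'(M)$ contributes $l$ vertices plus a controlled number of auxiliary vertices off the cycle (required by the incident $12$-gons). Summing over all $t$ horizontal cycles gives the cylinder's contribution, with the factor $\frac{3}{2}$ arising in the same way as in Lemma \ref{lem3636:4}, since the cycles of type $G_1$ play the same structural role here as the cycles of type $X_1$ do there.

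For case (1), both strips are of type $\mathcal{M}_{3,12}(C, M)$, each bounded by a cycle of type $G_1$ of length $l$. Mimicking the argument that yielded $4 \mid r$ in Lemma \ref{lem3122:1}, the alternation of $3$-gons and $12$-gons along the boundary, combined with the orientation-reversing identification of the M\"obius twist, forces $8 \mid l$; the minimum $l = 24$ is then dictated by the smallest admissible $l$ for which every vertex has a cyclic link. For case (2), each strip $\mathcal{M}(L', M)$ has its cycle of type $G_1$ strictly interior; the relation $l = 2 \cdot \mathrm{length}(L')$ together with the parity constraint from the twist yields $4 \mid l$ with $l \geq 20$. Case (3) combines the two analyses, giving $4 \mid l$ and $l \geq 24$. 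In each case, summing the contributions of $S'(M)$, $\mathcal{M}'$, and $\mathcal{M}''$ produces the stated formula for $n$.

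The main obstacle I anticipate is verifying in case (3) that the two M\"obius strips of different types can be simultaneously glued to the two boundaries of $S'(M)$ without any vertex on $\partial S'(M)$ acquiring a link that fails to be a cycle; this will require a direct inspection of the face sequence around each such vertex, analogous to the corresponding step in Lemma \ref{lem3636:4}. The converse direction---that each pair $(t, l)$ satisfying the listed conditions actually realises a map---is then obtained by reversing the construction, reading off explicit gluing data from the divisibility and size conditions.
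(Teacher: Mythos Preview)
Your proposal is correct and follows essentially the same approach as the paper: the paper's proof consists of a single sentence referring back to Lemma~\ref{lem3636:4} and the argument in Section~2.1, noting that values of $t,l$ outside the stated ranges produce a vertex whose link is not a cycle. Your plan is simply a more explicit unpacking of that same template, so there is nothing to correct or compare.
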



\begin{Lemma}\label{lem3122:4}
Let $K(S'(M_i), \mathcal{M}'_i, \mathcal{M}''_i)$ represent $M_i$ on $n$ vertices. Then, we have the following cases : 

(1) Let $K(l_i, S'(M_i), \mathcal{M}'_i, \mathcal{M}''_i)$ represents $M_i$ such that $\mathcal{M}'_i = \mathcal{M}_{3,12}(C_i', M_i)$ and $\mathcal{M}''_i = \mathcal{M}_{3,12}(C_i'', M_i)$. Then $M_1 \cong M_2$ if $l_1 = l_2$.  

(2) Let $K(l_i, \frac{l_i}{2}, S'(M_i), \mathcal{M}'_i, \mathcal{M}''_i)$ represents $M_i$ such that $\mathcal{M}'_i = \mathcal{M}(C_i', M_i)$ and $\mathcal{M}''_i = \mathcal{M}(C_i'', M_i)$. Then $M_1 \cong M_2$ if $l_1 = l_2$. 

(3) Let $\mathcal{M}'_i =\mathcal{M}_{3,12}(C_i', M_i)$ and $\mathcal{M}''_i = \mathcal{M}(C_i'', M_i)$. Then $M_1 \cong M_2$ if $l_1 = l_2$.  
\end{Lemma}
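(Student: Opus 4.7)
The plan is to mirror the isomorphism argument already used for type $\{3,6,3,6\}$ in Lemma \ref{lem3636:5}, simply replacing $X_1$-cycles and the M\"obius strips $\mathcal{M}(\cdot,\cdot), \mathcal{M}_{3,6}(\cdot,\cdot)$ with $G_1$-cycles and $\mathcal{M}(\cdot,\cdot), \mathcal{M}_{3,12}(\cdot,\cdot)$ respectively. I would handle the three cases uniformly. First, for each case I would invoke Lemma \ref{lem3122:3} with the hypothesis $l_1=l_2$ and the fact that the maps have the same vertex count $n$: solving the corresponding relation among $n$, $t_i$, $l_i$ (e.g.\ $\tfrac32 t_i l_i + \tfrac12 l_i = n$ in Case (1), $n=l_i(t_i+2)$ in Case (2), and $n=l_i(t_i+\tfrac54)$ in Case (3)) forces $t_1=t_2=:s$. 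This pins down the common combinatorial shape of the two representations.

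Next, in each case I would enumerate the cycles of type $G_1$ in $M_1$ and in $M_2$. Let $C_1,\dots,C_s$ denote the $G_1$-cycles of $M_1$ arranged so that $C_1=\partial\mathcal{M}'_1$ and $C_s=\partial\mathcal{M}''_1$, and similarly let $L_1,\dots,L_s$ list those of $M_2$ with $L_1=\partial\mathcal{M}'_2$ and $L_s=\partial\mathcal{M}''_2$. Since $l_1=l_2$, each $C_i$ and each $L_i$ has the same length, and because Lemma \ref{lem3122:3} imposes identical parity/divisibility conditions on $l$ in each subcase, the two M\"obius strips $\mathcal{M}'_i$ (respectively $\mathcal{M}''_i$) in $M_1$ and $M_2$ are of the same combinatorial type ($\mathcal{M}_{3,12}$ in Case (1), $\mathcal{M}$ in Case (2), mixed in Case (3)). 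Writing $C_i=C(u(i,1),\dots,u(i,l_i))$ and $L_i=C(v(i,1),\dots,v(i,l_i))$, I would define $f_1:V(M_1)\to V(M_2)$ by $f_1(u(i,j))=v(i,j)$ for $1\le i\le s$, $1\le j\le l_1$, exactly as in Lemma \ref{lem3636:5}.

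The verification step is to check that $f_1$ sends $lk(u(i,j))$ to $lk(v(i,j))$, and hence faces to faces. This reduces to a local check at a vertex in the interior of the cylinder $S'(M_1)$ and at a vertex lying on $\partial\mathcal{M}'_1$ or $\partial\mathcal{M}''_1$. For interior vertices the face sequence is dictated by type $\{3,12^2\}$ and the labelling of successive $G_1$-cycles, so preservation is automatic; for boundary vertices, the specification of the M\"obius strip (either $\mathcal{M}$ or $\mathcal{M}_{3,12}$) on each side forces a unique completion of the link, and matching types of strips on both sides gives matching completions. Finally, to conclude non-trivially across the three cases, I would note that whenever $M_1$ and $M_2$ have representations of different subcases of the lemma (say Case (1) for $M_1$ and Case (2) for $M_2$), no isomorphism can exist because the boundary cycles on the two sides are boundaries of M\"obius strips with different face-type compositions ($3$-gons and $12$-gons vs.\ $12$-gons only), and any isomorphism must preserve these local face sequences.

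The main obstacle I expect is the boundary-vertex link verification in Case (3), where $\mathcal{M}'_i$ and $\mathcal{M}''_i$ are of different types; here one must check that the identification of $\mathcal{M}_{3,12}(C'_i,M_i)$ on one side and $\mathcal{M}(C''_i,M_i)$ on the other is compatible with the cyclic labelling of $S'(M_i)$, so that the starting vertex of $C_1$ can be mapped consistently to that of $L_1$. Once this base-point compatibility is established (by choosing, if necessary, an appropriate cyclic shift on $L_1$), the rest of the argument proceeds exactly as in Lemma \ref{lem3636:5}.
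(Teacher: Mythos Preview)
Your proposal is correct and matches the paper's approach exactly: the paper itself gives no independent proof of this lemma, stating only that it ``follows from similar argument as in Lemma \ref{lem3636:5}'', and your plan carries out precisely that mirroring (replacing $X_1$-cycles and $\mathcal{M}_{3,6}$ by $G_1$-cycles and $\mathcal{M}_{3,12}$, invoking Lemma \ref{lem3122:3} to force $t_1=t_2$, and defining $f_1$ cycle-by-cycle). Your additional attention to the boundary-vertex link verification in Case~(3) is more detail than the paper provides, but is consistent with its intent.
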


\noindent \textbf{2.9 Maps of type $\{3^4, 6\}$~:~}\label{3461}
Let $M$ be a map of type $\{3^4, 6\}$ on the Klein bottle. Let $P$ be a maximal walk of type $Y_1$ \cite{mu:torus-hc13}. It is either a cycle or a closed walk as in Section 2.1. As in Section 2.7, the map $M$ contains a cycle, say $C$ of type $Y_{1}$ in $M$. Let $S$ denote a set of faces incident with $C$. Then, similarly as in Lemma \ref{lem3636:1}, $|S| = S(C, M)$. Next we show that there is no map of type $\{3^4, 6\}$ on the Klein bottle.

\begin{theo} \label{thm346:1} There is no map of type $\{3^4, 6\}$ on the Klein bottle.
\end{theo}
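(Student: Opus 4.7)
The plan is to argue by contradiction. Suppose $M$ is a map of type $\{3^4, 6\}$ on the Klein bottle. By the preceding paragraph, every cycle $C$ of type $Y_1$ in $M$ satisfies $|S| = S(C, M)$, so the faces incident with $C$ form a cylinder and no M\"obius strip ever arises. Applying the cut-and-paste procedure of Section 2.1 (growing out from $C$ by attaching the incident faces of the successive boundary cycles), the map $M$ therefore admits a planar polyhedral representation of the form $K(r, s, k)$ whose lower and upper horizontal boundary cycles are two identified copies of a common cycle of type $Y_1$. Since $M$ is a map on the non-orientable Klein bottle and no M\"obius strip appears in the construction, this horizontal identification must be orientation-reversing on $C$.

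The essential ingredient is the \emph{chirality} of the tiling $\{3^4, 6\}$: at every vertex the cyclic face sequence $(3, 3, 3, 3, 6)$ contains a unique hexagonal corner, which destroys any local reflective symmetry. Using the definition of a path of type $Y_1$ from \cite{mu:torus-hc13}, one checks that once such a path is given a direction, the hexagon at every vertex of the path lies on a consistent side of the path; equivalently, all vertex-links along the path are rotated copies of one another with no reflections. In particular, walking once along the horizontal cycle $C$ in the representation $K(r, s, k)$, every hexagon incident with $C$ lies on a single fixed side of $C$, while the opposite side of $C$ is occupied exclusively by triangles.

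The contradiction now comes from the orientation-reversing identification of the two copies of $C$. Such an identification interchanges the two local sides of $C$ at each vertex it identifies. Hence at the identified vertex the side that was the ``hexagon side'' of the upper copy of $C$ meets, locally, the ``triangle side'' of the lower copy, and conversely. After the identification, each vertex of $C$ inherits hexagons on both sides of $C$, forcing at least two hexagonal faces into its link. This violates the face sequence $(3^4, 6)$, which prescribes exactly one hexagon per vertex, and yields the desired contradiction.

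The main obstacle is the rigorous verification of the chirality claim, namely, proving from the precise definition of $Y_1$ in \cite{mu:torus-hc13} that a $Y_1$-path has an intrinsically well-defined ``hexagon side'' along its whole length. Once this is in place, the argument is unaffected by the horizontal shift parameter $k$ in $K(r, s, k)$, because the conflict occurs purely locally at the identified vertices of the boundary cycle.
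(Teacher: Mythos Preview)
Your overall strategy coincides with the paper's: both argue that the cylindrical representation $S(M)$ cannot be closed up by an orientation-reversing identification of its boundary $Y_1$-cycle, and both locate the obstruction in the chirality of the $\{3^4,6\}$ pattern. Where the arguments diverge is in the specific local invariant used to detect the chirality.

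Your key claim is that along a $Y_1$-cycle the hexagon at every vertex lies on one fixed side, so that one side of $C$ consists entirely of triangles. This is too strong. The paper's own proof records that, by the definition of $Y_1$, the cycle divides the four triangles at a vertex $w_i$ in one of the ratios $0:4$, $1:3$, $2:2$, and moreover that along three consecutive vertices $w_{j-1},w_j,w_{j+1}$ these ratios shift as $i:4-i,\ 1+i:3-i,\ 2+i:2-i$ for some fixed $i$. Thus the hexagon is \emph{not} always isolated on one side; the side on which it sits varies periodically along the cycle. Your ``hexagon side'' picture therefore fails, and with it the clean ``two hexagons in one link'' contradiction.

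The paper's substitute is precisely this pattern of ratios: under the orientation-reversing twist the triple $(i:4-i,\ 1+i:3-i,\ 2+i:2-i)$ becomes $(i:4-i,\ 2+i:2-i,\ 1+i:3-i)$, so at the middle vertex the two copies of $C$ demand different triangle splits and cannot be glued. You correctly flagged the verification of the ``hexagon side'' as the main obstacle; in fact that statement is false as written, and the chirality has to be read off from the shifting triangle ratios instead.
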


\begin{proof} By the above argument, let $S(C, M) \subset M$ for some cycle $C$ of type $Y_1$. We consider $S(C, M)$ and repeat with the arguments as in Sections 2.4, 2.5. In this process, let $S(M)$ denote $M$. 

\smallskip

$Claim : $ The cylindrical representation $S(M)$ does not exist in $M$.

\smallskip

Let $\partial S(M) = \{C', C''\}$ such that $C' = C''$ and $C'$ identifies with $C''$ with a twist. Let $C' = C(w_1, \dots, w_r)$. By definition of $Y_1, C'$ divides number of triangles at $w_i$ one of $0 : 4,  1 : 3$ and $2 : 2$ ratios (here, $a : b$ ratio is to mean $a$ number of triangles lie on one side and $b$ number of triangles lie on the other side of the cycle at a vertex). Let $P(w_{j-1}, w_j, w_{j+1}) \subset C', C''$ for some $j$ as $C' = C''$. Let $i \in \{0, 1, 2\}$. If $P \subset C'$ then $P$ divides incident triangles one of $i : 4-i, 1+i : 3-i$ and $2+i : 2-i$ ratios for some fixed $i$. If $P \subset C''$ then $P$ divides the incident triangles one of $i : 4-i, 2+i : 2-i$ or $1+i : 3-i$ ratios for some fixed $i$. Now we have the following cases. Observe that by definition of $Y_1$, if ratio $i : 4-i$ is at $w_i \in C'$ then $i : 4-i$ is at $w_i \in C''$ in $S(M)$. Again, if ratio $1+i : 3-i$ is at $w_i \in C'$ then $2+i : 2-i$ is at $w_i \in C''$. Similarly again, if ratio $2+i : 2-i$ is at $w_i \in C'$ then $1+i : 3-i$ is at $w_i \in C''$. In later two cases, it is impossible to identify boundaries of $S(M)$ at $w_i$ as the $lk(w_i)$ divides incident faces into two different ratios. So, the $C'$ can not identify with $C''$ with a twist. So, $S(M)$ does not exist. Therefore, there is no map of type $\{3^4, 6\}$ on the Klein bottle. This completes the proof. 
\end{proof}

\noindent \textbf{2.10 Maps of type $\{4, 6, 12\}$~:~}\label{46121} 
Let $M$ be a map of type $\{4, 6, 12\}$ on the Klein bottle. Let $P$ be a maximal walk of type $H_1$ \cite{mu:torus-hc13} in $M$. It is either a cycle or a closed walk (see the similar argument as in Section 2.7). Since degree of each vertex is three, it follows that the map $M$ has a cycle of type $H_1$ as in Section 2.7. Let $C$ be a cycle of type $H_{1}$ in $M$. Then, the geometric carrier of the faces which are incident with $C$ is either $S(C, M)$ or $\mathcal{M}_{4, 12}(C, M)$ (see the similar argument as in Lemma \ref{lem3636:1}). Thus, either $S(C, M) \subset M$ or $S\mathcal{M}_{4, 12}(C, M) \subset M$. We consider each of $S(C, M)$ and $S\mathcal{M}_{4, 12}(C, M)$, and repeat the similar arguments as in Section 2.5. Hence, we get either a $K(r, s, k)$ or a $K(S'(M), \mathcal{M}', \mathcal{M}'')$ representation of $M$.

\textbf{Classification of $K(r, s, k)$ on $n$ vertices :} We define admissible relations among $r, s, k$ of $K(r, s, k)$ such that $K(r, s, k)$ represents a map after identifying its boundaries. The proof of next lemma repeats similar argument as in Lemma \ref{lem36:6} and the proof of Lemma \ref{lem4612:2} follows from similarly as in Lemma \ref{lem3342:iso-1}. Thus, we have 

\begin{Lemma}\label{lem4612:1} The maps of tyep $\{ 4, 6, 12\}$ of the form $K(r, s, k)$ on $n$ vertices exist if and only if the following holds : (i) $n = rs \geq 48$, (ii) $2 \mid s$, (iii) $6 \mid r,$ (iv) $s \ge 4$, (v) $12 \mid n$, (vi) $r \geq 12$, and (vii) $k \in \{ 6t+4 ~\colon ~0 \leq t \leq \frac{r-6}{6}\}$.
\end{Lemma}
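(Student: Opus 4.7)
The plan is to follow exactly the template established in Lemma \ref{lem36:6} and the analogous admissibility lemmas of the previous subsections (Lemmas \ref{lem3342:4}, \ref{lem32434:3}, \ref{lem482:3}, \ref{lem3636:2}, \ref{lem3122:1}). The strategy is to argue both directions: if any of the listed numerical constraints fails, then in $K(r,s,k)$ some vertex obtained after identifying the boundary sides has a link that is not a proper $(4,6,12)$-cycle, and conversely, when all the constraints hold, the identification glues consistently and every vertex acquires the correct face sequence, giving a bona fide map on the Klein bottle.

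For the necessity of (i)--(vi) I would proceed by incidence and parity counting along the horizontal base cycle $C$ of type $H_1$. Since every vertex of $M$ lies on exactly one $12$-gon and each $12$-gon has $12$ vertices, a face-vertex count immediately forces $12 \mid n$. Next, because a $12$-gon is incident to the cycle $C$ along $6$ consecutive edges on one side, the length $r$ of $C$ must be a multiple of $6$ with $r \geq 12$ (otherwise either a $12$-gon would wrap around $C$ creating a repeated vertex, or the $\{4,6,12\}$ pattern cannot close up, giving a vertex whose link is not a cycle, as in Lemma \ref{lem482:3}). The vertical count $s$ controls how many horizontal cycles of type $H_1$ lie between the two identified boundaries; for the Klein-bottle twist identification to be consistent with the $\{4,6,12\}$ face sequence at every vertex of both boundaries, $s$ must be even and at least $4$, exactly as in the even/odd dichotomy appearing in Lemmas \ref{lem32434:3} and \ref{lem482:3}. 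Finally, $n=rs$ comes from stacking $s$ horizontal cycles of length $r$, and the lower bound $rs \geq 48$ is forced by the combined constraints $r \geq 12$ and $s \geq 4$.

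For (vii), the admissible shifts $k$: the identification of the upper horizontal cycle $C'$ with the lower horizontal cycle $C$ with shift $k$ must preserve the cyclic face pattern at every identified vertex. Because the $\{4,6,12\}$ pattern repeats along $C$ with period $6$ (one $4$-gon, one $6$-gon, and one block coming from the $12$-gon per period), and because the Klein-bottle identification reverses the orientation of $C$, a direct computation of the face sequence at a boundary vertex shows that only shifts of the form $k = 6t+4$ align the $12$-gon edges on the two sides; every other residue class modulo $6$ creates a vertex link that mixes two different face types at the same slot of the $(4,6,12)$ cyclic sequence. The range $0 \leq t \leq (r-6)/6$ just lists one shift per residue class, avoiding duplicates when $k$ and $k+r$ give the same identification.

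The only real obstacle is the explicit case analysis for (vii): one has to draw the link of a boundary vertex under each residue class of $k$ modulo $6$ and verify that exactly one class survives and that it is the class of $4$. This is mechanical but must be done carefully, since unlike the type $\{3^6\}$ case the three face types are all distinct and the pattern period is $6$ instead of $2$. Everything else reduces to the arguments already carried out in the previous subsections, so I would simply write "proceeding exactly as in Lemma \ref{lem36:6}" and only spell out the $\{4,6,12\}$-specific vertex-link verifications.
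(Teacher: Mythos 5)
Your proposal matches the paper's approach: the paper proves this lemma only by deferring to the case-checking argument of Lemma \ref{lem36:6} (out-of-range parameters force some vertex whose link fails to be a proper cycle of the $\{4,6,12\}$ type), and your plan is exactly that argument, fleshed out with the incidence/parity counts along the horizontal cycle of type $H_1$ and the residue-class analysis of $k$ modulo $6$. The one step you leave unverified --- the explicit vertex-link check for each residue class in (vii) --- is also not carried out in the paper, so your proposal is at least as complete as the paper's own proof.
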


\begin{Lemma}\label{lem4612:2}
Let $K(r_{i}, s_{i}, k_{i})$ denote a $(r_i, s_i, k_i)$-representation of $M_{i}$ on same number of vertices for $i \in \{1, 2\}$. Then,\ $M_{1} \cong M_{2}$ if $(r_1, s_1, l_1) = (r_2, s_2, l_2)$ where $ l_i \in \{k_i~ mod(2), (k_i + r_i)~ mod(2)\}$. 
\end{Lemma}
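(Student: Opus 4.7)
The plan is to mirror the argument template used in Lemma \ref{lem36:iso} and its analogues in the preceding subsections, adapting it to the admissible relations for type $\{4, 6, 12\}$ maps recorded in Lemma \ref{lem4612:1}. The proof splits naturally into two cases: full equality of the triples, and agreement only after reduction modulo $2$.

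First, consider the special case $r_1 = r_2 = r$, $s_1 = s_2 = s$, $k_1 = k_2 = k$. Label the horizontal cycles of type $H_1$ in $K(r_i, s_i, k_i)$ as $C(i, 0), C(i, 1), \dots, C(i, s-1)$, concretely written as $C(i, t) = C(u^{(i)}_{t, 0}, u^{(i)}_{t, 1}, \dots, u^{(i)}_{t, r-1})$. Define $f : V(K(r_1, s_1, k_1)) \to V(K(r_2, s_2, k_2))$ by $f(u^{(1)}_{t, j}) = u^{(2)}_{t, j}$. Since the two representations share the same horizontal length $r$, the same number $s$ of horizontal cycles, and are identified along their boundaries with the same shift $k$, the link of each vertex on one side (a cycle encoding the face sequence $(4, 6, 12)$) is carried onto the link of the corresponding vertex on the other side; hence $f$ is an isomorphism of polyhedral maps and $M_1 \cong M_2$.

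Second, suppose $(r_1, s_1) = (r_2, s_2) = (r, s)$ but $k_1 \neq k_2$ with $l_1 = l_2$. We reduce to the previous case by re-cutting each $K(r_i, s_i, k_i)$ along a new vertical path of type $H_1$. Assume first $l_1 = l_2 = 0$. Then for each $i$ either $2 \mid k_i$ or $2 \mid (k_i + r_i)$. In the former subcase the vertex $u^{(i)}_{0, k_i/2}$ lies on the base cycle $C(i, 0)$, and by the definition of type $H_1$ there is a unique vertical path of type $H_1$ emanating from this vertex and terminating on the upper horizontal cycle; keeping the identifications along the vertical boundaries of $K(r_i, s_i, k_i)$ and cutting $M_i$ afresh along this new vertical path produces a $K(r_i, s_i, 0)$ representation of the same map. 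The latter subcase is handled by the analogous cut through $u^{(i)}_{0, (k_i + r_i)/2}$. Case 1 then gives $K(r_1, s_1, 0) \cong K(r_2, s_2, 0)$, hence $M_1 \cong M_2$. The case $l_1 = l_2 = 1$ is symmetric: one cuts through $u^{(i)}_{0, (k_i-1)/2}$ or $u^{(i)}_{0, (k_i + r_i - 1)/2}$ according as $2 \mid (k_i - 1)$ or $2 \mid (k_i + r_i - 1)$, and obtains $K(r_i, s_i, 1)$ representations.

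The only point requiring care — and the one I would expect to be the main obstacle — is verifying that the shift produced by the re-cut lands in the admissible set $\{6t + 4 : 0 \le t \le (r-6)/6\}$ of Lemma \ref{lem4612:1} (up to the $\bmod 2$ normalization), so that the new representation is genuinely a legitimate $K(r, s, 0)$ or $K(r, s, 1)$. Because every vertex has face sequence $(4, 6, 12)$ and the vertical paths of type $H_1$ move in steps compatible with this $6$-periodic local structure, the new shift automatically has the required parity; this is precisely the computation that succeeded in the earlier sections, and no new idea is introduced here.
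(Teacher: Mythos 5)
Your proposal follows exactly the argument template the paper itself invokes for this lemma (the paper simply defers to Lemma \ref{lem3342:iso-1}, which in turn repeats Lemma \ref{lem36:iso}): an explicit vertex bijection between corresponding horizontal cycles when the triples agree, and a re-cut along a vertical path of type $H_1$ through $u_{0,k_i/2}$ or $u_{0,(k_i+r_i)/2}$ to normalize the shift when only the parities agree. This is essentially the same approach as the paper, and your added check that the new shift remains admissible under the $6$-periodic structure of Lemma \ref{lem4612:1} is consistent with it.
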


\begin{cor}\label{cor4612:1} The $K(r_{1}, s_{1}, k_{1}) \not\cong K(r_{2}, s_{2}, k_{2})$ if $r_{1} \neq r_{2}, K(r_{1}, s_{1}, k_{1}) \not\cong K(r_{2}, s_{2}, k_{_{2}})$ if $s_{1} \neq s_{2}, K(r_{1}, s_{1}, 0) \cong K(r_{1}, s_{1}, k_{1})$ if $2 \mid k_1$ or $2 \mid (k_1 + r_1)$, and $K(r_{1}, s_{1}, 1) \cong K(r_{1}, s_{1}, k_{1})$ if $2 \mid (k_1 - 1)$ or $2 \mid (k_1 + r_1 - 1)$.
\end{cor}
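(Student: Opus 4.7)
The plan is to derive this corollary as a direct consequence of Lemma \ref{lem4612:2} together with the intrinsic nature of the parameters $r$ and $s$ in the representation $K(r, s, k)$, following the same pattern used for the earlier corollaries such as Corollary \ref{cor3342:1} and Corollary \ref{cor3636:1}.

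First I would handle the two non-isomorphism statements. By the construction preceding Lemma \ref{lem4612:1}, the parameter $r$ is the common length of the cycles of type $H_1$ in $M$ and $s$ is the number of such cycles, so the relation $n = rs$ pins down both parameters as map invariants. Any isomorphism between two semi-equivelar maps must preserve the type of each face incident to a vertex, and hence must send a path of type $H_1$ to a path of type $H_1$ (because the face sequence at each vertex is fixed). Consequently, cycles of type $H_1$ are mapped to cycles of type $H_1$, so their common length $r$ is an invariant and so is their number $s$. This immediately gives $K(r_1, s_1, k_1) \not\cong K(r_2, s_2, k_2)$ whenever $r_1 \neq r_2$ or $s_1 \neq s_2$.

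Next I would turn to the two isomorphism statements, which are essentially a substitution into Lemma \ref{lem4612:2}. For the first, suppose $2 \mid k_1$ or $2 \mid (k_1 + r_1)$. Taking $(r_2, s_2, k_2) = (r_1, s_1, 0)$ in Lemma \ref{lem4612:2}, we have $l_2 = 0 \in \{k_2 \bmod 2, (k_2 + r_2) \bmod 2\}$ trivially, while the hypothesis on $k_1$ gives $0 \in \{k_1 \bmod 2, (k_1 + r_1) \bmod 2\}$, so we may pick $l_1 = l_2 = 0$, and the lemma yields $K(r_1, s_1, 0) \cong K(r_1, s_1, k_1)$. An entirely parallel substitution with $(r_2, s_2, k_2) = (r_1, s_1, 1)$ and $l_1 = l_2 = 1$ handles the case $2 \mid (k_1 - 1)$ or $2 \mid (k_1 + r_1 - 1)$, giving $K(r_1, s_1, 1) \cong K(r_1, s_1, k_1)$.

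I do not expect any serious obstacle here: once Lemma \ref{lem4612:2} is granted, the corollary is a formal consequence of reading off the two possible residues modulo $2$ that the twist parameter $k$ can be reduced to, combined with the invariance of $r$ and $s$ under isomorphism. The only point that deserves care is confirming that the parity reduction of the twist parameter $k$ described in the proof of Lemma \ref{lem36:iso} and inherited through Lemma \ref{lem4612:2} is actually well-defined for maps of type $\{4, 6, 12\}$, i.e.\ that a cut along a suitable vertical path of type $H_1$ through the appropriate midpoint of the base horizontal cycle produces a valid $K(r_1, s_1, 0)$ or $K(r_1, s_1, 1)$ representation; this is already covered by the general argument underlying Lemma \ref{lem4612:2}, so the corollary follows.
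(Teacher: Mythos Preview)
Your proposal is correct and follows the same approach the paper intends: the corollary is stated without proof in the paper, as it is meant to be an immediate consequence of Lemma \ref{lem4612:2} together with the invariance of the parameters $r$ and $s$ (the common length and number of cycles of type $H_1$), exactly as you argue. Your explicit verification that the parity reduction of $k$ is a direct substitution into Lemma \ref{lem4612:2} matches the pattern used throughout the paper for the analogous corollaries in the other types.
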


\textbf{Classification of $K(l, t)$ on $n$ vertices :} Let $M$ be a map of type $\{4, 6, 12\}$ on $n$ vertices. Let $S\mathcal{M}(M)$ denote $M$. Then, it has a $K(l, S'(M), \mathcal{M}', \mathcal{M}'')$ representation (see similar argument as in Section 2.4). Let $C'$ and $C''$ denote two boundary cycles of $\mathcal{M}', \mathcal{M}''$ respectively. Then, $C' = \partial\mathcal{M}', C'' = \partial\mathcal{M}''$ and length($C'$) = length($C''$). Let $C' = \partial\mathcal{M}', C''=\partial\mathcal{M}''$ and $\{C_1, C_2\} = \partial S'(M)$. Let $l = length(C') = length(C'')$ and $t$ denote the number of cycles in $S'(M)$ which are of type $H_1$. The proof of next lemmas repeats similar argument as in Lemmas \ref{lem3342:5} and \ref{lem3636:5}.

\begin{Lemma}\label{lem4612:3} The maps of type $\{4, 6, 12\}$ of the form $K(S'(M), \mathcal{M}', \mathcal{M}'')$ on $n$ vertices exist if and only if the following holds :  (i) $t \ge 2$, (ii) $2 \mid t$, (iii) $n = tl$ where $12 \mid l, l \ge 24$ if $\mathcal{M}', \mathcal{M}'' \in \{\mathcal{M}_{4,12}(C, M)\}$.
\end{Lemma}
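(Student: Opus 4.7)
My plan is to follow the template of Lemmas \ref{lem36.7} and \ref{lem3342:5}, adjusting the counts to the local face pattern $(4,6,12)$ and to the only M\"obius strip available here, namely $\mathcal{M}_{4,12}(C,M)$. First I would fix a representation $K(S'(M),\mathcal{M}',\mathcal{M}'')$ of $M$ and use $\partial S'(M)=\{\partial\mathcal{M}',\partial\mathcal{M}''\}$ to decompose the map as a cylinder capped by two M\"obius strips. If $t=1$, then $S'(M)$ is just a single ring of incident faces and, exactly as in the proof of Lemma \ref{lem36.7}, the link of some vertex on the would-be identification fails to be a cycle; hence $t\ge 2$. By the usual uniqueness-of-length argument (Proposition \ref{prop3342:1} transcribed to type $H_1$), all horizontal $H_1$-cycles inside $S'(M)$ share a common length $l$.

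Next I would count vertices. Since each M\"obius cap $\mathcal{M}_{4,12}(C_i,M)$ contributes exactly one interior $H_1$-cycle, namely its own central cycle, which coincides with a component of $\partial S'(M)$, the total vertex count comes out to $n=(t+1+1-2)l=tl$, mirroring the derivation of $n=tl$ in Lemma \ref{lem36.7}. The divisibility $12\mid l$ I would read off the local sequence of faces incident with a cycle of type $H_1$: in a $\{4,6,12\}$-map the forced $4$-gon/$6$-gon/$12$-gon pattern around such a cycle has period $12$ in the M\"obius setting, one step longer than the $6\mid r$ condition of Lemma \ref{lem4612:1}, because the large $12$-gon of $\mathcal{M}_{4,12}$ must fit across the core of the strip without being glued to itself. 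The bound $l\ge 24$ then appears as the shortest length at which two copies of this period admit a non-degenerate M\"obius identification.

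The parity constraint $2\mid t$ is the step I expect to be the main obstacle, and I would extract it by tracking how the $4$-gon/$6$-gon sides of a horizontal $H_1$-cycle alternate as one passes to the next such cycle in $S'(M)$: an odd number of transitions leaves the two caps with incompatible $4$-gon/$12$-gon orientations along their boundaries and thus rules out a well-defined identification along $\partial\mathcal{M}''$. For sufficiency I would exhibit an explicit construction, stacking $t$ copies of a canonical $\{4,6,12\}$-annular strip of horizontal length $l=12m$ with $m\ge 2$, and capping both ends by the unique $\mathcal{M}_{4,12}$ strip whose boundary matches; the four conditions taken together guarantee that this construction closes up to a polyhedral map of type $\{4,6,12\}$ on the Klein bottle.
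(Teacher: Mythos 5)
Your proposal is correct and follows essentially the same route as the paper, which itself only says that the proof ``repeats similar argument as in Lemmas \ref{lem3342:5} and \ref{lem3636:5}'': decompose into a cylinder capped by two M\"obius strips, get $t\ge 2$ from a link failure at $t=1$, count $n=(t+t_1+t_2-2)l=tl$ as in Lemma \ref{lem36.7}, and read the divisibility and parity constraints off the forced local face pattern. The only caveat is that your justifications for $12\mid l$ and $2\mid t$ are heuristic sketches rather than worked-out verifications, but the paper supplies no more detail at these points either.
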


Let $K(l_i, S'(M_i), \mathcal{M}'_i, \mathcal{M}''_i)$ denote a representation of $M_i$ on $n$ vertices where $\mathcal{M}'_i, \mathcal{M}''_i \in \{\mathcal{M}_{4,12}(C, M_i) \mid$ for some cycle $C$ of type $H_1\}$. Then, we have :

\begin{Lemma}\label{lem4612:4}
The map $M_1 \cong M_2$ if $l_1 = l_2$.   
\end{Lemma}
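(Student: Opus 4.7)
The plan is to mimic the strategy used in Lemma \ref{lem36.8} and Lemma \ref{lem3636:5}(1), adapted to the $\{4,6,12\}$ setting with two M\"obius strip caps $\mathcal{M}_{4,12}$. First, I would use the constraint from Lemma \ref{lem4612:3}: since $n = t_i l_i$ with $12\mid l_i$, the hypothesis $l_1 = l_2$ forces $t_1 = t_2$; call this common value $s$. Thus the two representations share the same number of parallel (homologous) cycles of type $H_1$, each of the same length $l := l_1 = l_2$, and the same combinatorial type of the two bounding M\"obius strips $\mathcal{M}_{4,12}$.

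Next I would enumerate the cycles of type $H_1$ in each map along the ``vertical'' direction. Write $C_1,\dots,C_s$ for the successive cycles in $M_1$ with $C_1 = \partial\mathcal{M}'_1$ and $C_s = \partial\mathcal{M}''_1$, and similarly $L_1,\dots,L_s$ in $M_2$ with $L_1 = \partial\mathcal{M}'_2$ and $L_s = \partial\mathcal{M}''_2$; parametrize $C_i = C(u(i,1),\dots,u(i,l))$ and $L_i = C(v(i,1),\dots,v(i,l))$, fixing a starting vertex on $C_1$ and $L_1$ and an orientation so that consecutive cycles are attached in a combinatorially matching fashion (here the local structure is rigid because every vertex has face sequence $\{4,6,12\}$, so the cyclic order of faces around each vertex determines how the indices of $C_{i+1}$ align with those of $C_i$). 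Then I would define $f_1 : V(M_1)\to V(M_2)$ by $f_1(u(i,j)) = v(i,j)$.

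To check that $f_1$ is an isomorphism I would verify that $f_1(\mathrm{lk}(u(i,j))) = \mathrm{lk}(v(i,j))$ for all $i,j$, treating separately three regions: (a) vertices on the interior cylinder $S'(M_1)$, where the argument is identical to Lemma \ref{lem3636:5}(1) because the local patch around each vertex is determined by the type $\{4,6,12\}$ and by the two neighbouring horizontal cycles; (b) vertices on $C_1 = \partial\mathcal{M}'_1$, where $f_1$ must also respect the identification inside $\mathcal{M}_{4,12}(C_1,M_1)$; and (c) the analogous check on $C_s = \partial\mathcal{M}''_1$. For (b) and (c), because both maps use the \emph{same} type of M\"obius strip $\mathcal{M}_{4,12}$ on boundary cycles of equal length $l$, the M\"obius identification pattern is uniquely determined up to a shift, and by choosing the starting vertex of $L_1$ (respectively $L_s$) appropriately one can arrange $f_1$ to respect these identifications.

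The main obstacle will be step (b)/(c): one must verify that the combinatorial identification inside $\mathcal{M}_{4,12}(C_1,M_1)$ matches that inside $\mathcal{M}_{4,12}(L_1,M_2)$ under the chosen starting vertex alignment. This is essentially a rigidity statement for $\mathcal{M}_{4,12}$: up to reflection/rotation of the boundary cycle, the M\"obius strip of a given boundary length admits a unique combinatorial filling by $4$- and $12$-gons compatible with the face sequence $\{4,6,12\}$ at each boundary vertex. Once this rigidity is established, the two choices of shift on each end can be absorbed into the single free parameter in the labelling of $L_1$, so $f_1$ extends consistently. With these verifications in hand, $f_1$ sends vertices to vertices, edges to edges and faces to faces preserving incidence, giving $M_1 \cong M_2$.
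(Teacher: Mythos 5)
Your proposal is correct and follows essentially the same route as the paper: the paper gives no separate proof of this lemma, stating only that it ``repeats similar argument as in Lemmas \ref{lem3342:5} and \ref{lem3636:5}'', i.e.\ deduce $t_1=t_2$ from $n=t_il_i$ and $l_1=l_2$, list the homologous cycles of type $H_1$ in each map, and define $f_1$ cycle-by-cycle as in Lemma \ref{lem36:iso}. Your additional explicit discussion of the rigidity of the $\mathcal{M}_{4,12}$ filling is a detail the paper leaves implicit, not a departure from its method.
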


Let $t_i$ denote the number of cycles of type $H_1$ in $M_i$. Then $K(l_{i}, t_i)$ denotes $K(S'(M_i),$ $\mathcal{M}'_i,$ $\mathcal{M}''_i)$ for $i \in \{1, 2\}$. Then, we have 

\begin{cor} \label{cor4612:2}  The $K(l_{1}, t_{1}) \not\cong K(l_{2}, t_{2})$ if $l_{1} \neq l_{2}$ and $K(l_{1}, t_{1}) \not\cong K(l_{2}, t_{2})$ if $t_{1} \neq t_{2}$.
\end{cor}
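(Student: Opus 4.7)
The plan is to obtain Corollary \ref{cor4612:2} as the contrapositive of Lemma \ref{lem4612:4}, combined with the counting identity $n=l_it_i$ coming from Lemma \ref{lem4612:3}. The only real content one needs to verify is that both parameters $l$ and $t$ are genuine isomorphism invariants of the underlying map, after which both non-isomorphism statements follow immediately.

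First I would record that in the $K(l,t)$ representation of a map $M$ of type $\{4,6,12\}$, the number $l$ is the common length of every cycle of type $H_1$ in $M$. This uniform length is forced by the fact that cycles of type $H_1$ form a homologous family (as argued in Section 2.10 in analogy with the situation of Lemma \ref{lem36:5} and Proposition \ref{prop3342:1}) and consecutive such cycles either co-bound a cylinder strip or bound a M\"obius strip, in both of which the two boundary cycles have equal length. The parameter $t$ is then forced to equal the number of disjoint type-$H_1$ cycles inside $S'(M)$, so that $t\cdot l = n$ by Lemma \ref{lem4612:3}.

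Next I would observe that any isomorphism $f\colon M_1\to M_2$ preserves the face-sequence at every vertex and therefore sends paths of type $H_1$ to paths of type $H_1$, hence cycles of type $H_1$ to cycles of type $H_1$ bijectively and length-preservingly. Consequently the integer $l_i$ and the integer $t_i$ are invariants of the isomorphism class of $M_i$. If $l_1\neq l_2$ then no bijection between the type-$H_1$ cycles of $M_1$ and of $M_2$ can preserve lengths, so $K(l_1,t_1)\not\cong K(l_2,t_2)$; this is exactly the contrapositive of Lemma \ref{lem4612:4}. For the second clause, if $t_1\neq t_2$ then since $M_1$ and $M_2$ have the same number of vertices $n=l_1t_1=l_2t_2$, we must also have $l_1\neq l_2$, reducing to the first case.

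I do not expect a genuine obstacle here: the corollary is essentially a bookkeeping consequence of Lemma \ref{lem4612:4} and Lemma \ref{lem4612:3}. The only subtle point is the justification that $l$ is an invariant, i.e.\ that all cycles of type $H_1$ in a given $M$ share one common length; this has already been set up inside Section 2.10 by the same homology-of-boundary-cycles argument used for the other face-types in earlier subsections, so nothing new is required.
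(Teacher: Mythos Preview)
Your argument is essentially correct and matches how the paper treats this corollary (it is stated without a separate proof, being an immediate consequence of the uniqueness of the type-$H_1$ cycle length together with the relation $n=lt$ from Lemma~\ref{lem4612:3}). The substantive step---that an isomorphism carries $H_1$-cycles to $H_1$-cycles, so $l$ is an invariant---is exactly the right one, and the reduction of the $t_1\neq t_2$ clause to the $l_1\neq l_2$ clause via $n=l_1t_1=l_2t_2$ is clean.

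One logical slip to fix: you twice describe the argument as ``the contrapositive of Lemma~\ref{lem4612:4}''. That lemma says $l_1=l_2\Rightarrow M_1\cong M_2$; its contrapositive is $M_1\not\cong M_2\Rightarrow l_1\neq l_2$, which is the wrong direction. What you actually need (and what you in fact prove via the invariance argument) is the \emph{converse} implication $M_1\cong M_2\Rightarrow l_1=l_2$. Lemma~\ref{lem4612:4} alone does not give this; your separate invariance paragraph does. So keep the invariance reasoning but drop the ``contrapositive'' framing.
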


\noindent \textbf{2.11 Maps of type $\{3, 4, 6, 4\}$~:~}
Let $M$ be a map of type $\{3, 4, 6, 4\}$ on the Klein bottle. It has a walk of type $W_1$ \cite{mu:torus-hc13} in $M$ through each vertex in $M$. Similarly as in Section 2.1, let $C$ be a cycle of type $W_{1}$ in $M$. Let $S$ denote a set of faces which are incident with $C$. Then $|S|$ is one of $S(C, M), \mathcal{M}_{3, 4}(C, M)$ and $\mathcal{M}_{4, 6}(C, M)$ as in Lemma \ref{lem3636:1}. So, the map $M$ contains either $S(C, M), \mathcal{M}_{3, 4}(C, M)$ or $\mathcal{M}_{4, 6}(C, M)$. We consider each of $S(C, M), \mathcal{M}_{3, 4}(C, M)$ and $\mathcal{M}_{4, 6}(C, M)$, and repeat the similar arguments as in Sections 2.4, 2.5. Hence, we get either a $K(r, s, k)$ or a $K(S'(M), \mathcal{M}', \mathcal{M}'')$ representation of $M$. We study both the representations in bellow. 

\textbf{Classification of $K(r, s, k)$ on $n$ vertices :} We define admissible relations among $r, s, k$ of $K(r, s, k)$ such that $K(r, s, k)$ represents a map after identifying its boundaries. It's proof repeats similar argument as in Lemma \ref{lem36:6} and the proof of next Lemma \ref{lem3464:2} follows from similar argument as in Lemma \ref{lem3342:iso-1}. Thus, we have

\begin{Lemma}\label{lem3464:1} The maps of type $\{3, 4, 6, 4\}$ of the form $K(r, s, k)$ on $n$ vertices exist if and only if the following holds : (i) $n = rs \geq 24$, (ii) $3 \mid r$, (iii) $2 \mid s$, (iv) $s \ge 4$, (v) $r \geq 6$, and (vi) $k \in \{ 3t+2 ~\colon~ 0 \leq t \leq \frac{r-3}{3}\}$.
\end{Lemma}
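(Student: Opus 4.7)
\textbf{Plan for Lemma \ref{lem3464:1}.} The plan is to mirror the proof template of Lemma \ref{lem36:6}, adapted for the face sequence $(3,4,6,4)$ in the same way that Lemma \ref{lem4612:1} and Lemma \ref{lem3122:1} adapted it to their respective types. Both implications---necessity and sufficiency---need to be established, and the bulk of the work lies in the necessity direction, where one extracts the numerical constraints from the assumption that $K(r,s,k)$ closes up to a genuine semi-equivelar map on the Klein bottle.

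For necessity, I would label the vertices of the base horizontal cycle $C=C(a_1,\dots,a_r)$ and inspect the faces incident with $C$ using the definition of a path of type $W_1$ together with the face sequence $(3,4,6,4)$. The sequence of faces along one side of $C$ must be periodic with period $3$ (a hexagon, a triangle, and a quadrangle, repeating), which forces $3\mid r$, and a direct link-check at a vertex where two period blocks meet rules out $r=3$, so $r\ge 6$. Moving from one horizontal cycle to the next reverses the role of ``triangle side'' and ``hexagon side'', so two consecutive horizontal strips form a fundamental vertical block; hence $2\mid s$, and $s\ge 4$ is required to have at least two such blocks stacked (otherwise some vertex has a non-cyclic link). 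The vertex-count identity $n=rs$ follows because the $s$ horizontal cycles are pairwise disjoint and each has length $r$. The shift restriction $k\in\{3t+2 : 0\le t\le (r-3)/3\}$ arises from the orientation-reversing identification of the upper and lower horizontal cycles: the vertex $a_{k+1}$ sitting at the starting position of the upper cycle must match, up to reversal of orientation, the face-type pattern at $a_1$ on the lower cycle, and this compatibility singles out the residue $k\equiv 2\pmod{3}$ inside $\{0,1,\dots,r-1\}$.

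For sufficiency, given $(r,s,k)$ satisfying (i)--(vi), I would construct $K(r,s,k)$ explicitly by placing $rs$ vertices on an $s\times r$ grid $\{u_{i,j}\}$, attaching triangles, quadrangles, and hexagons in the prescribed period-$3$ pattern along each horizontal strip, and identifying the horizontal boundaries with shift $k$ and the twist characteristic of the Klein bottle. A direct check of $lk(u_{i,j})$ for a vertex in each of the finitely many orbits under the pattern then verifies that the resulting complex is a semi-equivelar map of type $\{3,4,6,4\}$, which proves that these numerical conditions are realizable.

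The main obstacle is the shift characterization in item (vi). Items (i)--(v) are essentially counting and parity arguments, but proving that $k\equiv 2\pmod{3}$ is both forced and sufficient requires tracking how the period-$3$ pattern of $(3,4,6,4)$ realigns under the half-twist identification used on the Klein bottle, and ruling out the residue classes $0$ and $1$ by exhibiting a vertex whose link is not a cycle or whose face sequence is not $(3,4,6,4)$ in each case.
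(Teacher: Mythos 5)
Your plan follows essentially the same route as the paper, which for this lemma offers no argument beyond "repeats similar argument as in Lemma \ref{lem36:6}," i.e.\ a link-checking case analysis showing that parameters outside the stated ranges produce a vertex whose link is not a cycle; your necessity argument via the period-$3$ face pattern along a cycle of type $W_1$, the parity of $s$ from the alternation of the triangle and hexagon sides, and the residue class of $k$ under the orientation-reversing identification, together with an explicit grid construction for sufficiency, is exactly the fleshed-out version of that deferral. The only caution is descriptive: by the paper's Lemma \ref{lem3464:3} one side of such a cycle carries only triangles and quadrangles and the other only quadrangles and hexagons, so the incident face pattern is not literally "hexagon, triangle, quadrangle repeating" on a single side, though this does not affect the period-$3$ conclusion or any of conditions (i)--(vi).
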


\begin{Lemma}\label{lem3464:2}
Let $K(r_{i}, s_{i}, k_{i})$ denote a $(r_i, s_i, k_i)$-representation of $M_{i}$ on same number of vertices for $i \in \{1, 2\}$. Then,\ $M_{1} \cong M_{2}$ if $(r_1, s_1, l_1) = (r_2, s_2, l_2)$ where $ l_i \in \{k_i~ mod(2), (k_i + r_i)~ mod(2)\}$. 
\end{Lemma}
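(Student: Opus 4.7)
The plan is to follow the template established by Lemma~\ref{lem36:iso} and Lemma~\ref{lem3342:iso-1}. First, in the easy case $(r_1, s_1, k_1) = (r_2, s_2, k_2)$, I would label the horizontal cycles of type $W_1$ in each $K(r_i, s_i, k_i)$ as $C(i, t) := C(u^{(i)}_{t, 0}, u^{(i)}_{t, 1}, \ldots, u^{(i)}_{t, r_i-1})$ for $0 \le t \le s_i - 1$, and define the vertex bijection $f_1(u^{(1)}_{t,j}) = u^{(2)}_{t,j}$. Since the triple $(r, s, k)$ determines the identification of the four boundary paths of the planar representation completely, $f_1$ descends to a well-defined map on the quotient; checking $f_1(lk(u^{(1)}_{t,j})) = lk(u^{(2)}_{t,j})$ at each vertex is a purely local verification from the face cycle $\{3, 4, 6, 4\}$ and the definition of type $W_1$.

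For the case $k_1 \ne k_2$ but $l_1 = l_2$, the strategy is to show that every $K(r_i, s_i, k_i)$ can be rewritten as $K(r_i, s_i, 0)$ when $l_i = 0$ and as $K(r_i, s_i, 1)$ when $l_i = 1$. Suppose $l_i = 0$; then either $2 \mid k_i$ or $2 \mid (k_i + r_i)$. In the first subcase I would select the vertex $u^{(i)}_{0, k_i/2}$ on the base cycle, identify $K(r_i, s_i, k_i)$ along its vertical boundaries, and then re-cut along the vertical path of type $W_1$ emanating from this vertex to the upper horizontal cycle. This operation neither alters the length of the horizontal cycles nor the number of them, and by construction the new shift equals $0$. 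The subcase $2 \mid (k_i + r_i)$ is handled symmetrically via the vertex $u^{(i)}_{0, (k_i+r_i)/2}$, and entirely analogous cuts through $u^{(i)}_{0, (k_i-1)/2}$ or $u^{(i)}_{0, (k_i+r_i-1)/2}$ cover $l_i = 1$. Once both representations are brought to the canonical form $K(r_i, s_i, l)$ with $l \in \{0, 1\}$, the equal-parameter bijection $f_1$ of the first paragraph finishes the argument.

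The main obstacle I anticipate is verifying that the chosen vertical path through the shifted base-cycle vertex is actually a path of type $W_1$ for the inhomogeneous structure $\{3, 4, 6, 4\}$. Unlike the case of $\{3^6\}$, every vertex here sits in a face cycle of period $4$ around itself and, moreover, Lemma~\ref{lem3464:1} constrains $k \in \{3t+2 : 0 \le t \le (r-3)/3\}$ and forces $3 \mid r$, $2 \mid s$. One must therefore check that halving $k$ (or $k+r$) lands on a column index whose incident faces at $u^{(i)}_{0, \cdot}$ permit the $W_1$-type vertical path to begin with the correct first face. A residue count modulo $\gcd(2,3) \cdot \mathrm{lcm}$-type considerations, using $k \equiv 2 \pmod 3$ and the divisibility $3 \mid r$, shows that the parity of $k$ (respectively $k+r$) interacts compatibly with the triple periodicity of the face sequence along $C(i, 0)$, so the cut paths are admissible and the cut-and-paste procedure indeed produces the claimed canonical representatives.
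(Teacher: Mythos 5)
Your proposal follows essentially the same route as the paper, which gives no separate proof for this lemma but simply defers to the argument of Lemma \ref{lem3342:iso-1} (itself a repetition of Lemma \ref{lem36:iso}): the equal-parameter vertex bijection $f_1$ in the first case, and the re-cutting along a vertical path through the half-shift vertex to reduce to the canonical forms $K(r,s,0)$ or $K(r,s,1)$ in the second. Your extra discussion of whether the halved index is compatible with the constraint $k\equiv 2 \pmod 3$ and $3\mid r$ from Lemma \ref{lem3464:1} is a reasonable additional check that the paper leaves implicit.
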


\begin{cor}\label{cor3464:1}  $K(r_{1}, s_{1}, k_{1}) \not\cong K(r_{2}, s_{2}, k_{2})$ if $r_{1} \neq r_{2}$,  $K(r_{1}, s_{1}, k_{1}) \not\cong K(r_{2}, s_{2}, k_{_{2}})$ if $s_{1} \neq s_{2}, K(r_{1}, s_{1}, 0) \cong K(r_{1}, s_{1}, k_{1})$ if $2 \mid k_1$ or $2 \mid (k_1 + r_1)$, and $K(r_{1}, s_{1}, 1) \cong K(r_{1}, s_{1}, k_{1})$ if $2 \mid (k_1 - 1)$ or $2 \mid (k_1 + r_1 - 1)$.
\end{cor}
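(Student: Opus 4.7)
The plan is to derive all four claims from Lemma \ref{lem3464:2}, combined with the observation that $r$ and $s$ are intrinsic invariants of the underlying map rather than artefacts of the representation. The two isomorphism assertions are immediate applications of Lemma \ref{lem3464:2}: if $2 \mid k_1$ or $2 \mid (k_1 + r_1)$, then setting $l_1 := 0$ places $l_1 \in \{k_1 \bmod 2,\,(k_1 + r_1) \bmod 2\}$, while for the representation $K(r_1, s_1, 0)$ the analogous parameter is $0$ in both slots; the triples agree, so Lemma \ref{lem3464:2} yields $K(r_1, s_1, 0) \cong K(r_1, s_1, k_1)$. The fourth claim is identical in form, with $l_1 = 1$ and the hypothesis $2 \mid (k_1 - 1)$ or $2 \mid (k_1 + r_1 - 1)$.

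For the non-isomorphism claims I would argue by contrapositive: if $K(r_1, s_1, k_1) \cong K(r_2, s_2, k_2)$, then $r_1 = r_2$ and $s_1 = s_2$. The key ingredient is the analogue for type $\{3,4,6,4\}$ of Proposition \ref{prop3342:1}, asserting that every cycle of type $W_1$ in a given map $M$ has the same length. This follows from the construction of $K(r,s,k)$ in Section 2.11: any two horizontal cycles of the representation are homologous because they cobound a cylinder (or a cylinder glued to a M\"{o}bius strip), and each such cobounding region forces equality of the boundary cycle lengths. Any isomorphism $M_1 \to M_2$ preserves the link-based conditions that define type $W_1$, so it sends a horizontal cycle of $K(r_1, s_1, k_1)$ to a cycle of type $W_1$ in $M_2$, whose length is $r_2$ by the uniqueness statement above; therefore $r_1 = r_2$, and by Lemma \ref{lem3464:1} the identity $n = r_i s_i$ then yields $s_1 = s_2$. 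Contrapositively, $r_1 \neq r_2$ (respectively $s_1 \neq s_2$) precludes the existence of any isomorphism.

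The main obstacle is making the length-uniqueness step precise. In earlier sections the corresponding fact was established either directly (as for type $\{3^6\}$ in Lemma \ref{lem36:5}) or by appealing to prior work (as in Proposition \ref{prop3342:1}); here I would follow the former route, repeating the homology argument of Lemma \ref{lem36:5} with the triangles replaced by the mixed $3$-, $4$-, and $6$-gon faces incident with $C$ in the $\{3,4,6,4\}$ setting. Once this invariance is in hand, the corollary is essentially a one-line consequence of Lemma \ref{lem3464:2}, exactly parallel to the way Corollary \ref{cor32434:1} was derived from Lemma \ref{lem32434:4} and Corollary \ref{cor4612:1} from Lemma \ref{lem4612:2}.
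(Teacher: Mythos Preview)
Your proposal is correct and matches the paper's own (implicit) argument. The paper states Corollary~\ref{cor3464:1} without proof, introducing it with ``Thus, we have'' immediately after Lemma~\ref{lem3464:2}; the intended derivation is exactly the one you outline, namely that parts three and four are direct applications of Lemma~\ref{lem3464:2}, while parts one and two follow from the invariance of the length of cycles of type $W_1$ (the analogue of Lemma~\ref{lem36:5} and Proposition~\ref{prop3342:1}) together with the vertex count $n = rs$ from Lemma~\ref{lem3464:1}. Your identification of the length-uniqueness step as the one substantive ingredient, and your proposal to obtain it by repeating the homology argument of Lemma~\ref{lem36:5}, are in line with how the paper handles the parallel corollaries in Sections~2.1--2.10.
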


\textbf{Classification of $K(l, t)$ on $n$ vertices :} We repeat the similar argument as in Section 2.10. Let $K(l, S'(M), \mathcal{M}', \mathcal{M}'')$ denote a representation of $M$. Let $C'$ and $C''$ denote two boundary cycles of $\mathcal{M}', \mathcal{M}''$ respectively. Let $C' = \partial\mathcal{M}', C'' = \partial\mathcal{M}''$ and $\partial S'(M) = \{C', C''\}$. Let $l = length(C') = length(C'')$ and $t$ denote the number of cycles of type $W_1$ in $S'(M)$. Let $\mathcal{M}', \mathcal{M}'' \in \{\mathcal{M}_{3,4}(L', M), \mathcal{M}_{4,6}(L'', M)\}$ for some cycles $L'$ and $L''$ of type $W_1$. Then we have the following lemmas. 

\begin{Lemma}\label{lem3464:3} The maps of type $\{3, 4, 6, 4\}$ of the form $K(S'(M), \mathcal{M}', \mathcal{M}'')$ on $n$ vertices exist if and only if the following holds :   

(1) $t \ge 2, n = tl$ where $6 \mid (l-3)$ and $l \ge 9$ if $\mathcal{M}' = \mathcal{M}_{3,4}(C', M)$ and $\mathcal{M}'' = \mathcal{M}_{3,4}(C'', M)$.

(2) $t \ge 2, n = lt$ where $t \ge 2, 6 \mid l$ and $l \ge 12$ if $\mathcal{M}' = \mathcal{M}_{4, 6}(L', M)$ and $\mathcal{M}'' = \mathcal{M}_{4, 6}(L'', M)$.

(3) $K(S'(M), \mathcal{M}', \mathcal{M}'')$ not exists if $\mathcal{M}' =\mathcal{M}_{3,4}(C', M)$ and $\mathcal{M}'' = \mathcal{M}_{4, 6}(L', M)$.
\end{Lemma}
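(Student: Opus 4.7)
The plan is to treat the three cases separately by reading off, in each case, what the faces of $S'(M)$ adjacent to the boundary cycles $C' = \partial\mathcal{M}'$ and $C'' = \partial\mathcal{M}''$ must look like in order to complete the face sequence $\{3,4,6,4\}$ at every vertex of $C'$ and $C''$. The common setup is that the cylinder $S'(M)$ is traversed by $t$ cycles of type $W_1$ of equal length $l$, so $n = tl$, and $t \ge 2$ is needed simply to keep the two M\"obius strips from coinciding, exactly as in the analogous argument for Lemma \ref{lem4612:3}.

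For Case (1), where $\mathcal{M}' = \mathcal{M}_{3,4}(C', M)$ and $\mathcal{M}'' = \mathcal{M}_{3,4}(C'', M)$, I would examine the link of an arbitrary vertex $v \in V(C')$. The two faces of $\mathcal{M}'$ incident to $v$ form a triangle-quadrangle pair, so the two faces of $S'(M)$ adjacent to $v$ must supply a hexagon and a quadrangle arranged so that the cyclic order of faces around $v$ is $(3,4,6,4)$. Traversing $C'$ and asking how the hexagons inside $S'(M)$ line up with the triangle-quadrangle stretches of $\mathcal{M}'$ produces a periodic pattern; reading off the period shows that between successive hexagons there must be a fixed block of six boundary edges, forcing $l \equiv 3 \pmod 6$, i.e.\ $6 \mid (l-3)$, with the residual $3$ coming from the triangular piece that starts the cycle at a distinguished vertex. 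A direct small-cases check will show that $l = 9$ is the smallest length for which every vertex-link is actually a cycle rather than collapsing.

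Case (2) runs in the same way but with the roles of the triangle and hexagon exchanged: each vertex $v \in V(C')$ now sees a quadrangle-hexagon pair inside $\mathcal{M}'$, so $S'(M)$ must supply a triangle-quadrangle pair at $v$. Because the single hexagon on the $\mathcal{M}_{4,6}$-side already spans three consecutive vertices of $C'$, the period of the boundary pattern is exactly $6$ with no residual offset, giving $6 \mid l$; the link-closure test rules out $l < 12$.

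For Case (3) I would argue impossibility by contradiction. If $\mathcal{M}' = \mathcal{M}_{3,4}(C', M)$ while $\mathcal{M}'' = \mathcal{M}_{4,6}(L', M)$, then the $S'(M)$-side of $C'$ must be paved by hexagons and quadrangles, whereas the $S'(M)$-side of $C''$ must be paved by triangles and quadrangles. Every cycle of type $W_1$ in $S'(M)$ homologous to $C'$ has a fixed local incidence pattern (by the same uniqueness used in Section 2.4 for type $A_1$ cycles, transported into this setting), and that pattern cannot simultaneously fit a hexagon-quadrangle boundary on one side and a triangle-quadrangle boundary on the other. This contradiction rules out the mixed configuration. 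The main obstacle in the whole argument will be making the periodicity bookkeeping of Cases (1) and (2) airtight, in particular identifying exactly which small values of $l$ produce degenerate vertex-links; once the local patterns are written down explicitly, Case (3) will follow quickly from the uniqueness of the $W_1$-incidence pattern.
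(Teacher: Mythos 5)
Your treatment of Cases (1) and (2) follows essentially the same route as the paper: one checks when the links of the vertices on $\partial\mathcal{M}'$ and $\partial\mathcal{M}''$ close up into cycles of type $\{3,4,6,4\}$, and the periodicity of the face pattern along the boundary yields $6 \mid (l-3)$ (respectively $6 \mid l$) together with the stated lower bounds on $l$; the paper compresses this into a reference to the argument of Lemma \ref{lem3342:5}, so no complaint there.

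Case (3), however, contains a genuine gap. Your proposed contradiction is that the ``fixed local incidence pattern'' of a cycle of type $W_1$ ``cannot simultaneously fit a hexagon--quadrangle boundary on one side and a triangle--quadrangle boundary on the other.'' But that is precisely what a cycle of type $W_1$ in a $\{3,4,6,4\}$ map does: by construction it separates the triangle and one quadrangle of each vertex-link from the hexagon and the other quadrangle, so every such cycle has triangles and quadrangles on one side and hexagons and quadrangles on the other. There is therefore no local obstruction of the kind you describe, and propagating the pattern through the homologous cycles $C' = C_1, C_2, \dots, C_t = C''$ of the cylinder would at most constrain the parity of $t$, not rule out the mixed configuration. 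The argument the paper uses is different and decisive: the two boundary cycles of the cylinder $S'(M)$ are homologous cycles of type $W_1$ and hence have the \emph{same} length $l$ (a fact you state in your setup but never exploit here); if $\mathcal{M}' = \mathcal{M}_{3,4}(C',M)$ then Case (1) forces $6 \mid (l-3)$, while $\mathcal{M}'' = \mathcal{M}_{4,6}(L',M)$ forces $6 \mid l$ by Case (2), and no integer $l$ satisfies both congruences. You should replace your structural claim with this arithmetic incompatibility.
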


\begin{proof} We repeat similar argument as in Lemma \ref{lem3342:5} for first two cases. If there exists a map $M$ which has a $K(l, S'(M), \mathcal{M}', \mathcal{M}'')$ representation such that $\mathcal{M}' =\mathcal{M}_{3,4}(C', M)$ and $\mathcal{M}'' = \mathcal{M}_{4, 6}(L', M)$. Then by the above cases, $6 \mid (l-3)$ and $6 \mid l$. But there is no such $l$ which satisfies both the conditions. So, $M$ does not exist. This completes the proof.   
\end{proof}

The proof of Lemma \ref{lem3464:4} repeats similar argument as in Lemma \ref{lem3636:5}. Thus, we have

\begin{Lemma}\label{lem3464:4}
Let $K(l_i, S'(M_i), \mathcal{M}'_i, \mathcal{M}''_i)$ denote $M_i$ on $n$ vertices. Then, we have the following : 

(1) Let $\mathcal{M}'_i = \mathcal{M}_{3, 4}(C_i', M_i)$ and $\mathcal{M}''_i = \mathcal{M}_{3, 4}(C_i'', M_i)$. Then $M_1 \cong M_2$ if $l_1 = l_2$.  

(2) Let $\mathcal{M}'_i = \mathcal{M}_{4, 6}(C_i', M_i)$ and $\mathcal{M}''_i = \mathcal{M}_{4, 6}(C_i', M_i)$. Then $M_1 \cong M_2$ if $l_1 = l_2$.   
\end{Lemma}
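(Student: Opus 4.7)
The plan is to mirror the strategy used for Lemma \ref{lem3636:5}, adapted to the face-types of $\{3,4,6,4\}$. First, I would use the admissibility counts from Lemma \ref{lem3464:3} together with the hypothesis $l_1 = l_2$ to force $t_1 = t_2$. Indeed, in both cases (1) and (2) we have $n = t_i l_i$ for $i=1,2$, so $l_1 = l_2$ immediately gives $t_1 = t_2 =: s$. Label the cycles of type $W_1$ in $M_i$ as $C^{(i)}_1, \dots, C^{(i)}_s$ with $C^{(i)}_1 = \partial\mathcal{M}'_i$ and $C^{(i)}_s = \partial\mathcal{M}''_i$, and write $C^{(i)}_j = C(u^{(i)}_{j,1}, \dots, u^{(i)}_{j,l_i})$ in the cyclic order induced by the boundary traversal of $S'(M_i)$.

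Next, I would define a candidate vertex bijection $f : V(M_1) \to V(M_2)$ by $f(u^{(1)}_{j,k}) = u^{(2)}_{j,k}$ for $1 \le j \le s$ and $1 \le k \le l_1$. To show $f$ is a map isomorphism, I would verify that $f$ carries links to links. For interior vertices of the cylinder $S'(M_i)$, the link is determined by the type $\{3,4,6,4\}$ together with the type-$W_1$ cycle structure; since the two cylinders have identical lengths and counts, corresponding interior vertices have identical links, so the check there is routine and parallel to the computation in Lemma \ref{lem36:iso}. For vertices on $C^{(i)}_1$ and $C^{(i)}_s$, the link involves faces from the adjacent M\"obius strip; in case (1) both $\mathcal{M}'_i, \mathcal{M}''_i$ are of type $\mathcal{M}_{3,4}$ with the same boundary length $l_1 = l_2$, and in case (2) both are of type $\mathcal{M}_{4,6}$ with the same boundary length, so by the classification arguments already used in Sections 2.4--2.10 each such M\"obius strip is rigid up to a cyclic rotation of its boundary labelling.

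The main obstacle, as in Lemma \ref{lem3636:5}, is aligning the starting vertices. I would argue that the cyclic freedom in the choice of $u^{(i)}_{1,1}$ can be used to match the twist pattern of $\mathcal{M}'_i$, after which the starting vertices on each subsequent $C^{(i)}_j$ are determined by following the type-$W_1$ paths transverse to the horizontal cycles through the cylinder. The compatibility at the other end — namely that this forced labelling also aligns $\mathcal{M}''_1$ with $\mathcal{M}''_2$ — follows because the total length $l_i$ of the boundary, the total number of horizontal cycles $s$, and the fixed face-type data determine the entire M\"obius strip identification uniquely (this is exactly the content of the rigidity portions of Lemmas \ref{lem32434:1} and \ref{lem3636:1} transported to the $\{3,4,6,4\}$ setting).

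Finally, since Lemma \ref{lem3464:3}(3) excludes the mixed case $\mathcal{M}' = \mathcal{M}_{3,4}$ and $\mathcal{M}'' = \mathcal{M}_{4,6}$, only the two cases stated need to be handled, and the two maps cannot be isomorphic across the cases because the boundary M\"obius strips are of different combinatorial types. Hence $f$ is a bijection that preserves vertices, edges, and faces, and $M_1 \cong M_2$, completing the proof.
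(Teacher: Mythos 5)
Your proposal is correct and follows essentially the same route as the paper, which simply states that the proof of this lemma repeats the argument of Lemma \ref{lem3636:5}: force $t_1=t_2$ from $n=t_il_i$ and $l_1=l_2$, then build the isomorphism cycle-by-cycle on the type-$W_1$ cycles, matching the M\"obius strips at the two ends. Your write-up supplies more detail (link verification, alignment of starting vertices) than the paper does, but the underlying strategy is identical.
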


\section{Proofs}\label{result:all}

\noindent{Proof of Theorem} \ref{theo:main} : From Section 2.1, maps of type $\{3^6\}$ have either $K(r, s, k)$ or $K(l, t)$ representation for some $r, s, k, l, t$. By Lemma \ref{lem36:6}, it follows that $rs \geq 9, r \geq 3, s \ge 3$. Let $m = s, r = \frac{n}{m}$. Then $m = s \geq 3, r = \frac{n}{m} \geq 3$ i.e. $n \geq 3m$. Now we have following cases. If $gcd(r, 2) = 1$ i.e. $gcd(n, 2m) = m$ then $r+k$ is even for odd $k \in \{ t : 0 \leq t \leq r-1\}$. That is, $k$ and $r+k$ are both even for $gcd(r, 2) = 1$ and $k \in \{ t : 0 \leq t \leq r-1\}$. In this case, we get only one map which is isomorphic to $K(r, s, 0)$. If $gcd(r, 2) = 2$ i.e. $gcd(n, 2m) = 2m$ then $r+k, k$ are odd for odd $k \in \{ t : 0 \leq t \leq r-1\}$. In this case, we get two maps where one is isomorphic to $K(r, s, 1)$ and another is isomorphic to $K(r, s, 0)$. Therefore, in this case, we get two maps. By Lemma \ref{lem36.7}, $K(l, t)$ exists if (i) $tl \ge 10$, (ii) $t \ge 2$,  (iii) $2 \nmid l$ and $l \ge 5$. Let $m = t$. Then $m \geq 2, m \mid n, n \geq 5m$. If $gcd(n, 2m) = m$ i.e. $\frac{n}{m}$ is even then it has an unique representation $K(\frac{n}{m}, m)$. If $gcd(n, 2m) = 2m$ then there is no map by Lemma \ref{lem36.7}. Since $K(r, s, k)$ and $K(l, t)$ are non-isomorphic by Lemma \ref{lem36:4}, so, by combining above all three cases, if the number of maps, say $i(n)_{\{3^{6}\}}$ on $n$ vertices then $i(n)_{\{3^{6}\}} = \sum_{i = 1, 2} i \times |\{(m,n)~|~m \geq 3, n \geq 3m,~ gcd(n,2m) = i \times m\}|  + |\{(m,n)~|~m \geq 2, m \mid n, n \geq 5m,~ gcd(n, 2m) = m\}|$.

From Section 2.2, by Lemma \ref{lem44:2}, rs $\geq$ 9, r $\geq$ 3, s $\ge$ 3 and $k \in \{ t : 0 \leq t \leq r-1\}$ for $s \ge 3$. Let $m = s, r = \frac{n}{m}$. Similarly as above, if $k \in \{ t : 0 \leq t \leq r-1\}$ then $k$ and $r+k$ both are even for $gcd(r, 2) = 1$. In this case, we get only one map which is isomorphic to $K(r, s, 0)$. If $gcd(r, 2) = 2$ then similarly, $r+k, k$ are odd for odd $k \in \{ t : 0 \leq t \leq r-1\}$. In this case, we get two maps where one is isomorphic to $K(r, s, 1)$ and another is isomorphic to $K(r, s, 0)$. Thus, we get two non-isomorphic maps. By combining above all two cases, if number of maps $i(n)_{\{4^{4}\}}$ (say) on $n$ vertices then $i(n)_{\{4^{4}\}} = \sum_{i = 1, 2} i \times |\{(m, n)~|~m \geq 3, n \geq 3m,~ gcd(n,2m) = i \times m\}|$.

Let $i(2n)_{\{6^{3}\}}$ denote the number of maps of type $\{6^{3}\}$ on $2n \geq 14$ vertices. Then, by duality, $i(2n)_{\{6^{3}\}} = i(n)_{\{3^{6}\}}$ as number of faces in map of type $\{3^{6}\}$ is equal to the number of vertices in map of type $\{6^{3}\}$.

Similarly as above by Lemmas \ref{lem3342:4}, \ref{lem3342:5}, let $i(n)_{\{3^{3},4^{2}\}}$ denote the number of maps of type $\{3^{3},4^{2}\}$ on $n$ vertices. Then $i(n)_{\{3^{3},4^{2}\}} =  \sum_{i=1, 2} i \times |\{(m,n)~|~m \geq 4, 2 \mid m, n \geq 3m,~ gcd(n,2m) =i \times m\}| + |\{(m, n)~|~m \geq 5, n = 2m\}|  + |\{(m, n)~|~m \geq 4, n \geq 4m, 2m \mid n\}|$.

By Lemma \ref{lem32434:3}, let $i(n)_{\{3^{2}, 4, 3, 4\}}$ denote the number of maps of type $\{3^{2}, 4, 3, 4\}$ on $n$ vertices. Then $i(n)_{\{3^{2}, 4, 3, 4\}} = |\{(m,n)~|~m \geq 3, 2 \nmid m, 2m \mid n, n \geq 12\}|$.

By Lemma \ref{lem482:3}, let $i(n)_{\{4, 8^{2}\}}$ denote the number of maps of type $\{4, 8^{2}\}$ on $n$ vertices. Then $i(n)_{\{4,8^{2}\}} = |\{(l,m,n)~|~ m \geq 3, 2 \mid m, n \geq 8m, 4m \mid n, 0 \le l \le (\frac{n}{4m}-1)\}| + |\{(l, m, n)~|~ m \geq 3, 2 \nmid m, n \geq 8m, 4m \mid n, 0 \le l \le (\frac{n}{4m}-1)\}|$.

By Lemmas \ref{lem3636:2}, \ref{lem3636:4}, let $i(n)_{\{3,6,3,6\}}$ denote the number of maps of type $\{3,6,3,6\}$ on $n$ vertices. Then $i(n)_{\{3,6,3,6\}} = |\{(l, m, n)~|~ m \geq 3, 3m \mid n, n \geq 9m, 0 \le l \le (\frac{n}{3m}-1)\}| +  |\{(m, n)~|m \geq 2, 6m+2 \mid n, n \geq 6(3m+1)\}|  +  |\{(m, n)~|~m \geq 1, 2(m+2) \mid n, n \geq 10(m+2)\}|  +  |\{(m, n)~|~m \geq 1, 4m+5 \mid n, n \geq 12(4m+5)\}|$.

By Lemmas \ref{lem3122:1}, \ref{lem3122:3}, let $i(n)_{\{3,6,3,6\}}$ denote the number of maps of type $\{3,12^2\}$ on $n$ vertices. Then $i(n)_{\{3,12^2\}} = |\{(l, m, n) ~|~ m \geq 3, 3m \mid n, n \geq 9m, 0 \le l \le (\frac{n}{6m}-1)\}| + |\{(m, n)~|~m \geq 2, 4(3m+1) \mid n, n \geq 12(3m+1)\}| + |\{(m, n)~|~m \geq 1, 4(m+2) \mid  n, n \geq 20(m+2)\}| + |\{(m, n)~|~m \geq 1, 4m+5 \mid n, n \geq 24(4m+5)\}|$.

By Theorem \ref{thm346:1}, there is no map of type $\{3^4,6\}$ on $n$ vertices. So, $i(n)_{\{3^4,6\}} = 0$ for all $n$.

By Lemmas \ref{lem4612:1}, \ref{lem4612:3}, let $i(n)_{\{4, 6, 12\}}$ denote the number of maps of type $\{4, 6, 12\}$ on $n$ vertices. Then $i(n)_{\{4, 6, 12\}} = \{(l, m, n)~|~ m \geq 4, 2 \mid m, 6m \mid n, n \geq 12m, 0 \le l \le (\frac{n}{6m}-1)\}| + |\{(m, n)~|~m \geq 2, 2 \mid m, 12m \mid n, n\geq 24m\}|$.

By Lemmas \ref{lem3464:1}, \ref{lem3464:3}, let $i(n)_{\{3, 4, 6, 4\}}$ denote the number of maps of type $\{3, 4, 6, 4\}$ on $n$ vertices. Then $i(n)_{\{3, 4, 6, 4\}} = |\{(l, m, n)~|~ m \geq 4, 2 \mid m, 3m \mid n, n \geq 6m, 0 \le l \le (\frac{n}{3m}-1)\}| + |\{(m, n)~|~m \geq 2, 6m \mid (n-3m), n \geq 9m\}|  +  |\{(m, n)~|~m \geq 2, 6m \mid n, n \geq 12m\}|$.
\hfill$\Box$

\smallskip

\noindent{Proof of Theorem} \ref{classification-semi-maps} : The proof follows from the Sections 2.1, 2.2, 2.3, 2.4, 2.5, 2.6, 2.7, 2.8, 2.9, 2.10, 2.11. Let $M$ be a map of type $\{3^{6}\}$ on the Klein bottle with $n$ vertices. By Lemmas \ref{lem36:6}, \ref{lem36.7}, we consider all admissible $K(r, s, k)$ and $K(l, t)$ representations of $M$ on $n$ vertices. Then, we classify them by Lemmas \ref{lem36:iso} and \ref{lem36.8}. Let $r_1, r_2 \mid n$. By Corollary \ref{cor36:1}, \ref{cor36:2}, $K(r_1, s, k) \not\cong K(r_2, s, k), K(r_1, t) \not\cong K(r_2, t)$ if $r_1 \neq r_2$ for all $s, k, t$, and $K(r, s, k)$ is isomorphic to either of $K(r, s, 0), K(r, s, 1)$. Hence, every map of type $\{3^{6}\}$ on $n$ vertices is isomorphic to one of $K(r, s, 0), K(r, s, 1)$ and $K(r, s)$ for some $r \mid n, s = \frac{n}{r}$. 

Similarly by Corollary \ref{cor44:1}, $K(r_1, s, k) \not\cong K(r_2, s, k)$ if $r_1 \neq r_2$ for all $s, k$, and every map of type $\{4^{4}\}$ on $n$ vertices is isomorphic to one of $K(r, s, 0), K(r, s, 1)$ for some $r \mid n$ and $s = \frac{n}{r}$.

By Corollary \ref{cor3342:1}, \ref{cor3342:2}, $K(r_1, s, k) \not\cong K(r_2, s, k)$ and $K(r_1, t) \not\cong K(r_2, t)$ if $r_1 \neq r_2$ for all $s, k, t$, and $K(r, s, k)$ is isomorphic to either of $K(r, s, 0), K(r, s, 1)$. So, every map of type $\{3^{3}, 4^2\}$ on $n$ vertices is isomorphic to one of $K(r, s, 0), K(r, s, 1), K(r, s)$ for some $r \mid n$ and $s = \frac{n}{r}$.

By Corollary \ref{cor32434:1}, $K(r_1, s, k) \not\cong K(r_2, s, k)$ if $r_1 \neq r_2$ for all $s, k$, and every map of type $\{3^{2}, 4, 3, 4\}$ on $n$ vertices is isomorphic to one of $K(r, s, 0), K(r, s, 1)$ for some $r \mid n$ and $s = \frac{n}{r}$.

By Corollary \ref{cor482:1}, $K(r_1, s, k) \not\cong K(r_2, s, k)$ if $r_1 \neq r_2$ for all $s, k$, and every map of type $\{4, 8^{2}\}$ on $n$ vertices is isomorphic to one of $K(r, s, 0), K(r, s, 1)$ for some $r \mid n$ and $s = \frac{n}{r}$.

By Corollary \ref{cor3636:1} and Lemma \ref{lem3636:5}, $K(r_1, s, k) \not\cong K(r_2, s, k)$ and $K(r_1, t) \not\cong K(r_2, t)$ if $r_1 \neq r_2$ for all $s, k, t$, and $K(r, s, k)$ is isomorphic to either of $K(r, s, 0), K(r, s, 1)$. So, every map of type $\{3, 6, 3, 6\}$ on $n$ vertices is isomorphic to one of $K(r, s, 0), K(r, s, 1)$ and $K(r, s)$ for some $r \mid n$ and $s \in \{\frac{2n}{3r}, \frac{2n}{3r+1}, \frac{4n}{4r+5}\}$.

By Corollary \ref{cor3122:1} and Lemma \ref{lem3122:4}, $K(r_1, s, k) \not\cong K(r_2, s, k)$ and $K(r_1, t) \not\cong K(r_2, t)$ if $r_1 \neq r_2$ for all $s, k, t$, and $K(r, s, k)$ is isomorphic to either of $K(r, s, 0), K(r, s, 1)$. So, every map of type $\{3, 12^{2}\}$ on $n$ vertices is isomorphic to one of $K(r, s, 0), K(r, s, 1)$ and $K(r, s)$ for some $r \mid n$ and $s \in \{\frac{2n}{3r}, \frac{2n}{3r+1}, \frac{4n}{4r+5}\}$.

By Corollary \ref{cor4612:1} and \ref{cor4612:2}, $K(r_1, s, k) \not\cong K(r_2, s, k)$ and $K(r_1, t) \not\cong K(r_2, t)$ if $r_1 \neq r_2$ for all $s, k, t$, and $K(r, s, k)$ is isomorphic to either of $K(r, s, 0), K(r, s, 1)$. So, every map of type $\{4, 6, 12\}$ on $n$ vertices is isomorphic to one of $K(r, s, 0), K(r, s, 1), K(r, s)$ for some $r \mid n$ and $s = \frac{n}{r}$.

By Corollary \ref{cor3464:1} and Lemma \ref{lem3464:4}, $K(r_1, s, k) \not\cong K(r_2, s, k)$ and $K(r_1, t) \not\cong K(r_2, t)$ if $r_1 \neq r_2$ for all $s, k, t$, and $K(r, s, k)$ is isomorphic to either of $K(r, s, 0), K(r, s, 1)$. So, every map of type $\{3, 4, 6, 4\}$ on $n$ vertices is isomorphic to one of $K(r, s, 0), K(r, s, 1), K(r, s)$ for some $r \mid n$ and $s = \frac{n}{r}$.
\hfill$\Box$

\section{Acknowledgement}

The first author is supported by post doctoral scholarship of Harish Chandra Research Institute, Department of Atomic Energy, Government of India. Work of second author is partially supported by SERB, DST grant No. SR/S4/MS:717/10.

{\small

}

\end{document}